\definecolor{mygray}{gray}{0.85}
\newcommand{\mrm}[1]{\mathrm{#1}}
\renewcommand{\leq}{\leqslant}
\renewcommand{\geq}{\geqslant}
\newcommand{\dleq}{\leqslant_{\oplus}}
\newcommand{\sleq}{\leqslant_{\ast}}
\newcommand{\pleq}{\leqslant_{\ast}^{\mrm{c}}}
\newcommand{\precleq}{\preccurlyeq^{\mrm{c}}}
\newcommand{\bigast}{\mathop{\LARGE  \mathlarger{\mathlarger{*}}}}
\newcommand{\hleq}{\leq_{\mrm{HF}}}
\def\subsection{\@startsection{subsection}{3}%
  \z@{.5\linespacing\@plus.7\linespacing}{.3\linespacing}%
  {\bfseries\centering}}
\def\subsubsection{\@startsection{subsubsection}{3}%
  \z@{.5\linespacing\@plus.7\linespacing}{.3\linespacing}%
  {\centering}}
\def\myfnt{\ifx\protect\@typeset@protect\expandafter\footnote\else\expandafter\@gobble\fi}
\renewcommand{\restriction}{ {\upharpoonright} }
\def\Ind{\setbox0=\hbox{$x$}\kern\wd0\hbox to 0pt{\hss$\mid$\hss}
	\lower.9\ht0\hbox to 0pt{\hss$\smile$\hss}\kern\wd0}
\def\Notind{\setbox0=\hbox{$x$}\kern\wd0\hbox to 0pt{\mathchardef
		\nn=12854\hss$\nn$\kern1.4\wd0\hss}\hbox to
	0pt{\hss$\mid$\hss}\lower.9\ht0 \hbox to 0pt{\hss$\smile$\hss}\kern\wd0}
\def\ind{\mathop{\mathpalette\Ind{}}}
\newtheorem{theorem}{Theorem}[section]
\theoremstyle{definition}
\newtheorem{corollary}[theorem]{Corollary}
\newtheorem{definition}[theorem]{Definition}
\newtheorem{lemma}[theorem]{Lemma}
\newtheorem{proposition}[theorem]{Proposition}
\newtheorem{fact}[theorem]{Fact}
\newtheorem{context}[theorem]{Context}
\newtheorem{remark}[theorem]{Remark}
\newtheorem*{theorem*}{Main Theorem 1}
\newtheorem*{fact*}{Fact}
\newtheorem*{theorem2*}{Main Theorem 2}
\newtheorem*{theorem3*}{Main Theorem 3}
\newtheorem{notation}[theorem]{Notation}
\newcommand{\pureindep}[1][]{%
	\mathrel{
		\mathop{
			\vcenter{
				\hbox{\oalign{\noalign{\kern-.3ex}\hfil$\vert$\hfil\cr
						\noalign{\kern-.7ex}
						$\smile$\cr\noalign{\kern-.3ex}}}
			}
		}\displaylimits_{#1}
	}
}
\newcommand{\indep}[2]{%
	\mathrel{
		\mathop{
			\vcenter{
				\hbox{%
					\oalign{
						\noalign{\kern-.3ex}\hfil$\vert$\hfil\cr
						\noalign{\kern-.7ex}
						$\smile$\cr\noalign{\kern-.3ex}
					}
				}
			}
		}^{\!\!\!\!\!#2}_{\!\!\hspace{-0.1em}#1}
	}
}
\newcommand{\displayindep}[2]{%
	\mathrel{
		\mathop{
			\vcenter{
				\hbox{%
					\oalign{
						\noalign{\kern-.3ex}\hfil$\vert$\hfil\cr
						\noalign{\kern-.7ex}
						$\smile$\cr\noalign{\kern-.3ex}
					}
				}
			}
		}^{\!\!\hspace{-0.1em}#2}_{\!\!\hspace{-0.1em}#1}
	}
}
\newcommand{\displayfindep}[2]{%
	\mathrel{
		\mathop{
			\vcenter{
				\hbox{%
					\oalign{
						\noalign{\kern-.3ex}\hfil$\vert$\hfil\cr
						\noalign{\kern-.7ex}
						$\smile$\cr\noalign{\kern-.3ex} 
					}
				}
			}
		}^{\!\hspace{-0.14em}#2}_{\!\!\hspace{-0.05em}#1}
	}
}
\begin{document}

\begin{abstract} 
We use the framework of Abstract Elementary Classes ($\mrm{AEC}$s) to introduce a new Construction Principle $\mrm{CP}(\mathbf{K},\ast)$, which generalises the Construction Principle of Eklof, Mekler and Shelah from \cite{EM, MSh} and allows for many novel applications beyond the setting of universal algebra. From this we derive, in ZFC, that several uncountably categorical classes of structures are not axiomatisable in the logic $\mathfrak{L}_{\infty,\omega_1}$, and, under $V=L$, that they are not axiomatisable in $\mathfrak{L}_{\infty,\infty}$. In particular, our methods apply to: free products of cyclic groups of fixed order, direct sums of a fixed torsion-free abelian group of rank~$1$ which is not $\mathbb{Q}$, free $(k,n)$-Steiner systems, and free generalised $n$-gons. 
\end{abstract}

\title{A New Construction Principle}

\thanks{The authors were supported by project PRIN 2022 ``Models, sets and classifications", prot. 2022TECZJA. The second listed author was also supported by INdAM Project 2024 (Consolidator grant) ``Groups, Crystals and Classifications''. The third listed author was also supported by an INdAM post-doc grant.}

\author{Tapani Hyttinen}
\author{Gianluca Paolini}
\author{Davide E. Quadrellaro}

\address{Department of Mathematics and Statistics, University of Helsinki, P.O. Box 68 (Pietari Kalmin katu 5), 00014 Helsinki, Finland}
\email{tapani.hyttinen@helsinki.fi}

\address{Department of Mathematics “Giuseppe Peano”,
	University of Torino,
	Via Carlo Alberto 10,
	10123 Torino, Italy.}
\email{gianluca.paolini@unito.it}
\email{davideemilio.quadrellaro@unito.it}

\address{Istituto Nazionale di Alta Matematica ``Francesco Severi'',
	Piazzale Aldo Moro 5
	00185 Roma, Italy.}
\email{quadrellaro@altamatematica.it}

\subjclass[2020]{03C48, 03C75, 06B25, 51E10, 51E12}

\date{\today}
\maketitle

\section{Introduction}\label{sec:introduction}

Every textbook in mathematical logic is rich in theorems relating the syntactic and the semantic side of the discipline. These results are among the most classical accomplishments from the field, often dating back to the first half of the 20\emph{th} century. A classical example is Birkhoff's Theorem in universal algebra stating that varieties of algebras are exactly those classes of structures axiomatised by equations. Similarly, an important corollary of Keisler-Shelah's Theorem is that a class of structures is elementary if and only if it is closed under ultraproducts and its complement is closed under ultrapowers. More recently, a similar question has been considered in the context of Abstract Elementary Classes ($\mathrm{AEC}$s). These objects were originally introduced by Shelah in  \cite{Sh1} in order to provide a common model-theoretic framework to several logics and classes of structures. Essentially, an $\mathrm{AEC}$ $(\mathbf{K},\preccurlyeq)$ is simply a class of structures $\mathbf{K}$ in a language $L$ together with a ``strong submodel'' relation $\preccurlyeq$ that obeys certain structural conditions  (cf. Def.~\ref{def_AEC}).   The definition of $\mrm{AEC}$ is thus essentially semantical: we do not require that $\mathbf{K}$ is axiomatised by some theory in a formal language, nor that $\preccurlyeq$ is defined in terms of embeddings preserving formulas of a certain logic. However, already in his foundational text on the subject, Shelah proved that every $\mathrm{AEC}$ $(\mathbf{K},\preccurlyeq)$ consists exactly of the models of an infinitary sentence in an expansion of the language of $\mathbf{K}$ \cite[p.~424]{Sh1}. In a recent paper Shelah and Villaveces \cite{VSh} provided a major improvement of this result and showed that every $\mrm{AEC}$ can be axiomatised by a sentence in an infinitary logic \emph{in the same vocabulary of $\mathbf{K}$}. Crucially, the converse of this fact is false: not all classes of structures $\mathbf{K}$ which can be axiomatised in some infinitary, or higher-order logic, can be endowed with a strong submodel relation to form an $\mrm{AEC}$  $(\mathbf{K},\preccurlyeq)$.

In this spirit, in the present paper we want to give new insights in the following classical logical problem: \emph{suppose $\mathbf{K}$ is a class of structures categorical in all $\kappa>\aleph_0$, when is $\mathbf{K}$ axiomatisable in the infinitary logic $\mathfrak{L}_{\infty,\infty}$?} We attack this problem from the perspective of $ \mrm{AEC}$s, isolating some abstract conditions that allow us to conclude that: \emph{failing to form an $ \mrm{AEC}$ is failing to be axiomatisable}. Now, if $\mathbf{K}$ can be endowed with a strong submodel relation $\preccurlyeq$ in such a way that $(\mathbf{K},\preccurlyeq)$ forms an $\mrm{AEC}$, then Shelah-Villaveces' Theorem entails that $\mathbf{K}$ is exactly the class of models of an infinitary sentence in the language of $\mathbf{K}$. We are then left with the case where $\mathbf{K}$ does not admit a strong submodel relation $\preccurlyeq$ such that $(\mathbf{K}, \preccurlyeq)$ forms an $ \mrm{AEC}$, in this case its axiomatisability remains open. In this article we consider the case where we can associate to $\mathbf{K}$ a specific submodel relation $\sleq$ which satisfies all axioms of $\mrm{AEC}$s \emph{with the possible exception of Coherence and Smoothness}. If $(\mathbf{K},\sleq)$ has L\"owenheim-Skolem number $\aleph_0$, is uncountably categorical, has universal models and canonical amalgamation, then we prove, in $\mrm{ZFC}$, that failure of Smoothness entails that $\mathbf{K}$ cannot be axiomatised in $\mathfrak{L}_{\infty,\omega_1}$, and, under the additional set-theoretical assumption $V=L$, that $\mathbf{K}$ cannot be axiomatised in $\mathfrak{L}_{\infty,\infty}$. As we will see, this generalises the classical results of non-axiomatisability of free $R$-modules (for rings $R$ which are not left-perfect), and the related study of almost-free $R$-modules from \cite{EM2}. In case of $R$-modules, the relation $\sleq$ used to achieve non-axiomatisability is the strong submodel relation of ``being a direct summand'' (cf.~Def.~\ref{definition V-free product}). Crucially, the choice of $\sleq$ as the strong submodel relation is \emph{not} standard in the context of $\mrm{AEC}$s of modules, where this is often assumed to be the submodule relation, or the pure submodule relation (cf.~\cite{Boney}, and see \cite{Baldwin} for an $\mrm{AEC}$ of modules with yet another strong submodel relation).

It is noteworthy that the possible failure of Smoothness has already been considered in the literature on Abstract Elementary Classes. In \cite{HP1,HP2} the first two named authors of the present paper introduced \emph{almost $\mrm{AEC}$s} as classes $(\mathbf{K},\preccurlyeq)$ satisfying all the axioms of $\mrm{AEC}$s but Smoothness, and used them to study concrete examples, such as geometric lattices and right-angled Coxeter groups. Independently, failure of Smoothness was considered also in the general study of $\mrm{AEC}$s. On the one hand, lack of Smoothness was identified as a source of non-structure theorems in the abstract study of independence notions from \cite[\S 9]{jarden}. On the other hand, $\mrm{AEC}$s without the Smoothness Axiom were studied by Shelah in \cite{Sh6} (under the name of \emph{weak $\mrm{AEC}$s}) in the context of the classification theory for universal classes, and they play an important role in Vasey's proof of the eventual categoricity conjecture for universal classes (cf.~\cite{vasey1,vasey2}).  Despite these previous contributions, the most direct inspiration of our work does not come from the field of $\mrm{AEC}$s, but rather from the work of Eklof, Mekler and Shelah on the axiomatisability of free algebras (generalizing the case of almost-free $R$-modules mentioned above). As a matter of fact, this study represents the first comprehensive analysis of the question of axiomatisability of uncountably categorical classes, at this level of generality.  In \cite{EM} and \cite{MSh}, Eklof, Mekler and Shelah  provided a full characterisation of the possible \emph{spectra} of $\mathfrak{L}_{\infty,\kappa}$-free algebras, i.e., of algebras which are equivalent to the free algebra $F_{\mathbf{V}}(\kappa)$ in the infinitary logic $\mathfrak{L}_{\infty,\kappa}$ (cf.~\cite[p.~81]{EM}). They proved in particular the following theorem \cite[Thm.~16]{MSh} which sums up the main results from \cite{EM,MSh} (and which largely extends the previous results on locally-finite varieties established by Baldwin and Lachlan in \cite{Baldwin_Lachlan}). 

\begin{fact}[Eklof-Mekler-Shelah]\label{recap_fact}
	For any variety of algebras $\mathbf{V}$ in a countable language exactly one of the following possibilities holds:
	\begin{enumerate}[(i)]
		\item there is an $\mathfrak{L}_{\infty,\omega_1}$-free algebra of cardinality $\aleph_1$ which is not free;
		\item every $\mathfrak{L}_{\infty,\omega_1}$-free algebra of cardinality $\aleph_1$ is free, and for every infinite cardinal $\kappa$ there is an $\mathfrak{L}_{\infty,\kappa}$-free algebra of cardinality $\kappa^+$ which is not free;
		\item every $\mathfrak{L}_{\infty,\omega}$-free algebra is free.
	\end{enumerate}
	Moreover, under the set-theoretical assumption that $V=L$, item (i) is equivalent to the statement that the theory of $F_{\mathbf{V}}(\kappa)$ in $\mathfrak{L}_{\infty,\kappa}$ is not $\kappa$-categorical  for all regular, non-weakly compact cardinals $\kappa$.
\end{fact}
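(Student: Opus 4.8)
The plan is to follow the Eklof--Mekler--Shelah argument of \cite{EM,MSh}, organised around a combinatorial reformulation of $\mathfrak{L}_{\infty,\kappa}$-freeness together with two transfer principles: one that propagates non-freeness upward in $\mrm{ZFC}$, and one (using $V=L$) that also propagates it downward. Throughout, $F_{\mathbf{V}}(\mu)$ denotes the free $\mathbf{V}$-algebra on $\mu$ generators, so that for uncountable $\kappa$ every free $\mathbf{V}$-algebra of cardinality $\kappa$ is isomorphic to $F_{\mathbf{V}}(\kappa)$.

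\emph{A combinatorial characterisation.} The first step is to isolate the relation of \emph{relative freeness}: for subalgebras $B\leq C$ of a common algebra, write $B\sleq C$ when $C$ is a free product of $B$ with a $\mathbf{V}$-free algebra. One verifies that $\sleq$ is transitive, closed under unions of $\sleq$-increasing chains, and satisfies $F_{\mathbf{V}}(\mu)\sleq F_{\mathbf{V}}(\nu)$ for $\mu\leq\nu$. Using this one establishes Shelah's characterisation: an algebra $A$ of cardinality $\lambda$ is $\mathfrak{L}_{\infty,\kappa}$-equivalent to a $\mathbf{V}$-free algebra (hence to $F_{\mathbf{V}}(\lambda)$ when $\lambda$ is uncountable) if and only if there is a nonempty family $\mathcal{F}$ of free subalgebras of $A$, closed under unions of chains of length $<\kappa$, such that every subset of $A$ of size $<\kappa$ lies in a member of $\mathcal{F}$ and for all $B\in\mathcal{F}$ and $X\subseteq A$ with $|X|<\kappa$ there is $B'\in\mathcal{F}$ with $B\cup X\subseteq B'$ and $B\sleq B'$; call such $A$ \emph{strongly $\kappa$-free}. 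The proof is a back-and-forth argument: one plays $\mathcal{F}$ against the canonical family of free factors of $F_{\mathbf{V}}(\lambda)$ in one direction, and extracts $\mathcal{F}$ from an $\mathfrak{L}_{\infty,\kappa}$-equivalence in the other. This is the point where the passage from modules to arbitrary varieties is most delicate---one needs free products over a subalgebra to exist and to interact well with unions---and I expect verifying the good behaviour of $\sleq$ in this generality to be a first hurdle.

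\emph{Localisation and the trichotomy.} By Shelah's Singular Compactness Theorem, every $\mathfrak{L}_{\infty,\lambda}$-free algebra of singular cardinality $\lambda$ is free; together with a $\Gamma$-invariant/filtration analysis this localises the possible failures of freeness at $\aleph_1$ and at successor cardinals. The trichotomy then follows from a double case split. If every $\mathfrak{L}_{\infty,\omega}$-free algebra is free we are in (iii) (this covers, e.g., locally finite varieties, cf.~\cite{Baldwin_Lachlan}, and modules over left-perfect rings, cf.~\cite{EM2}). Otherwise, filtering a non-free $\mathfrak{L}_{\infty,\omega}$-free algebra by free subalgebras produces a suitable \emph{brick}: free algebras $B\leq C$ with $C$ not a free product over $B$ but nonetheless $\mathfrak{L}_{\infty,\omega}$-free over $B$, the back-and-forth data being inherited from the ambient algebra. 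If moreover there is a non-free $\mathfrak{L}_{\infty,\omega_1}$-free algebra of cardinality $\aleph_1$, we are in (i); this excludes (ii) and (iii), since an $\mathfrak{L}_{\infty,\omega_1}$-free algebra is a fortiori $\mathfrak{L}_{\infty,\omega}$-free. In the remaining case---(iii) fails but every $\mathfrak{L}_{\infty,\omega_1}$-free algebra of size $\aleph_1$ is free---one establishes (ii) by constructing, for each infinite $\kappa$, a continuous chain $(A_\alpha)_{\alpha<\kappa^+}$ of free algebras with $A_\alpha\sleq A_{\alpha+1}$ whenever $\alpha\notin S$ and a brick-step whenever $\alpha\in S$, where $S$ is a non-reflecting stationary subset of $\{\delta<\kappa^+ : \mrm{cf}(\delta)=\omega\}$ (such sets exist in $\mrm{ZFC}$ at every successor cardinal); non-reflection of $S$ allows one to keep each $A_\delta$ free---the free bases of the $A_\delta$ need not be coherent with the filtration, which is the crux---while $A=\bigcup_\alpha A_\alpha$ acquires the nonzero $\Gamma$-invariant $[S]$, so $A$ is strongly $\kappa$-free of cardinality $\kappa^+$ but not free. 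Since the bricks sit at points of cofinality $\omega$, $A$ is in general not $\mathfrak{L}_{\infty,\omega_1}$-free, which is exactly what keeps (ii) compatible with freeness of all $\mathfrak{L}_{\infty,\omega_1}$-free algebras of size $\aleph_1$. Exactness of the trichotomy follows from the incompatibilities just noted.

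\emph{The $V=L$ equivalence.} Assume $V=L$. From (i): a non-free $\mathfrak{L}_{\infty,\omega_1}$-free algebra of size $\aleph_1$ again yields a brick, and for every regular non-weakly-compact $\kappa$ one assembles, via $\diamondsuit_\kappa$ along a non-reflecting stationary subset of $\kappa$ (available because $\kappa$ is regular and, under $V=L$, not weakly compact), a non-free strongly $\kappa$-free algebra of cardinality $\kappa$; by the characterisation it is $\mathfrak{L}_{\infty,\kappa}$-equivalent to $F_{\mathbf{V}}(\kappa)$, and since every free algebra of cardinality $\kappa$ is isomorphic to $F_{\mathbf{V}}(\kappa)$, the theory of $F_{\mathbf{V}}(\kappa)$ in $\mathfrak{L}_{\infty,\kappa}$ is not $\kappa$-categorical. (At a weakly compact $\kappa$ this must fail: weak compactness forces strongly $\kappa$-free algebras of size $\kappa$ to be free, so the theory is $\kappa$-categorical there---hence the exclusion.) The converse direction---reflecting down---is, I expect, the main obstacle of the whole proof: from non-$\kappa$-categoricity at some regular non-weakly-compact $\kappa$ one extracts a non-free strongly $\kappa$-free algebra of size $\kappa$, and must manufacture a non-free $\mathfrak{L}_{\infty,\omega_1}$-free algebra of size $\aleph_1$. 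Under $V=L$ this is carried out with $\square$-sequences: one shows that the least cardinal $\lambda$ carrying a non-free strongly $\lambda$-free algebra of size $\lambda$, if $\lambda>\aleph_1$, must reflect its $\Gamma$-invariant on a club, hence be weakly compact (a Gregory/Shelah-type reflection theorem), contradicting the chosen non-weakly-compact witness; therefore $\lambda=\aleph_1$ and (i) holds.
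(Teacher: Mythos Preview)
The paper does not prove this statement: it is recorded as a background \emph{Fact} attributed to Eklof, Mekler and Shelah with references to \cite{EM,MSh}, and no argument is given in the paper itself. So there is no ``paper's own proof'' to compare your proposal against; your sketch is essentially a reconstruction of the original Eklof--Mekler--Shelah argument, and in its broad architecture (the $\sleq$/strongly-$\kappa$-free characterisation, singular compactness, filtration-with-bricks constructions, and the $V=L$ reflection dichotomy) it matches what one finds in those references.

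That said, your treatment of case~(ii) contains a genuine gap. You assert that a non-reflecting stationary subset of $\{\delta<\kappa^+:\mrm{cf}(\delta)=\omega\}$ exists in $\mrm{ZFC}$ at every successor cardinal; this is false (e.g.\ under Martin's Maximum every stationary subset of $S^{\omega_2}_\omega$ reflects). More importantly, your construction as written does not exploit the hypothesis that distinguishes (ii) from (i), namely that every $\mathfrak{L}_{\infty,\omega_1}$-free algebra of size $\aleph_1$ is free (equivalently, that the Construction Principle fails). In the actual Mekler--Shelah argument this hypothesis is used essentially: it furnishes a structural uniformity of the variety (roughly, that $\sleq$-defects cannot accumulate along short chains) which both changes the combinatorics of the brick and allows one to work with stationary sets that \emph{are} available in $\mrm{ZFC}$ (e.g.\ $S^{\kappa^+}_\kappa$ for regular $\kappa$, which is automatically non-reflecting). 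Without invoking that hypothesis, placing $\omega$-cofinality bricks as you describe would in general destroy $\mathfrak{L}_{\infty,\kappa}$-freeness for $\kappa>\omega$, since a $<\kappa$-closed witnessing family cannot skip over cofinality-$\omega$ failures. You should revisit this step and make explicit where and how the failure of~(i) enters the construction for~(ii).
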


Interestingly, the problem of the axiomatisability of free algebras in infinitary logics arose independently in the context of group theory. In fact, the statement that an uncountable group $G$ is equivalent in $\mathfrak{L}_{\infty,\omega_1}$ to the absolutely free group $F(\aleph_1)$ (viz. free in the variety of \emph{all} groups) is equivalent to the statement that every countable subgroup of $G$ is free. The study of subgroups of free groups has a long history in algebra, which goes back to the Nielsen-Schreier Theorem that subgroups of free groups are free. Subsequently, Higman showed in \cite{higman} that there are $\aleph_1$-free groups of cardinality $\aleph_1$, i.e., non-free groups of size $\aleph_1$ whose countable subgroups are all free. Both Higman's work and the later results by Eklof, Mekler and Shelah rely on a careful combinatorial analysis of groups, and more generally of algebras in a specific variety. One of the most striking and interesting aspects of the work of Eklof, Mekler and Shelah on almost-free algebras is perhaps the identification of the so-called \emph{Construction Principle} ($ \mrm{CP} $)  (cf.~Definition~\ref{CP:mekler-shelah}), which is essentially  a combinatorial pattern of countable free algebras with deep consequences for the whole class of free algebras (with respect to a given variety $\mathbf{V}$). This pattern appears in Higman's construction, and also in all other constructions of a non-free $\mathfrak{L}_{\infty,\omega_1}$-free algebra in a variety $\mathbf{V}$. In \cite{EM} Eklof and Mekler proved that whether ($ \mrm{CP} $) holds in a variety of algebras is equivalent to item (i) from \ref{recap_fact}.

\begin{fact}[Eklof-Mekler]
	Let $\mathbf{V}$ be a variety in a countable language, ($ \mrm{CP} $) holds in $\mathbf{V}$ if and only if there is an $\mathfrak{L}_{\infty,\omega_1}$-free algebra of size $\aleph_1$ in $\mathbf{V}$ which is not free.
\end{fact}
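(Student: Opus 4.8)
The plan is to route both implications through the $\Gamma$-invariant: the obstruction to freeness of an algebra of size $\aleph_1$ is a stationary subset of $\omega_1$ attached to a continuous filtration, and the crux is that the countable slice sitting at a typical point of this invariant realises exactly the configuration recorded in $\mrm{CP}$ (Definition~\ref{CP:mekler-shelah}). First I would recall the standard dictionary, as in \cite{EM}, between the two freeness notions in play: for $|M| = \aleph_1$ the algebra $M$ is $\mathfrak{L}_{\infty,\omega_1}$-free if and only if it is $\aleph_1$-free, i.e.\ every countable subalgebra of $M$ sits inside a countable free subalgebra. This uses that $M \equiv_{\mathfrak{L}_{\infty,\omega_1}} F_{\mathbf{V}}(\aleph_1)$ is detected by a back-and-forth system of countable partial isomorphisms, together with Scott's theorem (countable structures equivalent in $\mathfrak{L}_{\infty,\omega}$, a fortiori in $\mathfrak{L}_{\infty,\omega_1}$, are isomorphic). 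From now on I work with $\aleph_1$-free algebras $M$ of size $\aleph_1$, fixing a continuous increasing filtration $M = \bigcup_{\alpha<\omega_1} M_\alpha$ by countable subalgebras which, by $\aleph_1$-freeness, may be arranged to consist of free algebras.

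For the implication from $\mrm{CP}$ to the existence of a non-free $\mathfrak{L}_{\infty,\omega_1}$-free algebra of size $\aleph_1$: fix a stationary $S \subseteq \omega_1$ of ordinals of cofinality $\omega$ and a ladder system $\langle \eta_\delta : \delta \in S \rangle$, and build $M = \bigcup_{\alpha<\omega_1} M_\alpha$ by transfinite recursion. Outside $S$ I take free products and direct limits in $\mathbf{V}$ that keep each $M_\alpha$ a free factor of its successors; at each $\delta \in S$ I glue, along the ladder $\eta_\delta$, the countable configuration furnished by $\mrm{CP}$, so that $M_\delta$ remains free but is prevented from being a free factor of $M_{\delta+1}$ in a way that no refinement of the filtration can repair. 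Then two checks remain. First, $M$ is $\aleph_1$-free: every countable subalgebra lies inside some $M_\alpha$ because the filtration is $\omega$-closed, and each $M_\alpha$ is free by construction. Second, $M$ is not free: the construction forces $\Gamma(M)$ to contain $S$ modulo the club filter, and for varieties one has the Eklof-type criterion (the analogue, developed in \cite{EM}, of Eklof's theorem for modules) that an $\aleph_1$-free algebra of size $\aleph_1$ is free exactly when its $\Gamma$-invariant is trivial. The ladder system is what makes the gluings at distinct $\delta \in S$ non-interfering, and this is possible precisely because $\mrm{CP}$ supplies a single reusable countable pattern.

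For the converse, let $M$ be $\aleph_1$-free of size $\aleph_1$ and not free, with a continuous filtration $\langle M_\alpha \rangle$ by free countable subalgebras. Non-freeness means $\Gamma(M)$ is stationary, so after intersecting with a club I may assume a stationary set $S$ of ordinals of cofinality $\omega$ such that for every $\delta \in S$ and every $\beta > \delta$ the subalgebra $M_\delta$ is not a free factor of $M_\beta$. Using $\aleph_1$-freeness I thin the filtration further so that $M_\alpha$ is a free factor of $M_{\alpha+1}$ whenever $\alpha \notin S$ and so that, for each $\delta \in S$, there is an increasing sequence $\alpha^\delta_n \to \delta$ with each $M_{\alpha^\delta_n}$ a free factor of $M_{\alpha^\delta_{n+1}}$ and with $M_{\delta+1}$ free over $M_\delta$. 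The local data at a single $\delta \in S$ --- a tower $M_{\alpha^\delta_0} \le M_{\alpha^\delta_1} \le \cdots$ of countable free algebras, each a free factor of the next, with free union $M_\delta$, contained in the free algebra $M_{\delta+1}$ but with $M_\delta$ not a free factor of $M_{\delta+1}$ --- is exactly the combinatorial pattern isolated in $\mrm{CP}$; hence $\mrm{CP}$ holds in $\mathbf{V}$.

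The main obstacle I expect lies in the converse, in the localisation step: one must arrange, simultaneously and by passing only to clubs and to thinnings of the filtration, that all points of $S$ look uniform, that every step outside $S$ is a free-factor extension, and that $M_{\delta+1}$ stays free over $M_\delta$, so that a single countable $\delta$-slice already exhibits the full $\mrm{CP}$ configuration. Dually, in the forward direction one must be sure the $\Gamma$-invariant of the constructed $M$ cannot be dissolved by a cleverer filtration. Both rest on the combinatorics of the $\Gamma$-invariant --- its being well defined modulo the club filter and its controlled behaviour under refinement of filtrations --- together with the fact that $\mathbf{V}$ is a variety in a countable language, which is exactly what keeps free products and countable free subalgebras inside $\mathbf{V}$ and legitimises the chain manipulations.
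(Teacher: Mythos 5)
Your overall architecture (a stationary-set/ladder construction for one direction, a filtration-plus-club extraction for the other) is the right one, but two specific points in your write-up are genuine gaps rather than routine details. First, your opening ``dictionary'' is false: for an algebra $M$ of size $\aleph_1$, being $\mathfrak{L}_{\infty,\omega_1}$-free is \emph{not} equivalent to $\aleph_1$-freeness (every countable subalgebra contained in a countable free one); it is equivalent to \emph{strong} $\aleph_1$-freeness, where every countable subset lies in a countable free subalgebra over which one can keep extending by free factors with free complements — this is Kueker's criterion, cf.\ \cite{Kueker,MSh3} and, in the abstract form used here, Theorem~\ref{thm:kueker} — and there exist $\aleph_1$-free algebras of size $\aleph_1$ (already abelian groups) which are not strongly $\aleph_1$-free. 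This breaks both halves of your argument as written: in the forward direction you only verify $\aleph_1$-freeness of the constructed $M$, which does not give $M\equiv_{\infty,\omega_1}F(\aleph_1)$ — you must also check that along the non-stationary part of the filtration the successive stages are free factors with free complements (this is exactly what the proof of Theorem~\ref{thm:2} verifies via its condition (b) and strong $\kappa^+$-freeness, Definition~\ref{def:strong_free}); and in the converse, your ``thinning'' so that $M_\alpha\sleq M_{\alpha+1}$ off $S$, and the cofinal free-factor tower below each $\delta\in S$ with each $M_{\alpha^\delta_n}\sleq M_{\delta+1}$, cannot be obtained from $\aleph_1$-freeness alone; it needs the actual hypothesis of $\mathfrak{L}_{\infty,\omega_1}$-freeness fed through the strong-freeness/back-and-forth characterisation.

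Second, you never engage with clause (2) of $\mrm{CP}$ (Definition~\ref{CP:mekler-shelah}), which demands $A\not\sleq B\ast F(\aleph_0)$ and not merely that $A$ fails to be a free factor of $B$; since ``free factor with free complement'' is not a coherent relation, the weaker statement is insufficient in both directions. In the forward direction, from $M_\delta\not\sleq M_{\delta+1}$ alone you cannot conclude that $\delta$ lands in $\Gamma(M)$: a later $M_\beta$ is isomorphic over $M_{\delta+1}$ to $M_{\delta+1}\ast F(\aleph_0)$, and nothing prevents $M_\delta$ from becoming a free factor there; it is precisely the robust clause (2) (the paper's $A\not\pleq B$, Definition~\ref{def:coherentisation}) that makes the club comparison with a filtration of $F(\aleph_1)$ produce a contradiction, so your assertion that ``the construction forces $\Gamma(M)$ to contain $S$'' is exactly the unproved step. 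Dually, in the converse your extracted local configuration records only ``$M_\delta$ not a free factor of $M_{\delta+1}$,'' which is strictly weaker than $\mrm{CP}$; to get clause (2) you must argue that $M_\delta\sleq M_{\delta+1}\ast F(\aleph_0)$ would force $M_\delta\sleq M_\beta$ for some $\beta>\delta$ (using that above $\delta+1$ the filtration climbs by free factors with infinite-rank free complements), contradicting the choice of $S$; and you still need the routine but necessary conversion of your $\omega$-tower of countable free factors of $M_{\delta+1}$ into a single basis enumeration so that the finitely generated initial segments $\langle a_0,\dots,a_n\rangle$ are free factors of $B$, as clause (1) literally requires.
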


The Construction Principle ($ \mrm{CP} $) thus provides us with a combinatorial test to determine whether a variety has $\mathfrak{L}_{\infty,\omega_1}$-free algebras which are not free. This paves the way for a vast number of applications, most notably to $R$-modules \cite{EM2}, where the presence of ($ \mrm{CP} $) is equivalent to the fact that $R$ is {\em not} left-perfect (cf.~\cite[p.~193]{EM2}). The following corollary of the previous theorem shows that ($ \mrm{CP} $) has a crucial connection with the problem of axiomatisability of free algebras.

\begin{fact}[Eklof-Mekler]
	Let $\mathbf{V}$ be a variety in a countable language that satisfies (CP), then its subclass $\mathbf{FV}$ of $\mathbf{V}$-free algebras is not axiomatisable in $\mathfrak{L}_{\infty,\omega_1}$. Moreover, assuming $V=L$, it also follows that  $\mathbf{FV}$ is not axiomatisable in $\mathfrak{L}_{\infty,\infty}$. 
\end{fact}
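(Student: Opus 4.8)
The plan is to deduce both non-axiomatisability statements by a single device: transporting a hypothetical axiomatising sentence along an $\mathfrak{L}_{\infty,\kappa}$-equivalence. The inputs are the preceding Eklof--Mekler Fact --- that $(\mrm{CP})$ in $\mathbf{V}$ produces a non-free $\mathfrak{L}_{\infty,\omega_1}$-free algebra of size $\aleph_1$ --- and Fact~\ref{recap_fact}, including its ``Moreover'' clause. Throughout I use that a theory in $\mathfrak{L}_{\infty,\omega_1}$ (resp.\ $\mathfrak{L}_{\infty,\infty}$) is equivalent to a single sentence of the same logic, since conjunctions over arbitrary sets are allowed and taking such a conjunction introduces no new quantifiers.

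\textbf{The $\mathfrak{L}_{\infty,\omega_1}$ case.} Suppose for contradiction that $\mathbf{FV}$ were axiomatisable in $\mathfrak{L}_{\infty,\omega_1}$; fix $\varphi\in\mathfrak{L}_{\infty,\omega_1}$ whose models are exactly the algebras of $\mathbf{FV}$. As $\mathbf{V}$ satisfies $(\mrm{CP})$, the preceding Fact gives an algebra $A\in\mathbf{V}$ of cardinality $\aleph_1$ which is not free but is $\mathfrak{L}_{\infty,\omega_1}$-equivalent to a free algebra $F$ of $\mathbf{V}$ (this being the meaning of ``$\mathfrak{L}_{\infty,\omega_1}$-free''). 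Then $F\in\mathbf{FV}$, so $F\models\varphi$; by $\mathfrak{L}_{\infty,\omega_1}$-equivalence $A\models\varphi$; hence $A\in\mathbf{FV}$, i.e.\ $A$ is free, contradicting the choice of $A$. So no such $\varphi$ exists.

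\textbf{The $\mathfrak{L}_{\infty,\infty}$ case under $V=L$.} Again suppose for contradiction that $\mathbf{FV}$ is the class of models of a single sentence $\psi\in\mathfrak{L}_{\infty,\infty}$. Since $\mathfrak{L}_{\infty,\infty}=\bigcup_{\lambda}\mathfrak{L}_{\infty,\lambda}$, there is a cardinal $\lambda$ with $\psi\in\mathfrak{L}_{\infty,\lambda}$; put $\kappa:=\lambda^{+}$, so $\kappa$ is regular and not weakly compact (a successor never is) while still $\psi\in\mathfrak{L}_{\infty,\lambda}\subseteq\mathfrak{L}_{\infty,\kappa}$. Because $(\mrm{CP})$ holds, the preceding Fact places us in case (i) of Fact~\ref{recap_fact}; its ``Moreover'' clause, together with $V=L$, then tells us that the $\mathfrak{L}_{\infty,\kappa}$-theory $T$ of $F_{\mathbf{V}}(\kappa)$ is not $\kappa$-categorical. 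Since the language is countable, $|F_{\mathbf{V}}(\kappa)|=\kappa$, so $F_{\mathbf{V}}(\kappa)$ is itself a model of $T$ of size $\kappa$, and non-categoricity yields $B\models T$ with $|B|=\kappa$ and $B\not\cong F_{\mathbf{V}}(\kappa)$; in particular $B\equiv_{\infty,\kappa}F_{\mathbf{V}}(\kappa)$. Now $F_{\mathbf{V}}(\kappa)$ is free, hence lies in $\mathbf{FV}$, hence $F_{\mathbf{V}}(\kappa)\models\psi$; as $\psi\in\mathfrak{L}_{\infty,\kappa}$, the equivalence gives $B\models\psi$, so $B\in\mathbf{FV}$, i.e.\ $B$ is a free algebra of $\mathbf{V}$ of cardinality $\kappa$. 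But then $B\cong F_{\mathbf{V}}(\kappa)$, a contradiction. Hence $\mathbf{FV}$ is not $\mathfrak{L}_{\infty,\infty}$-axiomatisable.

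\textbf{Main obstacle.} Modulo Fact~\ref{recap_fact} and the Eklof--Mekler equivalence for $(\mrm{CP})$, nothing deep remains: the argument is a routine equivalence-transfer. The two points needing attention are the reduction of an infinitary theory to a single sentence (harmless in these logics) and the choice of $\kappa$ in the second part, which must be simultaneously regular, non-weakly-compact, and large enough to contain $\psi$ --- taking a successor cardinal above the quantifier rank of $\psi$ secures all three at once. All the genuine set-theoretic difficulty (diamond-type combinatorics giving non-categoricity at such $\kappa$) is already packaged inside the ``Moreover'' clause of Fact~\ref{recap_fact}.
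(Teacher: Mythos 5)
Your proof is correct. A caveat first: the paper never writes out a proof of this Fact --- it is quoted from Eklof--Mekler as ``a corollary of the previous theorem'', so the natural implicit argument is precisely the equivalence-transfer you give: CP yields (via the preceding Fact) a non-free algebra $A$ of size $\aleph_1$ with $A\equiv_{\infty,\omega_1}F(\aleph_1)$, and any purported $\mathfrak{L}_{\infty,\omega_1}$-axiomatisation is transported along this equivalence to force $A$ to be free; under $V=L$ the ``Moreover'' clause of Fact~\ref{recap_fact} supplies, for a suitably chosen successor cardinal $\kappa$ absorbing the given sentence, a model of size $\kappa$ equivalent to $F_{\mathbf{V}}(\kappa)$ in $\mathfrak{L}_{\infty,\kappa}$ but not isomorphic to it, and uncountable categoricity of $\mathbf{FV}$ (Context~\ref{context:univ_algebra}) closes the contradiction. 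All the steps check out, including the choice of $\kappa=\lambda^+$ (regular, never weakly compact) and the fact that being a model of the complete $\mathfrak{L}_{\infty,\kappa}$-theory of $F_{\mathbf{V}}(\kappa)$ is the same as $\equiv_{\infty,\kappa}$-equivalence. Where your route differs from what the paper itself does is that you black-box the two quoted Eklof--Mekler results, whereas the paper's own derivation of this conclusion (and its generalisation) is self-contained: it verifies that $(\mathbf{FV},\sleq)$ is a canonical amalgamation class (Lemma~\ref{prop:varieties_canon}), shows that the classical CP implies $\mrm{CP}(\mathbf{K},\ast)$ (Lemma~\ref{CP_entails_CPast}), and then invokes Theorem~\ref{main:theorem} and Corollary~\ref{main:corollary}, which rest on the stationary-set construction of Theorem~\ref{thm:2} rather than on Fact~\ref{recap_fact}. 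Your argument buys brevity but proves nothing beyond what the quoted facts already encode; the paper's machinery reproves the hard content and extends it beyond varieties. One cosmetic point: when you replace a theory by a single sentence, the constraint in $\mathfrak{L}_{\infty,\lambda}$ is on the number of variables occurring in the conjunction rather than on quantifiers, but since you choose $\lambda$ (hence $\kappa$) only afterwards, this is harmless --- alternatively the transfer can be run sentence-by-sentence with $\lambda$ taken above the supremum of the indices occurring in the theory.
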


Abstractly, the Construction Principle can be seen essentially as a failure of the Smoothness Axiom in the class of free algebras with the relation of free factor (we elaborate on this later in Sections~\ref{pre:univ_algebra} and \ref{CP:coproduct_case}). The fundamental aim of this paper is to show that \emph{the undefinability of free algebras is an instance of a more general phenomenon}, which takes place essentially in any context where we have a combination of \emph{uncountable categoricity}, \emph{canonical amalgamation} and a version of the \emph{Construction Principle}. In order to reach this level of generality and extend the scope of the Eklof-Mekler-Shelah's Construction Principle, we introduce a new Construction Principle $\mrm{CP}(\mathbf{K},\ast)$ in the setting of $\mrm{AEC}$s, and we provide several examples of its applications beyond the scope of universal algebra. 

We proceed in this article as follows. In Section~\ref{sec:kappa_cat} and Section~\ref{CP:coproduct_case} we generalise the Construction Principle beyond the setting of universal algebra. We formalise our new Construction Principle in what we call \emph{weak $ \mrm{AEC}$s}, i.e., classes which satisfy all the  requirements of $ \mrm{AEC}$s except possibly the Coherence and the Smoothness axioms (this is thus a context strictly weaker than Shelah's {\em weak  $ \mrm{AEC}$s}, and also of the {\em almost $ \mrm{AEC}$s} of \cite{HP1,HP2}). This approach allows us to relate the two independent lines of research recalled above: the study of almost free algebras that originated with Higman, and the abstract research on $\mrm{AEC}$s. We proceed in two steps. First, in Section~\ref{sec:kappa_cat}, we consider weak $\mrm{AEC}$s $ (\mathbf{K},\preccurlyeq) $ that: (i) are $\kappa^+$-categorical, (ii) admit suitable universal models of size $\kappa$, and (iii) satisfy a first abstract version of the Construction Principle $\mrm{CP}^\kappa_{\lambda,\delta}(\mathbf{K},\preccurlyeq)$. Essentially, $\mrm{CP}^\kappa_{\lambda,\delta}(\mathbf{K},\preccurlyeq)$ is a failure of Smoothness for specific chains of size $\lambda$, i.e., for chains of \emph{limit models} of size~$\kappa$. Under these assumptions, and under the additional assumption that $V=L$, we then  prove that for every $\kappa\geq \aleph_0$, there are $\mathfrak{L}_{\infty,\kappa^+}$-free structures of size $\kappa^+$ that do not belong to $\mathbf{K}$. Secondly, in  Section~\ref{CP:coproduct_case} we consider uncountably (and eventually) categorical classes which satisfy a canonical form of amalgamation. We formulate in this setting our new Construction Principle $\mrm{CP}(\mathbf{K},\ast)$ and we show that in this case the failure of Smoothness for arbitrary chains of models entails the failure of Smoothness for chains of  \emph{arbitrarily large limit models} (cf.~Lemma~\ref{prop:build_large_CP}).  We thus obtain the following theorem, which is the first main result of the paper and which generalises the classical work on the Construction Principle by Eklof, Mekler and Shelah. We highlight here the case with countable L\"owenheim-Skolem number and direct the reader to Section~\ref{subsec:4.2} for a slightly more general statement.

\begin{theorem*} Let $(\mathbf{K},\sleq)$ be a canonical amalgamation class (cf.~\ref{context:amalgamation_class}) with $\mathrm{LS}(\mathbf{K}, \sleq)=\aleph_0$, with $(\mathbf{K}, \sleq)$-universal models for all  $\kappa\geq \aleph_0$ (cf.~\ref{def:universality}) and which is $\kappa^+$-categorical in all $\kappa\geq \aleph_0$. If $(\mathbf{K},\sleq)$ satisfies the Construction Principle $\mrm{CP}(\mathbf{K},\ast)$ (cf.~\ref{CP-AEC:coproducts}), then:
	\begin{enumerate}[(1)]
		\item there is an $\mathfrak{L}_{\infty,\omega_1}$-free structure of size $\aleph_1$  not in $\mathbf{K}$;
		\item if $V=L$ there is for all $\kappa\geq \aleph_0$  an $\mathfrak{L}_{\infty,\kappa^+}$-free structure of size $\kappa^+$ not in $\mathbf{K}$.
	\end{enumerate}
Thus, it follows, in $\mrm{ZFC}$, that $\mathbf{K}$ is not axiomatisable in $\mathfrak{L}_{\infty,\omega_1}$ and, under $V=L$, that $\mathbf{K}$ is not  axiomatisable also in $\mathfrak{L}_{\infty,\infty}$.
\end{theorem*}

One important consequence of this theorem is that it allows us to work in the abstract setting of weak $ \mrm{AEC}$s, rather than specifically with varieties of algebras. We then make use of the generality of our approach to obtain some novel non-axiomatisability results in concrete cases. In Section~\ref{Sec:application} we turn to this research direction and we study some concrete applications of $\mrm{CP}(\mathbf{K},\ast)$. We first show in Section~\ref{Application:varieties} that $\mrm{CP}(\mathbf{K},\ast)$  subsumes the classical Construction Principle by Eklof, Mekler and Shelah, and we then consider some new applications to free products of cyclic groups of fixed order, to directs sums of a fixed torsion-free abelian groups of rank~$1$ which is not $\mathbb{Q}$, and to incidence geometry (for an extensive overview of this topic see \cite{handbook}). In the process, we noticed while working on the applications to incidence geometry that there was no solid definition of free projective planes in the uncountable setting. We expand on this matter in Section~\ref{sec:remark_freeness}, where we prove  that the class of projective planes which admit a wellfounded $\mrm{HF}$-construction is uncountably categorical (cf. Theorem~\ref{theorem:free_categoricity}). Our main result from Section~\ref{Sec:application} is the following theorem (cf. \ref{def:steiner} for the definition of $(k,n)$-Steiner systems and \ref{def:n_gon} for the definition of generalised $n$-gons).

\begin{theorem2*}
	Let $\mathbf{K}$ be any of the following classes:
	\begin{enumerate}[(a)]
		\item free products of cyclic groups of fixed order $2 \leq n \in \mathbb{N} \cup \{\infty\}$;
		\item direct sums of a fixed torsion-free abelian group of rank~$1$ which is not $\mathbb{Q}$;		
		\item  free $(k,n)$-Steiner systems (for  $2\leq k<n<\omega$);
		\item  free generalised $n$-gons (for $3\leq n<\omega$);
	\end{enumerate}
	then there is an $\mathfrak{L}_{\infty,\omega_1}$-free structure $M\notin \mathbf{K}$ (cf.~\ref{def:L-free}(1)) and, under $V=L$, there is for every $\kappa\geq \aleph_0$ an $\mathfrak{L}_{\infty,\kappa^+}$-free structure $M\notin \mathbf{K}$. It follows in $\mrm{ZFC}$ that $\mathbf{K}$ is not axiomatisable in $\mathfrak{L}_{\infty,\omega_1}$ and, under $V=L$, it is not  axiomatisable also in $\mathfrak{L}_{\infty,\infty}$.
\end{theorem2*}

\noindent  These applications display the special role of the Construction Principle in several classes of structures of interest. Additionally, we also believe that the new Construction Principle $\mrm{CP}(\mathbf{K},\ast)$ may find several further applications, most notably in the case of modules. Modules are in fact the most natural application of the classical Construction Principle, as witnessed by the standard reference \cite{EM2} by Eklof and Mekler, and have recently been subject of several investigations in the context of $ \mrm{AEC}$s \cite{Boney,armida1,armida2,Trlifaj,Baldwin,SaTr}. We think that relating these two research directions is a natural strand of further research. We leave this as a pointer for future works.

\section{Preliminaries}

We collect in this section some preliminary definitions and informations that we will use in the rest of the article. We deal in  Section~\ref{pre:univ_algebra} with some preliminary facts from universal algebra,  in Section~\ref{pre:inf_logic} with infinitary logics, in Section~\ref{pre:aec} with $ \mrm{AEC}$s and, in Section~\ref{pre:inf_comb}, with some notions from set theory and infinitary combinatorics. We refer the reader to \cite{burris,Sh2, Dickmann,Vaananen,jech} for further background.

\subsection{Preliminaries on universal algebra}\label{pre:univ_algebra}

\begin{context}\label{context:univ_algebra}
	We recall that an algebra is a first-order structure in a functional signature, and a variety is a class of algebras axiomatised by equations. By Birkhoff's Theorem (cf.~\cite[Thm. 11.9]{burris}) varieties can be equivalently defined as classes of algebras which are closed under subalgebras, products and homomorphic images. Let $\mathbf{V}$ be a variety, then for any cardinal $\kappa$ there is a free algebra in $\mathbf{V}$ (unique up to isomorphism) with exactly $\kappa$-many generators, which we denote by $F_\mathbf{V}(\kappa)$ --- we drop the index $\mathbf{V}$ when it is clear from the context. Given a variety $\mathbf{V}$, we denote by $\mathbf{FV}$ its subclass of free algebras (i.e., $\mathbf{FV}$ are the free algebras in the variety $\mathbf{V}$).  Let $\mu$ be the size of the language of $\mathbf{V}$, then $\kappa\leq|F_\mathbf{V}(\kappa)|\leq \kappa+\mu+\aleph_0$. It follows that for all $\kappa>\mu+\aleph_0$ there is only one free algebra in $\mathbf{V}$ up to isomorphism, namely $F_\mathbf{V}(\kappa)$. This means that the class $\mathbf{FV}$ is eventually categorical and, for varieties $\mathbf{V}$ in a countable language, it is uncountably categorical.
\end{context}

\begin{definition}\label{definition V-free product}
	Let $\mathbf{V}$ be a variety, we then define the following notions.
	\begin{enumerate}[(1)]
		\item An algebra $A \in \mathbf{V}$ is said to be the $\mathbf{V}$-\emph{free product} of its substructures $B$ and $C$ if and only if it is generated by $B$ and $C$ and, for any structure $D\in \mathbf{V}$ and any two homomorphisms $f:B \rightarrow D$ and $g:C \rightarrow D$, there is a unique homomorphism $h:A \rightarrow D$ such that $h\restriction B=f$ and $h\restriction C=g$.
		\item If the free product of two objects $B,C\in \mathbf{V}$ exists then we denote it by $B\ast C$.
		\item If $A=B\ast C$ for $B,C\in \mathbf{V}$, then we also say that $B$ is a $\mathbf{V}$-\emph{free factor} of $A$, and we refer to $C$ as a \emph{complementary factor} of $B$ in $A$. 
		\item If $A=B\ast C$ and in addition $A, B,C\in \mathbf{FV}$, then we write $B \sleq A$, i.e., $B$ is a free factor of $A$ with a free complementary factor.
	\end{enumerate}
\end{definition}

\begin{remark}\label{fact:free_product_varieties}
Notice that, as observed in \cite[p.~83]{EM} the free product of two algebras  $A,B\in \mathbf{V}$ does not necessarily exist, but it always does when every finite subset of $A$ and $B$ is contained in some free subalgebra. \emph{A fortiori}, free products of free algebras are always well-defined. It follows that the class of free algebras is also closed under infinite free products, i.e., if $A_i\in\mathbf{FV}$ for all $i<\kappa$, then there is a free product $\bigast_{i<\kappa}A_i\in\mathbf{FV}$ which satisfies the infinite version of the properties from Definition~\ref{definition V-free product}. As a matter of fact, we prove in Lemma~\ref{prop:varieties_canon} that the class of $\mathbf{V}$-free algebras $\mathbf{FV}$ with the relation $\sleq$ is a weak $\mrm{AEC}$ (cf.~\ref{def:weak_aec}) satisfying Conditions~\ref{context:amalgamation_class}(\hyperref[C1]{C1})-(\hyperref[C3]{C6}), and thus makes for one major application of our setting.
\end{remark}

The notion of free factor lies at the heart of the so-called Construction Principle, i.e., a combinatorial pattern occurring in the construction of almost-free algebras. This was first isolated in \cite{EM} and then studied in several further works, most notably \cite{mekler,mekler2,EM2,MSh}. The following definition is essentially from \cite[p.~129]{mekler}, with the difference that we state it also for varieties in uncountable languages. For this reason we require that the algebras $A$ and $B$ are countably generated, but not necessarily countable, and also in Clause~(2) of \ref{definition CP} we ask that $A\not\sleq B\ast F(\mu) $ rather than $A\not\sleq B\ast F(\aleph_0) $, where $\mu$ is the size of the language of $\mathbf{V}$.

\begin{definition}\label{definition CP}\label{CP:mekler-shelah}
	Let $\mathbf{V}$ be a variety in a language of size $\mu$, we say that the \emph{Construction Principle (CP)} holds in $\mathbf{V}$ if there are two countably generated $\mathbf{V}$-free structures $A \leq B$ such that:
	\begin{enumerate}[(1)]
	\item there are free generators $\{a_i:i<\omega\}\subseteq B$ of $A$ such that $A_n :=\langle (a_i)_{i\leq n}\rangle_B\sleq B$ for every $n<\omega$;
	\item $A\not\sleq B\ast F(\mu) $.
\end{enumerate}	
\end{definition}

\noindent We also notice that clause (2) from the Construction Principle above is slightly different from the original one from \cite{EM}, but as remarked by Mekler in \cite[p.~130]{mekler2} it is equivalent to it. We shall often refer to the principle above as the Eklof-Mekler-Shelah Construction Pinciple.

\begin{remark}
	We notice that in their monograph on modules Eklof and Mekler already consider the Construction Principle in some varieties in uncountable languages \cite[p.~193]{EM2}, most specifically for modules over arbitrary rings. Our Definition~\ref{definition CP} above essentially generalises also their Construction Principle from \cite[p.~130]{mekler2}, with the only difference that in Clause~(2) they require $A\not\sleq B\ast F(\aleph_0) $ instead of $A\not\sleq B\ast F(\mu) $. The reason for this discrepancy is simply that for varieties of free $R$-modules we have that  $A\sleq B\ast F(\mu) $ already entails $A\sleq B\ast F(\aleph_0) $, as it readily follows from \cite[Cor. 7.18]{rotman}).
\end{remark}

\subsection{Preliminaries on infinitary logic}\label{pre:inf_logic}

In this article we are interested in the non-axiomatisability of several classes of structures in infinitary logics. We recall in this section only the fundamentals of infinitary logics and refer the reader to \cite{Dickmann} and \cite{Vaananen} for additional background.

\begin{definition}
	Let $L$ be a first-order signature, let $\kappa\geq \lambda\geq \aleph_0$ and let $V$ be a fixed set of $\lambda$-many variables, then the set of formulas of $\mathfrak{L}_{\kappa,\lambda}$ is obtained from $V$ and the atomic formulas from $L$ by recursively: (i) taking negations, (ii) taking conjunctions and disjunctions of sets of formulas $\Phi$ such that $|\Phi|<\kappa$ and $<\lambda$ variables occur in $\Phi$, and (iii) quantifying $<\lambda$ variables in a given formula. Then  $\mathfrak{L}_{\infty,\lambda}$ is a class-sized logic with formulas in $\mathfrak{L}_{\kappa,\lambda}$ for all cardinals $\kappa\geq \lambda$, and $\mathfrak{L}_{\infty,\infty}$ is the logic with formulas in $\mathfrak{L}_{\infty,\lambda}$ for all infinite cardinals $\lambda$. We write $\mathfrak{L}_{\kappa,\lambda}(L)$, $\mathfrak{L}_{\infty,\lambda}(L)$ and $\mathfrak{L}_{\infty,\infty}(L)$ when we want to make explicit the underlying signature $L$.
\end{definition}

\begin{notation}
	Given the logics $\mathfrak{L}_{\kappa,\lambda}$, $\mathfrak{L}_{\infty,\lambda}$ and $\mathfrak{L}_{\infty,\infty}$, we write $\preccurlyeq_{\kappa,\lambda}$, $\preccurlyeq_{\infty,\lambda}$ and $\preccurlyeq_{\infty,\infty}$ for the embeddings preserving formulas in the respective logics. In particular, we shall write $\preccurlyeq_{\omega,\omega}$ for the relation of elementary embedding from first-order logic. We write $\equiv_{\kappa,\lambda}$, $\equiv_{\infty,\lambda}$ and $\equiv_{\infty,\infty}$ for the corresponding notions of logical equivalence in the logics the logics $\mathfrak{L}_{\kappa,\lambda}$, $\mathfrak{L}_{\infty,\lambda}$ and $\mathfrak{L}_{\infty,\infty}$. Similarly, we write $\equiv_{\omega,\omega}$ for the relation of elementary equivalence from first-order logic.
\end{notation}

In this work we will often need a criterion to establish when two structures are equivalent in some infinitary logic. We recall the following back-and-forth criterion concerning  $\mathfrak{L}_{\infty,\lambda}$, which goes back to the seminal work of Karp \cite{Karp}.  We refer to \cite[Thm.~9.26]{Vaananen} for a proof of the following fact and to \cite[5.7,~9.2,~9.7]{Vaananen} for history and further  references.

\begin{fact}\label{criterion:Marker}
	Given two $L$-structures $A$ and $B$, we have that   $A\equiv_{\infty,\lambda}B$ if and only if there is a nonempty family $\mathcal{F}$ of partial isomorphisms $f:A\to B$ with the following \emph{back-and-forth property}:
	\begin{enumerate}[(1)]
		\item if $f\in \mathcal{F}$ and $C\subseteq A$ with $|C|<\lambda$, there is  $g\in \mathcal{F}$ with  $C\subseteq \mrm{dom}(g)$ and $g\supseteq f$; 
		\item if $f\in \mathcal{F}$ and $C\subseteq B$ with $|C|<\lambda$, there is  $g\in \mathcal{F}$ with  $C\subseteq \mrm{ran}(g)$ and $g\supseteq f$.
	\end{enumerate}
\end{fact}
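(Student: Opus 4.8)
The plan is to prove both directions by the classical Karp-style back-and-forth arguments; the only points requiring care are the infinitary conjunctions and the fact that $\lambda$ need not be regular (though, since this is a result of Karp, one could also just invoke \cite[Thm.~9.26]{Vaananen}).

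\emph{Family $\Rightarrow$ equivalence.} Suppose such a family $\mathcal{F}$ is given. I would prove, by induction along the well-founded, set-like subformula relation, that for every $\varphi(\bar x)\in\mathfrak{L}_{\infty,\lambda}$, every $f\in\mathcal{F}$, and every tuple $\bar a$ from $\mrm{dom}(f)$ matching the free variables $\bar x$ (so $|\bar a|<\lambda$), one has $A\models\varphi(\bar a)$ iff $B\models\varphi(f(\bar a))$. The atomic case is precisely the assumption that each member of $\mathcal{F}$ is a partial isomorphism; negations and set-sized conjunctions/disjunctions are immediate from the induction hypothesis; for $\varphi=\exists\bar y\,\psi(\bar x,\bar y)$ with $|\bar y|<\lambda$, if $A\models\varphi(\bar a)$ one picks witnesses $\bar c\in A^{<\lambda}$, uses clause (1) to find $g\supseteq f$ in $\mathcal{F}$ with $\bar c\subseteq\mrm{dom}(g)$, and applies the induction hypothesis to $\psi$ and the tuple $\bar a\,{}^\frown\bar c$ (of length $<\lambda$, since the union of two sets of size $<\lambda$ has size $<\lambda$); the reverse direction uses clause (2) symmetrically. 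Specialising to sentences and any fixed $f\in\mathcal{F}$ (which exists as $\mathcal{F}\neq\emptyset$) yields $A\equiv_{\infty,\lambda}B$.

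\emph{Equivalence $\Rightarrow$ family.} Suppose $A\equiv_{\infty,\lambda}B$. Let $\mathcal{F}$ be the set of all maps $f$ with $|\mrm{dom}(f)|<\lambda$ such that, enumerating $\mrm{dom}(f)$ as $\bar a$ and putting $\bar b:=f(\bar a)$, the pointed structures satisfy $(A,\bar a)\equiv_{\infty,\lambda}(B,\bar b)$ — this is well posed because expanding by $<\lambda$ constants keeps one inside $\mathfrak{L}_{\infty,\lambda}$, and it does not depend on the chosen enumeration. The empty map belongs to $\mathcal{F}$ by hypothesis, and every $f\in\mathcal{F}$ is a partial isomorphism since its members preserve atomic formulas both ways. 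The heart of the matter is the extension property, which I would reduce to the claim: if $(A,\bar a)\equiv_{\infty,\lambda}(B,\bar b)$ and $\bar c\in A^{J}$ with $|J|<\lambda$, there is $\bar d\in B^{J}$ with $(A,\bar a\,{}^\frown\bar c)\equiv_{\infty,\lambda}(B,\bar b\,{}^\frown\bar d)$. To prove the claim, assume not; then for each $\bar d\in B^{J}$ pick $\varphi_{\bar d}(\bar x,\bar y)\in\mathfrak{L}_{\infty,\lambda}$ with $A\models\varphi_{\bar d}(\bar a,\bar c)$ but $B\not\models\varphi_{\bar d}(\bar b,\bar d)$. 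Since $B^{J}$ is a set, $\bigwedge_{\bar d\in B^{J}}\varphi_{\bar d}(\bar x,\bar y)$ is again an $\mathfrak{L}_{\infty,\lambda}$-formula — its free variables still lie in $\bar x\cup\bar y$, a set of size $<\lambda$ — and hence so is $\chi(\bar x):=\exists\bar y\,\bigwedge_{\bar d}\varphi_{\bar d}(\bar x,\bar y)$. Now $A\models\chi(\bar a)$ (witness $\bar c$), so $B\models\chi(\bar b)$, so some $\bar d^{*}\in B^{J}$ has $B\models\varphi_{\bar d^{*}}(\bar b,\bar d^{*})$, contradicting the choice of $\varphi_{\bar d^{*}}$. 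Granting the claim, clause (1) is obtained by enumerating $\mrm{dom}(f)$ as $\bar a$ and $C\setminus\mrm{dom}(f)$ as $\bar c$, taking $\bar d$ as in the claim, and letting $g:=f\cup\{(c_i,d_i):i\in J\}$ (a function extending $f$ with $C\subseteq\mrm{dom}(g)$, still in $\mathcal{F}$); clause (2) is symmetric.

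\emph{Main obstacle.} The one genuinely delicate step is the claim used for the extension property: one must be sure that $\bigwedge_{\bar d\in B^{J}}\varphi_{\bar d}$ is a bona fide formula of $\mathfrak{L}_{\infty,\lambda}$. This rests on $B^{J}$ being a set (so we are conjoining a \emph{set} of formulas, which the logic allows, even though there is a proper class of $\mathfrak{L}_{\infty,\lambda}$-formulas in $\bar x,\bar y$), and on no new free variables being created, so that the bound ``$<\lambda$ free variables'' survives both the conjunction and the existential quantification; for the latter one uses that a union of two sets of size $<\lambda$ has size $<\lambda$, with no regularity assumption on $\lambda$. The remaining steps are routine inductions.
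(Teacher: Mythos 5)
The paper itself does not prove this statement: it is quoted as a classical fact going back to Karp, with the proof delegated to the cited references (V\"a\"an\"anen, Thm.~9.26; see also Dickmann's chapter). Your argument is essentially the standard proof given there, and it is correct: the ``family $\Rightarrow$ equivalence'' direction by induction on formulas, using clauses (1)--(2) at the quantifier step and the fact that the union of two sets of size $<\lambda$ has size $<\lambda$ (no regularity needed); and the harder direction by taking $\mathcal{F}$ to be the family of maps of size $<\lambda$ preserving $\mathfrak{L}_{\infty,\lambda}$-types over the named elements, with the extension property secured by the formula $\exists\bar y\,\bigwedge_{\bar d\in B^{J}}\varphi_{\bar d}(\bar x,\bar y)$, a legitimate formula because the conjunction is over a \emph{set} of formulas and creates no new free variables. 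You also correctly isolate this as the delicate point. One small remark: the paper's compressed definition of $\mathfrak{L}_{\kappa,\lambda}$ literally requires that fewer than $\lambda$ variables \emph{occur} (not merely occur free) in a conjoined set $\Phi$, so read against that clause your conjunction step needs an extra word (renaming bound variables, or, as in the cited references, adopting the convention that only the free variables of a conjunction are constrained); this is a mismatch with the paper's abbreviated syntax rather than a gap in your argument, which matches the proof in the sources the paper cites.
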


\begin{remark}\label{remark:partial_iso}
	When two structures $A$ and $B$ admit a back-and-forth system of partial isomorphism $\mathcal{F}$ with the properties (1)-(2) from \ref{criterion:Marker} we also say (following \cite[Def.~9.22]{Vaananen}) that they are $\lambda$-partially isomorphic and we write $A\cong_\lambda B$. The previous fact thus establishes the equivalence between $A\equiv_{\infty,\lambda} B$ and $A\cong_\lambda B$.
\end{remark}

\subsection{Preliminaries on abstract elementary classes}\label{pre:aec}

In classical model theory it is customary to study the class of models of a complete first-order theory $T$ with the relation of elementary embedding $\preccurlyeq_{\omega,\omega}$, namely to consider the category $(\mrm{Mod}(T),\preccurlyeq_{\omega,\omega})$. Abstract elementary classes were introduced by Shelah in  \cite{Sh1} to generalise this setting, and to provide a common model-theoretic framework to classes of structure with a reasonable notion of ``strong embedding'', capturing also relations such as those of substructure, or of pure submodule. Notice that we write $A\leq B$ to say that $A$ is a substructure of $B$. First, we recall the following definition from \cite{Bon_et_al}.

\begin{definition}\label{def_AC} 
Let $\mathbf{K}$ be a class of $L$-structures and let $\preccurlyeq$ be a binary relation on $\mathbf{K}$. We say that $(\mathbf{K}, \preccurlyeq)$ is an {\em abstract class} if the following conditions are satisfied:
\begin{enumerate}[(1)]
	\item  $\mathbf{K}$ and $\preccurlyeq$ are closed under isomorphisms, i.e., if $A\in \mathbf{K}$ and $f:A\to B$ is an isomorphism then $B\in \mathbf{K}$ and if $C\in \mathbf{K}$ is such that $C\preccurlyeq A$ then $f(C)\preccurlyeq B$;
	\item if $A \preccurlyeq B$ then $A\leq B$ (i.e., $A$ is an $L$-substructure of $B$);
	\item the relation $\preccurlyeq$ is a partial order on $\mathbf{K}$.
\end{enumerate}
\end{definition}

\begin{definition}\label{def:strong embedding}
	Let $(\mathbf{K},\preccurlyeq)$ be an abstract class and $A,B\in \mathbf{K}$, we say that an embedding $f:A\to B$ is a \emph{strong embedding} if $f(A)\preccurlyeq B$.
\end{definition}

\begin{definition}\label{def_AEC}  Let $\mathbf{K}$ be a class of $L$-structures and let $\preccurlyeq$ be a binary relation on $\mathbf{K}$. We say that $(\mathbf{K}, \preccurlyeq)$ is an {\em abstract elementary class} ($\mathrm{AEC}$) if it is an abstract class (i.e., it respects \ref{def_AC}(1)-(3)) and the following conditions are satisfied.
	\begin{enumerate}[(4)]
		\item[(4)] If $(A_i)_{i < \delta}$ is an increasing continuous $\preccurlyeq$-chain of structures in $\mathbf{K}$, then:
		\begin{enumerate}[({4.}1)]
			\item $\bigcup_{i < \delta} A_i \in \mathbf{K}$;
			\item for each $j < \delta$, $A_j \preccurlyeq \bigcup_{i < \delta} A_i$;
			\item if $A_i \preccurlyeq B$ for all $i<\delta$, then $\bigcup_{i < \delta} A_i \preccurlyeq B$.
	\end{enumerate}
			\item[(5)] If $A, B, C \in \mathbf{K}$, $A \preccurlyeq C$, $B \preccurlyeq C$ and $A \leq B$, then $A \preccurlyeq B$; 
			\item[(6)] There is a L\"owenheim-Skolem number $\mathrm{LS}(\mathbf{K}, \preccurlyeq)\geq |L|+\aleph_0$ such that if $A \in \mathbf{K}$ and $B \subseteq A$, then there is $C \in \mathbf{K}$ with $B \subseteq C\preccurlyeq A$ and $|C| \leq |B| + \mathrm{LS}(\mathbf{K}, \preccurlyeq)$.
\end{enumerate}
When $(\mathbf{K},\preccurlyeq)$ is an $\mrm{AEC}$, we refer to $\preccurlyeq$ as a \emph{strong submodel relation} for the class $\mathbf{K}$. We refer to Condition~(4) as the \emph{Tarski-Vaught Axioms}, to Condition~(4.3) specifically as the \emph{Smoothness Axiom} of $\mrm{AEC}$, to Condition~(5) as the \emph{Coherence Axiom}, and to Condition~(6) as the \emph{L\"owenheim-Skolem-Tarski Axiom}.
 \end{definition}

\begin{notation}
	Let $\mathbf{K}$ be an abstract class of structures (cf.~\ref{def_AC}), we write $\mathbf{K}_{\lambda}$ for the collection of structures in  $\mathbf{K}$ of size $\lambda$, and we write $\mathbf{K}_{<\lambda}$ and $\mathbf{K}_{>\lambda}$ for the collections of structures in  $\mathbf{K}$ of size $<\lambda$ and $>\lambda$, respectively. Given two $L$-structures $A$ and $B$ we write $A\leq B$ if $A$ is a substructure of $B$.
\end{notation}

As we explained in the introduction (cf.~\S \ref{sec:introduction}), it follows from Shelah and Villaveces' recent result from \cite{VSh} that every $\mrm{AEC}$ $(\mathbf{K},\preccurlyeq)$ in the language $L$ is axiomatisable in $\mathfrak{L}_{\infty,\infty}(L)$. Since in this article we are interested in \emph{non-axiomatisability} results, it is then natural to consider the case when an abstract class of structures $(\mathbf{K},\preccurlyeq)$ does \emph{not} form an $\mrm{AEC}$. We thus define the notions of \emph{weak $\mrm{AEC}$} and \emph{almost $\mrm{AEC}$}. 

\begin{definition}\label{def:weak_aec}
	We say that an abstract class $(\mathbf{K}, \preccurlyeq)$ is a \emph{weak $\mathrm{AEC}$} if it has a L\"owenheim-Skolem number and is closed under continuous $\preccurlyeq$-chains, i.e., it  satisfies the axioms of $\mathrm{AEC}$s (from \ref{def_AEC}) except possibly Smoothness or Coherence (i.e., \ref{def_AEC}(4.3) and \ref{def_AEC}(5)). We say that $(\mathbf{K}, \preccurlyeq)$ is an \emph{almost $\mathrm{AEC}$} if all the axioms from \ref{def_AEC} are satisfied except possibly Smoothness (i.e., \ref{def_AEC}(4.3)). A weak $\mathrm{AEC}$ is \emph{strictly weak} if neither Smoothness nor Coherence is satisfied; an almost $\mathrm{AEC}$ is \emph{strictly almost} if it does not satisfy Smoothness.
\end{definition}

\noindent We briefly clarify the terminology. Almost $\mathrm{AEC}s$ were employed  by the first two named authors of the present paper already in \cite{HP1} to study geometric lattices, and later in \cite[\S 5]{HP2} in the context of right-angled Coxeter groups. We stress that almost $\mathrm{AEC}s$ coincide with Shelah's \emph{weak $\mrm{AEC}$} from \cite{Sh6}, which play an important role in Vasey's proof of the eventual categoricity conjecture for universal classes \cite{vasey1,vasey2}. Since we will often consider classes where Coherence may fail we prefer to distinguish between weak $\mrm{AEC}$s and almost $\mrm{AEC}$s. For the same reason, we introduce the following relation $\preccurlyeq^{\mrm{c}}$, which we call the {\em coherentisation of $\preccurlyeq$}. Essentially, this forces a weaker form of coherence relative to structure of some bounded size. In particular, if $(\mathbf{K},\preccurlyeq)$ is coherent then $\preccurlyeq^{\mrm{c}}$ coincides with $\preccurlyeq$.

\begin{definition}\label{def:coherentisation} Let $(\mathbf{K},\preccurlyeq)$ be a weak $\mrm{AEC}$ with $\mathrm{LS}(\mathbf{K}, \preccurlyeq)=\mu$, then for $A,B\in \mathbf{K}$ with $A\leq B$ we define $A\preccurlyeq^{\mrm{c}} B$ if there are a structure $C\in \mathbf{K}$ with $|C|=|A|+|B|+\mu$, and an embedding $f:B\to C$, such that $A\preccurlyeq C$ and $f(B)\preccurlyeq C$. We refer to $(\mathbf{K},\preccurlyeq^{\mrm{c}})$ as the \emph{coherentisation} of $(\mathbf{K},\preccurlyeq)$.
\end{definition}

\subsection{Preliminaries on infinitary combinatorics}\label{pre:inf_comb}

For readers of various background, we recall the notions of club and stationary set, and the existence under suitable set-theoretical axioms of stationary sets satisfying some special conditions. We refer the reader to \cite{jech} and  \cite{EM} for references.

\begin{remark}\label{remark:regular}
	Given an infinite ordinal $\alpha$, we write $\mrm{cf}(\alpha)$ for its \emph{cofinality}. We recall that an infinite cardinal $\lambda$ is \emph{regular} if $\mrm{cf}(\lambda)=\lambda$ and it is \emph{singular} otherwise. In particular, $\omega$ is regular as clearly $\mrm{cf}(\omega)=\omega$.
\end{remark}

\begin{definition}
	Let $\alpha$ be a limit ordinal with uncountable cofinality. A \emph{closed unbounded set (club)} $C\subseteq \alpha$ is a subset of $\alpha$ which is unbounded and, for every sequence $(\beta_i)_{i<\gamma}$ with $0<\gamma<\alpha$ and $\beta_i\in C$ for all $i<\gamma$ we have that $\bigcup_{i<\gamma}\beta_i \in C$.  A \emph{stationary set} in $\alpha$ is a subset $X\subseteq \alpha$ such that $X\cap C\neq \emptyset$ for every club $C$ in $\alpha$. 
\end{definition}

\begin{definition}\label{stationary:condition_E}
	Let $\alpha$ be a limit ordinal with uncountable cofinality and let $\beta<\alpha$ be a limit ordinal. We say that a stationary set $S\subseteq \alpha$ \emph{reflects} at $\beta$ if $S\cap \beta$ is stationary in $\beta$.  Given a regular uncountable cardinal $\kappa$, we let $E(\kappa)$ and $E^+(\kappa)$ be the following statements:
	\begin{enumerate}[(1)]
	\item $E(\kappa)$ is the statement that there exists a stationary set $S\subseteq \kappa$ which does not reflect at any limit $\alpha<\kappa$ and, additionally, $\mrm{cf}(\beta)=\omega$ for all $\beta\in S$;
	\item $E^+(\kappa)$ is the statement that for all regular cardinals $\lambda<\kappa$, there is a stationary set $S_\lambda \subseteq \kappa$ such that $S_\lambda$ does not reflect at any limit ordinal $\alpha<\kappa$ and, additionally, $\mrm{cf}(\beta)=\lambda$ for all $\beta\in S_\lambda$.
\end{enumerate}	
\end{definition}

\noindent We finally recall the following facts (cf. \cite[p.~88]{EM}): Fact (1) follows immediately from the previous definitions, Fact (2) is due to Jensen \cite{Jen} (but see also \cite{Beller}).

\begin{fact}\label{existence:stationary} The following statements hold:
	\begin{enumerate}[(1)]
		\item $E(\omega_1)$ is true;
		\item under the set-theoretical principle $V=L$, the statement $E^+(\kappa)$ holds for every regular, not weakly compact cardinal $\kappa$.
	\end{enumerate}
\end{fact}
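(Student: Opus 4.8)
The plan is to treat the two items on entirely different footings: item (1) is a one‑line explicit construction in ZFC, while item (2) is Jensen's deep theorem about $L$, for which I would only indicate the shape of the argument and otherwise cite \cite{Jen} (see also \cite{Beller}, and \cite[p.~88]{EM} where exactly this is recorded).

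For (1), I would take $S=\{\beta<\omega_1:\beta\text{ is a limit ordinal}\}$. Every $\beta\in S$ is a countable limit ordinal, so $\mrm{cf}(\beta)=\omega$, which is the cofinality clause demanded by $E(\omega_1)$. The set $S$ is closed (a supremum of a nonempty family of limit ordinals is again a limit ordinal) and unbounded in $\omega_1$, hence a club and in particular stationary. For the non‑reflection clause there is nothing to verify: by the conventions fixed in this subsection, clubs and stationary sets in an ordinal $\alpha$ are only defined when $\mrm{cf}(\alpha)$ is uncountable, and no limit ordinal $\alpha<\omega_1$ has uncountable cofinality, so $S$ reflects at no limit $\alpha<\omega_1$ vacuously. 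Thus $S$ witnesses $E(\omega_1)$, which is precisely the "immediate from the definitions" remark.

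For (2), fix a regular, non‑weakly‑compact cardinal $\kappa$ and a regular $\lambda<\kappa$; writing $S^\kappa_\lambda=\{\beta<\kappa:\mrm{cf}(\beta)=\lambda\}$ (a stationary subset of $\kappa$ by the usual iteration argument), the task is to produce a stationary $S_\lambda\subseteq S^\kappa_\lambda$ reflecting at no limit $\alpha<\kappa$, and I would split into cases. When $\kappa=\mu^+$ is a successor cardinal with $\mu$ regular and $\lambda=\mu$, no set‑theoretic hypothesis is needed: $S^{\mu^+}_\mu$ is itself non‑reflecting, since if $\alpha<\mu^+$ is a limit ordinal whose cofinality $\rho$ is uncountable then $\rho\leq\mu$, and the limit points of a continuous cofinal sequence of order type $\rho$ through $\alpha$ form a club of $\alpha$ all of whose members have cofinality $<\rho\leq\mu$, hence a club of $\alpha$ disjoint from $S^{\mu^+}_\mu$. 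In every remaining case — $\mu$ singular, or $\lambda<\mu$, or $\kappa$ inaccessible — I would invoke the fine structure of $L$: in $L$ the square principle $\square_\mu$ holds for all infinite $\mu$, every stationary set carries a diamond sequence, and (Jensen) a regular cardinal of $L$ reflects all of its stationary sets precisely when it is weakly compact. Thinning $S^\kappa_\lambda$ along a $\square$‑sequence in the successor case, respectively along the non‑reflecting stationary set extracted from the failure of weak compactness in the inaccessible case (using a diamond sequence to keep it stationary after restricting to cofinality $\lambda$), yields the desired $S_\lambda$.

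The step I expect to be the genuine obstacle is exactly this $L$‑side of (2) — producing non‑reflecting stationary subsets of a prescribed regular cofinality at an inaccessible, non‑weakly‑compact $\kappa$. That is where Jensen's fine‑structural analysis (the $\square$ and diamond principles, and the characterization of weak compactness in $L$) is indispensable, and it is the sole reason the hypothesis $V=L$ enters the statement; by contrast the $\omega_1$ case and the successor‑of‑a‑regular subcase are soft ZFC arguments. I would therefore write out (1) and that ZFC subcase in full, and cite \cite{Jen,Beller} for the rest.
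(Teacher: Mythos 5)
Your proposal is correct and follows the same route as the paper, which states this as a Fact without proof: item (1) is exactly the ``immediate from the definitions'' observation (any stationary set of countable limit ordinals, e.g.\ the club of all of them, witnesses $E(\omega_1)$ vacuously, since no limit $\alpha<\omega_1$ has uncountable cofinality), and item (2) is deferred, as in the paper, to Jensen's fine-structural theorem via \cite{Jen} and \cite{Beller}. Your heuristic middle paragraph on squares and diamonds is loose (in particular, intersecting a non-reflecting stationary set with the cofinality-$\lambda$ ordinals need not stay stationary), but since you explicitly cite Jensen--Beller for precisely that part, nothing is missing relative to the paper's treatment.
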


\begin{remark}\label{remark:many_stationary}
	We notice that in the two statements above the stationary sets witnessing $E(\omega_1)$ and $E^+(\kappa)$ must contain exclusively limit ordinals, for the cofinality of successor ordinals is simply 1. Moreover, we stress that starting from one stationary set witnessing $E(\omega_1)$ or $E^+(\kappa)$, one can use Solovay's Theorem~\cite[Thm.~8.10]{jech} to obtain a family of disjoint stationary sets of size, respectively, $2^{\aleph_1}$ or $2^\kappa$.
\end{remark}

\section{$\kappa^+$-categorical weak $ \mrm{AECs} $}\label{sec:kappa_cat}

We introduce in this section the main context of our work, namely weak $ \mrm{AEC}$s which are $\kappa^+$-categorical for some (or all) infinite cardinal $\kappa$ and that have so-called $(\mathbf{K}, \preccurlyeq)$-universal models in $\kappa$. In Section~\ref{subsec:limits} we review the notion of limit model and we introduce some background definitions. In Section~\ref{subsec:almost_free} we adapt the notion of $\mathfrak{L}_{\infty,\kappa^+}$-free algebras to the setting of $\kappa^+$-categorical $ \mrm{AEC}$s. In Section~\ref{CP:general_case} we introduce the Construction Principle $\mrm{CP}^\kappa_{\lambda,\delta}(\mathbf{K},\preccurlyeq)$  and we show that, with the possible addition of the set-theoretical assumption $V=L$, $\mrm{CP}^\kappa_{\lambda,\delta}(\mathbf{K},\preccurlyeq)$ entails the existence of $\mathfrak{L}_{\infty,\kappa^+}$-free structures of size $\kappa^+$ which are not in $\mathbf{K}$. We start by fixing some definitions. The notation $F_{\mathbf{K}}(\kappa)$ in the next definition is meant to indicate the analogy with the case of varieties of algebras, which are classes with a \emph{unique} free algebra of size $\kappa$ (for any $\kappa$ greater than the size of the language).

\begin{definition}
	Let $(\mathbf{K},\preccurlyeq)$ be an abstract class, $(\mathbf{K},\preccurlyeq)$  is \emph{$\kappa$-categorical} if there is one model  in $\mathbf{K}$ of size $\kappa$ up to isomorphism. We denote this model by $F_{\mathbf{K}}(\kappa)$ and we omit the index $\mathbf{K}$ when it is clear from the context. We say that $(\mathbf{K},\preccurlyeq)$  is \emph{uncountably categorical} if it is $\kappa$-categorical for all $\kappa\geq \aleph_1$.
\end{definition}

\begin{definition}\label{def:universality}
	Let $(\mathbf{K},\preccurlyeq)$ be an abstract class. We say that $B\in \mathbf{K}$ is \emph{$(\mathbf{K}, \preccurlyeq)$-universal} if for every $C\in \mathbf{K}$ with $|C|\leq |B|$ there is a strong embedding $f:C\to B$. Let $A,B\in\mathbf{K}$, we say that $B$ is \emph{$(\mathbf{K}, \preccurlyeq)$-universal over $A$} if for every $C\in \mathbf{K}$ such that $A\preccurlyeq C$ and $|C|=|A|$  there is a strong embedding $f:C\to B$ with $f\restriction A=\mrm{id}_A$.
\end{definition}

\noindent The following definition provides the setting of this section. As we shall justify later with Proposition~\ref{prop_1:amalg_class} and Lemma~\ref{prop:varieties_canon}, this definition accommodates the case of free algebras with the relation of being a free factor, but is more general. Essentially, \ref{context:strong_categoricity}(2) states that $\mathbf{K}_{\kappa}$ has no maximal model.

\begin{definition}\label{context:strong_categoricity}
	Let $(\mathbf{K},\preccurlyeq)$ be a weak $\mrm{AEC}$ with $\mathrm{LS}(\mathbf{K}, \preccurlyeq)=\mu$ and let $\kappa\geq \mu+\aleph_0$, we say that $(\mathbf{K},\preccurlyeq)$ is \emph{strongly $\kappa^+$-categorical} if it satisfies the following properties:
	\begin{enumerate}[(1)]
		\item\label{strong_categoricity:one} $\mathbf{K}$ is $\kappa^+$-categorical;
		\item\label{no-nonsense}\label{strong_categoricity:three} for every $A\in \mathbf{K}_{\kappa}$ there is $B\in \mathbf{K}_{\kappa}$ with $A\neq B$ and  $A\preccurlyeq B$;
		\item\label{strong_categoricity:two} for every $A\in \mathbf{K}_{\kappa}$ there is $B\in \mathbf{K}_{\kappa}$ which is $(\mathbf{K}, \preccurlyeq)$-universal over $A$.
	\end{enumerate}
\end{definition}

\begin{context}\label{context:one}
	Unless stated otherwise, for the whole of Section~\ref{sec:kappa_cat}, $(\mathbf{K},\preccurlyeq)$ is a fixed weak $\mrm{AEC}$ with $\mathrm{LS}(\mathbf{K}, \preccurlyeq)=\mu$ and which is strongly $\kappa^+$-categorical, i.e., it satisfies Conditions~\ref{context:strong_categoricity}(\ref{strong_categoricity:one})-(\ref{strong_categoricity:two}) for some fixed cardinal $\kappa\geq \mu+\aleph_0$.
\end{context}

\subsection{Limit Models}\label{subsec:limits}

We recall the notion of limit model, which was first considered in \cite[Ch.~II]{Sh2} and which is now a standard tool from the literature on $ \mrm{AEC}$s. Essentially, limit models provide a version of the first-order notion of saturation for $\mrm{AEC}$s.

\begin{definition}
	Let $(\mathbf{K},\preccurlyeq)$ be an abstract class and $B\in \mathbf{K}$, we say that $(B_i)_{i<\gamma}$ is a \emph{$\gamma$-filtration} of $B$ in $(\mathbf{K},\preccurlyeq)$ if $(B_i)_{i<\gamma}$ is a continuous $\preccurlyeq$-chain of structures in $\mathbf{K}$ such that $B=\bigcup_{i<\gamma}B_i$. A \emph{filtration} of $B\in \mathbf{K}$ is simply a $\gamma$-filtration of $B$  in $(\mathbf{K},\preccurlyeq)$ for some ordinal $\gamma$.
\end{definition}

\begin{definition}\label{def:limits}
	Let $(\mathbf{K},\preccurlyeq)$ be an abstract class  and let $A \preccurlyeq B\in \mathbf{K}$. Let $\kappa$ be a cardinal and $\delta$ a limit ordinal such that $\delta<\kappa^+$, we say that $B$ is a \emph{$(\kappa,\delta)$-$(\mathbf{K},\preccurlyeq)$-limit model over $A$} if there is a continuous $\preccurlyeq$-chain of models $(B_i)_{i<\delta}$ s.t.:
	\begin{enumerate}[(i)]
		\item $B_0=A$;
		\item $B=\bigcup_{i < \delta} B_i $;	
		\item for $i <\delta$, $B_{i+1}$ is $(\mathbf{K}, \preccurlyeq)$-universal over $B_i$;
		\item for $i < \delta$, $|B_i| = \kappa$.
	\end{enumerate}	
	In other words, a \emph{$(\kappa,\delta)$-$(\mathbf{K}, \preccurlyeq)$-limit model over $A$} is a model that admits a $\delta$-filtration of models of size $\kappa$ starting from $A$, such that each model indexed by a successor ordinal is $(\mathbf{K}, \preccurlyeq)$-universal over its predecessor.
\end{definition}

We start by showing the existence of $(\kappa,\delta)$-$(\mathbf{K}, \preccurlyeq)$-limit model. Additionally, in the next proposition we also establish the uniqueness of limit models with filtrations of the same cofinality. We remark that this is  a well-known fact from the theory of $\mrm{AEC}$s (see e.g., \cite[Lem.~10.8]{categoricity}), but we provide details to emphasize that it holds also in weak $ \mrm{AEC}$s.

\begin{proposition}\label{prop:uniqueness_limits}
	(Recalling \ref{context:strong_categoricity}.) Let $A\in \mathbf{K}_{\kappa}$, then for all limit $\delta< \kappa^+$  there is a model $B\in \mathbf{K}_{\kappa}$ which is a $(\kappa,\delta)$-$(\mathbf{K}, \preccurlyeq)$-limit model over $A$. Moreover, if $\gamma< \kappa^+$ is a limit, $\mrm{cf}(\delta)=\mrm{cf}(\gamma)$ and $C$ is a $(\kappa,\gamma)$-$(\mathbf{K}, \preccurlyeq)$-limit model over $A$, then $B\cong_A C$, i.e., there is an isomorphism $f:B\to C$ such that $f\restriction A = \mrm{id}_A$.
\end{proposition}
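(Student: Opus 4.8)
The plan is to prove the two assertions — existence, and uniqueness up to the cofinality of the filtration length — in turn; the second will be a back-and-forth argument, engineered so as not to use the Smoothness Axiom \ref{def_AEC}(4.3), which is unavailable in a weak $\mrm{AEC}$. For \emph{existence}, I would build a witnessing chain $(B_i)_{i<\delta}$ by recursion: put $B_0=A$; at a successor $i+1$, invoke \ref{context:strong_categoricity}(\ref{strong_categoricity:two}) to choose $B_{i+1}\in\mathbf{K}_{\kappa}$ that is $(\mathbf{K},\preccurlyeq)$-universal over $B_i$; at a limit $j<\delta$, set $B_j=\bigcup_{i<j}B_i$. By the Tarski--Vaught Axioms \ref{def_AEC}(4.1)--(4.2), which hold in any weak $\mrm{AEC}$, $B_j\in\mathbf{K}$ and $B_i\preccurlyeq B_j$ for all $i<j$; moreover $\kappa=|A|\le|B_j|\le|j|\cdot\kappa=\kappa$ because $\delta<\kappa^+$, so $B_j\in\mathbf{K}_{\kappa}$. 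The same cardinality bookkeeping gives $B:=\bigcup_{i<\delta}B_i\in\mathbf{K}_{\kappa}$, and by construction $B$ is a $(\kappa,\delta)$-$(\mathbf{K},\preccurlyeq)$-limit model over $A$.

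\emph{Reduction of the uniqueness claim to regular length.} I would first note that universality is monotone upward along $\preccurlyeq$: if $D$ is $(\mathbf{K},\preccurlyeq)$-universal over $A$ and $D\preccurlyeq D'$ in $\mathbf{K}$, then $D'$ is too, by composing a strong embedding into $D$ fixing $A$ with $D\preccurlyeq D'$ and using transitivity of $\preccurlyeq$. Now, given a $(\kappa,\delta)$-limit model $B$ over $A$ with witnessing filtration $(B_i)_{i<\delta}$, fix a continuous strictly increasing enumeration $(i_\alpha)_{\alpha<\theta}$ of a club of $\delta$ with $i_0=0$, where $\theta=\mrm{cf}(\delta)$. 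Then $(B_{i_\alpha})_{\alpha<\theta}$ starts at $A$, consists of models of size $\kappa$, is continuous (by continuity of the original filtration at the limit indices), has union $B$, and each $B_{i_{\alpha+1}}$ is $(\mathbf{K},\preccurlyeq)$-universal over $B_{i_\alpha}$ because $B_{i_\alpha}\preccurlyeq B_{i_\alpha+1}\preccurlyeq B_{i_{\alpha+1}}$ with $B_{i_\alpha+1}$ universal over $B_{i_\alpha}$. So $B$ is a $(\kappa,\theta)$-limit model over $A$ with $\theta$ regular. Applying this to both $B$ and $C$ and using $\mrm{cf}(\delta)=\mrm{cf}(\gamma)=:\theta$, it suffices to show: \emph{if $\theta$ is regular and $B,C$ are both $(\kappa,\theta)$-limit models over $A$, then $B\cong_A C$.}

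\emph{The back-and-forth.} Fix witnessing filtrations $(B_i)_{i<\theta}$ of $B$ and $(C_i)_{i<\theta}$ of $C$ with $B_0=C_0=A$. I would construct by recursion on $i<\theta$ strictly increasing continuous sequences of ordinals $(\alpha_i)_{i<\theta}$ and $(\beta_i)_{i<\theta}$ below $\theta$, models $R_i\preccurlyeq B_{\alpha_i}$ and $S_i\preccurlyeq C_{\beta_i}$, and isomorphisms $g_i\colon R_i\to S_i$ with $g_i\restriction A=\mrm{id}_A$, all increasing with $i$, arranged so that the domain is full at even stages ($R_i=B_{\alpha_i}$) and the range is full at odd stages ($S_i=C_{\beta_i}$); start with $\alpha_0=\beta_0=0$, $R_0=S_0=A$, $g_0=\mrm{id}_A$. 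At a successor step where the range must be made full, transport the pair $S_i\preccurlyeq C_{\beta_i+1}$ across $g_i^{-1}$ — licit since $\mathbf{K}$ and $\preccurlyeq$ are closed under isomorphism (\ref{def_AC}(1)) — getting a structure $Q\in\mathbf{K}_\kappa$ with $R_i\preccurlyeq Q$ and an isomorphism $Q\to C_{\beta_i+1}$ that extends $g_i$; then, since $B_{\alpha_i+1}$ is $(\mathbf{K},\preccurlyeq)$-universal over $B_{\alpha_i}=R_i$, take a strong embedding $Q\to B_{\alpha_i+1}$ fixing $R_i$, and put $\alpha_{i+1}=\alpha_i+1$, $\beta_{i+1}=\beta_i+1$, $S_{i+1}=C_{\beta_{i+1}}$, let $R_{i+1}$ be the image of $Q$ in $B_{\alpha_{i+1}}$, and let $g_{i+1}$ be the induced isomorphism (which extends $g_i$). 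The step making the domain full is symmetric. At a limit $i$, set $\alpha_i=\sup_{j<i}\alpha_j$ and $\beta_i=\sup_{j<i}\beta_j$ — both $<\theta$ by regularity — and $g_i=\bigcup_{j<i}g_j$. The key point is that cofinally many $R_j$ with $j<i$ are full, so $\mrm{dom}(g_i)=\bigcup_{j<i}R_j=\bigcup_{j<i}B_{\alpha_j}=B_{\alpha_i}$ by continuity of the $B$-filtration, and likewise $\mrm{ran}(g_i)=C_{\beta_i}$; since $L$ is finitary, a union of compatible isomorphisms with these domains and ranges is again an isomorphism, so $g_i\colon B_{\alpha_i}\to C_{\beta_i}$ is an isomorphism fixing $A$ — crucially, we never claim $\bigcup_{j<i}R_j\preccurlyeq$ anything, so no Smoothness is used, only reflexivity of $\preccurlyeq$. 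Since $\alpha_i,\beta_i\ge i$, both are cofinal in $\theta$, whence $g:=\bigcup_{i<\theta}g_i\colon B\to C$ is an isomorphism with $g\restriction A=\mrm{id}_A$, i.e.\ $B\cong_A C$.

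\emph{Main obstacle.} The whole difficulty is concentrated in the limit stages: the textbook uniqueness-of-limit-models argument recognises the union of the approximating structures as a strong submodel via Smoothness \ref{def_AEC}(4.3), which we do not have. The leapfrog pattern is tailored to avoid this, keeping the domains and ranges of the $g_i$ honestly equal to members $B_{\alpha_i},C_{\beta_i}$ of the fixed filtrations at every limit stage, so that only closure under $\preccurlyeq$-chains and reflexivity of $\preccurlyeq$ are needed. A secondary point is that, with no amalgamation assumed here, the successor steps are carried by pushing a $\preccurlyeq$-pair across an isomorphism and then absorbing it into a universal model — this is what substitutes for amalgamation.
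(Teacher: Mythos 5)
Your proposal is correct and follows essentially the same route as the paper: existence by the obvious recursion using \ref{context:strong_categoricity}(3) and closure under $\preccurlyeq$-chains, then uniqueness by passing to cofinal (club-indexed) subfiltrations of length $\mrm{cf}(\delta)=\mrm{cf}(\gamma)$ and running a back-and-forth in which limit stages are handled by arranging that the accumulated partial isomorphisms have domain and range exactly equal to members of the fixed filtrations, so that Smoothness is never invoked. The only difference is bookkeeping: the paper interleaves two families of mutually inverse strong embeddings $f_i\colon B_{\alpha_i}\to C_{\beta_i}$, $g_i\colon C_{\beta_i}\to B_{\alpha_{i+1}}$, whereas you keep one increasing chain of isomorphisms between transported copies with alternating fullness — a cosmetic variant of the same argument.
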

\begin{proof}
	Let $\delta< \kappa^+$, we first show the existence of a $(\kappa,\delta)$-$(\mathbf{K}, \preccurlyeq)$-limit $B$ over $A$. We define by induction on $\alpha<\delta$ a $\preccurlyeq$-chain $(B_i)_{i<\delta}$ with $B_i\in \mathbf{K}$ for all $i<\delta$.  If $\alpha=0$ then we let $B_0=A\in\mathbf{K}_{\kappa}$.  If $\alpha$ is limit we let $B_{\alpha}=\bigcup_{i<\alpha}B_i$. Since $(\mathbf{K},\preccurlyeq)$ is a weak $ \mrm{AEC}$ it follows by closure under $\preccurlyeq$-chains that $B_\alpha\in \mathbf{K}$. Since $\alpha<\delta<\kappa^+$ and by induction hypothesis all $B_i$ for $i<\alpha$ have power $\kappa$, it follows that $|B_\alpha|=\kappa$. If $\alpha=\beta+1$ then we define $B_\alpha\in \mathbf{K}_{\kappa}$ to be $(\mathbf{K}, \preccurlyeq)$-universal over $B_\beta$, this exists by \ref{context:strong_categoricity}(\ref{strong_categoricity:two}). The resulting model $B_\delta=\bigcup_{i<\delta}B_{i}$ is a $(\kappa,\delta)$-$(\mathbf{K}, \preccurlyeq)$-limit model over $A$.
	
	\smallskip
	\noindent Now, let $\delta< \kappa^+$, $\gamma< \kappa^+$ with $\mrm{cf}(\delta)=\mrm{cf}(\gamma)$. Let  $B=\bigcup_{i<\delta}B_i$ and $C=\bigcup_{i<\gamma}C_i$ witness the fact that $B$ and $C$ are, respectively, $(\kappa,\delta)$-$(\mathbf{K}, \preccurlyeq)$-limit and \mbox{$(\kappa,\gamma)$-$(\mathbf{K}, \preccurlyeq)$-limit} over $A$. Let $\varepsilon\coloneqq \mrm{cf}(\delta)=\mrm{cf}(\gamma)$, then there are subsets $I\subseteq \delta$ and $J\subseteq \gamma$ with both $I$ and $J$ of order type $\varepsilon$ and closed under suprema of chains of length $<\varepsilon$ such that $B=\bigcup_{i\in I}B_i$ and $C=\bigcup_{i\in J}C_i$. Also, we assume without loss of generality that $0\in I$ and $0\in J$.
	
	\smallskip
	\noindent 	Since  $I$ and $J$ are both of order type $\varepsilon$, and they are closed under suprema of chains  of length $<\varepsilon$, we can find two increasing and continuous enumerations $(\alpha_i)_{i<\varepsilon}$ and $(\beta_i)_{i<\varepsilon}$ of the ordinals from $I$ and $J$. We define inductively on $i<\varepsilon$ two $\subseteq$-chains of strong embeddings $f_{i}:B_{\alpha_i}\to C_{\beta_i}$ and $g_i:C_{\beta_i}\to B_{\alpha_{i+1}}$ with the following properties:
	\begin{enumerate}[(a)]
		\item $g_{i}\circ f_i=\mrm{id}_{B_{\alpha_i}}$ and $f_{i+1}\circ g_{i}=\mrm{id}_{C_{\beta_i}}$ for all $i<\varepsilon$;
		\item if $i\leq \varepsilon$ is limit, $f_i=\bigcup_{j<i}f_j$ and $g_i=\bigcup_{j<i}g_{j}$ are isomorphisms.
	\end{enumerate}
	If (a)-(b) hold  then for $i=\varepsilon$ this shows in particular that $B\cong_A C$, as desired.
	
	\smallskip
	\noindent For $i=0$ we have by assumption that $\alpha_0=\beta_0=0$, thus we simply let  $f_0=g_0=\mrm{id}_A$ (as $B_0=C_0=A$). If $i$ is a limit ordinal we simply let $f_i=\bigcup_{j<i}f_j$ and $g_i=\bigcup_{j<i}g_{j}$. Since $I$ and $J$ are both closed under suprema of chains of length $<\varepsilon$, it follows that $\alpha_i=\bigcup_{j<i}\alpha_j$ and $\beta_i=\bigcup_{j<i}\beta_j$, thus $B_{\alpha_i}=\bigcup_{j<i}B_{\alpha_j}$ and $C_{\beta_i}=\bigcup_{j<i}C_{\beta_j}$ and so the maps $f_i$ and $g_i$ are welldefined maps between $B_{\alpha_i}$ and $C_{\beta_i}$. By the inductive property (a) for $j<i$ it follows that both $f_i$ and $g_i$ are isomorphisms and so (b) is verified (notice in particular that this argument did not require Smoothness).
	
	\smallskip
	\noindent Consider the successor case and suppose we have defined $f_i:B_{\alpha_i}\to C_{\beta_i}$ and $g_i:C_{\beta_i}\to B_{\alpha_{i+1}}$. Notice that by Definition~\ref{def:universality} and the choice of the two $\preccurlyeq$-chains $(B_i)_{i<\delta}$ and $(C_i)_{i<\gamma}$ it follows immediately that if $j<\ell<\delta$ then $B_\ell$ is $(\mathbf{K}, \preccurlyeq)$-universal over $B_j$ and, similarly, if $j<\ell<\gamma$ then $C_\ell$ is $(\mathbf{K}, \preccurlyeq)$-universal over $C_j$. Thus in particular we have that  $C_{\beta_{i+1}}$ is $(\mathbf{K}, \preccurlyeq)$-universal over $C_{\beta_{i}}$, whence we can find a strong embedding $f_{i+1}:B_{\alpha_{i+1}}\to C_{\beta_{i+1}}$ such that $f_{i+1}\restriction g_i(C_{\beta_i})=g^{-1}_i$, i.e., $f_{i+1}\circ g_{i}=\mrm{id}_{C_{\beta_{i}}}$. By the same argument we have that $B_{\alpha_{i+2}}$ is $(\mathbf{K}, \preccurlyeq)$-universal over $B_{\alpha_{i+1}}$, and thus we can also find a strong embedding $g_{i+1}:C_{\beta_{i+1}}\to B_{\alpha_{i+2}}$ with $g_{i+1}\circ f_{i+1}=\mrm{id}_{B_{\alpha_{i+1}}}$. Thus property (a) holds and by the reasoning above this completes our proof.
\end{proof}

\noindent The following corollary is a straightforward application of the previous proposition, and it will be often used in the rest of this article.

\begin{corollary}\label{prop:limitoflimit}
	Let $\gamma<\kappa^+$ be a limit ordinal and let $(D_i)_{i\leq \alpha}$ be a continuous $\preccurlyeq$-chain in $\mathbf{K}_\kappa$ such that  $D_{i+1}$ is $(\kappa,\gamma)$-$(\mathbf{K}, \preccurlyeq)$-limit over $D_i$ for all $i<\alpha$. If $\mrm{cf}(\alpha)=1$ (i.e.,  $\alpha$ is successor) or $\mrm{cf}(\alpha)=\gamma$ then $D_\alpha$ is $(\kappa,\gamma)$-$(\mathbf{K}, \preccurlyeq)$-limit over $D_0$. 
\end{corollary}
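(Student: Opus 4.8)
The plan is to reduce the statement to the uniqueness part of Proposition~\ref{prop:uniqueness_limits} by producing, in each of the two cases, an explicit $\preccurlyeq$-chain of models of size $\kappa$ witnessing that $D_\alpha$ is $(\kappa,\gamma)$-$(\mathbf{K},\preccurlyeq)$-limit over $D_0$. First I would fix, for each $i<\alpha$, a $\gamma$-filtration $(D^i_j)_{j<\gamma}$ witnessing that $D_{i+1}$ is $(\kappa,\gamma)$-limit over $D_i$; so $D^i_0 = D_i$, $D^i_{j+1}$ is $(\mathbf{K},\preccurlyeq)$-universal over $D^i_j$, each $D^i_j$ has size $\kappa$, and $\bigcup_{j<\gamma} D^i_j = D_{i+1}$. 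Note also that each $D_i$ itself is either $D_0$ or a limit model over $D_0$ (as $D_{i'+1}$ is limit over $D_{i'}$ and $(D_{i'})$ is continuous, so at limit stages $D_i = \bigcup_{i'<i}D_{i'}$ is a union of a continuous chain of models of size $\kappa$ and hence has size $\kappa$).

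\emph{Case $\mrm{cf}(\alpha)=1$, i.e.\ $\alpha=\beta+1$.} Here $D_\alpha = D_{\beta+1}$ is by hypothesis already $(\kappa,\gamma)$-limit over $D_\beta$, witnessed by $(D^\beta_j)_{j<\gamma}$. But I want it to be limit over $D_0$, not over $D_\beta$. If $\beta=0$ there is nothing to do. If $\beta\geq 1$, I would build a single $\gamma$-filtration of $D_\beta$ over $D_0$: concatenate the filtrations $(D^i_j)_{j<\gamma}$ for $i<\beta$ into one long $\preccurlyeq$-chain of length $\gamma\cdot\beta$ (or rather, use them to see $D_\beta$ is $(\kappa,\gamma\cdot\beta)$-limit over $D_0$, then invoke Proposition~\ref{prop:uniqueness_limits} since $\mrm{cf}(\gamma\cdot\beta)=\mrm{cf}(\gamma)$ when $\beta<\omega$, and in general pass to a cofinal subchain of the right order type). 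Then splice $(D^\beta_j)_{j<\gamma}$ onto the end: a $\gamma$-filtration of $D_\beta$ over $D_0$ followed by the $\gamma$-filtration of $D_{\beta+1}$ over $D_\beta$, passing to a cofinal subsequence of order type $\gamma$ closed under suprema, gives that $D_{\beta+1}$ is $(\kappa,\gamma)$-limit over $D_0$. The point that makes this routine is that a $\preccurlyeq$-chain of length $\gamma+\gamma$ in which every successor model is universal over its predecessor, restricted to a club of order type $\gamma$, still has every successor model universal over its predecessor — because universality is preserved upward along such chains, exactly as observed inside the proof of Proposition~\ref{prop:uniqueness_limits}.

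\emph{Case $\mrm{cf}(\alpha)=\gamma$.} Pick an increasing continuous cofinal sequence $(\alpha_j)_{j<\gamma}$ in $\alpha$ with $\alpha_0=0$. Set $E_j := D_{\alpha_j}$. Then $(E_j)_{j<\gamma}$ is a continuous $\preccurlyeq$-chain (continuity at limits because $(D_i)$ is continuous and $(\alpha_j)$ is continuous), each $E_j$ has size $\kappa$, $E_0 = D_0$, and $\bigcup_{j<\gamma} E_j = D_\alpha$. It remains to check that $E_{j+1}=D_{\alpha_{j+1}}$ is $(\mathbf{K},\preccurlyeq)$-universal over $E_j=D_{\alpha_j}$. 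Since $\alpha_j<\alpha_{j+1}$, there is at least one successor step in between, say $D_{i+1}$ with $\alpha_j\le i<\alpha_{j+1}$, and $D_{i+1}$ is $(\kappa,\gamma)$-limit over $D_i$, hence in particular $(\mathbf{K},\preccurlyeq)$-universal over $D_i$; and $D_i$ extends $D_{\alpha_j}$ while $D_{\alpha_{j+1}}$ extends $D_{i+1}$. So I need the transitivity-type fact that if $A\preccurlyeq A'\preccurlyeq A''\preccurlyeq A'''$ with $A''$ universal over $A'$, then $A'''$ is universal over $A$: given $C\in\mathbf{K}$ with $A\preccurlyeq C$, $|C|=|A|=\kappa$, amalgamate $C$ with $A'$ over $A$ inside some model of size $\kappa$ — here I would use that $A'$ is itself a limit model over $A$ (being $D_{\alpha_j}$, a limit over $D_0$, or using universality of $A'$ over $A$ which holds along these chains) to get a strong embedding of $C$ over $A$ into $A'$, or more directly use that $D_{\alpha_{j+1}}$ is universal over $D_{\alpha_j}$ by the standard argument that in a limit model every small strong extension of the base embeds. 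Cleanest is to observe: $D_{\alpha_{j+1}}$ is a $(\kappa,\gamma)$-limit (or at least a limit) over $D_{\alpha_j}$ as well — concatenate the filtrations of the $D_{i+1}$ over $D_i$ for $\alpha_j\le i<\alpha_{j+1}$ and restrict to order type $\gamma$ — and a limit model over a base is universal over that base by Proposition~\ref{prop:uniqueness_limits}'s construction. Then invoke Proposition~\ref{prop:uniqueness_limits}: the chain $(E_j)_{j<\gamma}$ witnesses $D_\alpha$ is $(\kappa,\gamma)$-limit over $D_0$.

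\textbf{Main obstacle.} The only genuine content is the bookkeeping of concatenating many $\gamma$-filtrations and then thinning to a cofinal subsequence of order type $\gamma$ while preserving the "successor is universal over predecessor" condition; this is precisely the type of argument already carried out in the proof of Proposition~\ref{prop:uniqueness_limits} (and it explicitly does not need Smoothness), so I expect it to be short. Everything else is immediate from continuity of the given chain and the existence/uniqueness of limit models.
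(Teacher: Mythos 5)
Your treatment of the case $\mrm{cf}(\alpha)=\gamma$ is essentially sound: taking a continuous cofinal sequence $(\alpha_j)_{j<\gamma}$ and checking that $D_{\alpha_{j+1}}$ is universal over $D_{\alpha_j}$ works, provided you drop the amalgamation digression (amalgamation is not available in this part of the paper) and use instead the observation you yourself end with: the first universal-extension step $D_{\alpha_j}^1$ of the filtration of $D_{\alpha_j+1}$ over $D_{\alpha_j}$ satisfies $D_{\alpha_j}^1\preccurlyeq D_{\alpha_{j+1}}$, so universality propagates by transitivity of $\preccurlyeq$, exactly as recorded inside the proof of Proposition~\ref{prop:uniqueness_limits}.

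The successor case, however, has a genuine gap. Your route passes through the intermediate claim that $D_\beta$ is a $(\kappa,\gamma)$-$(\mathbf{K},\preccurlyeq)$-limit over $D_0$, to be obtained either by invoking Proposition~\ref{prop:uniqueness_limits} with $\mrm{cf}(\gamma\cdot\beta)=\mrm{cf}(\gamma)$ or by thinning the concatenated chain of order type $\gamma\cdot\beta$ to a cofinal subchain of order type $\gamma$. But the hypotheses of the corollary allow $\beta$ to be a limit ordinal of arbitrary cofinality: e.g.\ $\gamma=\omega$, $\beta=\omega_1$, $\alpha=\omega_1+1$ (so $\mrm{cf}(\alpha)=1$), a configuration that actually arises when the corollary is applied in Proposition~\ref{free_filtration}. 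There $\mrm{cf}(\gamma\cdot\beta)=\mrm{cf}(\beta)=\omega_1\neq\omega=\mrm{cf}(\gamma)$, so the uniqueness proposition does not apply, no cofinal subchain of $\gamma\cdot\beta$ of order type $\gamma$ exists, and indeed nothing in the hypotheses lets you conclude that $D_\beta$ is a $\gamma$-limit over $D_0$ at all. The repair is not to go through $D_\beta$ as a $\gamma$-limit: the full concatenated chain $(D_i^j)_{i<\alpha,\,j<\gamma}$ has order type $\gamma\cdot\alpha$, its union is $D_\alpha$ itself (the last $\gamma$-block is the filtration of $D_\alpha$ over $D_\beta$), and $\mrm{cf}(\gamma\cdot\alpha)=\mrm{cf}(\gamma\cdot\beta+\gamma)=\mrm{cf}(\gamma)$ regardless of $\beta$; one application of Proposition~\ref{prop:uniqueness_limits}, comparing $D_\alpha$ with the $(\kappa,\gamma)$-limit $D_1$ over $D_0$, finishes the argument (this is the paper's proof). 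Alternatively, thin that full chain along the club $\{0\}\cup[\gamma\cdot\beta,\gamma\cdot\alpha)$ of order type $\gamma$, using only that $D_\beta$ is universal over $D_0$ (which does follow by transitivity, since $D_0^1\preccurlyeq D_\beta$); what you may not assume is that $D_\beta$ is a $\gamma$-limit over $D_0$.
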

\begin{proof}
	Since for all $i<\alpha$ we have that $D_{i+1}$ is $(\kappa,\gamma)$-$(\mathbf{K}, \preccurlyeq)$-limit over $D_i$, we have that $D_{i+1}=\bigcup_{j<\gamma}D_i^j$ for some $\gamma$-filtration of $D_{i+1}$ where each $D_i^{j+1}$ is $(\mathbf{K}, \preccurlyeq)$-universal over $D_i^j$. Thus the chain $(D_i)_{i< \alpha}$ of $(\kappa,\gamma)$-$(\mathbf{K}, \preccurlyeq)$-limit models determines a continuous chain $(D^j_i)_{i<\alpha, \, j<\gamma}$ of order type $\gamma\cdot \alpha$ where each $D_i^{j+1}$ is $(\mathbf{K}, \preccurlyeq)$-universal over $D_i^j$. Since $\mrm{cf}(\alpha)=1$ or $\mrm{cf}(\alpha)=\gamma$, it follows in both cases that $\mrm{cf}(\gamma\cdot \alpha)=\mrm{cf}(\gamma)$. From the uniqueness of limit models from \ref{prop:uniqueness_limits} we have that $\bigcup_{i<\alpha}D_i\cong_{D_0} D_1=\bigcup_{j<\gamma} D^j_0$, and so $\bigcup_{i<\alpha}D_i$ is $(\kappa,\gamma)$-$(\mathbf{K}, \preccurlyeq)$-limit over $D_0$. Now, if $\alpha$ is limit then $\bigcup_{i<\alpha}D_i=D_{\alpha}$ and so $D_\alpha$ is $(\kappa,\gamma)$-$(\mathbf{K}, \preccurlyeq)$-limit over $D_0$. If otherwise $\alpha=\beta+1$ is successor, then $\bigcup_{i<\alpha}D_i=D_{\beta}$ and so $D_{\beta}$ is $(\kappa,\gamma)$-$(\mathbf{K}, \preccurlyeq)$-limit over $D_0$. Since by construction we have that $D_\alpha$ is $(\kappa,\gamma)$-$(\mathbf{K}, \preccurlyeq)$-limit over $D_\beta$ and clearly $\mrm{cf}(\gamma+\gamma)=\gamma$, it then follows as before from \ref{prop:uniqueness_limits} that $D_{\alpha}$ is $(\kappa,\gamma)$-$(\mathbf{K}, \preccurlyeq)$-limit over $D_0$.
\end{proof}

\subsection{Strongly $\kappa^+$-free structures in weak $ \mrm{AECs}$}\label{subsec:almost_free}

In the setting of universal algebra, an algebra in a variety $\mathbf{V}$ is said to be $\mathfrak{L}_{\infty,\kappa^+}$-free if it is equivalent in the logic $\mathfrak{L}_{\infty,\kappa^+}$ to the unique free algebra in $\mathbf{FV}$ of size $\kappa^+$ (cf.~\cite{MSh,EM2}). In this section, we extend the notion of $\mathfrak{L}_{\infty,\kappa^+}$-free structure from the setting of universal algebra to an arbitrary (strongly) $\kappa^+$-categorical weak $ \mrm{AEC}$. The use of the adjectives ``free'' and ``almost-free'' for arbitrary weak $ \mrm{AEC}$s is meant to reflect the fact that one the main source of motivation for the current work are the weak $ \mrm{AEC}$s $(\mathbf{FV},\sleq)$ of free algebras with the relation of free factor with a free complement --- we shall prove later in Lemma~\ref{prop:varieties_canon} that $(\mathbf{FV},\sleq)$ is a weak $ \mrm{AEC}$, which additionally fails to satisfy Smoothness when (CP) holds in $\mathbf{V}$. Firstly, we show that under the assumptions of Definition~\ref{context:strong_categoricity} the free structure of size $\kappa^+$ in $\mathbf{K}$ admits a filtration by limit models. Recall that, crucially, we always work in the context fixed in \ref{context:one}.

\begin{proposition}\label{free_filtration}\label{free_filtration_kappa}
	For every model $D\in\mathbf{K}_{\kappa}$ and every limit $\gamma< \kappa^+$ there is a filtration $(A_i)_{i<\kappa^+}$ of $F(\kappa^+)$ such that $A_0\cong D$ and for all $\alpha<\beta<\kappa^+$, with $\beta$ successor, we have that $A_{\beta}$ is a $(\kappa,\gamma)$-$(\mathbf{K}, \preccurlyeq)$-limit over $A_\alpha$.
\end{proposition}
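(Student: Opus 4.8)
The plan is to produce the filtration by a transfinite recursion of length $\kappa^+$: at successor stages I insert a $(\kappa,\gamma)$-limit model, at limit stages I take unions, and at the end I use $\kappa^+$-categoricity to identify the union with $F(\kappa^+)$. Concretely, I would build a continuous $\preccurlyeq$-chain $(A_i)_{i<\kappa^+}$ by setting $A_0:=D$; given $A_i\in\mathbf{K}_\kappa$, choosing $A_{i+1}\in\mathbf{K}_\kappa$ to be a $(\kappa,\gamma)$-$(\mathbf{K},\preccurlyeq)$-limit over $A_i$, which exists by the existence half of Proposition~\ref{prop:uniqueness_limits}; and at a limit $i<\kappa^+$ setting $A_i:=\bigcup_{j<i}A_j$. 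Note $A_i\preccurlyeq A_{i+1}$ by the Tarski--Vaught axiom~\ref{def_AEC}(4.2) applied to the defining filtration of the limit model $A_{i+1}$, and at limit stages $A_i\in\mathbf{K}$ since $(\mathbf{K},\preccurlyeq)$ is closed under continuous $\preccurlyeq$-chains.

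Two routine points have to be recorded. First, the chain stays inside $\mathbf{K}_\kappa$: at a limit $i<\kappa^+$ one has $\kappa=|A_0|\leq|A_i|\leq|i|\cdot\kappa=\kappa$ because $|i|\leq\kappa$. Second, the inclusions $A_i\subsetneq A_{i+1}$ are proper — by \ref{context:strong_categoricity}(\ref{no-nonsense}) a model of size $\kappa$ universal over $A_i$ properly extends $A_i$, so the first step of the defining filtration of $A_{i+1}$ already does, hence so does $A_{i+1}$. Therefore, choosing $x_i\in A_{i+1}\setminus A_i$ yields $\kappa^+$ pairwise distinct elements, so $A:=\bigcup_{i<\kappa^+}A_i$ has size exactly $\kappa^+$; as $A\in\mathbf{K}$ by closure under chains, $\kappa^+$-categoricity gives $A\cong F(\kappa^+)$. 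Transporting the chain along an isomorphism $h\colon A\to F(\kappa^+)$ — limit models are preserved under isomorphism by \ref{def_AC}(1), so the successor-step property survives — turns $(A_i)_{i<\kappa^+}$ into a filtration of $F(\kappa^+)$ with $A_0\cong D$.

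The one genuinely structural step is showing that $A_\beta$ is a $(\kappa,\gamma)$-limit over $A_\alpha$ for \emph{all} $\alpha<\beta<\kappa^+$ with $\beta$ a successor, not merely for $\alpha$ the immediate predecessor. Here I would write $\beta=\alpha+\xi$ with $\xi=\beta-\alpha$ and observe that $\xi$ is a successor ordinal: it is nonzero since $\alpha<\beta$, and it cannot be a limit ordinal, since otherwise $\alpha+\xi$ would be a limit ordinal, contradicting that $\beta$ is a successor; hence $\mrm{cf}(\xi)=1$. The reindexed chain $D_j:=A_{\alpha+j}$ ($j\leq\xi$) is then a continuous $\preccurlyeq$-chain in $\mathbf{K}_\kappa$ — continuity at a limit $j<\xi$ because $\{\alpha+j':j'<j\}$ is cofinal in $\alpha+j$, so $D_j=A_{\alpha+j}=\bigcup_{j'<j}A_{\alpha+j'}=\bigcup_{j'<j}D_{j'}$ — and each successor member $D_{j+1}=A_{\alpha+j+1}$ is a $(\kappa,\gamma)$-limit over $D_j=A_{\alpha+j}$ by construction. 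Since $\mrm{cf}(\xi)=1$, Corollary~\ref{prop:limitoflimit} then gives that $D_\xi=A_\beta$ is a $(\kappa,\gamma)$-limit over $D_0=A_\alpha$, as required (the case $\alpha=0$ included, giving in particular that $A_\beta$ is a $(\kappa,\gamma)$-limit over a copy of $D$).

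I do not expect a serious obstacle: the weight is carried by Proposition~\ref{prop:uniqueness_limits} and by Corollary~\ref{prop:limitoflimit} (a tower of limit models is again a limit model), and what remains is bookkeeping. The two points that need a little attention are keeping the recursion inside $\mathbf{K}_\kappa$ at limit stages while still reaching cardinality $\kappa^+$ in the union, and the ordinal-arithmetic remark that $\beta-\alpha$ is a successor whenever $\beta$ is — it is precisely this remark that lets Corollary~\ref{prop:limitoflimit} upgrade ``$A_{\alpha+1}$ is a $(\kappa,\gamma)$-limit over $A_\alpha$'' to ``$A_\beta$ is a $(\kappa,\gamma)$-limit over $A_\alpha$'' for every successor $\beta>\alpha$.
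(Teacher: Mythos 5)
Your proposal is correct and follows essentially the same route as the paper's proof: build the chain recursively using the existence half of Proposition~\ref{prop:uniqueness_limits}, use \ref{context:strong_categoricity}(\ref{no-nonsense}) for strictness so the union has size $\kappa^+$ and is identified with $F(\kappa^+)$ by categoricity, and then invoke Corollary~\ref{prop:limitoflimit} for arbitrary $\alpha<\beta$ with $\beta$ a successor. Your explicit reindexing of $(A_i)_{\alpha\leq i\leq\beta}$ and the remark that $\beta-\alpha$ is a successor merely spell out what the paper leaves implicit when it says that this chain "satisfies the assumptions" of the corollary, so no further changes are needed.
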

\begin{proof}
	Let $A_0 \cong D \in \mathbf{K}_{\kappa}$, then, by Proposition~\ref{prop:uniqueness_limits}, $A_0$ can be extended to a $(\kappa,\gamma)$-$(\mathbf{K}, \preccurlyeq)$-limit model $A_1$ (which by \ref{context:strong_categoricity}(\ref{no-nonsense}) is a strict extension of $A_0$, i.e., $A_0\subsetneq A_1$). Proceeding inductively, we obtain a strict continuous $\preccurlyeq$-chain of models $(A_i)_{i<\kappa^+}$ of size $\kappa$ such that $A_{i+1}$ is $(\kappa,\gamma)$-$(\mathbf{K}, \preccurlyeq)$-limit over $A_i$ for every $i<\kappa^+$. Let $A=\bigcup_{i<\kappa^+}A_i$, then $A$ has size $\kappa^+$ and thus $A\cong F(\kappa^+)$. 
	\newline Now, let $\alpha<\beta<\kappa^+$ and suppose that $\beta$ is successor, thus in particular $\mrm{cf}(\beta)=1$. By construction, the chain of models $(A_i)_{\alpha\leq i\leq \beta}$ is a continuous $\preccurlyeq$-chains of structures in $\mathbf{K}_\kappa$ such that for every $i< \beta$ with $\alpha\leq i$ the model $A_{i+1}$ is $(\kappa,\gamma)$-$(\mathbf{K}, \preccurlyeq)$-limit over $A_i$. It follows that $A_\beta$ and $(A_i)_{\alpha\leq i\leq \beta}$  satisfy the assumptions from  Corollary~\ref{prop:limitoflimit} and thus $A_{\beta}$ is a $(\kappa,\gamma)$-$(\mathbf{K}, \preccurlyeq)$-limit over $A_\alpha$.
\end{proof}

We next introduce the notion of $\mathfrak{L}_{\infty,\kappa^+}$-free structure and provide an ``algebraic'' characterisation of this notion. Essentially, this extends the classical characterisation of almost free objects in varieties of algebras (cf.~\cite[Thm.~1.3]{MSh3} and  \cite{Kueker}).

\begin{definition}\label{def:L-free}\label{def:strong_free} Let $(\mathbf{K},\preccurlyeq)$ be a weak $\mrm{AEC}$ with $\mathrm{LS}(\mathbf{K}, \preccurlyeq)=\mu$ and which is strongly $\kappa^+$-categorical (i.e., it satisfies Conditions~\ref{context:strong_categoricity}(\ref{strong_categoricity:one})-(\ref{strong_categoricity:two})), then we define the following notions.
\begin{enumerate}[(1)]
	\item A structure $A$ is  \emph{$\mathfrak{L}_{\infty,\kappa^+}$-free} if $A\equiv_{\infty,\kappa^+} F(\kappa^+)$, i.e., $A$ and $F(\kappa^+)$ satisfy exactly the same sentences of the infinitary logic $\mathfrak{L}_{\infty,\kappa^+}$.
	
	\item A structure $A$  is \emph{strongly $\kappa^+$-free} if for some limit $\delta< \kappa^+$ there is a set $\mathcal{S}$ of substructures of $A$ from $\mathbf{K}_\kappa$ such that, for every $B\in \mathcal{S}$ and $C\subseteq A$ of size $\leq \kappa$, there is some $B'\in \mathcal{S}$ with $B\cup C\subseteq B'$ and $B'$ is a $(\kappa,\delta)$-$(\mathbf{K}, \preccurlyeq)$-limit over $B$.
\end{enumerate} 
\end{definition}

\begin{remark}
	Definition~\ref{def:strong_free}(2) adapts the algebraic notion of strongly $\kappa^+$-free algebras to the setting of weak $ \mrm{AEC}$s. However, in Definition~\ref{def:strong_free}(2) we had to strengthen the definition from Eklof and Mekler in \cite[p.~92]{EM2} and from Mekler and Shelah in \cite{MSh3} and require that $B'$ is a  $(\kappa,\delta)$-$(\mathbf{K}, \preccurlyeq)$-limit over $B$ and not simply that $B\preccurlyeq B'$.  We notice that limit models essentially replace the notion of \emph{$\kappa$-purity} from \cite[Theorem~1.3]{MSh3}.
\end{remark}

\noindent We notice that, in the next theorem, the structure $A$ does not necessarily belong to the class $\mathbf{K}$. Later in \ref{thm:2}, we will prove the existence of a $\mathfrak{L}_{\infty,\kappa^+}$-free  structure exactly to show that $\mathbf{K}$ is not axiomatisable in $\mathfrak{L}_{\infty,\kappa^+}$ (under suitable set-theoretical assumptions).

\begin{theorem}\label{thm:kueker}
	Let $(\mathbf{K},\preccurlyeq)$ be a weak $\mrm{AEC}$ which is strongly $\kappa^+$-categorical (i.e., satisfying the conditions from \ref{context:strong_categoricity}), then the following are equivalent:
	\begin{enumerate}[(1)]
		\item $A$ is $\mathfrak{L}_{\infty,\kappa^+}$-free (as in Def.~\ref{def:L-free}(1));
		\item for all limit $\delta< \kappa^+$ there is a set $\mathcal{S}_\delta$ of substructures of $A$ from $\mathbf{K}_\kappa$ such that, for every $B\in \mathcal{S}_\delta$ and $C\subseteq A$ of size $\leq \kappa$, there is some $B'\in \mathcal{S}_\delta$ such that $B\cup C\subseteq B'$ and $B'$ is a $(\kappa,\delta)$-$(\mathbf{K}, \preccurlyeq)$-limit over $B$;
		\item $A$ is strongly $\kappa^+$-free  (as in Def.~\ref{def:strong_free}(2)).
	\end{enumerate}
\end{theorem}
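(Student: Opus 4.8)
The plan is to prove the cycle of implications $(1)\Rightarrow(2)\Rightarrow(3)\Rightarrow(1)$, with the real content living in $(1)\Rightarrow(2)$ and $(3)\Rightarrow(1)$; the implication $(2)\Rightarrow(3)$ is immediate since (2) for \emph{all} limit $\delta<\kappa^+$ trivially yields (3), which only asks for \emph{some} such $\delta$. So first I would dispatch that trivial arrow, then concentrate on the two substantive directions.

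For $(3)\Rightarrow(1)$: assume $A$ is strongly $\kappa^+$-free, witnessed by $\mathcal S$ and a fixed limit $\delta<\kappa^+$. The goal is to build a back-and-forth system of partial isomorphisms between $A$ and $F(\kappa^+)$ and apply Fact~\ref{criterion:Marker} (with $\lambda=\kappa^+$). Using Proposition~\ref{free_filtration_kappa} applied with this same $\gamma=\delta$, fix a filtration $(A_i)_{i<\kappa^+}$ of $F(\kappa^+)$ with each $A_\beta$ (for $\beta$ successor, and more generally by Corollary~\ref{prop:limitoflimit} for $\beta$ with appropriate cofinality) a $(\kappa,\delta)$-limit over each earlier $A_\alpha$. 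The family $\mathcal F$ I would take consists of isomorphisms $h:B\to A_i$ where $B\in\mathcal S$ and $i<\kappa^+$ (using that $|B|=\kappa=|A_i|$, and that both are $(\kappa,\delta)$-limit models over a common base, so that the uniqueness clause of Proposition~\ref{prop:uniqueness_limits} — with $\mathrm{cf}(\delta)=\mathrm{cf}(\delta)$ — applies). To verify the back-and-forth property: given $h:B\to A_i$ in $\mathcal F$ and $C\subseteq A$ with $|C|\le\kappa$, pick $B'\in\mathcal S$ with $B\cup C\subseteq B'$ and $B'$ a $(\kappa,\delta)$-limit over $B$; on the other side, extend $A_i$ inside $F(\kappa^+)$ to some $A_{i'}$ (with $i'$ of the right form, e.g.\ $i'=i+\delta$ or a successor index past $i$) which is a $(\kappa,\delta)$-limit over $A_i$. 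Then $B'$ and $A_{i'}$ are both $(\kappa,\delta)$-limits over the ``same'' base once we transport via $h$, so by uniqueness over the base there is an isomorphism $g:B'\to A_{i'}$ extending $h$; this $g$ is in $\mathcal F$ and covers $C$. The range side is symmetric: given $C'\subseteq F(\kappa^+)$ with $|C'|\le\kappa$, find $i'$ with $C'\subseteq A_{i'}$ and $A_{i'}$ a limit over $A_i$, then find $B'\in\mathcal S$ over $B$ that is a $(\kappa,\delta)$-limit, and glue. Nonemptiness of $\mathcal F$ follows because $\mathcal S$ is nonempty (it must be, since the closure condition in Def.~\ref{def:strong_free}(2) presupposes witnesses — or one extracts a member by applying the condition, which requires some $B\in\mathcal S$ to start; if $\mathcal S=\emptyset$ the statement is vacuous/one should note $|A|\ge\kappa^+$ forces this, but I would simply observe $\mathcal S\neq\emptyset$ is part of what ``strongly $\kappa^+$-free'' sensibly means and follows from $|A|>\kappa$).

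For $(1)\Rightarrow(2)$: assume $A\equiv_{\infty,\kappa^+}F(\kappa^+)$, equivalently (Remark~\ref{remark:partial_iso}) $A\cong_{\kappa^+}F(\kappa^+)$ via a back-and-forth family $\mathcal F$. Fix an arbitrary limit $\delta<\kappa^+$ and a filtration $(A_i)_{i<\kappa^+}$ of $F(\kappa^+)$ as in Proposition~\ref{free_filtration_kappa} for this $\gamma=\delta$. Define $\mathcal S_\delta$ to be the set of substructures $B\le A$ of size $\kappa$ such that there exist $f\in\mathcal F$ and an index $i<\kappa^+$ with $B=\mathrm{dom}(f)$ (equivalently $B\le \mathrm{dom}(f)$ and $f(B)=A_i$ up to a little bookkeeping) — more precisely, $B$ is such that some $f\in\mathcal F$ maps $B$ isomorphically onto some $A_i$. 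One checks $\mathcal S_\delta\subseteq\mathbf K_\kappa$ because each $A_i\in\mathbf K_\kappa$ (limit models are in $\mathbf K$, being unions of $\preccurlyeq$-chains in a weak AEC of models of size $\kappa$), and $\mathbf K$ is closed under isomorphism. For the closure property: given $B\in\mathcal S_\delta$, say $f:B\xrightarrow{\sim}A_i$ with $f\in\mathcal F$, and given $C\subseteq A$ with $|C|\le\kappa$, first use the back-and-forth ``forth'' property to extend $f$ to $f'\in\mathcal F$ with $B\cup C\subseteq\mathrm{dom}(f')$; then $|\mathrm{dom}(f')|\le\kappa$, and using ``forth'' again (extending $f'$ by $\le\kappa$ more elements of $A$ and $\le\kappa$ more of $F(\kappa^+)$, going back and forth $\omega$ or $\kappa$ times as needed) I can arrange that $f'' \supseteq f'$ in $\mathcal F$ has domain a substructure $B'$ with $B\cup C\subseteq B'\le A$, $|B'|=\kappa$, and $f''(B')=A_{i'}$ for some $i'<\kappa^+$ of the form making $A_{i'}$ a $(\kappa,\delta)$-limit over $A_i$ (pick $i'$ to be a successor ordinal, or $i'$ with $\mathrm{cf}(i'-i)$ right; by Corollary~\ref{prop:limitoflimit} one can always move to such an index while only enlarging). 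Concretely: at stage $n$ alternately ensure the current domain contains the next block of $B\cup C$ and the current range equals some $A_{j}$; take unions. Then $B'\cong A_{i'}$, and $A_{i'}$ is a $(\kappa,\delta)$-limit over $A_i\cong B$, so transporting via $f''$, $B'$ is a $(\kappa,\delta)$-limit over $B$. Hence $B'\in\mathcal S_\delta$ and witnesses the closure condition.

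The main obstacle, and the step I would spend the most care on, is the \emph{index-arithmetic inside the filtration} in $(1)\Rightarrow(2)$: I need the partial isomorphism's range, after finitely/$\kappa$-many enlargements, to land exactly on a stage $A_{i'}$ of the \emph{given} filtration that is genuinely a $(\kappa,\delta)$-limit over the previous stage $A_i$ — not merely a $\preccurlyeq$-extension. This requires interleaving the back-and-forth construction with the filtration's own internal $\delta$-filtration (each $A_{i+1}=\bigcup_{j<\delta}A_i^j$ with $A_i^{j+1}$ universal over $A_i^j$), so that one can always ``round up'' to a limit stage of the right cofinality, invoking the uniqueness of limit models (Proposition~\ref{prop:uniqueness_limits}) and Corollary~\ref{prop:limitoflimit} to re-present the result as a single $(\kappa,\delta)$-limit. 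The fact that we are in a weak AEC — no Smoothness, no Coherence — is handled precisely because Propositions~\ref{prop:uniqueness_limits} and its corollary were proved without those axioms, so none of the gluing steps secretly use them; I would flag this explicitly, as the paper does elsewhere.
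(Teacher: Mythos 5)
Your overall skeleton is the paper's: the cycle $(1)\Rightarrow(2)\Rightarrow(3)\Rightarrow(1)$ with $(2)\Rightarrow(3)$ trivial, the set $\mathcal{S}_\delta$ defined as the substructures carried by a member of the back-and-forth family onto a stage of a limit-model filtration of $F(\kappa^+)$ (Proposition~\ref{free_filtration_kappa}), and the $(3)\Rightarrow(1)$ direction built from uniqueness of $(\kappa,\delta)$-limits (Proposition~\ref{prop:uniqueness_limits}); that direction is essentially the paper's argument. The genuine gap is in the closure step of $(1)\Rightarrow(2)$, exactly where you flag the ``index arithmetic'' as the main obstacle. Your concrete recipe — iterate the back-and-forth ``$\omega$ or $\kappa$ times'', at each stage making the range equal to some $A_j$, then take unions — does not work as stated. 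First, a union of an increasing chain of members of $\mathcal{F}$ need not belong to $\mathcal{F}$, so the resulting $B'$ is not certified to lie in your $\mathcal{S}_\delta$, whose definition requires a witnessing map \emph{in} $\mathcal{F}$. Second, and more seriously, after an $\omega$-length iteration the range is a stage $A_j$ with $\mathrm{cf}(j)=\omega$, so the chain from $A_i$ to $A_j$ has cofinality $\omega$ and Corollary~\ref{prop:limitoflimit}/Proposition~\ref{prop:uniqueness_limits} only identify $A_j$ over $A_i$ with a limit model of cofinality $\omega$; this is a $(\kappa,\delta)$-limit over $A_i$ only when $\mathrm{cf}(\delta)=\omega$. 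Since clause (2) quantifies over \emph{all} limit $\delta<\kappa^+$, for $\kappa\geq\aleph_1$ and, say, $\delta=\omega_1$ your ``rounding up to a limit stage of the right cofinality'' has nothing to invoke (the paper's toolkit gives no uniqueness across different cofinalities), and iterating $\mathrm{cf}(\delta)$-many times instead re-imports the first problem at the limit stages of the iteration.

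The repair is much simpler than the interleaving you propose, and it is what the paper does: because the system witnesses $\cong_{\kappa^+}$, a \emph{single} forth or back step absorbs a set of size $\kappa$, so no iteration and no unions are needed. Close $\mathcal{F}$ under restrictions (harmless). Given $f:B\cong D_\alpha$ in $\mathcal{F}$ and $C\subseteq A$ with $|C|\leq\kappa$, one forth step yields $g\supseteq f$ in $\mathcal{F}$ with $C\subseteq\mathrm{dom}(g)$ and, by regularity of $\kappa^+$, $\mathrm{ran}(g)\subseteq D_\beta$ for some \emph{successor} $\beta>\alpha$; one back step yields $h\supseteq g$ with $D_\beta\subseteq\mathrm{ran}(h)$, and restricting $h$ to $h^{-1}(D_\beta)$ (still in $\mathcal{F}$, with domain still containing $B\cup C$) gives a map onto exactly $D_\beta$. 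Corollary~\ref{prop:limitoflimit} (with $\beta$ successor) says $D_\beta$ is a $(\kappa,\delta)$-limit over $D_\alpha$, and transporting along this isomorphism shows $B'=\mathrm{dom}(h\restriction h^{-1}(D_\beta))$ is a $(\kappa,\delta)$-limit over $B$ and lies in $\mathcal{S}_\delta$; the only ``index arithmetic'' is choosing $\beta$ to be a successor. With this replacement your proof matches the paper's.
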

\begin{proof}
	We first prove that (1) entails (2). Suppose $A$ is $\mathfrak{L}_{\infty,\kappa^+}$-free, then by Fact \ref{criterion:Marker} we also have that $A\cong_{\kappa^+} F(\kappa^+)$ (cf.~\ref{remark:partial_iso}). Let $\mathcal{F}$ be a family of partial isomorphisms witnessing $A\cong_{\kappa^+} F(\kappa^+)$ and suppose without loss of generality that $f\in\mathcal{F}$ entails $g\in\mathcal{F}$ for all $g\subseteq f$. Also, let $\delta<\kappa^+$ and fix a filtration $(D_i)_{i<\kappa^+}$ of $F(\kappa^+)$ with each $D_{i+1}$ being $(\kappa,\delta)$-$(\mathbf{K}, \preccurlyeq)$-limit over $D_i$ (this exists by Proposition~\ref{free_filtration_kappa}).  We define $\mathcal{S}_\delta$ to be the family of all substructures $B\leq A$ for which there is some $f\in \mathcal{F}$ such that $f:B\cong D_i$ for some $i<\kappa^+$.  We show that $\mathcal{S}_\delta$ witnesses clause (2) with respect to $\delta$.  Let $B\in \mathcal{S}_\delta$ and let $C\subseteq A$ be of size $\leq \kappa$. By definition of $\mathcal{S}_\delta$ there is some $\alpha<\kappa^+$ and $f\in \mathcal{F}$ such that $f:B\cong D_\alpha$. Then, since $A\cong_{\kappa^+} F(\kappa^+)$, we can find by \ref{criterion:Marker}(1) some successor ordinal $\beta>\alpha$  and $g\supseteq f$ such that $g\in \mathcal{F}$, $C\subseteq \mrm{dom}(g)$ and $ \mrm{ran}(g)\subseteq D_\beta$. Then, by reasoning similarly using \ref{criterion:Marker}(2), we can find some $h\in\mathcal{F}$ such that $h\supseteq g$ and $\mrm{ran}(h)=D_\beta$. Let $B'=\mrm{dom}(h)$, then $B'\in \mathcal{S}_\delta$ and $B\cup C\subseteq B'$. Since $\beta$ is successor it follows by Corollary~\ref{prop:limitoflimit} that $D_\beta$ is $(\kappa,\delta)$-$(\mathbf{K}, \preccurlyeq)$-limit over $D_\alpha$. Since $h$ is an isomorphism extending $f$, we obtain that  $B'$ is $(\kappa,\delta)$-$(\mathbf{K}, \preccurlyeq)$-limit over $B$. Using $\mathcal{F}\neq\emptyset$ it follows by a similar argument that $\mathcal{S}_\delta\neq \emptyset$. We thus conclude that $A$ satisfies clause (2).
	
	\smallskip
	\noindent Clearly, (2) entails (3) immediately by Definition~\ref{def:strong_free}(2), so it remains to show that  (3) entails~(1). Suppose that for some limit $\delta< \kappa^+$ there is a set $\mathcal{S}$ of substructures of $A$ satisfying (3). We exhibit a back-and-forth system $\mathcal{F}$ between $A$ and $F(\kappa^+)$ satisfying the conditions from \ref{criterion:Marker}. Let $E\leq A$ be any substructure of $A$ of size $\kappa$ (this exists by $\mathrm{LS}(\mathbf{K}, \preccurlyeq)=\mu\leq \kappa$) and fix by Proposition~\ref{free_filtration_kappa} a filtration  $F(\kappa^+)=\bigcup_{i<\kappa^+}D_i$  with $D_0\cong E$ and $D_\beta$ a $(\kappa,\delta)$-$(\mathbf{K}, \preccurlyeq)$-limit over $D_\alpha$ for all successors $\beta$ with $\alpha<\beta<\kappa^+$. Let $\mathcal{F}$ be the family of all isomorphisms $f:B\to D_{\alpha}$ for some $B\leq A$ and $\alpha<\kappa^+$. Clearly, since $E\leq A$ and $E\cong D_0$, we immediately have that $\mathcal{F}$ is nonempty. Let $f:B\cong D_{\alpha}$ be a given isomorphism in $\mathcal{F}$, we verify the back and forth conditions from Fact~\ref{criterion:Marker}. First, consider some $C\subseteq A$ with $|C|<\kappa^+$. By (3) there is some $B'\leq A$ with $B\cup C\subseteq B'$ and $B'$ is $(\kappa,\delta)$-$(\mathbf{K}, \preccurlyeq)$-limit over $B$. Since $D_{\alpha+1}$ is also $(\kappa,\delta)$-$(\mathbf{K}, \preccurlyeq)$-limit over $D_\alpha$, it follows from Proposition~\ref{prop:uniqueness_limits} that there is $g\supseteq f$ such that $g:B'\cong D_{\alpha+1}$, whence $C\subseteq \mrm{dom}(g)$ and $g\in \mathcal{F}$. Secondly, consider any set $C\subseteq F(\kappa^+)$ with $|C|<\kappa^+$. Since $\kappa^+$ is regular we have that $C\subseteq D_{\beta}$ for some $\alpha<\beta<\kappa^+$ and, additionally, we can assume that $\beta$ is successor. By the choice of the filtration $(D_i)_{i<\kappa^+}$ we have that $D_{\beta}$ is a $(\kappa,\delta)$-$(\mathbf{K}, \preccurlyeq)$-limit over $D_{\alpha}$. Also, again by assumption (3), there is some $B'\leq A$ such that $B'$ is $(\kappa,\delta)$-$(\mathbf{K}, \preccurlyeq)$-limit over $B$. By Proposition~\ref{prop:uniqueness_limits} we obtain an isomorphism $g\supseteq f$ witnessing $g:B'\cong D_\beta$ and satisfying $C\subseteq \mrm{ran}(g)$, whence $g\in\mathcal{F}$. This shows that $A\cong_{\kappa^+}F(\kappa^+)$ and thus by \ref{criterion:Marker} it completes our proof.
\end{proof}

\subsection{The Construction Principle $\mrm{CP}^\kappa_{\lambda,\delta}{(\mathbf{K},\preccurlyeq)}$}\label{CP:general_case}

We introduce in this section a general version of the Construction Principle for weak $ \mrm{AEC}$s satisfying the assumptions from \ref{context:strong_categoricity}. Recall from Definition~\ref{def:coherentisation} the  coherentisation $\precleq$ of $\preccurlyeq$; so that we work in a context where $\mathbf{K}$ is considered together with the two relations $\preccurlyeq$ and $\precleq$. Notice that the following Construction Principle $\mrm{CP}^\delta_{\kappa,\lambda}{(\mathbf{K},\preccurlyeq)}$ essentially amount to a strong form of failure of the Smoothness Axiom for  $\mrm{AEC}$s, and thus can hold only for weak $\mrm{AEC}$s (and almost $\mrm{AEC}$s) which do not satisfy Smoothness (but which may or may not satisfy Coherence). In the following definition, recall also that for $A,B\in \mathbf{K}$ we clearly have that $A\not \precleq B $ entails $A\not \preccurlyeq B $.

\begin{definition}\label{CP-AEC:kappa_edit}
	Let $(\mathbf{K},\preccurlyeq)$ be a weak $\mrm{AEC}$ with $\mathrm{LS}(\mathbf{K}, \preccurlyeq)=\mu$ and which is strongly $\kappa^+$-categorical for some $\kappa\geq \mu$, i.e., $(\mathbf{K},\preccurlyeq)$  satisfies Conditions~\ref{context:strong_categoricity}(\ref{strong_categoricity:one})-(\ref{strong_categoricity:two}). For any regular cardinal $\lambda$ such that $\lambda\leq \kappa$ and any limit ordinal $\delta< \kappa^+$, we define the \emph{Construction Principle $\mrm{CP}^\delta_{\kappa,\lambda}{(\mathbf{K},\preccurlyeq)}$} for weak $ \mrm{AECs} $ as the following condition:
	\begin{enumerate}[(4)]
		\item[(4)]\label{cond:CP-general} there are a continuous $\preccurlyeq$-chain of models $(A_i)_{i<\lambda}$ in $\mathbf{K}_\kappa$ and $B\in\mathbf{K}_\kappa$ such that:
		\begin{enumerate}[(a)]
			\item $A_{i+1}$ is a $(\kappa,\delta)$-$(\mathbf{K}, \preccurlyeq)$-limit model over $A_i$ for all $i<\lambda$;			
			\item  $B$ is a $(\kappa,\delta)$-$(\mathbf{K}, \preccurlyeq)$-limit model over $A_i$ for all $i<\lambda$;
			\item $\bigcup_{i<\lambda}A_i \not \precleq B$ (recall \ref{def:coherentisation}).
		\end{enumerate}
	\end{enumerate} 
Whenever $\lambda=\delta$ we simply write $\mrm{CP}_{\kappa,\lambda}{(\mathbf{K},\preccurlyeq)}$ instead of $\mrm{CP}^{\lambda}_{\kappa,\lambda}{(\mathbf{K},\preccurlyeq)}$. Notice in particular that $\aleph_0$ is regular (cf.~\ref{remark:regular}) and so $\mrm{CP}_{\kappa,\omega}{(\mathbf{K},\preccurlyeq)}$ is well-defined.
\end{definition}

\begin{context}
	We assigned number (4) to the clauses of $\mrm{CP}^\delta_{\kappa,\lambda}{(\mathbf{K},\preccurlyeq)}$ in the previous definition to make it explicit that we always consider in this section the Construction Principle $\mrm{CP}^\delta_{\kappa,\lambda}{(\mathbf{K},\preccurlyeq)}$ in the context of  strongly $\kappa^+$-categorical weak  $\mrm{AEC}$s. More precisely, we fix in this section a weak $\mrm{AEC}$ $(\mathbf{K},\preccurlyeq)$ with $\mathrm{LS}(\mathbf{K}, \preccurlyeq)=\mu$  and which satisfies Conditions~\ref{context:strong_categoricity}(\ref{strong_categoricity:one})-(\ref{strong_categoricity:two}) and Condition~\ref{CP-AEC:kappa_edit}(\hyperref[cond:CP-general]{4}), i.e.,  $\mrm{CP}^\delta_{\kappa,\lambda}{(\mathbf{K},\preccurlyeq)}$, for some regular cardinal $\lambda\leq \kappa$ and some limit ordinal $\delta< \kappa^+$.
\end{context}

\begin{remark}
	As the reader can immediately see, $\mrm{CP}^\delta_{\kappa,\lambda}{(\mathbf{K},\preccurlyeq)}$ is analogous to the Eklof-Mekler-Shelah Construction Principle \ref{definition CP}, but with two crucial differences.  First, differently from the classical version of CP, in $\mrm{CP}^\delta_{\kappa,\lambda}{(\mathbf{K},\preccurlyeq)}$ we consider chains of arbitrary length $\lambda$. Although in our applications we will be dealing exclusively with chains of length $\lambda=\omega$, we believe that it is worth stressing the generality of our arguments, and the fact that they could be applied to the setting where  the Smoothness Axiom fails with respect to chains strictly longer than $\omega$. Secondly, in  $\mrm{CP}^\delta_{\kappa,\lambda}{(\mathbf{K},\preccurlyeq)}$ we add the further requirement that both $B$ and each structure $A_{i+1}$ are $(\kappa,\delta)$-$(\mathbf{K}, \preccurlyeq)$-limit models over $A_i$. We will consider in our applications only $(\kappa,\omega)$-$(\mathbf{K}, \preccurlyeq)$-limit models, but our argument covers the general case where $B$ and each $A_{i+1}$ are $(\kappa,\delta)$-$(\mathbf{K}, \preccurlyeq)$-limit models over $A_i$ for some $\delta< \kappa^+$. We will consider later in Section~\ref{CP-AEC:coproducts} a version of the Construction Principle $ \mrm{CP}(\mathbf{K},\ast) $ more closely related to the Eklof-Mekler-Shelah Construction Principle. In Section~\ref{CP-AEC:coproducts} we will assume a form of canonical amalgamation (cf.~\ref{context:amalgamation_class}) and show that we do not need the additional constraint that $B$ and $A_{i+1}$ are limit over $A_i$ (as this can be achieved via amalgamation). We elaborate this further in Section~\ref{CP:coproduct_case}.
\end{remark}

\begin{remark}
	The main goal of this section is to study classes of structures which are $\kappa^+$-categorical, but which fail to form an $\mrm{AEC}$ because of the presence of $\mrm{CP}^\kappa_{\lambda,\delta}{(\mathbf{K},\preccurlyeq)}$. We notice that, in a similar direction, Šaroch and Trlifaj consider in \cite{SaTr} the class of $\kappa^{+}$-decomposable modules, which also does not form an $\mrm{AEC}$. In particular, they prove that if a $\kappa^{+}$-decomposable class of modules over an arbitrary ring $R$ is categorical in some $\lambda \geq (2^{\kappa})^{+}$ with $\kappa \geq |R| + \aleph_{0}$, then it is also categorical in all $\lambda \geq (2^{\kappa})^{+}$, thus establishing Shelah's eventual categoricity conjecture relative to $\kappa^{+}$-decomposable modules.
\end{remark}

The next theorem adapts to our general setting the classical strategy of using CP to build non-free $\mathfrak{L}_{\infty,\kappa^+}$-free algebras. Notice that we work under the extra set-theoretical assumption $E^+(\kappa^+)$ and that, as mentioned in \ref{existence:stationary}, for $\kappa=\aleph_0$ the principle $E^+(\kappa^+)$   is redundant, as $E^+(\aleph_1)$ coincides with $E(\aleph_1)$ and $E(\aleph_1)$ is true in ZFC. On the other hand, for $\kappa\geq \aleph_1$, the principle $E^+(\kappa^+)$ always hold in $V=L$, since successor cardinals are regular and not weakly compact (cf.~\ref{stationary:condition_E} and \ref{existence:stationary}).  The next result is thus a theorem of ZFC for $\kappa=\aleph_0$, and a relative consistency statement for $\kappa\geq \aleph_1$. We refer the reader to the classical paper \cite{MSh2} by Mekler and Shelah for the study of the set-theoretical strength of $E^+(\kappa^+)$ and related axioms.

\begin{theorem}[$\mrm{ZFC}$ + $E^+(\kappa^+)$]\label{thm:2}
 Let $(\mathbf{K},\preccurlyeq)$ be a strongly $\kappa^+$-categorical weak $\mrm{AEC}$ with $\mrm{CP}^\delta_{\kappa,\lambda}{(\mathbf{K},\preccurlyeq)}$ as in \ref{CP-AEC:kappa_edit}, then there is an $\mathfrak{L}_{\infty,\kappa^+}$-free structure $M$ of size $\kappa^+$ such that $M\not\cong F(\kappa^+)$. In particular, $\mathbf{K}$ is not axiomatisable in $\mathfrak{L}_{\infty,\kappa^+}$.
\end{theorem}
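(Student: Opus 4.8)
The plan is to follow the classical Eklof--Mekler--Shelah template, now recast in the language of limit models and the coherentisation $\precleq$. Fix a stationary set $S \subseteq \kappa^+$ witnessing the relevant instance of $E^+(\kappa^+)$; by \ref{remark:regular} and the hypothesis $\lambda \leq \kappa$ (with $\lambda$ regular) we may take $S = S_\lambda$ consisting of limit ordinals of cofinality $\lambda$, none of which reflects below $\kappa^+$. We also fix, once and for all, the ``local'' data provided by $\mrm{CP}^\delta_{\kappa,\lambda}(\mathbf{K},\preccurlyeq)$: a continuous $\preccurlyeq$-chain $(A_i)_{i<\lambda}$ in $\mathbf{K}_\kappa$ and a model $B \in \mathbf{K}_\kappa$ such that each $A_{i+1}$ and $B$ are $(\kappa,\delta)$-$(\mathbf{K},\preccurlyeq)$-limit over $A_i$, while $\bigcup_{i<\lambda} A_i \not\precleq B$. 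Write $A_\lambda := \bigcup_{i<\lambda}A_i$.

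First I would build, by transfinite recursion on $\alpha < \kappa^+$, a strictly increasing continuous $\preccurlyeq$-chain $(M_\alpha)_{\alpha<\kappa^+}$ in $\mathbf{K}_\kappa$, with $M := \bigcup_{\alpha<\kappa^+} M_\alpha$ of size $\kappa^+$, obeying two regimes. At stages $\alpha \notin S$ (and at successors), extend $M_\alpha$ to a $(\kappa,\delta)$-$(\mathbf{K},\preccurlyeq)$-limit $M_{\alpha+1}$ over $M_\alpha$, using Proposition~\ref{prop:uniqueness_limits} and \ref{context:strong_categoricity}(\ref{no-nonsense}); at limit stages $\alpha \notin S$ take unions, which stays in $\mathbf{K}_\kappa$ by closure under $\preccurlyeq$-chains and has size $\kappa$. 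At a stage $\alpha \in S$, however, $\alpha$ has cofinality $\lambda$, so fix a continuous cofinal increasing sequence $(\alpha_i)_{i<\lambda}$ in $\alpha$; by the construction below $\alpha \notin S$ one arranges (shrinking the club on which we have freedom, as in the classical argument) that $M_\alpha = \bigcup_{i<\lambda} M_{\alpha_i}$ is a $(\kappa,\lambda)$-limit model built from a chain whose successive quotients mimic the chain $(A_i)_{i<\lambda}$; then, instead of extending $M_\alpha$ to a limit over itself, we let $M_{\alpha+1}$ be the ``bad'' extension, namely an isomorphic copy of $B$ amalgamated over the identified copy of $A_\lambda$, so that $M_\alpha \preccurlyeq M_{\alpha+1}$ holds (as $B$ is $(\kappa,\delta)$-limit over each $A_i$, hence over $A_\lambda$ by Corollary~\ref{prop:limitoflimit}) but $M_\alpha \not\precleq M_{\alpha+1}$. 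The identification at stage $\alpha$ of $M_\alpha$ with the canonical $(\kappa,\lambda)$-limit uses the uniqueness part of Proposition~\ref{prop:uniqueness_limits} (with $\mrm{cf}(\delta)$ vs.\ $\lambda$: here one wants $\delta$ and $\lambda$ arranged so that the cofinalities match, which is exactly why $\mrm{CP}_{\kappa,\lambda}$ with $\lambda=\delta$ is the natural shape, and why the general $\mrm{CP}^\delta_{\kappa,\lambda}$ statement tracks both parameters).

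Next, to see $M$ is $\mathfrak{L}_{\infty,\kappa^+}$-free, I would verify criterion (3) of Theorem~\ref{thm:kueker}: exhibit a limit $\delta'<\kappa^+$ and a set $\mathcal{S}$ of substructures of $M$ from $\mathbf{K}_\kappa$ such that every $N \in \mathcal{S}$, together with any $C \subseteq M$ of size $\leq \kappa$, sits inside some $N' \in \mathcal{S}$ which is a $(\kappa,\delta')$-limit over $N$. The candidate is $\mathcal{S} = \{\, M_\alpha : \alpha < \kappa^+,\ \alpha \notin S \,\}$ (these are precisely the initial segments taken at ``good'' stages, each a $(\kappa,\delta)$-limit over its good predecessors by Corollary~\ref{prop:limitoflimit}), with $\delta'=\delta$. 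Given $M_\alpha \in \mathcal{S}$ and $C$ of size $\leq\kappa$, pick $\beta < \kappa^+$ with $C \subseteq M_\beta$ and $\beta > \alpha$ good; crucially, because $S$ does \emph{not} reflect, the interval $(\alpha,\beta]$ meets $S$ in a non-stationary — and here one arranges, bounded — set, so the chain from $M_\alpha$ to $M_\beta$ is, up to rearrangement, a chain of $(\kappa,\delta)$-limits with only finitely-many-cofinality-$\lambda$ jumps, all of which are themselves $(\kappa,\delta)$-limits (the $B$-extensions are limits over $A_\lambda$, hence over $M_\alpha$): by iterated application of Corollary~\ref{prop:limitoflimit} this makes $M_\beta$ a $(\kappa,\delta)$-limit over $M_\alpha$. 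Hence $\mathcal{S}$ witnesses strong $\kappa^+$-freeness, and $M \equiv_{\infty,\kappa^+} F(\kappa^+)$.

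Finally, to see $M \not\cong F(\kappa^+)$, suppose it were; since $F(\kappa^+)$ admits a filtration by $(\kappa,\delta)$-limit models (Proposition~\ref{free_filtration_kappa}), a standard club-vs-stationary argument shows that for a club of $\alpha$ the restriction $M_\alpha$ would be a $(\kappa,\delta)$-limit inside $M$ with $M_\alpha \precleq M_{\alpha+1}$ — indeed $\precleq M$ — whereas by construction at the stationarily-many $\alpha \in S$ we have $M_\alpha \not\precleq M_{\alpha+1}$, so $M_\alpha \not\precleq M$; these two conclusions intersect on a nonempty set, a contradiction. Thus $\mathbf{K}$, which contains $F(\kappa^+)$ but — as one checks, since the $\mathfrak{L}_{\infty,\kappa^+}$-free $M$ is not isomorphic to it and $\mathbf{K}$ is $\kappa^+$-categorical — cannot contain $M$, is not axiomatisable in $\mathfrak{L}_{\infty,\kappa^+}$, because $M \equiv_{\infty,\kappa^+} F(\kappa^+)$ would force $M \in \mathbf{K}$ for any $\mathfrak{L}_{\infty,\kappa^+}$-axiomatisation.

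\medskip
\noindent\textbf{Main obstacle.} The delicate point is the non-reflection bookkeeping in the construction: one must choose, at each $\alpha \in S$, the cofinal sequence $(\alpha_i)_{i<\lambda}$ and the preceding portion of the chain so that $M_\alpha$ is genuinely the canonical $(\kappa,\lambda)$-limit presenting the configuration of $\mrm{CP}^\delta_{\kappa,\lambda}$ — with $A_\lambda$ correctly identified as a $\precleq$-substructure up to size $\kappa$ so that ``$\not\precleq$'' transfers from $B$ to $M_{\alpha+1}$ and then, via non-reflection, persists as ``$M_\alpha \not\precleq M$''. Making ``$\not\precleq$'' a global obstruction (not killed by passing to the big union $M$, which is where the coherentisation $\precleq$ rather than $\preccurlyeq$ is essential, since $\precleq$ only looks at size-$\kappa$ witnesses) is exactly the heart of the matter, and is the step I expect to require the most care.
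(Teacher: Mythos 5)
Your overall strategy is the paper's: fix a non-reflecting stationary set of cofinality-$\lambda$ ordinals, build a filtration $(M_\alpha)_{\alpha<\kappa^+}$ whose bad stages copy the $\mrm{CP}^\delta_{\kappa,\lambda}$ configuration via uniqueness of limit models, verify strong $\kappa^+$-freeness through Theorem~\ref{thm:kueker} with $\mathcal{S}=\{M_\alpha:\alpha\notin S\}$, and rule out $M\cong F(\kappa^+)$ by a club-versus-stationary argument exploiting $\not\precleq$. However, there is a genuine error at the heart of your construction: at a bad stage $\alpha\in S$ you assert both that $(M_\alpha)_{\alpha<\kappa^+}$ is a continuous $\preccurlyeq$-chain, i.e.\ $M_\alpha\preccurlyeq M_{\alpha+1}$, and that $M_\alpha\not\precleq M_{\alpha+1}$, justifying the former by the claim that ``$B$ is a $(\kappa,\delta)$-limit over each $A_i$, hence over $A_\lambda$ by Corollary~\ref{prop:limitoflimit}''. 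This is false: Corollary~\ref{prop:limitoflimit} applies to a chain in which each successor is limit over its predecessor, and says nothing about the union $A_\lambda=\bigcup_{i<\lambda}A_i$; indeed, since models of size $\kappa\geq\mu$ satisfy $A\preccurlyeq B\Rightarrow A\precleq B$ (take $C=B$ in Definition~\ref{def:coherentisation}), the hypothesis $A_\lambda\not\precleq B$ forces $A_\lambda\not\preccurlyeq B$, so $B$ cannot be a limit model over $A_\lambda$ and your $M_\alpha\preccurlyeq M_{\alpha+1}$ cannot hold. Worse, if the whole sequence really were a continuous $\preccurlyeq$-chain, closure of the weak $\mrm{AEC}$ under such chains would put $M$ in $\mathbf{K}$, whence $M\cong F(\kappa^+)$ by $\kappa^+$-categoricity, and the construction would prove nothing.

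The fix is exactly what the paper's condition (b) records: at a bad stage one only gets $M_\alpha\leq M_{\alpha+1}$ as a substructure with $M_\alpha\not\precleq M_{\alpha+1}$, while $M_{\alpha+1}$ is a $(\kappa,\delta)$-limit over the models $M_{\gamma_i}$ indexed by the chosen cofinal sequence of successor (hence good) ordinals below $\alpha$; the relation $\preccurlyeq$ is maintained only along indices outside $S$, and this suffices both for membership in $\mathbf{K}_\kappa$ at limit stages (non-reflection gives a club avoiding $S$) and for the freeness verification, since $\mathcal{S}$ consists of good-indexed models and one may always choose the extension index $\beta$ to be a successor. Two smaller points: your ``only finitely many cofinality-$\lambda$ jumps'' between $\alpha$ and $\beta$ is not true in general (the interval may meet $S$ infinitely often), but the inductive use of Corollary~\ref{prop:limitoflimit} handles this without counting; and no advance ``bookkeeping'' or matching of $\mrm{cf}(\delta)$ with $\lambda$ is needed for the identification at bad stages --- clause (a) of $\mrm{CP}^\delta_{\kappa,\lambda}$ plus condition (b) of the construction let you build the isomorphisms $f_i:M_{\gamma_i}\cong A_i$ on the spot from Proposition~\ref{prop:uniqueness_limits}. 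Finally, in the non-isomorphism step avoid writing $M_\alpha\precleq M$: the relation $\precleq$ is only defined between members of $\mathbf{K}$, and the contradiction should be extracted, as you essentially do, from $M_{\alpha+1}\preccurlyeq M_\beta$ together with $M_\alpha\not\precleq M_{\alpha+1}$ versus $D_\alpha\preccurlyeq D_\beta$ transported through the isomorphism.
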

\begin{proof}
We construct a structure $M$ witnessing the statement. First, since we assume $E^+(\kappa^+)$ and $\lambda$ is a regular cardinal such that $\lambda \leq \kappa$, then there is a stationary set $X\subseteq \kappa^+$ such that $\mrm{cf}(\alpha)=\lambda$ for all $\alpha\in X$ and $X\cap \alpha$ is not stationary for any limit ordinal $\alpha<\kappa^+$ (cf.~\ref{existence:stationary}). We assume without loss of generality that $0\notin X$. By Condition~\ref{CP-AEC:kappa_edit}(\hyperref[cond:CP-general]{4}) we fix a  sequence  $(A_i)_{i<\lambda}$ in $\mathbf{K}_{\kappa}$ and $B\in\mathbf{K}_\kappa$ such that $A_{i+1}$ and $B$ are $(\kappa,\delta)$-$(\mathbf{K}, \preccurlyeq)$-limit over $A_i$ for all $i<\lambda$, and $\bigcup_{i<\lambda}A_i \not \precleq B$.
\newline We use these data to construct a model $M = M^X=\bigcup_{\alpha<\kappa^+}M^X_\alpha$ such that:	
	\begin{enumerate}[(a)]
		\item $M^X_\alpha\in \mathbf{K}_{\kappa}$ for all $\alpha<\kappa^+$;
		\item for all $\alpha<\beta<\kappa^+$ and $\alpha\notin X$ we have that $M^X_{\alpha}\preccurlyeq M^X_{\beta}$ and, if additionally $\beta$ is successor, then $M^X_{\beta}$ is $(\kappa,\delta)$-$(\mathbf{K}, \preccurlyeq)$-limit over $ M^X_{\alpha}$ (so $M^X_{\alpha} \preccurlyeq M^X_{\beta}$);
		\item for limits $\alpha<\kappa^+$, we have $M^X_\alpha=\bigcup_{\beta<\alpha}M^X_\beta$;
		\item $M^X_\alpha\not\precleq M^X_{\alpha+1}$ for all $\alpha\in X$.		
	\end{enumerate} 
	
	\smallskip
	\noindent We describe how to build $M^X_\alpha\in \mathbf{K}_{\kappa}$ recursively on $\alpha<\kappa^+$. At each step $\alpha<\kappa^+$ we assume properties (a)-(d) hold for all $\beta<\alpha$, and we later verify them inductively.
	
	\smallskip
	\noindent \underline{Case 1}. $\alpha=0$. 
	\newline We let $M^X_0=A_0$, where $A_0$ is the first structure from the chain $(A_i)_{i<\lambda}$ fixed above that witnesses the Construction Principle $\mrm{CP}^\delta_{\kappa,\lambda}(\mathbf{K},\preccurlyeq)$.
	
	\smallskip
	\noindent \underline{Case 2}. $\alpha$ is a limit ordinal.
	\newline We let $M^X_{\alpha}=\bigcup_{\gamma<\alpha}M^X_{\gamma}$.
	
	\smallskip
	\noindent \underline{Case 3}. $\alpha=\beta+1$.
	
	\smallskip
	\noindent \underline{Case 3.1}. $\beta\notin X$.
	\newline We let $M^X_{\alpha}$ be a $(\kappa,\delta)$-$(\mathbf{K}, \preccurlyeq)$-limit model over $M^X_{\beta}$ --- this exists by Proposition~\ref{prop:uniqueness_limits}.
	
	\smallskip
	\noindent \underline{Case 3.2}. $\beta \in X$.
	\newline By the choice of the stationary $X$, we have that $\beta$ is a limit ordinal with cofinality $\lambda$. Therefore, there is a sequence $(\gamma_i)_{i<\lambda}$ cofinal in $\beta$ and such that each $\gamma_i$ is a successor ordinal not in $X$.  We construct inductively a continuous $\subseteq$-chain of isomorphisms $(f_i)_{i<\lambda}$ for $i\geq 1$ such that $f_{i}:M^X_{\gamma_i}\cong A_{i}$.
	
	\smallskip
	\noindent \underline{Subcase 3.2(a)}.  $i = 1$.
	\newline If $i=1$ then, by Condition~(b) and $0\notin X$ we have that $M^X_{\gamma_1}$ is a $(\kappa,\delta)$-$(\mathbf{K}, \preccurlyeq)$-limit over $A_0$. Also, by the choice of the models $(A_i)_{i<\lambda}$ we have that $A_1$ is also a $(\kappa,\delta)$-$(\mathbf{K}, \preccurlyeq)$-limit over $A_0$. It then follows from Proposition~\ref{prop:uniqueness_limits} that there is an isomorphism $f_1:M^X_{\gamma_1}\cong A_1$ such that $f_1\restriction A_0=\mrm{id}_{A_0}$. 
	
	\smallskip
	\noindent \underline{Subcase 3.2(b)}.  $i=\varepsilon+1$.
	\newline If $i=\varepsilon+1$, then by Condition~(b) and the fact that $\gamma_{\varepsilon+1}$ is successor we have that $M^X_{\gamma_{\varepsilon+1}}$ is a $(\kappa,\delta)$-$(\mathbf{K}, \preccurlyeq)$-limit over $M^X_{\gamma_\varepsilon}$. Also, by our choice of the structures $(A_i)_{i<\lambda}$ we have that $A_{\varepsilon+1}$ is  a $(\kappa,\delta)$-$(\mathbf{K}, \preccurlyeq)$-limit over $A_\varepsilon$. It follows by Proposition~\ref{prop:uniqueness_limits}  that we can find an isomorphism $f_{\varepsilon+1}\supseteq f_\varepsilon$ such that $f_{\varepsilon+1}:M^X_{\gamma_{\varepsilon+1}}\cong A_{\varepsilon+1}$.	
	
	\smallskip
	\noindent \underline{Subcase 3.2(c)}.  $i$ is a limit ordinal.
	\newline  If $i$ is limit, then we have a $\subseteq$-chain of isomorphisms $f_{\varepsilon}:M^X_{\gamma_\varepsilon}\cong A_\varepsilon$ for all $\varepsilon<i$. Then $f_i=\bigcup_{\varepsilon<i}f_\varepsilon$ is an isomorphism between $M^X_i$ and $A_i$ (notice in particular that we do not need Smoothness here, as each $f_\varepsilon$ is an isomorphism). 
	
	\smallskip
	\noindent We thus obtained a continuous $\subseteq$-chain of isomorphisms $(f_i)_{i<\lambda}$ for $i\geq 1$ such that $f_{i}:M^X_{\gamma_i}\cong A_{i}$. Let $f=\bigcup_{i<\lambda}f_i$, then since the sequence $(\gamma_i)_{i<\lambda}$ is cofinal in $\beta$, it follows that  $f$ is an isomorphism between $M^X_\beta$ and $A\coloneqq\bigcup_{i<\lambda}A_i$.	Now, since the $\preccurlyeq$-chain $(A_i)_{i<\lambda}$ and the model $B$ were chosen to satisfy $\mrm{CP}^\delta_{\kappa,\lambda}(\mathbf{K},\preccurlyeq)$, we have  in particular that $A\leq B$ (but $A\not \precleq B$) and that $B$ is a $(\kappa,\delta)$-$(\mathbf{K}, \preccurlyeq)$-limit over every $A_i$.  We can now define $M^X_\alpha$ to be a model isomorphic to $B$ over $M^X_\beta=\bigcup_{i<\lambda}M^X_{\gamma_i}$, i.e., we define $M^X_\alpha$ so that there exists an isomorphism $g\supseteq f$  such that $g:M^X_\alpha\cong B$. It immediately follows from the choice of $B$ that $M^X_\beta\leq M^X_{\alpha}$, $M^X_\beta\not\precleq M^X_{\alpha}$ and $M^X_\alpha$ is $(\kappa,\delta)$-$(\mathbf{K}, \preccurlyeq)$-limit over $M^X_{\gamma_i}$ for every $i<\lambda$.
	
	\medskip 
	\noindent We verify that this construction satisfies (a)-(d). We proceed by induction and whenever we verify (a)-(d) for the stage $\alpha$ we assume (a)-(d) for all $\beta<\alpha$.
	
	\smallskip
	\noindent \underline{Condition~(a)}. $M^X_{\alpha}\in \mathbf{K}_{\kappa}$ for all $\alpha<\kappa^+$.
	\newline If $\alpha=0$ then $M^X_0= A_0$ and, by the statement of $\mrm{CP}^\delta_{\kappa,\lambda}(\mathbf{K},\preccurlyeq)$, $A_0\in \mathbf{K}_{\kappa}$.
	\newline If $\alpha=\beta+1$ is successor and $\beta\notin X$ then the fact that $M^X_\alpha\in \mathbf{K}_\kappa$ follows immediately by construction and Proposition~\ref{prop:uniqueness_limits}. If $\alpha=\beta+1$ is successor and $\beta\in X$ then $M^X_\alpha\in \mathbf{K}_\kappa$ follows from the fact that $M^X_\alpha\cong B$ and $B\in \mathbf{K}_\kappa$ by $\mrm{CP}^\delta_{\kappa,\lambda}(\mathbf{K},\preccurlyeq)$.
	\newline  If $\alpha$ is a limit ordinal, then $M^X_{\alpha}=\bigcup_{\gamma<\alpha}M^X_{\gamma}$. Since the stationary $X$ does not reflect at $\alpha<\kappa^+$, we have that $X\cap \alpha$ is not stationary and so there is a closed unbounded set $I\subseteq \alpha$ with $X\cap I=\emptyset$. By conditions (a), (b) and (c) it follows that $(M^X_{\gamma})_{\gamma\in I}$ is a continuous $\preccurlyeq$-chain in $\mathbf{K}_{\kappa}$, and thus $\bigcup_{\gamma\in I}M^X_{\gamma}\in \mathbf{K}_{\kappa}$. Since $I$ is a club we have that $M^X_{\alpha}=\bigcup_{\gamma\in I}M^X_{\gamma}$, which proves our claim.
	
	\smallskip
	\noindent \underline{Condition~(b)}. For all $\alpha<\beta<\kappa^+$ and $\alpha\notin X$ we have that $M^X_{\alpha}\preccurlyeq M^X_{\beta}$ and, if additionally $\beta$ is successor, then $M^X_{\beta}$ is $(\kappa,\delta)$-$(\mathbf{K}, \preccurlyeq)$-limit over $ M^X_{\alpha}$.
	\newline Let $\alpha<\kappa^+$ with $\alpha\notin X$, we proceed inductively on $\beta$ for $\alpha<\beta<\kappa^+$.
	\newline If $\beta=\alpha+1$ then  $M^X_{\beta}$ was chosen exactly to be $(\kappa,\delta)$-$(\mathbf{K}, \preccurlyeq)$-limit over $ M^X_{\alpha}$ (which clearly entails also $M^X_{\alpha}\preccurlyeq M^X_{\beta}$). 
	\newline If $\beta$ is limit, then $M^X_{\beta}=\bigcup_{\alpha<\gamma<\beta} M^X_\gamma$ where $(M^X_\gamma)_{\alpha<\gamma<\beta}$ forms a $\preccurlyeq$-chain. It then follows immediately from the definition of weak $\mrm{AEC}$ (cf.~\ref{def:weak_aec}) that $M^X_{\alpha}\preccurlyeq M^X_{\beta}$. 
	\newline If $\beta=\gamma+1$ for some $\gamma\neq \alpha$ with $\gamma\notin X$, then $M^X_{\beta}$ is a $(\kappa,\delta)$-$(\mathbf{K}, \preccurlyeq)$-limit model over $M^X_{\gamma}$. By the inductive hypothesis and using the fact that $\beta$ is successor, it follows that the chain $(M^X_i)_{\alpha\leq i\leq \beta}$ satisfies the assumptions from Corollary~\ref{prop:limitoflimit}, and thus $M^X_{\beta}$ is a $(\kappa,\delta)$-$(\mathbf{K}, \preccurlyeq)$-limit model over $M^X_{\alpha}$.
	\newline If $\beta=\gamma+1$ for some $\gamma\neq \alpha$ with $\gamma\in X$, then by construction we have that $M^X_{\beta}$ is a $(\kappa,\delta)$-$(\mathbf{K}, \preccurlyeq)$-limit over $M^X_{\gamma_i}$ for some $\alpha<\gamma_i<\gamma$. By reasoning as in the case with $\gamma\notin X$ it follows from Corollary~\ref{prop:limitoflimit} that $M^X_{\beta}$ is a $(\kappa,\delta)$-$(\mathbf{K}, \preccurlyeq)$-limit over $M^X_{\alpha}$.
	
	\smallskip
	\noindent \underline{Condition~(c)}. For limits $\alpha<\kappa^+$ we have that $M^X_\alpha=\bigcup_{\beta<\alpha}M^X_\beta$.
	\newline Immediate by construction.
	
	\smallskip
	\noindent \underline{Condition~(d)}. $M^X_\alpha\not\precleq M^X_{\alpha+1}$ for all $\alpha\in X$.
	\newline This follows immediately by the construction of $M^X_{\alpha+1}$ for $\alpha\in X$.

	\medskip 
	\noindent Let $M^X=\bigcup_{\alpha<\kappa^+}M^X_\alpha$. We show that $M^X\not\cong F(\kappa^+)$ but $M^X\equiv_{\infty,\kappa^+}F(\kappa^+)$. We first prove that $M^X\not\cong F(\kappa^+)$. By Proposition~\ref{free_filtration_kappa} we can (without loss of generality) let $F(\kappa^+)=\bigcup_{i<\kappa^+}D_i$, with $D_0\cong A_0$ and $D_{\beta}$ a $(\kappa,\delta)$-$(\mathbf{K}, \preccurlyeq)$-limit over $D_\alpha$ for all $\alpha<\beta<\kappa^+$ with $\beta$ successor. Suppose now that there is an isomorphism $h:M^X\to F(\kappa^+)$ and denote by $h_\alpha$ its restriction to $M^X_\alpha$. Consider the set:
	\[C_0=\{\alpha<\kappa^+ : h_\alpha \text{ is onto } D_\alpha\}.\]
	By a standard argument, one can see that $C_0$ is a club, and thus since  $X$ is stationary it follows that there is some limit ordinal $\alpha<\kappa^+$ such that $\alpha\in X\cap C_0$. Thus there is an isomorphism $h_\alpha:M^X_\alpha\to D_\alpha$ for some $\alpha\in X$. By Condition~(d) we have $M^X_{\alpha}\not \precleq M^X_{\alpha+1} $. Additionally, it is also easy to see that the following set is a club:
	\[C_1=\{\beta<\kappa^+ : \alpha<\beta \text{ and } h_\beta \text{ is onto } D_\beta\}.\]	
	Thus we can find $h_\beta\supseteq h_\alpha$ such that $h_{\beta}$ is onto $D_\beta$. Then we obtain by Condition~(b) that $ M^X_{\alpha+1} \preccurlyeq M^X_{\beta}$. Since $M^X_{\alpha}\not \precleq M^X_{\alpha+1} $, and $| M^X_\beta|= |M^X_\alpha|$ it follows from Definition~\ref{def:coherentisation} that  $M^X_{\alpha}\not \preccurlyeq M^X_\beta$. On the other hand, we have immediately from the choice of the models $(D_i)_{i<\kappa^+}$ that $D_{\alpha}\preccurlyeq D_\beta$. But since $h_\alpha:M^X_\alpha\cong D_\alpha$ and $h_\beta:M^X_\beta\cong D_\beta$, this contradicts the existence of the isomorphism $h:M^X\cong F(\kappa^+)$.
	
	\smallskip
	\noindent Finally, we show that $M^X\equiv_{\infty,\kappa^+}F(\kappa^+)$. By Theorem~\ref{thm:kueker} it suffices to show that $M^X$ is strongly $\kappa^+$-free. Let $\mathcal{S}=\{M^X_\alpha : \alpha\notin X  \}$, we show this satisfies Definition~\ref{def:strong_free}(2). Let $M^X_\alpha\in \mathcal{S}$ and consider $C\subseteq M^X$ such that $|C|\leq \kappa$. Since $\kappa^+$ is regular we can find some $\beta<\kappa^+$ such that $\beta$ is successor, $\alpha<\beta$, and $M^X_\alpha\cup C\subseteq M^X_\beta$. Then we have that $\beta\notin X$ and so $M^X_\beta\in \mathcal{S}$. Moreover, it follows immediately from Condition~(b) that $M^X_\beta$ is a $(\kappa,\delta)$-$(\mathbf{K}, \preccurlyeq)$-limit over $M^X_\alpha$. This shows that $M^X$ is strongly $\kappa^+$-free and so that $M^X\equiv_{\infty,\kappa^+}F(\kappa^+)$.
\end{proof}

\begin{remark}
	It is well-known that in the context of $\mrm{AEC}$s the assumption that the $\preccurlyeq$-chain $(A_i)_{i<\delta}$ from Conditions~\ref{def_AEC}(4.1) and \ref{def_AEC}(4.2) is continuous is redundant. As a matter of fact, \ref{def_AEC}(4.1) and \ref{def_AEC}(4.2) could be further weakened so that instead of a continuous $\preccurlyeq$-chain $(A_i)_{i<\delta}$ one considers an arbitrary directed system $(A_i)_{i\in I}$. Interestingly, this does not hold in the setting of weak $\mrm{AEC}$s where the Smoothness Axiom (cf.~\ref{def_AEC}(4.3)) may fail. In fact, in the proof of the previous theorem, the family $(M^X_\alpha)_{\alpha\notin X}$ is a $\preccurlyeq$-chain with $\bigcup_{\alpha\notin X}M^X_\alpha=M^X\notin \mathbf{K}$. We can re-enumerate this family so to make it a $\preccurlyeq$-chain, but the result will not be continuous. This shows that in the setting without Smoothness the Conditions~\ref{def_AEC}(4.1) and \ref{def_AEC}(4.2) of $ \mrm{AEC}$s may fail with respect to arbitrary directed systems.
\end{remark}

\begin{remark}
	As noticed in Remark~\ref{remark:many_stationary}, from $E^+(\kappa^+)$ and using Solovay's Theorem one can build $2^{\kappa^+}$-many different non-reflecting stationary subsets of $\kappa^+$ with elements of cofinality $\lambda$. Then, reasoning as in the previous theorem, one obtains for each pair of different stationary sets $X\neq Y$ two corresponding $\mathfrak{L}_{\infty,\kappa^+}$-free structures $M^X\not \cong M^Y$ of size $\kappa^+$. This shows that under $E^+(\kappa^+)$ there are $2^{\kappa^+}$-many non-isomorphic $\mathfrak{L}_{\infty,\kappa^+}$-free structures of size $\kappa^+$ with respect to a given weak $\mrm{AEC}$ $(\mathbf{K},\preccurlyeq)$ satisfying \ref{context:strong_categoricity} and \ref{CP-AEC:kappa_edit}. \emph{A fortiori}, and always under the set-theoretic assumption $E^+(\kappa^+)$, it follows that if ${(\mathbf{K}',\preccurlyeq')}$ is an $\mrm{AEC}$ which satisfies:
	\begin{enumerate}[(i)]
	\item $\mathbf{K}_\kappa\subseteq \mathbf{K}_\kappa'$;
	\item $A\preccurlyeq' B$ if and only if $A\preccurlyeq B$ for $A,B\in \mathbf{K}$;\end{enumerate}
then $\mathbf{K}'$ has $2^{\kappa^+}$-many non-isomorphic models of size $\kappa^+$. Clearly, for $\kappa^+=\aleph_1$ the previous statement holds in ZFC.
\end{remark}	

\section{The Construction Principle $\mrm{CP}(\mathbf{K},\ast)$}\label{CP:coproduct_case}

We provide in this section a more straightforward generalisation of the Eklof-Mekler-Shelah Construction Principle in the context of weak $ \mrm{AEC}$s, and we derive from it the conditions from \ref{context:strong_categoricity} and \ref{CP-AEC:kappa_edit} (and so also the consequences of such conditions, mainly \ref{thm:2}). While in Section~\ref{sec:kappa_cat} we isolated the technical notions needed to prove Theorem~\ref{thm:2}, we next focus on weak $ \mrm{AEC}$s which have a notion of canonical amalgam that \emph{defines} the strong submodel relation, as it is the case in all examples we shall consider in Section~\ref{Sec:application}. In particular, in this context we introduce the Construction Principle $\mrm{CP}(\mathbf{K},\ast)$ (Condition~\ref{CP-AEC:coproducts}(\hyperref[C4]{C4})), and we show that in this setting it entails $\mrm{CP}_{\kappa,\omega}(\mathbf{K},\sleq)$ (recall \ref{CP-AEC:kappa_edit}). In all concrete cases from Section~\ref{Sec:application}, we shall then directly verify that the class in question satisfies the Conditions~\ref{context:amalgamation_class}(\hyperref[C1]{C1})-(\hyperref[C3]{C3}), Condition~\ref{CP-AEC:coproducts}(\hyperref[C4]{C4}) and Conditions~\ref{assumptions:subsec_1}(\hyperref[C5]{C5})-(\hyperref[C6]{C6}) from this section (and thus in particular  $\mrm{CP}(\mathbf{K},\ast)$). We introduce canonical amalgamation classes in Section~\ref{subsec:4.1} and relate them to the setting of strongly $\kappa^+$-categorical weak $ \mrm{AEC}$s in Section~\ref{subsec:4.2}.

\subsection{Canonical amalgamation classes}\label{subsec:4.1}

We introduce in this section the concepts of (infinitary) canonical joint embedding and canonical amalgamation. Our definition follows the notion of canonical amalgamation (from, e.g., \cite[Def.~7]{PaSh}), but with the the additional requirements that the operator $\ast$ is infinitary, associative, and that it respects the strong submodel relation~$\preccurlyeq$. Notice that, in the following definition, the requirement that the intersection of the structures $B_i$ is empty corresponds to the fact that $\bigast_{i\in I}B_i$ depends solely on the isomorphism type of the structures $B_i$  (as in the case of free factors), and not on the choice of a specific witness.

\begin{definition}\label{def:canonical_jep}
	Let $(\mathbf{K},\preccurlyeq)$ be an abstract class. We say that $(\mathbf{K},\preccurlyeq)$ has \emph{infinitary canonical joint embedding} if for all sets $I$ there is an operator $\bigast_{i\in I}B_i$ on $\mathbf{K}^I$ which is defined whenever $B_i\in\mathbf{K}$ for all $i\in I$, $\bigcap_{i\in I} B_i=\emptyset$ and which has the following properties:
	\begin{enumerate}[(a)]
		\item  $B_i\preccurlyeq \bigast_{i\in I}B_i$ for all $i\in I$;
		\item if $\bigast_{i\in I}B_i$ and $\bigast_{i\in I}C_i$ are both defined and $(f_i:B_i\cong C_i)_{i\in I}$ are isomorphisms, then there is an isomorphism $g: \bigast_{i\in I}B_i\to \bigast_{i\in I}C_i$ s.t.  $g\restriction B_i=f_i$ for all $i\in I$;
		\item if $\bigast_{i\in I}B_i$ and $\bigast_{i\in I}C_i$ are both defined and $B_i\preccurlyeq C_i$ for all $i\in I$, then $\bigast_{i\in I}B_i \preccurlyeq \bigast_{i\in I}C_i$;
		\item  the operator $\ast$ satisfies the infinitary associativity law, i.e., if $\bigast_{i\in I}B_i$ is defined then for all  $J\subseteq I$ we have that $\bigast_{i\in I}B_i=\bigast_{i\in J}B_i\ast \bigast_{i\in I\setminus J}B_i$.
	\end{enumerate}
\end{definition}
We use the previous notion to define the concept of a \emph{canonical amalgamation class}. Intuitively, the following definition captures the key properties of $(\mathbf{FV},\sleq)$ in the setting of weak $ \mrm{AEC}$s. However, as we shall see in Section~\ref{Sec:application}, one can find examples of weak $ \mrm{AEC}$s that satisfy $\mrm{CP}(\mathbf{K},\ast)$ and that are not of the form $(\mathbf{FV},\sleq)$, i.e., they do not consist of the free algebras of some variety $\mathbf{V}$ with the relation of free factor with a free complementary factor defined in \ref{definition V-free product} (cf. the examples in Sections~\ref{Application:groups}-\ref{Application:ngons}). Since in this section we work with weak $ \mrm{AEC}$s with canonical joint embedding, we use the symbol $\sleq$ instead of $\preccurlyeq$ to denote the strong submodel relation.

\begin{definition}\label{context:amalgamation_class}
	Let $(\mathbf{K},\sleq)$ be a weak $\mrm{AEC}$ with $\mathrm{LS}(\mathbf{K}, \sleq)=\mu$, then we say that $(\mathbf{K},\sleq)$ is a  \emph{canonical amalgamation class} if it satisfies the following properties:
	\begin{enumerate}[(C1)]
		\item\label{C1}  $(\mathbf{K},\sleq)$ has canonical joint embedding (\ref{def:canonical_jep});
		\item\label{C2} if $A,B\in \mathbf{K}$ and $A\sleq B$, then there is $C\in \mathbf{K}$ such that $B=A\ast C$;
		\item\label{C3} for any ordinal $\alpha$ we have that $\bigast_{i<\alpha} B_i=\bigcup_{\beta<\alpha}(\bigast_{i<\beta} B_i)$.
\end{enumerate}
\end{definition}

\begin{remark}\label{remark:canonical_amalgam}
	We explain the usage of the name ``canonical amalgamation'' in the previous definition. The key point is that, in virtue of  Condition~\ref{context:amalgamation_class}(\hyperref[C2]{C2}), the canonical joint embedding property already entails the canonical amalgamation property, i.e., one can extend the operator $\ast$ from Definition~\ref{def:canonical_jep} to an operator $\ast_A$ satisfying analogous properties. The key point is that, if $A\sleq B$ and $A\sleq C$, then by Condition~\ref{context:amalgamation_class}(\ref{C2}) we have that $B=A\ast D_0$ and $C=A\ast D_1$ for some $D_0,D_1\in \mathbf{K}$ and it is then immediate to verify that $A\ast D_0\ast D_1$ satisfies the universal properties of the canonical amalgam over $A$, meaning that $A\ast D_0\ast D_1\cong B_0\ast_A B_1$. As we shall not need this in the rest of the paper we leave to the reader to verify these facts.  Additionally, we point out that Condition~\ref{context:amalgamation_class}(\hyperref[C3]{C3}) could be regarded as a restricted version of Smoothness, which holds also in cases (as for instance that of free algebras, as we shall elaborate in Section~\ref{Application:varieties}) where the full version of the Smoothness Axiom fails. 
\end{remark}

In the context of canonical amalgamation classes (i.e., \ref{context:amalgamation_class}), we can isolate the following version of the Construction Principle $\mrm{CP}(\mathbf{K},\ast)$. As the reader can readily verify, this is an immediate abstraction of the classical Construction Principle by Eklof, Mekler and Shelah from Definition~\ref{definition CP}, and amounts essentially to a specific failure of Smoothness in $(\mathbf{K},\sleq)$. Crucially, this version of CP is easier to verify in practice than $\mrm{CP}^\delta_{\kappa,\lambda}(\mathbf{K},\preccurlyeq)$ --- this is the main motivation behind it.

\begin{definition}\label{CP-AEC:coproducts}
	Let $(\mathbf{K},\sleq)$ be a canonical amalgamation class  (i.e., $(\mathbf{K},\sleq)$ is a weak $\mrm{AEC}$ which  satisfies \ref{context:amalgamation_class}(\hyperref[C1]{C1})-(\hyperref[C3]{C3}))  with $\mathrm{LS}(\mathbf{K}, \sleq)=\mu$. The Construction Principle $\mrm{CP}(\mathbf{K},\ast)$ is the following statement:
	\begin{enumerate}[(C4)]
		\item\label{C4} there are models $A_i \in \mathbf{K}_{\mu}$, for  all $i<\omega$, and $B\in \mathbf{K}_{\mu}$ such that:
		\begin{enumerate}[(a)]
			\item for all $i<\omega$, $A_i\sleq A_{i+1}$ and $A_i\sleq B$,
			\item $\bigcup_{i<\omega}A_i \not \pleq B$ (cf.~\ref{def:coherentisation}).
		\end{enumerate}
	\end{enumerate}
\end{definition}

\subsection{Strongly categorical amalgamation classes}\label{subsec:4.2}

In this section, we relate canonical amalgamation classes to the strongly categorical weak $ \mrm{AEC}$s from Definition~\ref{context:strong_categoricity}. We fix the following assumptions.

\begin{context}\label{assumptions:subsec_1}
	We fix in this section a canonical amalgamation class $(\mathbf{K},\sleq)$  with $\mathrm{LS}(\mathbf{K}, \sleq)=\mu$ and satisfying $\mrm{CP}(\mathbf{K},\ast)$ from \ref{CP-AEC:coproducts}, i.e., $(\mathbf{K},\sleq)$ satisfies Conditions~\ref{context:amalgamation_class}(\hyperref[C1]{C1})-(\hyperref[C3]{C3}) and Condition~\ref{CP-AEC:coproducts}(\hyperref[C4]{C4}). Additionally, we require that $(\mathbf{K},\sleq)$ satisfies the two additional conditions:
	\begin{enumerate}[(C5)]
		\item[(C5)]\label{C5} $\mathbf{K}$ is $\kappa^+$-categorical for all $\kappa\geq \mu$;
		\item[(C6)]\label{C6} for every $\kappa\geq \mu$ there is a universal model $M_\kappa\in \mathbf{K}_{\kappa}$ (cf.~\ref{def:universality}).
	\end{enumerate}
\end{context}

We notice that Condition~\ref{assumptions:subsec_1}(\hyperref[C6]{C6}) is apparently weaker than Condition~\ref{context:strong_categoricity}(\ref{strong_categoricity:two}). However, in the setting of canonical amalgamation  classes, the former entails the latter. We establish this in the next lemma, where we also use this fact to build suitable limit models. We first define some auxiliary structures.
	
\begin{definition}\label{def:auxiliary_models}
	Let $\kappa\geq \mu$ and let $M_\kappa\in \mathbf{K}_\kappa$ be the universal model of size $\kappa$ from \ref{assumptions:subsec_1}(\hyperref[C6]{C6}). We define $N_\kappa=\bigast_{i<\omega}M^i_\kappa$, where each $M^i_\kappa$ is a disjoint copy of $M_\kappa$.
\end{definition}

\noindent Given a structure $B\in \mathbf{K}$, it is quite straightforward to see that $B\ast M_\kappa$ is $(\mathbf{K}, \sleq)$-universal over $B$. In the context of canonical amalgamation classes we actually have more, i.e., $B\ast M_\kappa$ is $(\mathbf{K}, \sleq)$-universal over all $A\sleq B$. We establish this in the following lemma and we explain how to obtain $(\kappa,\omega)$-$(\mathbf{K}, \sleq)$-limit models. Recall that $M_\kappa$ is the universal model of size $\kappa$ given by \ref{assumptions:subsec_1}(\hyperref[C6]{C6}) and that $N_\kappa$ was defined in \ref{def:auxiliary_models} above.
	
\begin{lemma}\label{lemma:universality}\label{prop:auxiliary_limits}
	Let $\kappa\geq \mu$ and let $A,B\in \mathbf{K}_\kappa$ with $A\sleq B$, then:
	\begin{enumerate}[(1)]
		\item $B\ast M_\kappa$ is $(\mathbf{K}, \sleq)$-universal over $A$;
		\item $B\ast N_\kappa$ is $(\kappa,\omega)$-$(\mathbf{K}, \sleq)$-limit over $A$ (cf.~\ref{def:limits}).
	\end{enumerate}
\end{lemma}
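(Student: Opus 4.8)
The plan is to prove (1) and (2) by leveraging only the abstract properties of the canonical amalgamation operator $\ast$ (Conditions~\ref{context:amalgamation_class}(\hyperref[C1]{C1})-(\hyperref[C3]{C3})) together with the universality of $M_\kappa$ from \ref{assumptions:subsec_1}(\hyperref[C6]{C6}). For (1), let $A,B\in\mathbf{K}_\kappa$ with $A\sleq B$, and let $C\in\mathbf{K}$ with $A\sleq C$ and $|C|=|A|=\kappa$; the goal is to produce a strong embedding $f\colon C\to B\ast M_\kappa$ fixing $A$ pointwise. By Condition~\ref{context:amalgamation_class}(\hyperref[C2]{C2}), since $A\sleq C$ we may write $C=A\ast D$ for some $D\in\mathbf{K}$; note $|D|\le\kappa$. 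By universality of $M_\kappa$ there is a strong embedding $h\colon D\to M_\kappa$, so $h(D)\sleq M_\kappa$. Now use the functoriality of $\ast$ on isomorphisms and its monotonicity under $\sleq$ (Definition~\ref{def:canonical_jep}(b),(c)): the identity on $A$ together with $h$ gives a strong embedding $A\ast D\to A\ast h(D)$, and since $A\sleq B$ and $h(D)\sleq M_\kappa$ we get $A\ast h(D)\sleq B\ast M_\kappa$ by monotonicity; composing yields the desired $f\colon C=A\ast D\to B\ast M_\kappa$ with $f\restriction A=\mrm{id}_A$ and $f(C)\sleq B\ast M_\kappa$. (One must be slightly careful with the disjointness hypothesis required to form the relevant $\ast$-products; this is handled by replacing structures with isomorphic disjoint copies, as the operator depends only on isomorphism type by \ref{def:canonical_jep}(b).)

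For (2), the plan is to exhibit an explicit $\omega$-filtration of $B\ast N_\kappa$ witnessing that it is a $(\kappa,\omega)$-$(\mathbf{K},\sleq)$-limit over $A$. Recall $N_\kappa=\bigast_{i<\omega}M^i_\kappa$ with each $M^i_\kappa\cong M_\kappa$. Set $B_0=A$ and, for $i<\omega$, set $B_{i+1}=B\ast\bigast_{j\le i}M^j_\kappa$ — or more precisely, to start the chain at $A$, set $B_1 = B\ast M^0_\kappa$ and $B_{i+1}=B_i\ast M^i_\kappa$ for $i\ge 1$. By Condition~\ref{context:amalgamation_class}(\hyperref[C3]{C3}) (the continuity of $\bigast$), the union $\bigcup_{i<\omega}B_i$ equals $B\ast\bigast_{i<\omega}M^i_\kappa = B\ast N_\kappa$, and by associativity (\ref{def:canonical_jep}(d)) together with \ref{def:canonical_jep}(a) each $B_i\sleq B_{i+1}$, so the chain is a continuous $\sleq$-chain. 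Each $B_i$ has cardinality $\kappa$. It remains to check that $B_{i+1}$ is $(\mathbf{K},\sleq)$-universal over $B_i$: for $i\ge 1$ we have $B_{i+1}=B_i\ast M^i_\kappa$ with $B_i\sleq B_{i+1}$, so part~(1) (applied with the pair $B_i\sleq B_i$, i.e. $A:=B:=B_i$) gives that $B_i\ast M_\kappa$ is $(\mathbf{K},\sleq)$-universal over $B_i$; and for the first step, $B_1=B\ast M^0_\kappa=A\ast C_0\ast M^0_\kappa$ where $B=A\ast C_0$ by \ref{context:amalgamation_class}(\hyperref[C2]{C2}), which is again of the form ``($\sleq$-extension of $A$) $\ast\,M_\kappa$'', so part~(1) applies directly to show universality over $B_0=A$. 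Thus the chain $(B_i)_{i<\omega}$ satisfies clauses (i)-(iv) of Definition~\ref{def:limits}, proving that $B\ast N_\kappa$ is $(\kappa,\omega)$-$(\mathbf{K},\sleq)$-limit over $A$.

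The main obstacle I anticipate is purely bookkeeping rather than conceptual: the operator $\bigast$ is only defined on families with pairwise (or globally) empty intersection, so one cannot literally write $B_i\ast M^i_\kappa$ when $M^i_\kappa$ was already amalgamated into $B_i$ at an earlier stage. The fix is to fix at the outset pairwise-disjoint copies $M^i_\kappa$ ($i<\omega$) all disjoint from $B$, form $B\ast N_\kappa=B\ast\bigast_{i<\omega}M^i_\kappa$ once and for all, and then \emph{define} $B_{i}$ as the appropriate sub-$\ast$-product inside it using associativity \ref{def:canonical_jep}(d), rather than building the chain by iterated external amalgamation; continuity \ref{context:amalgamation_class}(\hyperref[C3]{C3}) then delivers that these $B_i$ exhaust $B\ast N_\kappa$. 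A secondary subtlety is that the universality statement in part~(1) must be invoked "up to isomorphism over $A$" (replacing an arbitrary $C$ with $A\ast D$ and a copy of $M_\kappa$ disjoint from $B$), but this is exactly what \ref{def:canonical_jep}(b) is designed to permit, so it costs nothing beyond careful wording.
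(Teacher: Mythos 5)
Your proof is correct and follows essentially the same route as the paper: decompose $C=A\ast D$ via Condition~\ref{context:amalgamation_class}(\hyperref[C2]{C2}), embed $D$ into $M_\kappa$ by universality, transport using the properties of $\ast$, and then obtain (2) by iterating (1) along the filtration $B\ast\bigast_{j\le i}M^j_\kappa$ together with Condition~\ref{context:amalgamation_class}(\hyperref[C3]{C3}). The only cosmetic difference is that you invoke Definition~\ref{def:canonical_jep}(b) and (c) directly, where the paper phrases the same step through the universal mapping property of the amalgam using the complementary factor $E$ of $A$ in $B$; both produce the same strong embedding over $A$.
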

\begin{proof}
	We first prove (1).	Since $A\sleq B$, we have that $B=A\ast E$ for some $E\in \mathbf{K}$. Suppose that we have  $A\sleq C$ and $|C|=|A|=\kappa$.  By Condition~\ref{context:amalgamation_class}(\hyperref[C2]{C2}) there is some $D\in \mathbf{K}_{\kappa}  $ such that $C=A\ast D$.  Since $M_\kappa$ is universal, it follows that there is a strong embedding $f_0:D\to M_\kappa$. Then, by the universal property of the amalgam, we obtain a strong embedding $f_1:A\ast D\to A\ast M_\kappa$ such that $f_1\restriction A =\mrm{id}_A$ and $f_1\restriction D =f_0$. Then, again by the universal mapping property of the amalgam, we obtain a strong embedding
	\[ f_2:A\ast D\ast E\to A\ast M_\kappa\ast E\]
	such that $f_2\restriction A =\mrm{id}_A$, $f_2\restriction E =\mrm{id}_E$ and $f_2\restriction D=f_0$. Since $C=A\ast D\sleq A\ast D\ast E$ and $A\ast M_\kappa\ast E=B\ast M_\kappa$, this shows that $C$ strongly embeds to $B\ast M_\kappa$ over $A$. We conclude that  $B\ast M_\kappa$ is $(\mathbf{K}, \sleq)$-universal over $A$.
	
	\smallskip
	\noindent We prove (2). Let $(M^i_\kappa)_{i<\omega}$ be $\omega$-many disjoint copies of $M_\kappa$. By Clause (1) we have that $B\ast M^0_\kappa$ is $(\mathbf{K}, \sleq)$-universal over $A$ and, moreover, for every $n<\omega$ the model $B\ast  \bigast_{i\leq n+1}M^i_\kappa$ is $(\mathbf{K}, \sleq)$-universal over $B \ast \bigast_{i\leq n}M^i_\kappa$. Since $\bigcup_{n<\omega}(B\ast  \bigast_{i<\omega}M^i_\kappa)=B\ast N_\kappa$, we conclude that $B\ast N_\kappa$ is $(\kappa,\omega)$-$(\mathbf{K}, \sleq)$-limit over $A$.
\end{proof}

\begin{proposition}\label{prop_1:amalg_class}
	Let $(\mathbf{K},\sleq)$ be as in \ref{assumptions:subsec_1}, then $(\mathbf{K},\sleq)$ is a strongly $\kappa^+$-categorical weak $\mrm{AEC}$ for every $\kappa\geq \mu$ (cf. Definition~\ref{def:auxiliary_models}). 
\end{proposition}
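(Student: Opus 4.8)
The plan is to verify the three clauses of Definition~\ref{context:strong_categoricity} for $(\mathbf{K},\sleq)$ at an arbitrary $\kappa\ge\mu$, the guiding principle being that the canonical amalgam $\ast$ hands us strong extensions for free via Definition~\ref{def:canonical_jep}(a), so that \emph{no appeal to Smoothness or Coherence is needed} --- which is exactly what one wants, since in the intended applications $(\mathbf{K},\sleq)$ fails both. Essentially all of the substantive work has already been packaged into Lemma~\ref{prop:auxiliary_limits}, so the proposition will reduce to short bookkeeping.

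Clause~\ref{context:strong_categoricity}(\ref{strong_categoricity:one}) is nothing other than Condition~\ref{assumptions:subsec_1}(\hyperref[C5]{C5}). For Clause~\ref{context:strong_categoricity}(\ref{no-nonsense}) I would fix $A\in\mathbf{K}_\kappa$; since $\sleq$ is a partial order it is reflexive, so $A\sleq A$, and the case $A=B$ of Lemma~\ref{prop:auxiliary_limits}(2) gives that $A\ast N_\kappa$ is a $(\kappa,\omega)$-$(\mathbf{K},\sleq)$-limit model over $A$ --- in particular $A\ast N_\kappa\in\mathbf{K}_\kappa$ and $A\sleq A\ast N_\kappa$. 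To see that this extension is proper, recall $N_\kappa=\bigast_{i<\omega}M^i_\kappa$ with the $M^i_\kappa$ pairwise disjoint copies of $M_\kappa$, which we may also take disjoint from $A$; then $M^0_\kappa\sleq A\ast N_\kappa$ by \ref{def:canonical_jep}(a), and $M^0_\kappa$ is nonempty, being of size $\kappa$, and disjoint from $A$, so $A\subsetneq A\ast N_\kappa$, as required.

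For Clause~\ref{context:strong_categoricity}(\ref{strong_categoricity:two}), given $A\in\mathbf{K}_\kappa$ I would take $B:=A\ast M_\kappa$ (with a fresh disjoint copy of $M_\kappa$). The case $A=B$ of Lemma~\ref{prop:auxiliary_limits}(1) says precisely that $A\ast M_\kappa$ is $(\mathbf{K},\sleq)$-universal over $A$, so the only remaining point is to certify $B\in\mathbf{K}_\kappa$. For this I would invoke infinitary associativity \ref{def:canonical_jep}(d) to split $A\ast N_\kappa=(A\ast M^0_\kappa)\ast\bigast_{1\le i<\omega}M^i_\kappa$, so that $A\ast M_\kappa\cong A\ast M^0_\kappa\sleq A\ast N_\kappa$ by \ref{def:canonical_jep}(a) and (b); since $A\ast N_\kappa$ is a $(\kappa,\omega)$-limit and hence of size $\kappa$ (Definition~\ref{def:limits}), while $A\le A\ast M_\kappa$ forces $|A\ast M_\kappa|\ge\kappa$, we conclude $|A\ast M_\kappa|=\kappa$. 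This finishes all three clauses.

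The one point that needs care is precisely this cardinality bookkeeping: a priori the canonical amalgam of two structures of size $\kappa$ could be larger, so one must make sure that the models actually in play lie in $\mathbf{K}_\kappa$ rather than grow. But this is exactly what Lemma~\ref{prop:auxiliary_limits} already delivers, by exhibiting the relevant amalgams as honest $(\kappa,\omega)$-limit models; so once that lemma is in hand, the present proposition is a routine verification with no further obstacle.
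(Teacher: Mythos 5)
Your proof is correct and follows essentially the same route as the paper: clause~\ref{context:strong_categoricity}(1) is read off from Condition (\hyperref[C5]{C5}), clause~\ref{context:strong_categoricity}(3) from Lemma~\ref{prop:auxiliary_limits}, and clause~\ref{context:strong_categoricity}(2) from a proper $\sleq$-extension built via the canonical amalgam. The only (minor) difference is that the paper witnesses clause (2) with $A\ast B$ for a disjoint isomorphic copy $B$ of $A$, asserting $A\ast B\in\mathbf{K}_\kappa$ outright, whereas you route the cardinality and membership bookkeeping through the $(\kappa,\omega)$-limit $A\ast N_\kappa$ of Lemma~\ref{prop:auxiliary_limits}(2) --- if anything a slightly more careful treatment of the size point you rightly flag.
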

\begin{proof}
	Clearly,  Condition~\ref{context:strong_categoricity}(\ref{strong_categoricity:one}) follows immediately from Condition~\ref{assumptions:subsec_1}(\hyperref[C5]{C5}). Condition~\ref{context:strong_categoricity}(\ref{strong_categoricity:three}) follows by noticing that for $\kappa\geq \aleph_0$ we have that if $A,B\in\mathbf{K}_\kappa$ are isomorphic disjoint models, then $A\ast B \in\mathbf{K}_\kappa$, $A\subsetneq A\ast B$ and $A\sleq A\ast B$. Condition~\ref{context:strong_categoricity}(\ref{strong_categoricity:two}) follows from Lemma~\ref{prop:auxiliary_limits}.
\end{proof}
	
The key rationale behind $\mrm{CP}(\mathbf{K},\ast)$ is that it entails  $\mrm{CP}_{\kappa,\omega}(\mathbf{K},\sleq)$ for all $\kappa\geq \mu$. In contrast to  $\mrm{CP}^\delta_{\kappa,\lambda}(\mathbf{K},\sleq)$, the sequence of models provided by Condition~\ref{CP-AEC:coproducts}(\hyperref[C4]{C4}) does not necessarily consist of limit models, making it simpler to find such configurations in concrete cases. Crucially, the existence of canonical amalgams allows one to lift the sequence $(A_i)_{i<\omega}$ and $B$ from $\mathbf{K}_{\mu}$ to obtain a sequence of limit models in \emph{any} cardinal $\kappa\geq \mu$. We establish this in the next proposition. 

\begin{lemma}\label{prop:build_large_CP}
	For every cardinal $\kappa\geq \mu$ there is a $\sleq$-chain $(A'_i)_{i<\omega}$ in $\mathbf{K}_\kappa$ and $B'\in\mathbf{K}_\kappa$ such that:
	\begin{enumerate}[(a)]
		\item $A'_{i+1}$ is a $(\kappa,\omega)$-$(\mathbf{K}, \sleq)$-limit model over $A'_i$ for all $i<\omega$;			
		\item  $B'$ is a $(\kappa,\omega)$-$(\mathbf{K}, \sleq)$-limit model over $A'_i$ for all $i<\omega$;
		\item $\bigcup_{i<\omega}A'_i \not \pleq B'$.
	\end{enumerate}
\end{lemma}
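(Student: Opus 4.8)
The plan is to fatten the size-$\mu$ configuration produced by $\mrm{CP}(\mathbf{K},\ast)$ with copies of $N_\kappa$: Lemma~\ref{prop:auxiliary_limits}(2) then turns the free-factor relations into $(\kappa,\omega)$-$(\mathbf{K},\sleq)$-limit relations, and the only real work is to see that the failure of $\pleq$ survives the fattening.

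First I would invoke $\mrm{CP}(\mathbf{K},\ast)$ (\ref{CP-AEC:coproducts}) to fix $(A_i)_{i<\omega}$, $B\in\mathbf{K}_\mu$ with $A_i\sleq A_{i+1}$, $A_i\sleq B$ and $A:=\bigcup_i A_i\not\pleq B$ (so $A\in\mathbf{K}_\mu$). By \ref{context:amalgamation_class}(\hyperref[C2]{C2}) write $A_{i+1}=A_i\ast D_i$ and $B=A_i\ast E_i$; by (\hyperref[C3]{C3}), $A=A_0\ast\bigast_{j<\omega}D_j$. Now take pairwise disjoint copies $(N^{(n)})_{n<\omega}$ of $N_\kappa$, disjoint from $B$, set $P:=\bigast_{n<\omega}N^{(n)}$, and put $A'_i:=A_i\ast N^{(0)}\ast\cdots\ast N^{(i)}$ and $B':=B\ast P$, all of size $\kappa$. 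Using associativity of $\ast$ and (\hyperref[C2]{C2})--(\hyperref[C3]{C3}) one checks $A'_{i+1}=(A'_i\ast D_i)\ast N^{(i+1)}$ (hence $A'_i\sleq A'_{i+1}$), $\bigcup_i A'_i=A\ast P=:U$, and $B'=A'_i\ast(E_i\ast\bigast_{j>i}N^{(j)})$ (hence $A'_i\sleq B'$ and $U\le B'$). Clause (a) then follows since $A'_i\sleq A'_i\ast D_i\in\mathbf{K}_\kappa$, so by Lemma~\ref{prop:auxiliary_limits}(2) (with $N^{(i+1)}$ a copy of $N_\kappa$) the model $A'_{i+1}=(A'_i\ast D_i)\ast N^{(i+1)}$ is a $(\kappa,\omega)$-$(\mathbf{K},\sleq)$-limit over $A'_i$. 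Clause (b) is the same: $B'=(A'_i\ast E_i\ast N^{(i+1)})\ast\bigast_{j>i+1}N^{(j)}$, with $A'_i\sleq A'_i\ast E_i\ast N^{(i+1)}\in\mathbf{K}_\kappa$ and $\bigast_{j>i+1}N^{(j)}$ a copy of $N_\kappa$, so Lemma~\ref{prop:auxiliary_limits}(2) makes $B'$ a $(\kappa,\omega)$-$(\mathbf{K},\sleq)$-limit over $A'_i$. Both of these are essentially rearrangement of free factors.

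The substantive step is clause (c): $U\not\pleq B'$. Suppose otherwise, with witness $C\in\mathbf{K}$, $|C|=|U|+|B'|+\mu=\kappa$, and $f:B'\to C$ fixing $U$, $U\sleq C$, $f(B')\sleq C$. As $P\subseteq U$ and $A\subseteq U$, $f$ fixes $P$ and $A$, so $f(B')=f(B)\ast P$ with $A\subseteq f(B)$; from $A\sleq U\sleq C$ and $f(B)\sleq f(B')\sleq C$ one gets $A\sleq C$ and $f(B)\sleq C$, and by (\hyperref[C2]{C2}), $C=A\ast P\ast Q=f(B)\ast P\ast R$ for suitable $Q,R$. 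I would now manufacture from this a size-$\mu$ witness of $A\pleq B$, contradicting $\mrm{CP}(\mathbf{K},\ast)$: using the L\"owenheim--Skolem--Tarski axiom together with (\hyperref[C2]{C2}), cut $R$ down to some $W_0\sleq R$ of size $\le\mu$ containing the "$R$-part" of $f(B)$ and set $C_0:=f(B)\ast W_0$; then $C_0\in\mathbf{K}_\mu$, $C_0\sleq C$ and $f(B)\sleq C_0$ come for free, and one uses the canonical-amalgam decomposition $C=A\ast P\ast Q$ — together with the fact that $N_\kappa$ absorbs any structure of size $\le\kappa$ (Lemma~\ref{prop:auxiliary_limits}, Conditions~\ref{assumptions:subsec_1}(\hyperref[C5]{C5})/(\hyperref[C6]{C6})) — to also arrange $A\sleq C_0$. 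With $h:=f\restriction B$ one then has $h:B\to C_0$ fixing $A$, $h(B)=f(B)\sleq C_0$ and $A\sleq C_0$, i.e.\ $A\pleq B$, the desired contradiction.

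I expect this last cut-down to be the crux of the whole lemma: one must descend from a size-$\kappa$ witness to a size-$\mu$ witness while keeping \emph{both} $A$ and the copy $f(B)$ of $B$ strong inside it, and there is no Coherence axiom to lean on. The saving grace is that $\pleq$ is precisely the coherentisation of $\sleq$, so it suffices to realise the configuration inside \emph{some} suitably small canonical amalgam; the operator $\ast$ and the absorbing model $N_\kappa$ are exactly what allow one to replace the large $C$ by such a small amalgam without destroying strongness. Once (a)--(c) are in hand, the lemma is complete.
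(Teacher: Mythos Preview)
Your construction and the verification of (a) and (b) are essentially the paper's: fatten each $A_n$ and $B$ by successive copies of $N_\kappa$ and apply Lemma~\ref{prop:auxiliary_limits}(2). The only cosmetic difference is that for (b) the paper reaches $B'$ as an $\omega$-chain of one-step limits over $A'_n$ and then invokes Corollary~\ref{prop:limitoflimit}, whereas you observe directly that $\bigast_{j>i+1}N^{(j)}\cong N_\kappa$ and apply Lemma~\ref{prop:auxiliary_limits}(2) once.

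For (c) the paper is much terser than you. It simply notes that $A\sleq A'$ and $B\sleq B'$ by construction, so if some $C$ witnesses $A'\pleq B'$ then by transitivity $A\sleq C$ and $B\sleq C$, and the paper declares this a contradiction with the choice of $A,B$ and stops. You have put your finger on a genuine subtlety this one-liner does not address: that $C$ has size $\kappa$, whereas $A\not\pleq B$ in $\mrm{CP}(\mathbf{K},\ast)$ only forbids witnesses of size $|A|+|B|+\mu=\mu$ (Definition~\ref{def:coherentisation}). Your cut-down strategy is the natural way to close this, but as written it is not a proof: the step ``use $N_\kappa$ absorbs\dots\ to also arrange $A\sleq C_0$'' is a promissory note, and without Coherence there is no general reason that passing to a small $\sleq$-substructure of $C$ containing $A\cup f(B)$ should preserve either $A\sleq\,-$ or $f(B)\sleq\,-$. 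So you have correctly located a point the paper glosses over, but you have not closed it either; making the descent from size $\kappa$ to size $\mu$ rigorous --- or observing that in every concrete application of Section~\ref{Sec:application} one in fact proves the size-unrestricted statement ``no $C$ whatsoever with $A\sleq C$ and $B\sleq C$'', which is what the paper's transitivity argument really needs --- is still work to be done.
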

\begin{proof}
	Let  $(A_i)_{i<\omega}$ and $B$ be the structures of size $\mu$ given in the statement of $\mrm{CP}(\mathbf{K},\ast)$ (cf.~\ref{CP-AEC:coproducts}(\hyperref[C4]{C4})). Let $A=\bigcup_{i<\omega}A_i$ and recall that $A\not\pleq B$. We define $(A'_i)_{i<\omega}$ and $B'$ in $\mathbf{K}_{\kappa}$ so to satisfy the statement of the proposition.  Let $(N^i_\kappa)_{i<\omega}$ be a disjoint sequence of models all isomorphic to $N_\kappa$ (recall \ref{def:auxiliary_models}) and also disjoint from $B$ and each $A_i$. For every $n<\omega$ we let $A'_n=A_n \ast \bigast_{i\leq n}N^i_\kappa$, we let $A'=A \ast \bigast_{i<\omega}N^i_\kappa$ and $B'=B\ast \bigast_{i<\omega}N^i_\kappa$. 
	\newline First, notice that we have that $A'_n= A_{n} \ast \bigast_{i\leq n}N^i_\kappa$ and $A'_{n+1}= A_{n+1} \ast \bigast_{i\leq n+1}N^i_\kappa$. Since we have by assumption that $A_n\sleq A_{n+1}$ and clearly $\bigast_{i\leq n}N^i_\kappa\sleq \bigast_{i\leq n+1}N^i_\kappa$, it follows immediately from Definition~\ref{def:canonical_jep} that $A'_n\sleq A'_{n+1} $. Moreover, since we also have that $ A'_{n}\sleq A_{n+1} \ast \bigast_{i\leq n}N^i_\kappa$, we obtain from Lemma~\ref{prop:auxiliary_limits} that $A'_{n+1}$ is a $(\kappa,\omega)$-$(\mathbf{K}, \sleq)$-limit model over $A'_n$. Similarly, we have that $A'_n\sleq B\ast \bigast_{i\leq n}N^i_\kappa$, and so by Lemma~\ref{prop:auxiliary_limits} we have that $ B\ast \bigast_{i\leq n+1}N^i_\kappa $ is $(\mathbf{K}, \sleq)$-limit over $A'_n$. Also by Lemma~\ref{prop:auxiliary_limits}, we have that $ B\ast \bigast_{i\leq j+1}N^i_\kappa $ is $(\mathbf{K}, \sleq)$-limit over $ B\ast \bigast_{i\leq j}N^i_\kappa $ for all $j<\omega$. Since  $\mrm{cf}(\omega)=\omega$ it then follows from Corollary~\ref{prop:limitoflimit} that  $B'$ is also a $(\kappa,\omega)$-$(\mathbf{K}, \sleq)$-limit model over $A'_n$. 
	\newline Finally, it remains to show that $\bigcup_{i<\omega}A'_i \not \pleq  B'$. First, notice that $A'=A \ast \bigast_{i<\omega}N^i_\kappa=\bigcup_{i<\omega}A'_i $ and that by construction we have $A\sleq A'$ and $B\sleq B'$. If there is some $C\in \mathbf{K}$ such that $A'\sleq C$ and $B'\sleq C$, by transitivity we obtain that $A\sleq C$ and $B\sleq C$, contradicting our choice of the models $A$ and $B$.
\end{proof}

\begin{proposition}\label{prop_2:amalg_class}
	Let $(\mathbf{K},\sleq)$ be as in \ref{assumptions:subsec_1}, then for all $\kappa\geq \mu$ the class $(\mathbf{K},\sleq)$ is a strongly $\kappa^+$-categorical weak $\mrm{AEC}$ satisfying $\mrm{CP}_{\kappa,\omega}(\mathbf{K},\sleq)$.
\end{proposition}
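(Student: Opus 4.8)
The plan is essentially to package together the two results already established in this subsection. First I would observe that the assertion ``$(\mathbf{K},\sleq)$ is a strongly $\kappa^+$-categorical weak $\mrm{AEC}$ for every $\kappa\geq\mu$'' is precisely the content of Proposition~\ref{prop_1:amalg_class}, so nothing remains to be done there; the substance of the present proposition lies entirely in verifying $\mrm{CP}_{\kappa,\omega}(\mathbf{K},\sleq)$ for each $\kappa\geq\mu$.

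For that, I would fix $\kappa\geq\mu$ and apply Lemma~\ref{prop:build_large_CP}, which supplies a $\sleq$-chain $(A'_i)_{i<\omega}$ in $\mathbf{K}_\kappa$ together with $B'\in\mathbf{K}_\kappa$ satisfying clauses (a)--(c) of that lemma. The remaining step is a check that these data witness Condition~\ref{CP-AEC:kappa_edit}(\hyperref[cond:CP-general]{4}) with parameters $\lambda=\delta=\omega$. This amounts to observing: (i) the instance is well-formed, since $\omega$ is regular (cf.~\ref{remark:regular}), $\omega\leq\kappa$ because $\kappa\geq\mu\geq\aleph_0$, and $\omega<\kappa^+$; (ii) clauses (a) and (b) of Lemma~\ref{prop:build_large_CP} --- that $A'_{i+1}$ is a $(\kappa,\omega)$-$(\mathbf{K},\sleq)$-limit over $A'_i$, and that $B'$ is a $(\kappa,\omega)$-$(\mathbf{K},\sleq)$-limit over $A'_i$ for all $i<\omega$ --- are verbatim clauses (a) and (b) of \ref{CP-AEC:kappa_edit}(\hyperref[cond:CP-general]{4}) for $\lambda=\delta=\omega$; and (iii) clause (c), $\bigcup_{i<\omega}A'_i\not\pleq B'$, is clause (c) of \ref{CP-AEC:kappa_edit}(\hyperref[cond:CP-general]{4}) once one unwinds the notation: $\pleq$ is the coherentisation $\sleq^{\mrm{c}}$ of $\sleq$ in the sense of Definition~\ref{def:coherentisation}, which is exactly the relation written $\precleq$ in the generic formulation \ref{CP-AEC:kappa_edit}. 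Hence $(\mathbf{K},\sleq)$ satisfies $\mrm{CP}^{\omega}_{\kappa,\omega}(\mathbf{K},\sleq)$, which by the abbreviation fixed at the end of Definition~\ref{CP-AEC:kappa_edit} is precisely $\mrm{CP}_{\kappa,\omega}(\mathbf{K},\sleq)$. Combining this with Proposition~\ref{prop_1:amalg_class} gives the proposition for every $\kappa\geq\mu$.

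I do not expect a genuine obstacle here: all the mathematical work has been front-loaded into Proposition~\ref{prop_1:amalg_class} (which establishes the strong categoricity, the no-maximal-model clause and the existence of universal-over models from \ref{context:strong_categoricity}) and into Lemma~\ref{prop:build_large_CP} (which produces, via canonical amalgamation with copies of $N_\kappa$, the required non-smooth configuration of $(\kappa,\omega)$-limit models in an arbitrary cardinal $\kappa\geq\mu$). The one point needing care is the purely notational reconciliation between the canonical-amalgamation formulation ($\sleq$, $\pleq$, $\mrm{CP}(\mathbf{K},\ast)$, index $\mrm{CP}_{\kappa,\omega}$) and the generic weak-$\mrm{AEC}$ formulation ($\preccurlyeq$, $\precleq$, $\mrm{CP}^{\delta}_{\kappa,\lambda}$), which is routine.
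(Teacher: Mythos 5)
Your proposal is correct and matches the paper's argument, which simply cites Proposition~\ref{prop_1:amalg_class} together with Lemma~\ref{prop:build_large_CP}; your additional notational unwinding (identifying $\pleq$ with the coherentisation and checking $\lambda=\delta=\omega$ is an admissible parameter choice) is exactly the routine verification left implicit there.
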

\begin{proof}
	Immediate from  Proposition~\ref{prop_1:amalg_class} and Lemma~\ref{prop:build_large_CP}.
\end{proof}

Finally, putting together the results from this section and the previous one, we derive the following results.

\begin{theorem}[$\mrm{ZFC}$ + $V=L$]\label{theorem:CP}\label{main:theorem}
	Let $(\mathbf{K},\sleq)$ be a weak $ \mrm{AEC}$ satisfying the assumptions from \ref{assumptions:subsec_1}, then for all $\kappa\geq \mu$ there is an $\mathfrak{L}_{\infty,\kappa^+}$-free structure $M\notin \mathbf{K}$ of size $\kappa^+$. In particular, $\mathbf{K}$ is not  axiomatisable in $\mathfrak{L}_{\infty,\infty}$.
\end{theorem}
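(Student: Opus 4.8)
The plan is to chain together the results already established, so there is essentially nothing new to prove beyond bookkeeping. Fix $\kappa\geq\mu$. First I would apply Proposition~\ref{prop_2:amalg_class}: since $(\mathbf{K},\sleq)$ satisfies the hypotheses of \ref{assumptions:subsec_1}, it is a strongly $\kappa^+$-categorical weak $\mrm{AEC}$ (in the sense of \ref{context:strong_categoricity}) which satisfies $\mrm{CP}_{\kappa,\omega}(\mathbf{K},\sleq)$, i.e.\ the instance $\lambda=\delta=\omega$ of the Construction Principle from \ref{CP-AEC:kappa_edit}. This places us precisely in the hypotheses of Theorem~\ref{thm:2}, provided the set-theoretic assumption $E^+(\kappa^+)$ is available.

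Next I would discharge that assumption. For every $\kappa\geq\mu$ the cardinal $\kappa^+$ is a successor, hence regular and not weakly compact; so under $V=L$ the statement $E^+(\kappa^+)$ holds by Fact~\ref{existence:stationary}(2). (When $\mu=\aleph_0$ the case $\kappa=\aleph_0$ is in fact provable in plain $\mrm{ZFC}$ via Fact~\ref{existence:stationary}(1), but we do not need this refinement here.) Theorem~\ref{thm:2} then yields, for each $\kappa\geq\mu$, an $\mathfrak{L}_{\infty,\kappa^+}$-free structure $M_\kappa$ of size $\kappa^+$ with $M_\kappa\not\cong F(\kappa^+)$.

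It then remains only to note that $M_\kappa\notin\mathbf{K}$: by Condition~\ref{assumptions:subsec_1}(\hyperref[C5]{C5}) the class $\mathbf{K}$ is $\kappa^+$-categorical, so $F(\kappa^+)$ is the unique model in $\mathbf{K}$ of size $\kappa^+$ up to isomorphism; since $|M_\kappa|=\kappa^+$ and $M_\kappa\not\cong F(\kappa^+)$, we must have $M_\kappa\notin\mathbf{K}$. This proves the first assertion. For the second, suppose toward a contradiction that $\mathbf{K}$ is axiomatised in $\mathfrak{L}_{\infty,\infty}(L)$, say $\mathbf{K}=\mrm{Mod}(\varphi)$ with $\varphi$ an $\mathfrak{L}_{\infty,\lambda}(L)$-sentence for some infinite cardinal $\lambda$. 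Set $\kappa=\max(\mu,\lambda)$; then $\kappa\geq\mu$ and $\lambda<\kappa^+$, so every $\mathfrak{L}_{\infty,\lambda}$-sentence is an $\mathfrak{L}_{\infty,\kappa^+}$-sentence, in particular $\varphi$ is. Since $F(\kappa^+)\in\mathbf{K}$ we have $F(\kappa^+)\models\varphi$, and since $M_\kappa\equiv_{\infty,\kappa^+}F(\kappa^+)$ (by $\mathfrak{L}_{\infty,\kappa^+}$-freeness) we get $M_\kappa\models\varphi$, whence $M_\kappa\in\mathbf{K}$, contradicting the previous step.

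I do not expect any genuine obstacle: all the real work has already been carried out in Theorem~\ref{thm:2} (and, upstream of it, in Lemma~\ref{prop:build_large_CP} and Proposition~\ref{prop:uniqueness_limits}). The only two points that need a moment of care are that $E^+(\kappa^+)$ is indeed available for \emph{all} $\kappa\geq\mu$ under $V=L$ --- which is automatic, since $\kappa^+$ is always a successor cardinal --- and that, in the non-axiomatisability argument, $\kappa$ be chosen large enough to absorb both the L\"owenheim-Skolem number $\mu$ and the index $\lambda$ of the hypothetical axiom, so that the $\mathfrak{L}_{\infty,\kappa^+}$-free counterexample actually lies within reach of $\varphi$.
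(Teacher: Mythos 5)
Your proposal is correct and follows essentially the same route as the paper: the paper's proof is exactly the combination of Proposition~\ref{prop_2:amalg_class}, Theorem~\ref{thm:2}, and the fact that $V=L$ yields $E^+(\kappa^+)$ via Fact~\ref{existence:stationary}, with the non-axiomatisability in $\mathfrak{L}_{\infty,\infty}$ left as an immediate consequence. Your extra bookkeeping (deducing $M_\kappa\notin\mathbf{K}$ from $\kappa^+$-categoricity and choosing $\kappa\geq\max(\mu,\lambda)$ to absorb the index of a hypothetical axiom) just spells out details the paper leaves implicit.
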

\begin{proof}
	The first part of the claim follows from Proposition~\ref{prop_2:amalg_class} together with Theorem~\ref{thm:2} and the fact that $V=L$ entails $E^+(\kappa^+)$ (see Fact~\ref{existence:stationary}). The second part of the statement follows immediately from the first. 
\end{proof}

\noindent  For the case when the L\"owenheim-Skolem number is $\aleph_0$, we additionally obtain the following non-axiomatisability result in $\mrm{ZFC}$.

\begin{corollary} \label{main:corollary}
Let $(\mathbf{K},\sleq)$ be as in \ref{assumptions:subsec_1} with $\mathrm{LS}(\mathbf{K}, \sleq)=\aleph_0$, then there is an $\mathfrak{L}_{\infty,\omega_1}$-free structure $M\notin \mathbf{K}$ of size $\aleph_1$. Thus, $\mathbf{K}$ is not  axiomatisable in $\mathfrak{L}_{\infty,\omega_1}$.
\end{corollary}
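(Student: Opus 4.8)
The plan is to reduce directly to Theorem \ref{main:theorem} (equivalently, to Theorem \ref{thm:2} via Proposition \ref{prop_2:amalg_class}) in the special case $\mu = \aleph_0$, $\kappa = \aleph_0$, $\kappa^+ = \aleph_1$. First I would note that $(\mathbf{K},\sleq)$ satisfies all the hypotheses fixed in \ref{assumptions:subsec_1}: it is a canonical amalgamation class obeying (\hyperref[C1]{C1})--(\hyperref[C3]{C3}), it satisfies $\mrm{CP}(\mathbf{K},\ast)$, it is $\kappa^+$-categorical for all $\kappa \geq \aleph_0$ by (\hyperref[C5]{C5}), and it has $(\mathbf{K},\sleq)$-universal models in all $\kappa \geq \aleph_0$ by (\hyperref[C6]{C6}) --- here we are simply instantiating $\mathrm{LS}(\mathbf{K},\sleq) = \mu = \aleph_0$. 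By Proposition \ref{prop_2:amalg_class} applied with $\kappa = \aleph_0$, the class $(\mathbf{K},\sleq)$ is a strongly $\aleph_1$-categorical weak $\mrm{AEC}$ satisfying $\mrm{CP}_{\aleph_0,\omega}(\mathbf{K},\sleq)$ (which in the notation of \ref{CP-AEC:kappa_edit} is $\mrm{CP}^{\omega}_{\aleph_0,\omega}(\mathbf{K},\sleq)$, since $\lambda = \delta = \omega$ is legitimate as $\omega$ is regular and $\omega < \aleph_1 = \kappa^+$).

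The crucial point is the set-theoretic input: Theorem \ref{thm:2} requires the hypothesis $E^+(\kappa^+)$, and by Fact \ref{existence:stationary}(1) together with the remark that $E^+(\aleph_1)$ coincides with $E(\aleph_1)$ (there are no regular cardinals $\lambda < \aleph_1$ other than $\omega$, so the single instance $S_\omega$ is exactly what $E(\omega_1)$ provides), we have that $E^+(\aleph_1)$ holds outright in $\mrm{ZFC}$. Hence no extra set-theoretic assumption is needed when $\kappa^+ = \aleph_1$. Applying Theorem \ref{thm:2} with $\kappa = \aleph_0$, $\lambda = \delta = \omega$, we obtain an $\mathfrak{L}_{\infty,\omega_1}$-free structure $M$ of size $\aleph_1$ with $M \not\cong F(\aleph_1)$; by the construction in that proof, in fact $M \notin \mathbf{K}$ (the point being that $M^X = \bigcup_{\alpha<\aleph_1} M^X_\alpha$ is witnessed to lie outside $\mathbf{K}$ by the non-reflecting stationary set $X$, since the family $(M^X_\alpha)_{\alpha\notin X}$ is a $\sleq$-chain whose union, were it in $\mathbf{K}$, would force $M^X_\alpha \precleq M^X_{\alpha+1}$ for the relevant $\alpha \in X$, contradicting clause (d)).

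Finally, for the non-axiomatisability conclusion: since $M \equiv_{\infty,\omega_1} F(\aleph_1)$ and $F(\aleph_1) \in \mathbf{K}$ while $M \notin \mathbf{K}$, the class $\mathbf{K}$ cannot be the class of models of any $\mathfrak{L}_{\infty,\omega_1}$-sentence (nor of any set of such sentences, nor of any $\mathfrak{L}_{\infty,\omega_1}$-theory), as membership in $\mathbf{K}$ would then be preserved under $\equiv_{\infty,\omega_1}$-equivalence --- this is exactly the last sentence of Theorem \ref{thm:2}. I do not anticipate a genuine obstacle here: the only subtlety is making explicit that $E^+(\aleph_1)$ is a theorem of $\mrm{ZFC}$ (so that the conclusion is not merely conditional on $V=L$), which is precisely Fact \ref{existence:stationary}(1) combined with the observation about $\lambda = \omega$ being the unique relevant regular cardinal below $\aleph_1$.
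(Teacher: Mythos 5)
Your proposal is correct and follows essentially the same route as the paper: instantiate Proposition~\ref{prop_2:amalg_class} and Theorem~\ref{thm:2} (equivalently Theorem~\ref{theorem:CP}) at $\kappa=\mu=\aleph_0$, observing that $E^+(\aleph_1)$ coincides with $E(\aleph_1)$ and is a theorem of $\mrm{ZFC}$, so no appeal to $V=L$ is needed. The extra details you supply (why $M\notin\mathbf{K}$ via $\aleph_1$-categoricity and why $\equiv_{\infty,\omega_1}$-invariance rules out axiomatisability) are exactly what the paper leaves implicit.
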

\begin{proof}
	Immediate from \ref{theorem:CP} and the fact that $E^+(\aleph_1)$ is a theorem of ZFC.
\end{proof}

\section{Applications of $\mrm{CP}(\mathbf{K},\ast)$}\label{Sec:application}

In the previous section we have introduced the principle $\mrm{CP}(\mathbf{K},\ast)$, which generalises the Construction Principle from Eklof, Mekler and Shelah to the abstract setting of weak $ \mrm{AEC}$s with canonical amalgamation. We show that such a generalisation does not carry only an intrinsic theoretical interest, but it also allows for novel non-axiomatisability results outside of the original scope of algebraic varieties.  On the one hand, we show in Section~\ref{Application:varieties} that in the class of free algebras $(\mathbf{FV},\sleq)$ the standard Construction Principle from \ref{definition CP} entails our principle $\mrm{CP}(\mathbf{K},\ast)$. On the other hand, we exhibit some examples of applications of $\mrm{CP}(\mathbf{K},\ast)$ which are not of the form $(\mathbf{FV},\sleq)$. This shows that $\mrm{CP}(\mathbf{K},\ast)$ is strictly wider in scope than the original Construction Principle, and that it makes possible to derive novel non-axiomatisability results. First, we consider in  Section~\ref{Application:groups} the class of free products of cyclic groups of some fixed order and in Section~\ref{Application:tfab} the class of direct sums of torsion-free abelian groups of rank~$1$ which are different from $\mathbb{Q}$. These classes provide examples of weak $\mrm{AEC}$s of algebras that do not form a variety, and thus do not directly fall under the scope of the original CP (we elaborate on this further in \ref{Application:groups} and \ref{Application:tfab}). Then, in Section~\ref{Application:steiner} and Section~\ref{Application:ngons} we study two applications of  $\mrm{CP}(\mathbf{K},\ast)$ in incidence geometry, i.e., we consider the classes of free $(k,n)$-Steiner systems and of free generalised $n$-gons. 

More precisely, we show in the following section that each of the classes $(\mathbf{K},\sleq)$ mentioned above satisfies Conditions~\ref{context:amalgamation_class}(\hyperref[C1]{C1})-(\hyperref[C3]{C3}), Condition~\ref{CP-AEC:coproducts}(\hyperref[C4]{C4}) and also Conditions~\ref{assumptions:subsec_1}(\hyperref[C5]{C5})-(\hyperref[C6]{C6}). We can thus apply our results from Section~\ref{CP:coproduct_case}, in particular Theorem~\ref{main:theorem} and Corollary~\ref{main:corollary}, to all these contexts. We thus derive, for each of the classes $\mathbf{K}$ above, the existence of an $\mathfrak{L}_{\infty,\omega_1}$-free structure $M\notin \mathbf{K}$  of size $\aleph_1$ and, under $V=L$, for every $\kappa\geq \aleph_0$, the existence of an $\mathfrak{L}_{\infty,\kappa^+}$-free structure $M\notin \mathbf{K}$ of size $\kappa^+$. It follows in $\mrm{ZFC}$ that all the classes mentioned above are not axiomatisable in $\mathfrak{L}_{\infty,\omega_1}$ and, under $V=L$, that they are not axiomatisable in $\mathfrak{L}_{\infty,\infty}$. Additionally, Theorem~\ref{prop:free_algebras} shows that our work subsumes the classical results of Eklof, Mekler and Shelah.

\subsection{Free algebras in varieties}\label{Application:varieties}

We show in this section that the classical version of the Construction Principle by Eklof, Mekler and Shelah entails $\mrm{CP}(\mathbf{K},\ast)$. Namely, we show that if $\mathbf{V}$ is a variety of algebras which satisfies the Construction Principle from Def. \ref{definition CP}, then its subclass of free algebras $\mathbf{FV}$ with the relation of free factor with a free complementary factor (cf.~\ref{definition V-free product}) satisfies Conditions~\ref{context:amalgamation_class}(\hyperref[C1]{C1})-(\hyperref[C3]{C3}), \ref{CP-AEC:coproducts}(\hyperref[C4]{C4}) and \ref{assumptions:subsec_1}(\hyperref[C5]{C5})-(\hyperref[C6]{C6}). We fix the context of the present section and refer the reader to \ref{definition V-free product} for the definition of the free product $\ast$ in varieties of algebras and for our notational conventions concerning free algebras. Also, recall that, by Remark~\ref{fact:free_product_varieties}, for any family of disjoint free algebras $A_i\in \mathbf{FV}$, for $i<\kappa$, we have a free product $\bigast_{i<\kappa}A_i\in \mathbf{FV}$.

\begin{context}\label{context:free_varieties}
	In this section we fix a variety of algebra $\mathbf{V}$ in a signature $L$ of size $\mu$ and we denote by $\mathbf{FV}$ its subclass of free $\mathbf{V}$-algebras. For $A,B\in \mathbf{FV}$ we write $A\sleq B$ if there is a free complementary factor $C\in \mathbf{FV}$ such that $B=A\ast B$.
\end{context}

\noindent We recall the notion of a basis in the setting of varieties of algebras.

\begin{notation}
	Let $A$ be any algebra and $X\subseteq A$, then we write $\langle X\rangle_A$ for the subalgebra generated by $X$ in $A$, i.e., the smallest subalgebra of $A$ containing $X$.
\end{notation}

\begin{definition}\label{def:basis}
	Let $A\in \mathbf{FV}$, then a \emph{basis} for $A$ is a set of \emph{free generators} $X_A\subseteq A$, i.e., elements $X_A\subseteq A$ such that $A=F(X_A)$ and $x\notin \langle X_A\setminus \{x\}\rangle_A$ for all $x\in X_A$.
\end{definition}

\begin{fact}\label{fact:bases}
	Every $A\in \mathbf{FV}$ has a basis $X_A$. Moreover, if $A,B\in \mathbf{FV}$ are disjoint, then $A \ast B=F(X_AX_B)$ for any two bases $X_A$ and $X_B$ of $A$ and $B$, respectively. If $(A_i)_{i<\kappa}$ is a disjoint family of algebras from $\mathbf{FV}$, then $\bigast_{i<\kappa}A_i=F(\bigcup_{i<\kappa}X_i)$, where each $X_i$ is a basis of $A_i$.
\end{fact}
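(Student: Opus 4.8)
The plan is to establish the three assertions one at a time, using only the universal mapping properties of free algebras in $\mathbf{V}$ and of the $\mathbf{V}$-free product $\ast$ (Definition~\ref{definition V-free product} together with Remark~\ref{fact:free_product_varieties}); conceptually, all three are instances of the fact that the free functor $F_{\mathbf{V}}$, being a left adjoint, sends set-theoretic coproducts to $\mathbf{V}$-free products, but I would spell this out by hand.

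\emph{Existence of a basis.} If $\mathbf{V}$ is the trivial variety, then every $A\in\mathbf{FV}$ is a one-element algebra equal to $F(\emptyset)$, and $X_A=\emptyset$ is vacuously a basis. Otherwise fix $A\in\mathbf{FV}$, write $A\cong F_{\mathbf{V}}(\lambda)$, and let $X_A=\{x_i:i<\lambda\}$ be the image of the canonical free generators; by construction $X_A$ generates $A$ and realises $A$ as $F(X_A)$, so only the independence clause of Definition~\ref{def:basis} needs checking. Suppose $x_j\in\langle X_A\setminus\{x_j\}\rangle_A$ for some $j$; since $A$ is the free algebra on $X_A$, equality of two term-values in $A$ amounts to $\mathbf{V}$ satisfying the corresponding identity, so there are a term $t$ and indices $i_1,\dots,i_n\ne j$ with $\mathbf{V}\models x_j\approx t(x_{i_1},\dots,x_{i_n})$. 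Substituting, for every $D\in\mathbf{V}$ and all $d,d'\in D$ this forces $d=t^D(d',\dots,d')$, whose right-hand side does not depend on $d$, so $|D|=1$, contradicting non-triviality. Hence $X_A$ is a basis. The same computation also shows that \emph{any} subset $X\subseteq C$ which generates a free algebra $C$ and exhibits it as $F(X)$ is automatically a basis of $C$, a remark I would use below.

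\emph{The binary free product.} Let $A,B\in\mathbf{FV}$ be disjoint with bases $X_A,X_B$, so $X_A\cap X_B=\emptyset$ and $A\ast B$ is defined by Remark~\ref{fact:free_product_varieties}. Since $A\ast B$ is generated by $A\cup B=\langle X_A\rangle_A\cup\langle X_B\rangle_B$, it is generated by $X_A\cup X_B$. I claim it has the universal mapping property of the free algebra on $X_A\cup X_B$: given $D\in\mathbf{V}$ and a function $\varphi:X_A\cup X_B\to D$, extend $\varphi\restriction X_A$ to a homomorphism $f:A\to D$ (using $A=F(X_A)$) and $\varphi\restriction X_B$ to $g:B\to D$, then apply Definition~\ref{definition V-free product}(1) to obtain the unique $h:A\ast B\to D$ with $h\restriction A=f$ and $h\restriction B=g$; this $h$ extends $\varphi$, and it is the only such homomorphism because $X_A\cup X_B$ generates $A\ast B$. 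Thus the canonical map $F_{\mathbf{V}}(X_A\cup X_B)\to A\ast B$ is an isomorphism that is the identity on generators, so $A\ast B=F(X_AX_B)$, and by the remark above $X_A\cup X_B$ is a basis of it.

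\emph{The infinite free product, and the main obstacle.} The infinitary statement is proved exactly as the binary one, replacing Definition~\ref{definition V-free product}(1) by the infinite version of the free-product universal property, available by Remark~\ref{fact:free_product_varieties}: for a pairwise disjoint family $(A_i)_{i<\kappa}$ in $\mathbf{FV}$ the free product $\bigast_{i<\kappa}A_i$ exists, is generated by $\bigcup_{i<\kappa}A_i$, hence by $\bigcup_{i<\kappa}X_i$ for any bases $X_i$ of $A_i$, and any family $(f_i:A_i\to D)_{i<\kappa}$ of homomorphisms glues to a unique $h:\bigast_{i<\kappa}A_i\to D$ restricting to $f_i$ on each $A_i$; this lets one lift an arbitrary $\varphi:\bigcup_{i<\kappa}X_i\to D$ to a homomorphism, unique by generation, so $\bigast_{i<\kappa}A_i=F(\bigcup_{i<\kappa}X_i)$. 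The only step that is not purely formal is the independence clause in the first assertion, since everything else is bookkeeping with universal properties; that is the one place genuinely using $\mathbf{V}$ non-trivial (or the explicit escape $X_A=\emptyset$ in the trivial case), so it is where I would concentrate the write-up.
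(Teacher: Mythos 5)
The paper states this as a background \emph{Fact} and gives no proof at all (it is treated as standard universal algebra, with the existence of free products of free algebras delegated to Remark~\ref{fact:free_product_varieties}), so there is no argument of the authors to compare against; judged on its own, your proof is correct. The two substantive points are exactly the ones you isolate: (i) independence of the canonical generators, which your substitution argument handles properly (if $x_j$ were a term in the other generators, evaluating in any $D\in\mathbf{V}$ with the other variables set to a fixed $d'$ shows every $d\in D$ equals $t^D(d',\dots,d')$, forcing $|D|=1$), and (ii) the verification that $A\ast B$, resp.\ $\bigast_{i<\kappa}A_i$, has the universal mapping property of the free algebra on the union of the bases, which is the standard ``left adjoints preserve coproducts'' computation spelled out by hand. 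Two small remarks. First, your treatment of the trivial variety is slightly off as stated: if the signature has no constants, a one-element algebra need not equal $F(\emptyset)$, but then a singleton is a basis (the independence clause is vacuous since $\langle\emptyset\rangle_A=\emptyset$), so existence of a basis still holds; this is a one-line repair and irrelevant to the paper's applications. Second, your infinitary step leans on Remark~\ref{fact:free_product_varieties}, which the paper also leaves unproved; this is legitimate given the paper's layout, but a fully self-contained route would instead check directly that $F(\bigcup_{i<\kappa}X_i)$, with the canonical copies of $A_i=F(X_i)$ inside it, satisfies the (infinitary) free-product universal property --- this proves the Fact and the existence claim of the Remark in one stroke, rather than quoting existence and then identifying the object afterwards.
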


We next establish that $(\mathbf{FV},\sleq)$ is a strongly categorical weak $\mrm{AEC}$ with canonical amalgamation. This relates the setting of algebraic varieties to our general context of Section~\ref{CP:coproduct_case}.

\begin{lemma}\label{prop:varieties_canon}
	Let  $\mathbf{V}$ be a variety of algebras in a language of size $\mu$, then  $(\mathbf{FV},\sleq)$ is a weak $\mrm{AEC}$  with $\mathrm{LS}(\mathbf{K}, \sleq)=\mu$ and it satisfies Conditions~\ref{context:amalgamation_class}(\hyperref[C1]{C1})-(\hyperref[C3]{C3}) and Conditions~\ref{assumptions:subsec_1}(\hyperref[C5]{C5})-(\hyperref[C6]{C6}).
\end{lemma}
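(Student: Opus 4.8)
The plan is to check the four kinds of requirement in turn, using throughout the basis combinatorics recorded in Fact~\ref{fact:bases} together with the existence of (infinitary) free products of free algebras from Remark~\ref{fact:free_product_varieties}; note that, since $(\mathbf{FV},\sleq)$ is only asserted to be a \emph{weak} $\mrm{AEC}$, we never have to verify Coherence or Smoothness. First I would dispatch the abstract-class axioms of \ref{def_AC}: closure under isomorphisms is clear; $A\sleq B$ gives $A\leq B$ by definition (\ref{definition V-free product}(4)); and $\sleq$ is a partial order, with reflexivity via $A=A\ast F_\mathbf{V}(0)$, transitivity via associativity of $\ast$ (from $B=A\ast D$ and $C=B\ast E$ we get $C=A\ast(D\ast E)$ with $D\ast E\in\mathbf{FV}$), and antisymmetry from $A\leq B\leq A$. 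For the L\"owenheim--Skolem--Tarski axiom with $\mathrm{LS}(\mathbf{K},\sleq)=\mu$: given $A\in\mathbf{FV}$ with basis $X_A$ and $B\subseteq A$, each element of $B$ is a term over finitely many elements of $X_A$, so $B\subseteq\langle X'\rangle_A$ for some $X'\subseteq X_A$ with $|X'|\leq|B|+\aleph_0$; by Fact~\ref{fact:bases}, $C:=\langle X'\rangle_A=F_\mathbf{V}(X')$ and $A=C\ast F_\mathbf{V}(X_A\setminus X')$, so $C\sleq A$, while $|C|\leq|B|+\mu+\aleph_0$, as required.

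The real work --- and the step I expect to be the main obstacle --- is closure under continuous $\sleq$-chains (\ref{def_AEC}(4.1)--(4.2)). Given such a chain $(A_i)_{i<\delta}$, I would build an increasing chain of bases $X_i$ of $A_i$: at a successor stage $i+1$, Condition~\ref{context:amalgamation_class}(\hyperref[C2]{C2}) (that is, the definition of $\sleq$) gives $A_{i+1}=A_i\ast D_{i+1}$ with $D_{i+1}\in\mathbf{FV}$, so by Fact~\ref{fact:bases} adjoining a basis of $D_{i+1}$ to $X_i$ produces a basis $X_{i+1}\supseteq X_i$ of $A_{i+1}$; at a limit stage set $X_i=\bigcup_{j<i}X_j$, which generates $A_i=\bigcup_{j<i}A_j$ since the signature is finitary. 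Writing $X=\bigcup_{i<\delta}X_i$ and $A=\bigcup_{i<\delta}A_i$, I would then check that $A$ is free on $X$: any map $X\to E\in\mathbf{V}$ restricts on each $X_i$ to a unique homomorphism $A_i\to E$, these cohere by uniqueness, and their union is the unique homomorphism $A\to E$ extending the map; and no $x\in X_j$ lies in $\langle X\setminus\{x\}\rangle_A$, since otherwise $x$ would be a term over finitely many elements of $X\setminus\{x\}$, all of which together with $x$ lie in some $X_m$ with $m\geq j$, contradicting that $X_m$ is a basis of $A_m$. Hence $A\in\mathbf{FV}$, and for each $j$, $A_j=F_\mathbf{V}(X_j)\sleq F_\mathbf{V}(X)=A$ by Fact~\ref{fact:bases} with free complement $F_\mathbf{V}(X\setminus X_j)$. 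It is precisely here that both the continuity hypothesis and the ``free complement'' built into $\sleq$ are indispensable: a mere chain of free subalgebras need not have a free union.

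Next I would turn to the canonical-amalgamation conditions. For \ref{context:amalgamation_class}(\hyperref[C1]{C1}) the operator is the free product $\bigast_{i\in I}B_i$ of Remark~\ref{fact:free_product_varieties}, which lies in $\mathbf{FV}$ and, by the mapping property, depends only on the isomorphism types of the $B_i$; then property (a) of \ref{def:canonical_jep} follows from $\bigast_{i\in I}B_i=B_i\ast\bigast_{j\in I\setminus\{i\}}B_j$ (Fact~\ref{fact:bases}), (b) from the universal property of the free product, (c) by writing each $C_i=B_i\ast D_i$ and rearranging $\bigast_{i\in I}C_i\cong(\bigast_{i\in I}B_i)\ast(\bigast_{i\in I}D_i)$, and (d) is infinitary associativity, again from Fact~\ref{fact:bases}. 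Condition~\ref{context:amalgamation_class}(\hyperref[C2]{C2}) is the definition of $\sleq$, and \ref{context:amalgamation_class}(\hyperref[C3]{C3}) follows by realising $\bigast_{i<\alpha}B_i$ as $F_\mathbf{V}(\bigcup_{i<\alpha}Z_i)=\langle\bigcup_{i<\alpha}Z_i\rangle$ on bases $Z_i$ of $B_i$ and using that a subalgebra generated by an increasing union of sets is the increasing union of the subalgebras they generate (the signature being finitary), so $\bigast_{i<\alpha}B_i=\bigcup_{\beta<\alpha}\langle\bigcup_{i<\beta}Z_i\rangle=\bigcup_{\beta<\alpha}(\bigast_{i<\beta}B_i)$.

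Finally, for \ref{assumptions:subsec_1}(\hyperref[C5]{C5}) I would note that $\kappa^+>\mu+\aleph_0$ for every $\kappa\geq\mu$, so by Context~\ref{context:univ_algebra} the unique algebra in $\mathbf{FV}$ of size $\kappa^+$ up to isomorphism is $F_\mathbf{V}(\kappa^+)$; and for \ref{assumptions:subsec_1}(\hyperref[C6]{C6}) I would take $M_\kappa:=F_\mathbf{V}(\kappa)$, which has size $\kappa$ since $\kappa\geq\mu+\aleph_0$, and observe that if $C\in\mathbf{FV}$ with $|C|\leq\kappa$ then $C\cong F_\mathbf{V}(\lambda)$ for some $\lambda\leq\kappa$, so $F_\mathbf{V}(\kappa)\cong F_\mathbf{V}(\lambda)\ast F_\mathbf{V}(\kappa)$ (using $\lambda+\kappa=\kappa$) exhibits a strong embedding of $C$ into $M_\kappa$, which is the universality demanded by \ref{def:universality}.
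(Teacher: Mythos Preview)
Your proposal is correct and follows essentially the same approach as the paper: reduce everything to basis combinatorics via Fact~\ref{fact:bases}, build an increasing chain of bases along a continuous $\sleq$-chain to verify (4.1)--(4.2), extract a small sub-basis for the L\"owenheim--Skolem axiom, and read off (\hyperref[C1]{C1})--(\hyperref[C3]{C3}) and (\hyperref[C5]{C5})--(\hyperref[C6]{C6}) directly from the definitions of $\ast$ and $\sleq$. The paper is terser where you are explicit (e.g., it simply cites Fact~\ref{fact:bases}/Remark~\ref{fact:free_product_varieties} rather than re-verifying that $\bigcup_i X_i$ is a free basis of $\bigcup_i A_i$), but the content is the same.
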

\begin{proof}
	First, we verify that  $(\mathbf{FV},\sleq)$ is a weak $\mrm{AEC}$. It is straightforward to verify that $(\mathbf{FV},\sleq)$ is an abstract class, i.e., that it satisfies Conditions~\ref{def_AC}(1)-\ref{def_AC}(3).  We verify Condition~\ref{def_AEC}(4.1), Condition~\ref{def_AEC}(4.2) and Condition~\ref{def_AEC}(6). First, suppose that  $(A_i)_{i<\delta}$ is a continuous $\sleq$-chain of free algebras in $\mathbf{FV}$, then by \ref{fact:bases} we can find a continuous sequence of bases $X_i$ of $A_i$ such that $X_i\subseteq X_{j}$ whenever $i<j<\delta$. Let $X_\delta=\bigcup_{i<\delta}X_i$, then we have by Remark~\ref{fact:bases} that $\bigcup_{i<\delta}A_i=F(X_\delta)\in\mathbf{FV}$, which verifies Condition~\ref{def_AEC}(4.1). Additionally, since $X_i\subseteq X_\delta$, it follows immediately that $F(X_\delta)=A_i\ast F(X_\delta\setminus X_i)$ and so  $A_i\sleq \bigcup_{i<\delta}A_i$, which verifies  Condition~\ref{def_AEC}(4.2). Consider now Condition~\ref{def_AEC}(6). Let $A\in \mathbf{FV}$ and consider a subset $B\subseteq A$. If $X_A$ is a basis of $A$, then there is a subset $Y\subseteq X_A$ of size $\mu+|B|$ so that every element in $B$ is a term over $Y$. Then let $Z=X_A\setminus Y$, clearly we have that $A=F(YZ)=F(Y)\ast F(Z)$. Since $|F(Y)|\leq \mu+ |B|$, this shows that $(\mathbf{FV},\sleq)$ has L\"owenheim-Skolem number $\mu$. It follows that $(\mathbf{FV},\sleq)$ satisfies  Condition~\ref{def_AEC}(6) and thus is a weak $\mrm{AEC}$.
	
	\smallskip
	\noindent 	Consider now Conditions~\ref{context:amalgamation_class}(\hyperref[C1]{C1})-(\hyperref[C3]{C3}). Condition~\ref{context:amalgamation_class}(\hyperref[C1]{C1}) follows immediately from the fact that the free product of free algebras is also free, together with the definition of the strong submodel relation $\sleq$. From this it is immediate to verify the properties from Definition~\ref{def:canonical_jep}.  Condition~\ref{context:amalgamation_class}(\hyperref[C2]{C2}) follows from the definition of $\sleq$ (cf.~\ref{context:free_varieties}) and Condition~\ref{context:amalgamation_class}(\hyperref[C3]{C3}) is a straightforward consequence of the definition of $\ast$ together with Fact~\ref{fact:bases}.
	
	\smallskip
	\noindent It remains to check that $(\mathbf{FV},\sleq)$ also satisfies Conditions~\ref{assumptions:subsec_1}(\hyperref[C5]{C5})-(\hyperref[C6]{C6}). Condition~\ref{assumptions:subsec_1}(\hyperref[C5]{C5}) follows immediately from the fact that, for all $\kappa\geq \mu^+$, the algebra $F(\kappa)$ is the only structure in $\mathbf{FV}$ of size $\kappa$ (cf.~Remark~\ref{context:univ_algebra}). For Condition~\ref{assumptions:subsec_1}(\hyperref[C6]{C6}) we reason as follows. Let $\mu\leq \kappa\leq \lambda$ and consider two (disjoint) free algebras $F(\kappa)$ and $F(\lambda)$ with bases $X_\kappa$ and $X_\lambda$, respectively. Since $\mu\leq\kappa\leq \lambda$ we have  $F(X_\kappa X_\lambda)\cong F(\lambda)$ and by the definition of the free product we have that $F(X_\kappa X_\lambda)=F(X_\kappa)\ast F(X_\lambda)\in \mathbf{FV}$. This shows that, for all $\lambda\geq \mu$, the free algebra $F(\lambda)$ is universal.
\end{proof}

To connect the classical work on varieties to our results from Section~\ref{CP:coproduct_case}, it remains to show that under the Construction Principle (CP) from \ref{CP:mekler-shelah} one can obtain $\mrm{CP}(\mathbf{K},\ast)$, namely  Condition~\ref{CP-AEC:coproducts}(\hyperref[C4]{C4}). We establish this in the next Lemma.

\begin{lemma}\label{CP_entails_CPast}
	Let $\mathbf{V}$ be a variety in a language of size $\mu$ that satisfies the Construction Principle from \ref{definition CP}, then $(\mathbf{FV},\sleq)$ satisfies $\mrm{CP}(\mathbf{K},\ast)$, i.e., Condition~\ref{CP-AEC:coproducts}(\hyperref[C4]{C4}).
\end{lemma}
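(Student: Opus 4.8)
The plan is to derive $\mrm{CP}(\mathbf{K},\ast)$ for $(\mathbf{FV},\sleq)$ by inflating the Eklof--Mekler--Shelah data by a copy of the universal model $F(\mu)$; recall that by Lemma~\ref{prop:varieties_canon} the class $(\mathbf{FV},\sleq)$ is a canonical amalgamation class, so the free-product operator $\ast$ enjoys the properties in Definition~\ref{def:canonical_jep}. Fix countably generated $A\leq B$ in $\mathbf{FV}$ together with free generators $\{a_i:i<\omega\}$ of $A$ as in Definition~\ref{definition CP}, so that $A_n:=\langle a_0,\dots,a_n\rangle_B\sleq B$ for all $n<\omega$ and $A\not\sleq B\ast F(\mu)$, where $A=\bigcup_n A_n$; since the $a_i$ are free generators of $A$ we have $A_{n+1}=A_n\ast\langle a_{n+1}\rangle_B$, hence $A_n\sleq A_{n+1}$. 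Now fix $G\cong F(\mu)$ disjoint from $B$ and put $A_i':=A_i\ast G$ and $B':=B\ast G$; all of these lie in $\mathbf{FV}_\mu$, using $|A_i|\leq\mu$, $|B|\leq\mu$ and $|G|=\mu$.

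Condition~\ref{CP-AEC:coproducts}(\hyperref[C4]{C4})(a) is then routine. Monotonicity of $\ast$ (Definition~\ref{def:canonical_jep}(c)) applied to $A_n\sleq A_{n+1}$ (and $G\sleq G$) gives $A_n'\sleq A_{n+1}'$; and writing $B=A_n\ast C_n$ and reassociating via Definition~\ref{def:canonical_jep}(d) gives $B'=(A_n\ast G)\ast C_n=A_n'\ast C_n$, so $A_n'\sleq B'$. A basis computation (Fact~\ref{fact:bases}) then identifies $\bigcup_n A_n'=A\ast G$; in particular $A\ast G\leq B'$.

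The heart of the matter is Condition~\ref{CP-AEC:coproducts}(\hyperref[C4]{C4})(b), i.e.\ $A\ast G\not\pleq B\ast G$. Suppose toward a contradiction that $A\ast G\pleq B\ast G$. Since $|A\ast G|+|B\ast G|+\mu=\mu$, Definition~\ref{def:coherentisation} provides a free algebra $C$ with $|C|=\mu$ and an embedding $f:B\ast G\to C$ such that $f(B\ast G)\sleq C$ and, inside it, $f(A\ast G)\sleq C$ (note $f(A\ast G)\leq f(B\ast G)$, as $A\ast G\leq B\ast G$). Write $C=f(B\ast G)\ast D$ with $D\in\mathbf{FV}$; then $|D|\leq|C|=\mu$, so $D\ast F(\mu)$ is a free algebra of size $\mu$ and there is an isomorphism $f(B\ast G)\ast D\ast F(\mu)\cong f(B\ast G)\ast F(\mu)$ fixing $f(B\ast G)$ pointwise. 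Since $f(A\ast G)\sleq C=f(B\ast G)\ast D\sleq f(B\ast G)\ast D\ast F(\mu)$, transitivity of $\sleq$ together with this isomorphism yields $f(A\ast G)\sleq f(B\ast G)\ast F(\mu)$. Transporting along $f^{-1}$, extended by the identity on $F(\mu)$ (using Definition~\ref{def:canonical_jep}(b)), we obtain $A\ast G\sleq(B\ast G)\ast F(\mu)$; but $(B\ast G)\ast F(\mu)=B\ast(G\ast F(\mu))\cong B\ast F(\mu)$ via an isomorphism fixing $B$, since $G\ast F(\mu)$ is a free algebra of size $\mu$. Hence $A\sleq A\ast G\sleq B\ast F(\mu)$, contradicting Definition~\ref{definition CP}(2).

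The single delicate step is the last paragraph: I would inflate by $F(\mu)$ — rather than by an arbitrary fixed model — precisely so that the free complement $D$ of $f(B\ast G)$ in $C$, which is forced to have size at most $\mu$, gets absorbed into a copy of $F(\mu)$. This is what allows the argument to go through without invoking Coherence of $(\mathbf{FV},\sleq)$, which in general fails, and it is the only place where the identification of $F(\mu)$ as \emph{the} free algebra of size $\mu$ is used.
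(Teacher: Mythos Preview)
Your proof is correct and follows essentially the same strategy as the paper's: the key absorption step --- writing $C = B' \ast D$ with $|D|\leq\mu$, then using $D\ast F(\mu)\cong F(\mu)$ to conclude $A\sleq B\ast F(\mu)$ --- is identical. The one difference is that you first inflate the Eklof--Mekler--Shelah data by $G\cong F(\mu)$, putting $A_i':=A_i\ast G$ and $B':=B\ast G$, whereas the paper uses the $A_n$'s and $B$ directly without inflation. Your extra step has the virtue of guaranteeing that every $A_i'$ has size exactly $\mu$, as formally required by the clause $A_i\in\mathbf{K}_\mu$ in Definition~\ref{CP-AEC:coproducts}; the paper's proof asserts this for $A$ and $B$ (being countably generated in a language of size $\mu$) but does not address it for the finitely generated $A_n$'s, which could in principle be smaller. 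The cost is a slightly longer bookkeeping chain when transporting along $f^{-1}$ at the end, but the underlying idea is the same.
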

\begin{proof}
	Let $A$ and $B$ be the two $\mathbf{V}$-free algebras in $\mathbf{V}$ witnessing the Eklof-Mekler-Shelah Construction Principle from \ref{definition CP}. In particular, since the language of $\mathbf{V}$ has size $\mu$ and $A$, $B$ are countably generated, they have both size $\mu+\aleph_0=\mu$. Following \ref{definition CP}, we let $X_A=\{ a_i : i<\omega  \}$ be a basis of $A$ and for every $n<\omega$ we let $A_n=\langle (a_i)_{i\leq n}\rangle_B$. We show that $A,B$ and $(A_n)_{n<\omega}$ witness $\mrm{CP}(\mathbf{K},\ast)$. 
	
	\smallskip
	\noindent First, notice that by the fact that $(a_i)_{i<\omega}$ is a free basis of $A$, it follows immediately that $A_{n}\sleq A_{n+1}$ for all $n<\omega$, and so $(A_n)_{n<\omega}$ forms a $\sleq$-chain. By \ref{definition CP}(1) it also follows that $A_n\sleq B$ for all $n<\omega$. So Condition~\ref{CP-AEC:coproducts}(\hyperref[C4]{C4a}) holds. 
	
	\smallskip
	\noindent We next verify Condition~\ref{CP-AEC:coproducts}(\hyperref[C4]{C4b}). First, notice that by \ref{definition CP}(2), and the fact that the language of $\mathbf{V}$ has size $\mu$, we have that $A\not\sleq B\ast F(\mu) $. Suppose towards contradiction that there is some $C\in \mathbf{K}$ such that $A\sleq C$, $B\sleq C$ and $|C|=|A|+|B|+\mathrm{LS}(\mathbf{K}, \sleq)$. Notice that since the language of $\mathbf{V}$ is of size $\mu$, it follows immediately from Lemma~\ref{prop:varieties_canon} that $\mathrm{LS}(\mathbf{K}, \sleq)=\mu$. Since $B\sleq C$ it follows that $C=B\ast D$ for some $D\in \mathbf{K}_{\leq\mu}$. Since $|D|\leq \mu$, it follows that $D\ast F(\mu)\cong F(\mu)$, and so by \ref{def:canonical_jep}(b)  we have $B\ast D\ast F(\mu)\cong_B B\ast F(\mu)$. As $A\sleq B\ast D$ and $B\ast D\sleq B\ast D\ast F(\mu)$, it follows:
	\[A\sleq C= B\ast D\sleq B\ast D\ast F(\mu),\]
	and so, since $B\ast D\ast F(\mu)\cong_B B\ast F(\mu)$ and $A\leq B$, it follows from \ref{def_AC}(1) that  $A\sleq B\ast F(\mu)$. This contradicts \ref{definition CP}(2) and shows that the Construction Principle $\mrm{CP}(\mathbf{K},\ast)$ holds.
\end{proof}

From the previous lemmas, we finally recover the original non-axiomatisability results by Eklof, Mekler and Shelah in our abstract setting.

\begin{theorem}[$\mrm{ZFC}$ + $V=L$]\label{prop:free_algebras}
	Let $\mathbf{V}$ be a variety of algebras in a language of size $\mu$ and satisfying the classical Construction Principle from Definition~\ref{definition CP}, then for all $\kappa\geq \mu$ there is an $\mathfrak{L}_{\infty,\kappa^+}$-free algebra $M\notin \mathbf{FV}$ of size $\kappa^+$. In particular, $\mathbf{FV}$ is not axiomatisable in $\mathfrak{L}_{\infty,\infty}$.
\end{theorem}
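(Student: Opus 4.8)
The plan is to read off this theorem as an immediate corollary of the two preceding lemmas together with Theorem~\ref{main:theorem}, so the proof is essentially an exercise in bookkeeping. First I would invoke Lemma~\ref{prop:varieties_canon}: it tells us that $(\mathbf{FV},\sleq)$ is a weak $\mrm{AEC}$ with $\mathrm{LS}(\mathbf{FV},\sleq)=\mu$ satisfying Conditions~\ref{context:amalgamation_class}(\hyperref[C1]{C1})--(\hyperref[C3]{C3}) and Conditions~\ref{assumptions:subsec_1}(\hyperref[C5]{C5})--(\hyperref[C6]{C6}). Next, since we assume that $\mathbf{V}$ satisfies the classical Construction Principle of Definition~\ref{definition CP}, Lemma~\ref{CP_entails_CPast} supplies the remaining ingredient: $(\mathbf{FV},\sleq)$ satisfies $\mrm{CP}(\mathbf{K},\ast)$, i.e.\ Condition~\ref{CP-AEC:coproducts}(\hyperref[C4]{C4}). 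Thus $(\mathbf{FV},\sleq)$ meets all the hypotheses fixed in Context~\ref{assumptions:subsec_1}, and Theorem~\ref{main:theorem} applies verbatim: under $V=L$, for every $\kappa\geq\mu$ there is an $\mathfrak{L}_{\infty,\kappa^+}$-free structure $M\notin\mathbf{FV}$ of size $\kappa^+$. Since such an $M$ is a structure in the (functional) language of $\mathbf{V}$, it is in particular an algebra, which gives the first assertion.

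For the non-axiomatisability claim I would argue by contradiction. Suppose $\mathbf{FV}$ is the class of models of some sentence $\varphi\in\mathfrak{L}_{\infty,\infty}$. By definition of $\mathfrak{L}_{\infty,\infty}$ there is an infinite cardinal $\lambda$ with $\varphi\in\mathfrak{L}_{\infty,\lambda}$. Choose $\kappa\geq\mu$ with $\kappa^+\geq\lambda$ and let $M$ be the $\mathfrak{L}_{\infty,\kappa^+}$-free algebra of size $\kappa^+$ with $M\notin\mathbf{FV}$ produced in the first part. By Definition~\ref{def:L-free}(1) we have $M\equiv_{\infty,\kappa^+}F(\kappa^+)$, and since $\lambda\leq\kappa^+$ this gives $M\equiv_{\infty,\lambda}F(\kappa^+)$; as $F(\kappa^+)\in\mathbf{FV}$ satisfies $\varphi$, so does $M$, whence $M\in\mathbf{FV}$, a contradiction. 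Hence $\mathbf{FV}$ is not axiomatisable in $\mathfrak{L}_{\infty,\infty}$.

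There is no genuine obstacle here: every step is a direct appeal to an already established result, and the theorem's role is to record how the abstract framework of Section~\ref{CP:coproduct_case} specialises to recover the classical Eklof--Mekler--Shelah setting. The only point deserving a moment of attention is the passage from $\mathfrak{L}_{\infty,\infty}$ to a single $\mathfrak{L}_{\infty,\lambda}$ — this works because $\mathfrak{L}_{\infty,\infty}$ is by definition the union of the logics $\mathfrak{L}_{\infty,\lambda}$ over all infinite $\lambda$, and because $\mathfrak{L}_{\infty,\kappa^+}$-equivalence refines $\mathfrak{L}_{\infty,\lambda}$-equivalence once $\lambda\leq\kappa^+$ — so picking $\kappa$ large enough to absorb the hypothetical axiom closes the argument.
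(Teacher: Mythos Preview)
Your proof is correct and follows exactly the paper's approach, which records the result as immediate from Lemma~\ref{prop:varieties_canon}, Lemma~\ref{CP_entails_CPast} and Theorem~\ref{main:theorem}. Your additional paragraph spelling out the non-axiomatisability argument is fine but unnecessary, since that conclusion is already part of the statement of Theorem~\ref{main:theorem}.
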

\begin{proof}
	Immediate from Lemma~\ref{prop:varieties_canon}, Lemma~\ref{CP_entails_CPast} and Theorem~\ref{main:theorem}.
\end{proof}

\begin{corollary} 
	Let $\mathbf{V}$ be a variety of algebras in a countable language and satisfying the classical Construction Principle from Definition~\ref{definition CP}, then there is an $\mathfrak{L}_{\infty,\omega_1}$-free algebra $M\notin \mathbf{FV}$ of size $\aleph_1$. Thus, $\mathbf{FV}$ is not  axiomatisable in $\mathfrak{L}_{\infty,\omega_1}$.
\end{corollary}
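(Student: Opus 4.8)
The plan is to specialise the abstract machinery of Section~\ref{CP:coproduct_case} to the weak $\mrm{AEC}$ of free $\mathbf{V}$-algebras and then read off the $\aleph_1$-case, which is the one that does not require $V=L$. Since the signature $L$ of $\mathbf{V}$ is countable we have $\mu=\aleph_0$, so the first step is to invoke Lemma~\ref{prop:varieties_canon}: it tells us that $(\mathbf{FV},\sleq)$ is a weak $\mrm{AEC}$ with $\mathrm{LS}(\mathbf{FV},\sleq)=\aleph_0$ satisfying Conditions~\ref{context:amalgamation_class}(\hyperref[C1]{C1})--(\hyperref[C3]{C3}) and Conditions~\ref{assumptions:subsec_1}(\hyperref[C5]{C5})--(\hyperref[C6]{C6}). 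The second step is to feed the hypothesis that $\mathbf{V}$ satisfies the classical Construction Principle of Definition~\ref{definition CP} into Lemma~\ref{CP_entails_CPast}, which produces $\mrm{CP}(\mathbf{K},\ast)$, i.e.\ Condition~\ref{CP-AEC:coproducts}(\hyperref[C4]{C4}), for $(\mathbf{FV},\sleq)$.

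At this point $(\mathbf{FV},\sleq)$ meets all the requirements collected in Context~\ref{assumptions:subsec_1} and moreover has countable L\"owenheim-Skolem number, so Corollary~\ref{main:corollary} applies verbatim and yields an $\mathfrak{L}_{\infty,\omega_1}$-free structure $M\notin\mathbf{FV}$ of size $\aleph_1$. One small bookkeeping remark to record: the structure $M$ is constructed in the proof of Theorem~\ref{thm:2} as the union of an increasing $\sleq$-chain of members of $\mathbf{FV}$, hence is itself a (total) algebra in the signature $L$; thus $M$ is an $\mathfrak{L}_{\infty,\omega_1}$-free \emph{algebra} outside $\mathbf{FV}$, as claimed. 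Finally, for non-axiomatisability: were $\mathbf{FV}$ the class of models of some $\varphi\in\mathfrak{L}_{\infty,\omega_1}(L)$, then from $M\equiv_{\infty,\omega_1}F(\aleph_1)$ and $F(\aleph_1)\models\varphi$ we would get $M\models\varphi$, hence $M\in\mathbf{FV}$, contradicting the previous sentence.

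There is no genuine obstacle here — all the substantive work has already been carried out in the preceding lemmas — and the only point deserving attention is that the corollary must be routed through Corollary~\ref{main:corollary} (which is a theorem of $\mrm{ZFC}$, relying on Fact~\ref{existence:stationary}(1) that $E(\aleph_1)$ holds outright) rather than through the $V=L$ statement Theorem~\ref{prop:free_algebras}; this is precisely what makes the conclusion unconditional for $\aleph_1$.
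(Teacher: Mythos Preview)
Your proof is correct and follows essentially the same route as the paper: the paper's proof is a one-line citation of Lemma~\ref{prop:varieties_canon}, Lemma~\ref{CP_entails_CPast} and Corollary~\ref{main:corollary}, which is precisely the chain of results you invoke. Your additional bookkeeping remarks (that $M$ is an $L$-algebra, and the explicit contrapositive for non-axiomatisability) are not in the paper but are harmless elaborations.
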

\begin{proof}
	Immediate from Lemma~\ref{prop:varieties_canon}, Lemma~\ref{CP_entails_CPast} and Corollary~\ref{main:corollary}.
\end{proof}

\subsection{Free products of cyclic groups of fixed order}\label{Application:groups}

We consider in this section an algebraic application of $\mrm{CP}(\mathbf{K},\ast)$ which does not come from varieties of algebras, namely free products of cyclic groups of some fixed order. This shows that, even in the setting of algebras, the scope of $\mrm{CP}(\mathbf{K},\ast)$ is strictly larger than the one of the original Construction Principle. We fix the context of this section and recall some notation and terminology from group theory.

\begin{context}\label{context:free_cyclic_groups}
	We fix in this section an order $2\leq n\in\mathbb{N}\cup \{\infty\}$. We let $\mathbf{K}$ be the class of free products of any possible cardinality of cyclic groups of order $n$. Given $A,B\in \mathbf{K}$ we write $A\ast B$ for their free product, and we let $A\sleq B$ if there is some $C\in \mathbf{K}$ such that $B=A\ast C$.
\end{context}

\begin{fact}\label{fact:cyclic_basis}
	Suppose $A\in \mathbf{K}$, then there is a basis $X_A$ of $A$ in the sense of Definition~\ref{def:basis}. In particular, if $A=\bigast_{i<\kappa}A_i$ then a basis for $A$ consists of exactly one element of order $n$ from each $A_i$ with $i<\kappa$.
\end{fact}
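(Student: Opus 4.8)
The plan is to prove the ``in particular'' clause directly, from which the bare existence of a basis follows, since by definition every $A\in\mathbf{K}$ already comes equipped with a decomposition $A=\bigast_{i<\kappa}A_i$ into internal free factors $A_i\leq A$, each cyclic of order $n$. So fix such a decomposition, and for each $i<\kappa$ choose a generator $x_i$ of $A_i$; when $n$ is finite this is simply an element of order $n$, and when $n=\infty$ it is one of the two generators of $A_i\cong\mathbb{Z}$ (this is the one place where the phrase ``element of order $n$'' must be read as ``generator''). Put $X_A=\{x_i:i<\kappa\}$. I will then check the two requirements of Definition~\ref{def:basis}: that $A=F(X_A)$ and that $x_j\notin\langle X_A\setminus\{x_j\}\rangle_A$ for every $j<\kappa$.

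For the first requirement, recall that in the present non-varietal setting $F(X_A)$ denotes the free product $\bigast_{x\in X_A}\langle x\rangle$ of cyclic groups of order $n$, one for each element of $X_A$. The canonical homomorphism $\varphi\colon F(X_A)\to A$ sending each free generator $x$ to the corresponding $x_i\in A$ is well defined because $x_i$ has order exactly $n$ in $A$, and it is surjective because the $x_i$ jointly generate $A$ (each $x_i$ generates $A_i$, and the $A_i$ generate the free product $A$). For injectivity I would construct the inverse: by the normal form theorem for free products each free generator $x$ has order exactly $n$ in $F(X_A)$, so the maps $A_i=\langle x_i\rangle\to F(X_A)$, $x_i\mapsto x$, are well defined homomorphisms, and by the universal property of the free product $A=\bigast_i A_i$ they assemble into a homomorphism $\psi\colon A\to F(X_A)$; since $\psi\circ\varphi$ and $\varphi\circ\psi$ both fix all generators, they are the identity, whence $\varphi$ is an isomorphism and $A=F(X_A)$.

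For the minimality requirement, fix $j<\kappa$. Since each $x_i$ generates $A_i$ we have $\langle X_A\setminus\{x_j\}\rangle_A=\langle A_i:i\neq j\rangle_A$, and by the normal form theorem this subgroup is exactly the free factor $\bigast_{i\neq j}A_i$ of $A$. Consider the retraction $\rho\colon A\to\bigast_{i\neq j}A_i$ obtained from the universal property of the free product by letting $\rho$ restrict to the identity on $A_i$ for $i\neq j$ and to the trivial map on $A_j$. Then $\rho$ fixes $\bigast_{i\neq j}A_i$ pointwise, whereas $\rho(x_j)=1\neq x_j$ since $x_j$ has order $n\geq 2$; hence $x_j\notin\langle X_A\setminus\{x_j\}\rangle_A$, as required.

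I do not expect a genuine obstacle here: this is the elementary, ``easy'' direction for free products, and everything needed is packaged in the normal form theorem for free products of groups. The only points worth a word of care are (i) making explicit that $F(-)$ in Context~\ref{context:free_cyclic_groups} means the free product of cyclic groups of order $n$ rather than a free object in a variety (which, for finite $n$, would be a Burnside group — precisely the reason $\mathbf{K}$ is not of the form $\mathbf{FV}$), and (ii) the reading of ``element of order $n$'' as ``generator'' when $n=\infty$, where the two notions diverge.
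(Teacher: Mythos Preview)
Your proof is correct. The paper states this as a \emph{Fact} without proof, so there is no argument in the paper to compare against; you are simply supplying the routine verification that the authors left to the reader. Your two interpretive remarks are well taken: (i) since $\mathbf{K}$ is not of the form $\mathbf{FV}$, the symbol $F(X_A)$ in Definition~\ref{def:basis} must here be read as the free product of $|X_A|$-many copies of the cyclic group of order $n$, not as a free object in a variety; and (ii) for $n=\infty$ the phrase ``element of order $n$'' must indeed be read as ``generator of the infinite cyclic factor''. Both points are implicit in the paper's usage and your proof handles them cleanly. The argument itself---universal property in both directions for $A\cong F(X_A)$, and the retraction killing $A_j$ for minimality---is the standard one and goes through without issue.
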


\begin{notation}\label{normal_closure}
	Let $A\leq B$ be groups, then we write $\langle \langle A \rangle \rangle_B$ for the normal closure of $A$ in $B$, and we drop the index $B$ when the ambient group is clear.
\end{notation}

We show that $(\mathbf{K},\sleq)$ is a weak $\mrm{AEC}$ satisfying Conditions~\ref{context:amalgamation_class}(\hyperref[C1]{C1})-(\hyperref[C6]{C6}). 

%

\begin{lemma}\label{cyclic_grups:AEC}
	The class $(\mathbf{K},\sleq)$ is a weak $\mrm{AEC}$  with $\mathrm{LS}(\mathbf{K}, \sleq)=\aleph_0$ and it satisfies Conditions~\ref{context:amalgamation_class}(\hyperref[C1]{C1})-(\hyperref[C3]{C3}) and Conditions~\ref{assumptions:subsec_1}(\hyperref[C5]{C5})-(\hyperref[C6]{C6}).
\end{lemma}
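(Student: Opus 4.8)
The plan is to mirror the proof of Lemma~\ref{prop:varieties_canon}: the class $(\mathbf{K},\sleq)$ behaves essentially like the class of free algebras of a variety, with Fact~\ref{fact:cyclic_basis} playing the role of Fact~\ref{fact:bases}; beyond that the only extra inputs are the standard fact that free products of groups commute with directed colimits and an elementary cardinality computation. I first check that $(\mathbf{K},\sleq)$ is an abstract class (Conditions~\ref{def_AC}(1)--(3)): closure under isomorphism and $A\sleq B\Rightarrow A\leq B$ are immediate from \ref{context:free_cyclic_groups}; $\sleq$ is reflexive because the trivial group (the empty free product) lies in $\mathbf{K}$, transitive by associativity of $\ast$, and antisymmetric since $A\sleq B$ and $B\sleq A$ force $A\leq B\leq A$.

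For the weak $\mrm{AEC}$ axioms I only need Conditions~\ref{def_AEC}(4.1),(4.2) and (6). Given a continuous $\sleq$-chain $(A_i)_{i<\delta}$, I build an increasing continuous chain of bases $X_i$ of $A_i$: at successor steps I write $A_{i+1}=A_i\ast C_i$ via Condition~\ref{context:amalgamation_class}(\hyperref[C2]{C2}) and adjoin a basis of $C_i$ (which exists by Fact~\ref{fact:cyclic_basis}); at limits I take unions. Then $\bigcup_{i<\delta}A_i$ is the free product $\bigast_{x\in X}\langle x\rangle$ with $X=\bigcup_iX_i$ --- here one uses that a directed colimit of free products along inclusions of the index set is the free product over the limit index set --- so $\bigcup_iA_i\in\mathbf{K}$, which is (4.1); and since $X_j\subseteq X$, associativity of $\ast$ gives $A_j=\langle X_j\rangle\sleq\bigcup_iA_i$, which is (4.2). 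For (6): given $A=\bigast_{x\in X_A}\langle x\rangle\in\mathbf{K}$ and a subset $B\subseteq A$, let $Y\subseteq X_A$ be the set of basis elements occurring in group words representing members of $B$, so $|Y|\leq|B|+\aleph_0$; then $C:=\langle Y\rangle_A\cong\bigast_{y\in Y}\langle y\rangle$ lies in $\mathbf{K}$ by associativity, $B\subseteq C$, the splitting $A=C\ast\bigast_{x\in X_A\setminus Y}\langle x\rangle$ witnesses $C\sleq A$, and $|C|\leq|B|+\aleph_0$. Since the language of groups is finite, this yields $\mathrm{LS}(\mathbf{K},\sleq)=\aleph_0$.

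For Conditions~\ref{context:amalgamation_class}(\hyperref[C1]{C1})--(\hyperref[C3]{C3}): the canonical joint embedding operator is the free product $\bigast_{i\in I}B_i$ of pairwise disjoint members of $\mathbf{K}$, which is again in $\mathbf{K}$. Clauses \ref{def:canonical_jep}(a) and (d) are basic properties of free products of groups; (b) holds because $\ast$ is functorial, so the isomorphisms $f_i$ induce an isomorphism of the two free products restricting to each $f_i$; and (c) follows by splitting each $C_i=B_i\ast D_i$ and rearranging to $\bigast_iC_i=(\bigast_iB_i)\ast(\bigast_iD_i)$. Condition~(\hyperref[C2]{C2}) is the definition of $\sleq$, and Condition~(\hyperref[C3]{C3}) is again the directed-colimit statement for free products. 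Finally, for Conditions~\ref{assumptions:subsec_1}(\hyperref[C5]{C5})--(\hyperref[C6]{C6}): any member of $\mathbf{K}$ of uncountable cardinality $\lambda$ is a free product of exactly $\lambda$ cyclic groups of order $n$, since a free product of $\nu<\lambda$ of them has cardinality $\max(\nu,\aleph_0)<\lambda$; hence $\mathbf{K}$ is $\lambda$-categorical for every uncountable $\lambda$, giving (\hyperref[C5]{C5}). For (\hyperref[C6]{C6}) I put $M_\kappa$ equal to the free product of $\kappa$-many copies of the cyclic group of order $n$; any $C\in\mathbf{K}$ with $|C|\leq\kappa$ is a free product of $\lambda\leq\kappa$ such groups, and splitting $M_\kappa$ into a copy of $C$ and the free product of the remaining $\kappa$-many copies gives a strong embedding $C\to M_\kappa$, so $M_\kappa$ is $(\mathbf{K},\sleq)$-universal.

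None of the steps is deep; the single point to keep honest is that a subgroup of a free product generated by a sub-basis really is a free factor admitting a free complement --- this is precisely the (infinitary) associativity of free products of groups, i.e.\ clause \ref{def:canonical_jep}(d) in the present case --- and that the directed-colimit behaviour of free products is invoked uniformly for (4.1), (4.2) and Condition~\ref{context:amalgamation_class}(\hyperref[C3]{C3}). Granting these, the argument is a direct adaptation of the proof of Lemma~\ref{prop:varieties_canon}, the only genuinely new ingredient being the cardinality argument for uncountable categoricity and the ad hoc construction of $M_{\aleph_0}$ (where categoricity is unavailable).
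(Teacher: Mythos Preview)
Your proposal is correct and follows exactly the approach the paper takes: the paper's proof is a single sentence saying that everything ``follows analogously as in \ref{prop:varieties_canon} by using the notion of basis from Fact~\ref{fact:cyclic_basis}'', and you have simply written out that analogy in detail. The only difference is that you supply explicit arguments for the cardinality computation underlying (\hyperref[C5]{C5}) and the universal model in (\hyperref[C6]{C6}), which the paper leaves entirely to the reader.
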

\begin{proof}
	The fact that $(\mathbf{K},\sleq)$ is a weak $\mrm{AEC}$ with $\mathrm{LS}(\mathbf{K}, \sleq)=\aleph_0$ and that it satisfies Conditions~\ref{context:amalgamation_class}(\hyperref[C1]{C1})-(\hyperref[C3]{C3}) and  Conditions~\ref{assumptions:subsec_1}(\hyperref[C5]{C5})-(\hyperref[C6]{C6}) follows analogously as in \ref{prop:varieties_canon} by using the notion of basis from Fact~\ref{fact:cyclic_basis}. 
\end{proof}

\noindent Next, we show that $(\mathbf{K},\sleq)$  also satisfies Conditions~\ref{assumptions:subsec_1}(\hyperref[C4]{C4}) i.e., the Construction Principle $\mrm{CP}(\mathbf{K},\ast)$. 

\begin{lemma} \label{CP:cyclyc_groups}
	The class $(\mathbf{K},\sleq)$ satisfies $\mrm{CP}(\mathbf{K},\ast)$, i.e., Condition~\ref{CP-AEC:coproducts}(\hyperref[C4]{C4}).
\end{lemma}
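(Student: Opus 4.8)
The plan is to exhibit three countable groups — an $\omega$-chain $(A_i)_{i<\omega}$ in $\mathbf{K}$ and a group $B\in\mathbf{K}$ — witnessing Condition~\ref{CP-AEC:coproducts}(\hyperref[C4]{C4}). The natural candidate is the classical configuration behind the Construction Principle for abelian-like groups, which one can realise inside free products of cyclic groups of order $n$. Concretely, I would take $B$ to be a free product of $\aleph_0$-many cyclic groups of order $n$ with basis $\{b_i : i<\omega\}$, and set $a_0 = b_0$, $a_{k+1} = b_{k+1}\,a_k\,$ (suitably interpreted: in the $n=\infty$ case this is an honest element of infinite order; for finite $n$ one chooses a word that still has order $n$, e.g. $a_{k+1}=b_{k+1}b_kb_{k+1}^{-1}$ or a similar conjugate-type expression — the precise word must be picked so that $\langle a_i : i\le k\rangle$ is a free factor of $B$). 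Then $A_k := \langle a_0,\dots,a_k\rangle_B$ and $A := \bigcup_{k<\omega}A_k = \langle a_i : i<\omega\rangle_B$. First I would verify Condition~(\hyperref[C4]{C4}a): by Fact~\ref{fact:cyclic_basis} and the choice of the $a_i$'s, each $\{a_0,\dots,a_k\}$ extends to a basis of $B$, so $A_k\sleq B$ and $A_k\sleq A_{k+1}$; this is a routine Nielsen-transformation / Kurosh-subgroup-theorem computation.

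The heart of the argument is Condition~(\hyperref[C4]{C4}b): $A=\bigcup_{k<\omega}A_k \not\pleq B$, i.e.\ there is no $C\in\mathbf{K}$ of size $\aleph_0$ with $A\sleq C$ and $B\sleq C$ via an embedding of $B$. Unwinding Definition~\ref{def:coherentisation} and using canonical amalgamation (Remark~\ref{remark:canonical_amalgam}, Condition~\ref{context:amalgamation_class}(\hyperref[C2]{C2})), it suffices to show that $A$ is \emph{not} a free factor of any $B\ast F$ where $F$ is a free product of cyclic groups of order $n$ — equivalently, $A$ is not a free factor of $B\ast D$ for any $D\in\mathbf{K}$. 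I would argue by contradiction: if $A\sleq B\ast D$, then since $A\sleq B\sleq B\ast D$ as well, one gets two free-factor decompositions of $B\ast D$ restricting to $A$, and the Kurosh Subgroup Theorem (or the structure theory of free products of cyclics) forces a rigidity that the "telescoping" sequence $(a_k)$ violates. The key combinatorial fact is the one underlying the classical CP for $\mathbb Z$: the countably generated group $A$ defined this way is \emph{not} itself a free factor of a free product of cyclic groups — intuitively because, while every finite piece $A_k$ is a free factor, being a free factor of the union would require a single basis simultaneously "absorbing" all the $a_k$, and the chosen word-pattern obstructs this. I would isolate this as a self-contained group-theoretic claim and prove it using normal forms in free products together with Grushko's theorem on the rank of free products.

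I expect the main obstacle to be exactly pinning down the right word-pattern for the generators $a_k$ in the finite-order case and then proving cleanly that the resulting $A$ is not a free factor of any member of $\mathbf{K}$; in the $n=\infty$ case this is essentially Higman's/Eklof–Mekler's construction inside the absolutely free group and is well documented, but for finite $n$ one must work with free products of $\mathbb Z/n\mathbb Z$'s, where the subgroup structure is governed by the Kurosh Subgroup Theorem rather than the Nielsen–Schreier theorem, so the argument needs care. The plan is therefore: (1) fix the generators and verify (\hyperref[C4]{C4}a) via bases; (2) reduce (\hyperref[C4]{C4}b) to the claim "$A$ is not a free factor of any $B\ast D$ with $D\in\mathbf{K}$" using Definition~\ref{def:coherentisation}, Condition~\ref{context:amalgamation_class}(\hyperref[C2]{C2}) and transitivity of $\sleq$; (3) prove that claim by a Kurosh/normal-form analysis of free-factor decompositions, exploiting the incompatibility of the telescoping relations $a_{k+1} = b_{k+1}\cdots a_k \cdots$ with the existence of a single common basis. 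Steps (1) and (2) are short and formal; step (3) is where the genuine content lies.
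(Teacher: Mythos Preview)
Your overall strategy --- build $B$ as a countable free product with basis $\{x_i\}$, define telescoping generators, and reduce (\hyperref[C4]{C4}b) via Definition~\ref{def:coherentisation} and Condition~\ref{context:amalgamation_class}(\hyperref[C2]{C2}) to showing $A$ is not a free factor of any $B\ast D$ with $D\in\mathbf{K}$ --- matches the paper's. The gap is your step~(3): you acknowledge this is where the content lies but give no argument beyond naming Kurosh, Grushko and normal forms, and you do not commit to specific generators in the finite-$n$ case. In particular, your tentative choice $a_{k+1}=b_{k+1}b_kb_{k+1}^{-1}$ agrees with the paper's for $n=2$ (where $b_{k+1}^{-1}=b_{k+1}$), but for $n>2$ taking conjugates makes the normal closure of $A$ in $B$ already equal to $B$, so the natural quotient obstruction vanishes and you would need an entirely different idea.

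The paper handles the two regimes separately with distinct techniques, neither of which is a Kurosh/Grushko computation. For $n=2$ the generators are $y_i=x_{i+1}x_ix_{i+1}$ and the argument is Coxeter-theoretic: if $A\sleq B\ast D$ then some Coxeter basis $S_A$ of $A$ sits inside a Coxeter generating set $S_C$ of $C=B\ast D$; using that reflections of $C$ lying in $B$ are exactly reflections of $B$, and that distinct Coxeter generators are non-conjugate in a universal Coxeter group, one forces $S_A\subseteq S_B$, i.e.\ $A\sleq B$. This is then ruled out by checking via normal forms that $A$ is a \emph{proper} subgroup of $B$ (namely $x_0\notin A$) which nonetheless meets every conjugacy class $x_i^B$ (via $y_i$), impossible for a proper free factor. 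For $n>2$ the paper instead takes $y_i=x_ix_{i+1}^{-2}$: then in $D/\langle\langle A\rangle\rangle_D$ the relations give $x_{i+1}^2=x_i$, so $x_0$ becomes a nontrivial $2$-divisible element; since no free product of cyclic groups of finite order $n>2$ (or of $\mathbb{Z}$'s) contains such an element, $D/\langle\langle A\rangle\rangle_D\notin\mathbf{K}$ and hence $A\not\sleq D$. The missing ideas in your proposal are precisely this case split and, for $n>2$, the specific generator choice $x_ix_{i+1}^{-2}$ that makes the divisibility obstruction visible in the quotient.
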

\begin{proof}  
	We construct a configuration of structures $(A_i)_{i<\omega}$ and $B$ in $\mathbf{K}$ witnessing $\mrm{CP}(\mathbf{K},\ast)$. Let $B$ be the free product of size $\aleph_0$ of cyclic groups of order $n$ and let $B = \langle x_i : i < \omega \rangle_B$, so that $\{x_i : i < \omega \}$ is a basis of $B$. We distinguish two cases.
	\smallskip
	
	\noindent  \underline{{Case 1}}. $ n =2$.
	\newline\noindent   For every $i < \omega$, let $y_i = x_{i+1}x_i x_{i+1}$ and $A_i = \langle y_j : j \leq i \rangle_A$. Let $A = \bigcup_{i < \omega} A_i$. Then easily, for every $i < \omega$, $A_i \leq_* B$. We claim that $A \not\pleq B$. Suppose that this is the case, then $A \leq_* B \ast D := C$. So there is a Coxeter basis $S_A$ of $A$ and a Coxeter basis $S_C$ of $C$ such that $S_A \subseteq S_C$, since $C = B \ast C$ with $B$ and $C$ free Coxeter groups, we have that $S_C = S_B \cup S_D$, with $S_B,S_D$ being disjoint Coxeter generators of $B$ and $D$, respectively. Suppose that $S_A \not\subseteq S_B$. Then there is $s \in S_C \cap S_A$ such that $s \notin S_B$. On the other hand, $s \in S_A \subseteq A \leq B$ and furthermore $s \in R_C$, where $R_C = (S_C)^C$ --- namely, $R_C$ are the reflections of the Coxeter groups $C$. Since it is well-known that $R_C \cap B = R_B$ (cf.~\cite[Cor.~1.4]{Swi}), it follows that $s \in R_B$, where $R_B = (S_B)^B$ (recall also that the set of reflections in a Coxeter group does not depend on the choice of the generators, cf. \cite[3.65(2)]{MPS}). Thus $s \in S_C \setminus S_B$ is conjugate to a reflection $s' \in R_B = (S_B)^B$ with $s' \neq s$, but this is impossible in an even Coxeter group, such as one with no relations between distinct generators (cf.~\cite[Lemma 3.3.3]{davis}). Hence we conclude that $S_A \subseteq S_B$,  i.e., $A \leq_* B$, but it is easy to see that this is impossible. In fact, it suffices to show that $A$ is a proper subgroup of $B$. Why? Suppose that this is the case, then it cannot be the case that $A \leq_* B$ as for every conjugacy class of element of order $2$ in $B$, i.e., for every $x_i^B$, there is $a \in A$ in that conjugacy class (namely $y_i = x_{i+1}x_i x_{i+1} = x_{i+1}x_i x^{-1}_{i+1}$, recall that $n = 2$), and the only way to achieve $A \leq_* B$ for a proper subgroup $A$ of $B$ is to add conjugacy classes to $A$. Finally, to see that $A$ is a proper subgroup of $B$ it suffices to show that $x_0 \notin A$, and this is easy to see using normal forms, i.e., no word in the the $y_i$'s can give $x_0$ (notice that the $y_i$'s have order $2$).
	\smallskip
	
	\noindent   \underline{{Case 2}}. $ n > 2$.
	\newline \noindent  For every $i < \omega$, let $y_i = x_ix^{-2}_{i+1}$ and $A_i = \langle y_j : j \leq i \rangle_A$. Let $A = \bigcup_{i < \omega} A_i$. As in Case 1, it is easy to verify that $A_i \sleq B$ for every $i < \omega$. We claim that $A\not\pleq B$. Suppose $B\sleq D \in \mathbf{K}$, we argue that $A\not \sleq D$. Let $E = \langle \langle A \rangle \rangle_D$ (cf.~\ref{normal_closure})). Now, easily, for every $i < j < \omega$, $x_i/E \neq x_j/E$ and $D/E  \models x^2_{i+1} = x_i$. Suppose that $A\sleq  D$, then $D/E \in \mathbf{K}$, contradicting the fact that a free product of cyclic groups of order $n>2$ does not contain a non-trivial $2$-divisible element, i.e., an element $x_0$ such that for every $0 < i < \omega$ there is $z_i$ such that $z_i^{2^i} = x_0$.
\end{proof}

\begin{theorem}\label{prop:cyclic_groups}
	Let $(\mathbf{K}, \sleq)$ be the class of free products of cyclic groups of fixed order $2\leq n\in\mathbb{N}\cup \{\infty\}$, then there is an $\mathfrak{L}_{\infty,\omega_1}$-free structure $M\notin \mathbf{K}$ of size $\aleph_1$ and, under $V=L$, there is for every $\kappa\geq \aleph_0$ an $\mathfrak{L}_{\infty,\kappa^+}$-free structure $M\notin \mathbf{K}$ of size $\kappa^+$.
\end{theorem}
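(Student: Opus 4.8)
The plan is to read off the statement directly from the two preceding lemmas together with the abstract transfer machinery of Section~\ref{CP:coproduct_case}. By Lemma~\ref{cyclic_grups:AEC}, the pair $(\mathbf{K},\sleq)$ is a weak $\mrm{AEC}$ with $\mathrm{LS}(\mathbf{K},\sleq)=\aleph_0$ which satisfies Conditions~\ref{context:amalgamation_class}(\hyperref[C1]{C1})-(\hyperref[C3]{C3}) and Conditions~\ref{assumptions:subsec_1}(\hyperref[C5]{C5})-(\hyperref[C6]{C6}); and by Lemma~\ref{CP:cyclyc_groups} it additionally satisfies Condition~\ref{CP-AEC:coproducts}(\hyperref[C4]{C4}), i.e., the Construction Principle $\mrm{CP}(\mathbf{K},\ast)$. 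Hence $(\mathbf{K},\sleq)$ meets exactly the hypotheses collected in Context~\ref{assumptions:subsec_1}, with parameter $\mu=\aleph_0$.

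First I would apply Corollary~\ref{main:corollary} (which is available precisely because $\mathrm{LS}(\mathbf{K},\sleq)=\aleph_0$ and $E^+(\aleph_1)$ is a theorem of $\mathrm{ZFC}$) to obtain, in $\mathrm{ZFC}$, an $\mathfrak{L}_{\infty,\omega_1}$-free structure $M\notin\mathbf{K}$ of size $\aleph_1$. Then, under the additional set-theoretical assumption $V=L$, I would apply Theorem~\ref{main:theorem} (equivalently Theorem~\ref{theorem:CP}), which yields, for every $\kappa\geq\aleph_0=\mu$, an $\mathfrak{L}_{\infty,\kappa^+}$-free structure $M\notin\mathbf{K}$ of size $\kappa^+$. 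This already gives both clauses of the statement, so the proof is complete; no further computation is needed at this point.

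The only genuine content lies upstream, and it has already been dispatched: the delicate part is the verification of $\mrm{CP}(\mathbf{K},\ast)$ in Lemma~\ref{CP:cyclyc_groups}, where one must exhibit the chain $(A_i)_{i<\omega}$ and the structure $B$ and argue $\bigcup_{i<\omega}A_i\not\pleq B$ via normal forms (and, when $n=2$, via facts about reflections in even Coxeter groups). The other potential obstacle, namely transporting this countable combinatorial pattern up to every uncountable cardinal $\kappa^+$, is exactly what Lemma~\ref{prop:build_large_CP} handles inside the proof of Theorem~\ref{theorem:CP}, so it does not need to be re-examined here. Thus the present theorem is a formal consequence of the earlier results and requires nothing beyond citing Corollary~\ref{main:corollary} and Theorem~\ref{main:theorem}.
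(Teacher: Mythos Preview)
Your proposal is correct and follows exactly the paper's own proof, which simply cites Lemma~\ref{cyclic_grups:AEC}, Lemma~\ref{CP:cyclyc_groups}, Theorem~\ref{main:theorem} and Corollary~\ref{main:corollary}. The additional commentary you provide about where the genuine content lies (upstream in Lemma~\ref{CP:cyclyc_groups} and in Lemma~\ref{prop:build_large_CP}) is accurate but not needed for the formal proof.
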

\begin{proof}
	It follows by Lemma~\ref{cyclic_grups:AEC}, Lemma~\ref{CP:cyclyc_groups}, Theorem~\ref{main:theorem} and Corollary~\ref{main:corollary}.
\end{proof}

\begin{corollary}
	Let $\mathbf{K}$ be the class of free products of cyclic groups of some fixed  order $2\leq n\in\mathbb{N}\cup \{\infty\}$, then $\mathbf{K}$ is not axiomatisable in $\mathfrak{L}_{\infty,\omega_1}$ and, under $V=L$, $\mathbf{K}$ is not axiomatisable in $\mathfrak{L}_{\infty,\infty}$. 
\end{corollary}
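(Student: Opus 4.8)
The plan is to deduce this immediately from Theorem~\ref{prop:cyclic_groups}, using only the elementary fact that a class of structures axiomatised by a sentence of $\mathfrak{L}_{\infty,\lambda}$ must be closed under the equivalence relation $\equiv_{\infty,\lambda}$.

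First I would record that, by the $\kappa^+$-categoricity of $(\mathbf{K},\sleq)$ (Condition (C5) of \ref{assumptions:subsec_1}, verified in Lemma~\ref{cyclic_grups:AEC}), the unique model $F(\kappa^+)$ of size $\kappa^+$ in $\mathbf{K}$ is simply the free product of $\kappa^+$-many copies of the cyclic group of order $n$; in particular $F(\kappa^+)\in\mathbf{K}$ for every $\kappa\geq\aleph_0$. This is the only point requiring a moment's care, since the whole argument rests on comparing the $\mathfrak{L}_{\infty,\kappa^+}$-free witnesses produced by Theorem~\ref{prop:cyclic_groups} against a genuine member of $\mathbf{K}$.

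For the first assertion I would argue by contradiction: if $\mathbf{K}=\mathrm{Mod}(\varphi)$ for some $\varphi\in\mathfrak{L}_{\infty,\omega_1}$ in the language of groups, then Theorem~\ref{prop:cyclic_groups} supplies an $\mathfrak{L}_{\infty,\omega_1}$-free structure $M\notin\mathbf{K}$ of size $\aleph_1$, so that $M\equiv_{\infty,\omega_1}F(\aleph_1)$ by Definition~\ref{def:L-free}(1); since $F(\aleph_1)\in\mathbf{K}$ and hence $F(\aleph_1)\models\varphi$, also $M\models\varphi$, whence $M\in\mathbf{K}$, a contradiction. For the second assertion I would work under $V=L$ and suppose $\mathbf{K}=\mathrm{Mod}(\varphi)$ with $\varphi\in\mathfrak{L}_{\infty,\infty}$; then $\varphi\in\mathfrak{L}_{\infty,\lambda}$ for some infinite $\lambda$, and I would choose $\kappa\geq\aleph_0$ with $\lambda\leq\kappa^+$ (for instance $\kappa=\lambda$), so that every $\mathfrak{L}_{\infty,\lambda}$-sentence is also an $\mathfrak{L}_{\infty,\kappa^+}$-sentence. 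Theorem~\ref{prop:cyclic_groups} then gives an $\mathfrak{L}_{\infty,\kappa^+}$-free structure $M_\kappa\notin\mathbf{K}$ of size $\kappa^+$, so $M_\kappa\equiv_{\infty,\kappa^+}F(\kappa^+)$ and hence $M_\kappa\equiv_{\infty,\lambda}F(\kappa^+)$; as $F(\kappa^+)\in\mathbf{K}$ and therefore $F(\kappa^+)\models\varphi$, also $M_\kappa\models\varphi$, forcing $M_\kappa\in\mathbf{K}$ — again a contradiction.

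No serious obstacle is expected: all the substantive work has already been carried out in Lemmas~\ref{cyclic_grups:AEC} and \ref{CP:cyclyc_groups} (establishing that $(\mathbf{K},\sleq)$ is a canonical amalgamation class satisfying $\mrm{CP}(\mathbf{K},\ast)$) and in Theorems~\ref{main:theorem} and \ref{prop:cyclic_groups}. The present corollary is merely the translation of ``there exist $\mathfrak{L}_{\infty,\kappa^+}$-free structures outside $\mathbf{K}$'' into ``$\mathbf{K}$ is not axiomatisable'', and the only genuine subtlety is the inclusion $\mathfrak{L}_{\infty,\lambda}\subseteq\mathfrak{L}_{\infty,\kappa^+}$ for $\lambda\leq\kappa^+$ invoked in the $\mathfrak{L}_{\infty,\infty}$ case.
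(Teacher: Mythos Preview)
Your proposal is correct and matches the paper's approach: the paper states this corollary without proof, treating it as an immediate consequence of Theorem~\ref{prop:cyclic_groups} (just as Theorem~\ref{main:theorem} and Corollary~\ref{main:corollary} already fold the non-axiomatisability conclusion into the existence of $\mathfrak{L}_{\infty,\kappa^+}$-free structures outside $\mathbf{K}$). You have simply spelled out the elementary translation step the paper leaves implicit.
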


\subsection{Direct sums of torsion-free abelian groups of rank~$1$}\label{Application:tfab}

We consider in this section torsion-free abelian groups of rank~$1$ which are not isomorphic to the group of rationals $(\mathbb{Q},+)$. We start by recalling some definitions and basic facts from the theory of torsion-free abelian groups. We refer  the reader to \cite[\S 12.1]{Fuchs} for proofs of the following facts and further background.

\begin{fact}\label{remark:basis} Let $G$ be a torsion-free abelian group, then $G$ is a subgroup of a $\mathbb{Q}$-vector space $V$ whose basis is a maximal subset of independent vectors in $G$. The size of this basis is called the \emph{rank} of $G$.
\end{fact}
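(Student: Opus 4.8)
The plan is to realise $V$ as the \emph{divisible hull} of $G$, concretely $V = G \otimes_{\mathbb{Z}} \mathbb{Q}$, and then to check that the maximal $\mathbb{Z}$-independent subsets of $G$ are exactly the $\mathbb{Q}$-bases of $V$.

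First I would set $V = G \otimes_{\mathbb{Z}} \mathbb{Q}$, which is a $\mathbb{Q}$-vector space under scalar multiplication on the second factor, and consider the group homomorphism $\iota : G \to V$ given by $\iota(g) = g \otimes 1$. Every element of $V$ has the form $g \otimes \tfrac{1}{m}$ with $g \in G$ and $m \in \mathbb{Z} \setminus \{0\}$, and the relation $g \otimes 1 = 0$ forces $ng = 0$ in $G$ for some $n \neq 0$; since $G$ is torsion-free this gives $g = 0$, so $\iota$ is injective. Hence $G$ embeds in $V$, and from now on I would identify $G$ with $\iota(G)$.

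Next I would prove that a subset $X \subseteq G$ is a maximal $\mathbb{Z}$-independent subset of $G$ if and only if it is a $\mathbb{Q}$-basis of $V$. Independence over $\mathbb{Q}$ follows by clearing denominators: any nontrivial $\mathbb{Q}$-linear relation among finitely many elements of $X$ yields, after multiplying through by a common denominator, a nontrivial $\mathbb{Z}$-linear relation, contradicting $\mathbb{Z}$-independence. For spanning, note that the elements $g \otimes \tfrac{1}{m}$ span $V$ over $\mathbb{Q}$, so it is enough to show that each $g \in G$ lies in the $\mathbb{Q}$-span of $X$; by maximality $X \cup \{g\}$ is $\mathbb{Z}$-dependent, giving a relation $ng + \sum_i n_i x_i = 0$ with coefficients not all zero, and $\mathbb{Z}$-independence of $X$ forces $n \neq 0$, so $g = -\tfrac{1}{n}\sum_i n_i x_i$ is a $\mathbb{Q}$-combination of elements of $X$. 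The converse implication is entirely analogous. Finally, every $\mathbb{Q}$-vector space has a basis and all its bases have the same cardinality, so the cardinal $\dim_{\mathbb{Q}} V$ does not depend on the chosen maximal independent subset, and calling this common value the rank of $G$ is well-posed.

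There is no substantive obstacle here, since this is the classical divisible-hull construction; the only two points deserving attention are the appeal to torsion-freeness to make $\iota$ injective, and the routine passage between $\mathbb{Z}$-linear and $\mathbb{Q}$-linear dependence by clearing or introducing denominators.
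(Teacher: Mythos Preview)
Your argument is correct and is exactly the standard divisible-hull construction. Note, however, that the paper does not prove this statement at all: it is recorded as a \emph{Fact} with a reference to \cite[\S 12.1]{Fuchs}, so there is no in-paper proof to compare against. Your proposal simply supplies the classical justification that the cited reference would contain.
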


\noindent The key tools in the study of torsion-free abelian groups are provided by the notions of height, characteristic and type. We recall their definitions.

\begin{definition}\label{definitions:tfag} 	Let $G$ be a torsion-free abelian group.
	\begin{enumerate}[(a)]
		\item Given an element $a\in G$ and a prime number $p$, the \emph{p-height} $h_p(a)$ of $a$ is the greatest integer $k$ such that the equation $p^k x=a$ has solutions in $A$, if it exists, and it is $\infty$ otherwise.
		\item Let $p_1,p_2,\dots$ be an increasing enumeration of the prime numbers, then the sequence of $p$-heights $\chi(a) = (h_{p_1}(a),h_{p_2}(a),\dots )$ of an element $a\in G$ is called the \emph{characteristic~of~$a$.}
		\item Two characteristics $\chi(a)=(k_1,k_2,\dots,)$ and $\chi(b)=(\ell_1,\ell_2,\dots,)$ are said to be \emph{equivalent} if $k_n=\ell_n$ for almost all $n$ and such that both $k_n$ and $\ell_n$ are finite whenever $k_n\neq\ell_n$. We then write $\chi(a)\equiv \chi(b)$ and call \emph{types} the equivalence classes of characteristics. We write $\mathbf{t}(a)$ for the type of the characteristics $\chi(a)$.
	\end{enumerate}
\end{definition}

In this section we are interested in torsion-free abelian groups of \emph{rank~$1$}. By Fact~\ref{remark:basis} these are exactly the subgroups of $\mathbb{Q}$ and are thus often called \emph{rational groups}. We recall the following important fact (cf.~\cite[p.~411]{Fuchs}).

\begin{fact}\label{fact:tfag}
	Let $G$ be a torsion-free abelian group of rank~$1$, then all its non-zero elements have the same characteristic. We thus define the \emph{type $\mathbf{t}(G)$ of $G$} simply as the type of any of its non-zero elements.
\end{fact}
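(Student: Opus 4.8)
The plan is to establish the only substantive content of the statement: that any two non-zero elements of $G$ have \emph{equivalent} characteristics in the sense of Definition~\ref{definitions:tfag}(c), which is precisely what makes the type $\mathbf{t}(a)$ independent of the non-zero $a$ chosen, so that $\mathbf{t}(G)$ is well-defined. (Identical characteristics cannot be expected in general — e.g.\ $1$ and $2$ in $\mathbb{Z}$ differ at the prime $2$ — so ``the same characteristic'' is here to be read as ``equivalent characteristics''.)

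First I would record two elementary facts about $p$-heights in an arbitrary torsion-free abelian group, for a fixed non-zero $a$ and a fixed prime $p$, adopting the convention $\infty + k = \infty$: (i) if $c \in \mathbb{Z}$ is not divisible by $p$, then $h_p(ca) = h_p(a)$; and (ii) $h_p(p^k a) = h_p(a) + k$ for every $k \geq 0$. In (i) the inequality $h_p(ca) \geq h_p(a)$ is immediate, and for the converse one uses a Bézout identity $u p^k + v c = 1$ (available since $\gcd(p^k, c) = 1$): from $p^k y = ca$ one obtains $a = u p^k a + v(ca) = p^k(u a + v y)$, so $h_p(a) \geq k$. In (ii) ``$\geq$'' is again immediate, and conversely, if $p^j z = p^k a$ with $j \geq k$ then cancelling $p^k$ in the torsion-free group gives $p^{j-k} z = a$, so $j - k \leq h_p(a)$, while $j < k$ is harmless since $h_p(a) \geq 0$. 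Combining (i) and (ii) with the factorisation of the $p$-part yields, for any non-zero $m \in \mathbb{Z}$, the clean formula $h_p(ma) = h_p(a) + v_p(m)$, where $v_p$ denotes the $p$-adic valuation.

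Next I would bring in the rank hypothesis. By Fact~\ref{remark:basis}, any two non-zero $a, b \in G$ are $\mathbb{Q}$-dependent in the ambient vector space, so after clearing denominators there are non-zero integers $m, n$ with $ma = nb$. Applying the height formula at each prime $p$ gives
\[ h_p(a) + v_p(m) \;=\; h_p(ma) \;=\; h_p(nb) \;=\; h_p(b) + v_p(n). \]
From this I would read off: first, $h_p(a) = \infty$ if and only if $h_p(b) = \infty$, since $v_p(m)$ and $v_p(n)$ are finite; second, when both heights are finite, $h_p(a) - h_p(b) = v_p(n) - v_p(m)$, which vanishes for every prime $p \nmid mn$. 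Hence $\chi(a)$ and $\chi(b)$ agree at all primes outside the finite set of primes dividing $mn$, and wherever they disagree both entries are finite; by Definition~\ref{definitions:tfag}(c) this is exactly $\chi(a) \equiv \chi(b)$, so $\mathbf{t}(a) = \mathbf{t}(b)$, which proves the claim.

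The only point demanding genuine care is step (i) of the height lemma — the invariance of $h_p$ under multiplication by a $p$-coprime integer — for which the Bézout argument is the decisive device; everything else amounts to routine bookkeeping, the main subtlety being to handle the $\infty$ conventions uniformly.
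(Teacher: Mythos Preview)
Your proof is correct. The paper does not supply its own argument for this fact; it merely records it with a citation to \cite[p.~411]{Fuchs}, so there is nothing to compare against beyond noting that your proof is the standard one.

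Your opening observation is worth highlighting: as literally phrased, the statement that all non-zero elements have ``the same characteristic'' is false --- your example of $1$ and $2$ in $\mathbb{Z}$ already witnesses this --- and what is intended (and what the second sentence of the Fact actually uses) is that all non-zero elements have the same \emph{type}, i.e., equivalent characteristics in the sense of Definition~\ref{definitions:tfag}(c). Your proof establishes precisely this via the height formula $h_p(ma) = h_p(a) + v_p(m)$ together with the rank-$1$ dependence relation $ma = nb$, and the B\'ezout step in (i) is exactly the right tool.
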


\noindent The notion of type in turn provides us with a classification of torsion-free abelian groups of rank~$1$. We recall the following theorem, originally proved by Baer in \cite{Baer} and refer the reader also to \cite[\S 12.1; Theorem~1.1]{Fuchs} for its proof.

\begin{fact}[Baer \cite{Baer}]\label{thm:Baer} Two torsion-free abelian groups of rank~$1$ are isomorphic exactly if they have the same type. Every type is realized by a subgroup of $(\mathbb{Q},+)$, i.e., for every type $\mathbf{t}$ there is a subgroup $G_{\mathbf{t}}\leq \mathbb{Q}$ such that $\mathbf{t}(G)=\mathbf{t}$.
\end{fact}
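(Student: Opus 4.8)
The plan is to translate the statement into the classical description of rank-$1$ torsion-free abelian groups as subgroups of $\mathbb{Q}$ classified by ``height sequences'', and then to reduce the isomorphism question to an equality of subgroups of $\mathbb{Q}$. First I would recall, via Fact~\ref{remark:basis}, that every nonzero such group $G$ embeds into $\mathbb{Q}$, and that composing an embedding with multiplication by a suitable element of $\mathbb{Q}\setminus\{0\}$ allows any chosen nonzero $a\in G$ to be sent to $1$; so I may identify $G$ with a subgroup of $\mathbb{Q}$ containing $1$. Writing $v_p$ for the $p$-adic valuation on $\mathbb{Q}$ and $\alpha_p:=h_p(1)$, the key structural fact is that $G=\{x\in\mathbb{Q}:v_p(x)\geq -\alpha_p\text{ for every prime }p\}$: the inclusion ``$\subseteq$'' follows from $h^G_p(x)=\alpha_p+v_p(x)\geq 0$ for $x\in G$, and ``$\supseteq$'' from a routine coprimality/partial-fractions argument expressing an admissible $x$ as a $\mathbb{Z}$-combination of the elements $1/p^{k}$ (with $k\leq\alpha_p$) which already lie in $G$. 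Consequently $G\mapsto(h_p(1))_p$ is a bijection between subgroups of $\mathbb{Q}$ containing $1$ and sequences $(\gamma_p)_p$ of elements of $\mathbb{N}\cup\{\infty\}$ indexed by the primes; this already yields the last clause, since for a type $\mathbf{t}$ with representative characteristic $(\alpha_p)_p$ the set $G_{\mathbf{t}}:=\{x\in\mathbb{Q}:v_p(x)\geq -\alpha_p\ \forall p\}$ is a subgroup of $\mathbb{Q}$ (using $v_p(x-y)\geq\min(v_p(x),v_p(y))$) satisfying $\chi(1)=(\alpha_p)_p$, hence $\mathbf{t}(G_{\mathbf{t}})=\mathbf{t}$.

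Next I would handle the easy implication. If $f:G\to H$ is an isomorphism and $0\neq a\in G$, then for each prime $p$ and each $k\geq 0$ the map $f$ restricts to a bijection between $\{x\in G:p^{k}x=a\}$ and $\{y\in H:p^{k}y=f(a)\}$, so $h^G_p(a)=h^H_p(f(a))$ for all $p$, whence $\chi_G(a)=\chi_H(f(a))$ and therefore $\mathbf{t}(G)=\mathbf{t}(a)=\mathbf{t}(f(a))=\mathbf{t}(H)$, using Fact~\ref{fact:tfag} for the fact that a rank-$1$ group has a well-defined type.

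The substance is the converse. Assuming $\mathbf{t}(G)=\mathbf{t}(H)$, fix nonzero $a\in G$ and $b\in H$, so $\chi_G(a)\equiv\chi_H(b)$, and embed $G$ and $H$ into $\mathbb{Q}$ with $a\mapsto 1$ and $b\mapsto 1$, producing height sequences $(\alpha_p)_p$ and $(\beta_p)_p$; by definition of equivalence there is a finite set $S$ of primes with $\alpha_p=\beta_p$ off $S$ and $\alpha_p,\beta_p$ both finite on $S$. The computational heart is the identity $h^G_p(x)=\alpha_p+v_p(x)$, valid for every $x\in G$ and every prime $p$ (with the convention $\infty+n=\infty$), which is immediate from the valuation description of $G$. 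Applying it to $c:=\prod_{p\in S}p^{\,\beta_p-\alpha_p}$ --- which lies in $G$, since $v_p(c)=\beta_p-\alpha_p\geq -\alpha_p$ for $p\in S$ and $v_p(c)=0$ otherwise --- gives $h^G_p(c)=\beta_p$ for all $p$, i.e.\ $\chi_G(c)=(\beta_p)_p$. Now re-embed $G$ into $\mathbb{Q}$ via $x\mapsto x/c$, so that $c\mapsto 1$; its new height sequence at $1$ is $(h^G_p(c))_p=(\beta_p)_p$, which is exactly the height sequence of the chosen embedding of $H$. By the bijection from the first paragraph the two resulting subgroups of $\mathbb{Q}$ coincide, so $G\cong H$.

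The routine ingredients are the bijection statement of the first paragraph and the bijection-of-solution-sets argument for the easy direction; I expect the only point needing genuine care to be the valuation bookkeeping in the converse --- namely the identity $h^G_p(x)=\alpha_p+v_p(x)$, the verification that the single rational $c$ simultaneously corrects all finitely many discrepant primes, and the observation that primes with $\alpha_p=\infty$ (which are necessarily outside $S$) require no adjustment and cause no difficulty. Beyond this I anticipate no real obstacle.
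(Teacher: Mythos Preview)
The paper does not give its own proof of this statement: it is recorded as a \emph{Fact}, attributed to Baer, with a pointer to \cite[\S 12.1, Theorem~1.1]{Fuchs} for the argument. So there is nothing in the paper to compare your proposal against directly.

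That said, your proposal is correct and is essentially the standard proof one finds in Fuchs. The structural identification $G=\{x\in\mathbb{Q}:v_p(x)\geq-\alpha_p\text{ for all }p\}$ is right (your ``$\subseteq$'' direction works by clearing the coprime part of the denominator and using B\'ezout to extract $1/p^m$, contradicting $\alpha_p<m$; your ``$\supseteq$'' direction is the CRT/partial-fractions computation you describe). From this the height identity $h^G_p(x)=\alpha_p+v_p(x)$ is immediate, and your correction element $c=\prod_{p\in S}p^{\beta_p-\alpha_p}$ does exactly what you claim: it lies in $G$ because $\beta_p\geq 0$ forces $v_p(c)\geq-\alpha_p$ on $S$, and it shifts the characteristic of the basepoint from $(\alpha_p)_p$ to $(\beta_p)_p$. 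The only cosmetic point is that you might state explicitly that multiplication by $1/c$ is a group automorphism of $(\mathbb{Q},+)$, so that $G\cong(1/c)G$ as abstract groups before invoking the bijection to conclude $(1/c)G=H$ as subsets of $\mathbb{Q}$; you do this implicitly, but making it explicit removes any doubt about the final step.
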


\begin{notation}
	Given a type $\mathbf{t}$ we write $G_{\mathbf{t}}$ for the unique (up to isomorphism) subgroup of $\mathbb{Q}$ with type $\mathbf{t}$.
\end{notation}

In this section we fix a specific type $\mathbf{t}$ such that $\mathbf{t}\neq \mathbf{t}(\mathbb{Q})$, i.e., every characteristic $\chi(G)=(\ell_1,\ell_2,\dots)$ of type $\mathbf{t}$ contains at least one number $\ell_i$ different from $\infty$. We study the class of all directs sums (of any possible cardinality) of the unique torsion-free group $G_{\mathbf{t}}$ with $\mathbf{t}(G_{\mathbf{t}})=\mathbf{t}$. Notice that, since the type of the characteristic $(0,0,0,\dots)$ is exactly the type of the integers $(\mathbb{Z},+)$, our approach generalises the work of Mekler \cite{mekler} on almost-free abelian groups to all torsion-free abelian groups of rank~$1$ with arbitrary type $\mathbf{t}\neq \mathbf{t}(\mathbb{Q})$. We fix the following context.

\begin{context}\label{context:tfag_groups}
	Let  $\mathbf{t}$ be any type such that $\mathbf{t}\neq \mathbf{t}(\mathbb{Q})$. In this section we let $\mathbf{K}_{\mathbf{t}}$ be the class of all direct sums of $G_{\mathbf{t}}$ and we let $A\dleq B$ for $A,B\in \mathbf{K}_{\mathbf{t}}$ if there is some $C\in \mathbf{K}_{\mathbf{t}}$ such that $B=A\oplus C$, where $\oplus$ denotes the usual operation of direct sum. Notice that $B=A\oplus C$ implies that $B/A=C\in\mathbf{K}_{\mathbf{t}}$ (cf.~\cite[p.~44]{Fuchs}). 
\end{context}

Before verifying that the class $(\mathbf{K}_{\mathbf{t}}, \dleq)$ satisfies Conditions~\ref{context:amalgamation_class}(\hyperref[C1]{C1})-(\hyperref[C3]{C6}), we introduce the following key definition, which is from \cite[p.~410]{Fuchs}.

\begin{definition}\label{def:type_closure}
	Let $\chi=(\ell_1,\ell_2,\dots)$ be some characteristic, then we let $R_{\chi}$ be the following subgroup of $\mathbb{Q}$:
	\begin{align*}
	R_{\chi}\coloneqq \Bigl \langle \Bigl \{p_n^{-k_n}    : k_n \leq \ell_n \text{ and }  n<\omega     \Bigr\}     \Bigr\rangle_{\mathbb{Q}} 
	\end{align*}
	where $p_n$ refers to the $n$-\emph{th} prime number. Similarly, we then write $R_\mathbf{\chi}(x)$ for the corresponding subgroup of $\mathbb{Q}(x)$, where $\mathbb{Q}(x)$ is the $\mathbb{Q}$-vector space with basis $\{x\}$.
\end{definition}

\begin{remark}\label{tfab:key_remark}
	Let $\chi=(\ell_1,\ell_2,\dots)$ be some characteristic, then in $R_{\chi}$ the element 1 has exactly characteristic $\chi(1)=\chi$ (cf.~\cite[p.~410]{Fuchs}). Moreover, since when viewed as a $\mathbb{Q}$-vector space, the basis of $R_{\chi}$ is $\{1\}$, it follows from Fact~\ref{fact:tfag} that all elements in $R_{\chi}$ have the same type of $\chi$. For this reason we often write $R_\mathbf{t}$ instead of $R_{\chi}$, where $\mathbf{t}=\mathbf{t}(1)$ in $R_\chi$. Additionally, it follows that if in a torsion-free group $A\leq \mathbb{Q}$ the element $a\in A$ has characteristic $\chi(a)=\chi$, then $R_\chi(a)=\langle a \rangle_A^*$, where $ \langle a \rangle_A^* $ refers to the \emph{pure closure} of $a$ in $A$ (cf.~\cite[p.~410]{Fuchs}).
\end{remark}

\begin{remark}\label{tfab:key_remark_1}
	We point out an important connection between torsion-free abelian groups of rank~$1$ and free modules. By \ref{thm:Baer} a torsion-free abelian group $G_{\mathbf{t}}$ of rank~$1$ and type $\mathbf{t}$ is a subgroup of $\mathbb{Q}$ generated by some element $a\in G_{\mathbf{t}}$ with type $\mathbf{t}$. It is possible to verify that $R_{\mathbf{t}}$ is actually a subring of $\mathbb{Q}$, whence from Definition~\ref{def:type_closure} and Remark~\ref{tfab:key_remark} it follows that $G_{\mathbf{t}}$ is a cyclic free $R_{\mathbf{t}}$-module. In turn, one can verify that direct sums of torsion-free groups of type $\mathbf{t}$ correspond to free products of $R_{\mathbf{t}}$-modules of rank~$1$.  This indicates an important connection between our approach in this section and the classical results on varieties of modules from \cite{EM2}. Crucially, however, $R_{\mathbf{t}}$-modules and torsion-free abelian groups differ in their signature, which is a key fact when dealing with their axiomatisability.  In particular, from the classical Construction Principle from Definition~\ref{definition CP} (and reasoning as in \ref{CP:tfag_groups} below) one can derive some non-axiomatisability results for free $R_{\mathbf{t}}$-modules. However, it is not in principle obvious how to transfer these results to additionally derive the non-axiomatisability of torsion-free abelian groups of rank~$1$. A key advantage of the Construction Principle  $\mrm{CP}(\mathbf{K},\ast)$ is to dispense with this dependency on the signature. We thus deal in the rest of this section directly with torsion-free abelian groups of rank~$1$ and show how to apply $\mrm{CP}(\mathbf{K},\ast)$ in this case.
\end{remark}

\begin{lemma}\label{tfag:AEC}
	The class $(\mathbf{K}_{\mathbf{t}},\dleq)$ is a weak $\mrm{AEC}$  with $\mathrm{LS}(\mathbf{K}_{\mathbf{t}}, \dleq)=\aleph_0$ and it satisfies Conditions~\ref{context:amalgamation_class}(\hyperref[C1]{C1})-(\hyperref[C3]{C3}) and Conditions~\ref{assumptions:subsec_1}(\hyperref[C5]{C5})-(\hyperref[C6]{C6}).
\end{lemma}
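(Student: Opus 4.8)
The plan is to mirror closely the proof of Lemma~\ref{prop:varieties_canon}, using the structure theory of direct sums of copies of the fixed rank-one group $G_{\mathbf{t}}$ in place of the theory of bases in free algebras. The key observation is that $(\mathbf{K}_{\mathbf{t}},\dleq)$ behaves like a ``variety of free algebras'' where the role of free generators is played by a \emph{direct decomposition into copies of $G_{\mathbf{t}}$}: every $A\in\mathbf{K}_{\mathbf{t}}$ is of the form $\bigoplus_{i<\kappa}G_{\mathbf{t}}^{(i)}$, and a ``basis'' for $A$ is a choice of such decomposition, i.e.\ a choice of generators $a_i\in G_{\mathbf{t}}^{(i)}$ of characteristic $\chi_{\mathbf{t}}$. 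The first step is to record the analogue of Fact~\ref{fact:bases}: any two such decompositions of the same $A$ have the same cardinality $\kappa$ (so $|A|=\kappa+\aleph_0$ is an invariant), $A\oplus B$ decomposes as the disjoint union of decompositions of $A$ and $B$, and similarly for infinite direct sums; this is standard for direct sums of a fixed indecomposable rank-one group (see \cite[\S 12]{Fuchs}). Using this, one checks the abstract class axioms \ref{def_AC}(1)--(3) trivially, and then \ref{def_AEC}(4.1), (4.2): given a continuous $\dleq$-chain $(A_i)_{i<\delta}$, using $A_{i+1}=A_i\oplus C_i$ we can choose compatible decompositions whose union decomposes $\bigcup_{i<\delta}A_i$, so $\bigcup_{i<\delta}A_i\in\mathbf{K}_{\mathbf{t}}$ and each $A_i\dleq\bigcup_{i<\delta}A_i$ (with complement the direct sum of the later summands). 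For \ref{def_AEC}(6), given $A\in\mathbf{K}_{\mathbf{t}}$ and $B\subseteq A$ with $B$ infinite, each element of $B$ lies in the subgroup generated by finitely many summands, so collecting these we find a sub-sum $C$ with $B\subseteq C$, $|C|\le|B|$, and $C$ a direct summand; hence $\mathrm{LS}(\mathbf{K}_{\mathbf{t}},\dleq)=\aleph_0$.

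Next I would verify Conditions~\ref{context:amalgamation_class}(\hyperref[C1]{C1})--(\hyperref[C3]{C3}). For (\hyperref[C1]{C1}) the operator $\bigast_{i\in I}B_i$ is just $\bigoplus_{i\in I}B_i$ (requiring the $B_i$ pairwise disjoint as sets), which lands in $\mathbf{K}_{\mathbf{t}}$ since a direct sum of direct sums of copies of $G_{\mathbf{t}}$ is again such; properties \ref{def:canonical_jep}(a)--(d) are the familiar functoriality, universality and associativity of the direct-sum construction for abelian groups, where (c) uses that $B_i\dleq C_i$ means $C_i=B_i\oplus D_i$ and then $\bigoplus C_i=(\bigoplus B_i)\oplus(\bigoplus D_i)$. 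Condition~(\hyperref[C2]{C2}) is immediate from the very definition of $\dleq$ in Context~\ref{context:tfag_groups}, and Condition~(\hyperref[C3]{C3}) is the statement that $\bigoplus_{i<\alpha}B_i=\bigcup_{\beta<\alpha}\bigoplus_{i<\beta}B_i$, which holds because every element of a direct sum has finite support. For Condition~\ref{assumptions:subsec_1}(\hyperref[C5]{C5}), $\kappa^+$-categoricity for $\kappa\ge\aleph_0$: a group in $\mathbf{K}_{\mathbf{t}}$ of cardinality $\kappa^+$ must be $\bigoplus_{i<\mu}G_{\mathbf{t}}^{(i)}$ with $\mu+\aleph_0=\kappa^+$, forcing $\mu=\kappa^+$, so it is isomorphic to $G_{\mathbf{t}}^{(\kappa^+)}=F(\kappa^+)$; here one uses Baer's classification (Fact~\ref{thm:Baer}) to see $G_{\mathbf{t}}$ is determined up to isomorphism. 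For Condition~(\hyperref[C6]{C6}), the universal model of size $\kappa$ is $M_\kappa:=G_{\mathbf{t}}^{(\kappa)}$: any $C\in(\mathbf{K}_{\mathbf{t}})_{\le\kappa}$ is $\bigoplus_{i<\nu}G_{\mathbf{t}}^{(i)}$ with $\nu\le\kappa$, hence embeds as a direct summand of $G_{\mathbf{t}}^{(\kappa)}$ (pad with extra copies), which is a strong embedding.

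I expect the main subtlety — mild, but worth care — to be ensuring that the decomposition-as-basis machinery is genuinely as rigid as the free-algebra case, specifically that $A\dleq B$ is detected \emph{intrinsically} and that complements are again in $\mathbf{K}_{\mathbf{t}}$. The point $B/A\cong C\in\mathbf{K}_{\mathbf{t}}$ is already noted in Context~\ref{context:tfag_groups} (citing \cite[p.~44]{Fuchs}), and it guarantees that the class is closed under the relevant quotients, which is what makes the chain axiom \ref{def_AEC}(4.1) and the Löwenheim--Skolem argument go through without having to re-derive structure theory by hand. The rest is bookkeeping that runs in exact parallel to Lemma~\ref{prop:varieties_canon}, with $\oplus$ in place of $\ast$ and ``decomposition into copies of $G_{\mathbf{t}}$'' in place of ``basis''; in fact, via the correspondence recorded in Remark~\ref{tfab:key_remark_1} between $\mathbf{K}_{\mathbf{t}}$ and free $R_{\mathbf{t}}$-modules of rank one, one could alternatively quote the module case of Lemma~\ref{prop:varieties_canon} directly, taking $\mathbf{V}$ to be the variety of $R_{\mathbf{t}}$-modules and noting that $\dleq$ for torsion-free groups coincides with $\sleq$ for the corresponding free $R_{\mathbf{t}}$-modules — the only genuine difference between the two settings being the signature, which is irrelevant to the present lemma (it becomes relevant only for the non-axiomatisability conclusions).
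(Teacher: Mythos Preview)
Your proposal is correct and takes essentially the same approach as the paper: the paper's proof is a one-line reference to Lemma~\ref{prop:varieties_canon}, adapted using the closure operator $R_{\chi}$ from Definition~\ref{def:type_closure} and the $\mathbb{Q}$-vector-space basis notion from Fact~\ref{remark:basis}, which is exactly the ``decomposition into copies of $G_{\mathbf{t}}$'' you spell out. Your version is simply more explicit, and your closing observation about the $R_{\mathbf{t}}$-module correspondence matches the paper's Remark~\ref{tfab:key_remark_1}.
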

\begin{proof}
	This follows by reasoning as in Lemma~\ref{prop:varieties_canon}, using the closure operators from Definition~\ref{def:type_closure} and the notion of basis from \ref{remark:basis}.
\end{proof}

\begin{lemma} \label{CP:tfag_groups}
	The class $(\mathbf{K}_{\mathbf{t}},\dleq)$ satisfies $\mrm{CP}(\mathbf{K}_{\mathbf{t}},\ast)$, i.e., Condition~\ref{CP-AEC:coproducts}(\hyperref[C4]{C4}).
\end{lemma}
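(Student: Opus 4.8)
The plan is to carry out, directly with torsion-free abelian groups of type $\mathbf{t}$, the classical construction that witnesses the Construction Principle for a PID which is not a field, exploiting the canonical $R_{\mathbf{t}}$-module structure on the members of $\mathbf{K}_{\mathbf{t}}$ recorded in~\ref{tfab:key_remark_1}. Since $\mathbf{t}\neq\mathbf{t}(\mathbb{Q})$, i.e.\ $R_{\mathbf{t}}\neq\mathbb{Q}$, there is a prime $q$ that is not invertible in $R_{\mathbf{t}}$, equivalently with $h_q(\mathbf{t})<\infty$; fix such a $q$.

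First I would fix the ambient model. Take $B=\bigoplus_{i<\omega}B_i\in\mathbf{K}_{\mathbf{t}}$ with each $B_i\cong G_{\mathbf{t}}$, and fix a free $R_{\mathbf{t}}$-generator $e_i$ of $B_i$, so that $B_i=\langle e_i\rangle^{*}_B$ and $B=\bigoplus_{i<\omega}R_{\mathbf{t}}e_i$ as an $R_{\mathbf{t}}$-module (recall~\ref{def:type_closure} and~\ref{tfab:key_remark_1}). For $k<\omega$ put $a_k:=e_k-q\,e_{k+1}$, and let $A_n$ be the $(\mathbf{K}_{\mathbf{t}},\dleq)$-substructure of $B$ generated by $\{a_0,\dots,a_n\}$; this is the pure closure $\langle a_0,\dots,a_n\rangle^{*}_B$, equivalently the free $R_{\mathbf{t}}$-submodule of rank $n+1$ they generate. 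A direct computation shows that $\{a_0,\dots,a_n\}$ is $R_{\mathbf{t}}$-independent and that $\langle e_0,\dots,e_{n+1}\rangle^{*}_B=A_n\oplus\langle e_{n+1}\rangle^{*}_B$ --- read off $e_i=a_i+q\,e_{i+1}$ downwards from $i=n$ --- whence $B=A_n\oplus\langle e_{n+1}\rangle^{*}_B\oplus\bigoplus_{i>n+1}\langle e_i\rangle^{*}_B$, so $A_n\dleq B$; likewise $A_{n+1}=A_n\oplus\langle a_{n+1}\rangle^{*}_B$, so $A_n\dleq A_{n+1}$. This verifies clause~(a) of $\mrm{CP}(\mathbf{K}_{\mathbf{t}},\ast)$ (Condition~\ref{CP-AEC:coproducts}(\hyperref[C4]{C4})). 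Moreover $A:=\bigcup_{n<\omega}A_n$ is the $R_{\mathbf{t}}$-module freely generated by $\{a_k:k<\omega\}$, so $A\in\mathbf{K}_{\mathbf{t}}$ and $A\le B$.

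The core of the proof is clause~(b), namely $A\not\pleq B$. The key point is a uniform bound on $q$-heights inside $\mathbf{K}_{\mathbf{t}}$: since $q$ is not invertible in $R_{\mathbf{t}}$, every non-zero element of $G_{\mathbf{t}}$ has finite $q$-height, and the $q$-height of an element of a direct sum equals the minimum of the $q$-heights of its finitely many non-zero components; hence every non-zero element of every $G\in\mathbf{K}_{\mathbf{t}}$ has finite $q$-height. On the other hand, in $B/A$ we have $\bar e_k=q\,\bar e_{k+1}$, which telescopes to $\bar e_0=q^{n}\bar e_n$ for every $n<\omega$, while $\bar e_0\neq 0$ (for $e_0\in A$ would force infinite support over $\{a_k\}$); thus $\bar e_0\in B/A$ is non-zero of infinite $q$-height. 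Now assume $A\pleq B$: since $\mathrm{LS}(\mathbf{K}_{\mathbf{t}},\dleq)=\aleph_0$ (by~\ref{tfag:AEC}), Definition~\ref{def:coherentisation} yields $C\in\mathbf{K}_{\mathbf{t}}$ with $A\le B\le C$, $A\dleq C$ and $B\dleq C$. From $A\dleq C$ we get $C/A\in\mathbf{K}_{\mathbf{t}}$; but $C/A\supseteq B/A\ni\bar e_0\neq 0$, and the $q$-height of $\bar e_0$ in $C/A$ is still infinite (heights do not decrease in overgroups), contradicting the uniform bound. Hence $A\not\pleq B$, and $(A_n)_{n<\omega}$, $B$ witness $\mrm{CP}(\mathbf{K}_{\mathbf{t}},\ast)$.

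The one place I expect to need genuine care is the bookkeeping between the abelian-group and $R_{\mathbf{t}}$-module descriptions: that the $(\mathbf{K}_{\mathbf{t}},\dleq)$-substructure generated by a set equals the $R_{\mathbf{t}}$-submodule (equivalently the pure closure) it generates, that a decomposition of a member of $\mathbf{K}_{\mathbf{t}}$ as a direct sum of abelian groups is automatically an $R_{\mathbf{t}}$-module decomposition --- so that the complements above, and $C/A$, indeed lie in $\mathbf{K}_{\mathbf{t}}$ --- and that $q$-heights are insensitive to this identification. All of this follows from the fact that, for a prime $p$ with $h_p(\mathbf{t})=\infty$, division by $p$ is unique and unconditionally possible in any member of $\mathbf{K}_{\mathbf{t}}$; i.e.\ from the description of $R_{\mathbf{t}}$ and of the closure operators in~\ref{def:type_closure}--\ref{tfab:key_remark_1}.
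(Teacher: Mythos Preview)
Your proof is correct and follows essentially the same approach as the paper: the paper takes $B=\bigoplus_{i<\omega}R_{\mathbf{t}}(x_i)$, sets $y_i=x_i-p_n x_{i+1}$ for a prime $p_n$ with $\ell_n=0$, lets $A_i=\bigoplus_{j\leq i}R_{\mathbf{t}}(y_j)$, and derives the contradiction from the fact that $x_0+A$ has infinite $p_n$-height in the quotient $D/A\in\mathbf{K}_{\mathbf{t}}$. Your treatment of the final contradiction (via the observation that $q$-height in a direct sum is the minimum of the component heights) is in fact slightly cleaner than the paper's phrasing.
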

\begin{proof}
	We describe a configuration of groups $(A_i)_{i<\omega}$ and $B$ in $\mathbf{K}_{\mathbf{t}}$ witnessing the Construction Principle  $\mrm{CP}(\mathbf{K}_{\mathbf{t}},\ast)$. Let $\chi(G_{\mathbf{t}})=(\ell_1,\ell_2,\ell_3,\dots)$ be a characteristic in the equivalence class $\mathbf{t}$. Since by assumption we have that $\mathbf{t}\neq \mathbf{t}(\mathbb{Q})$, there is at least one number $\ell_n$ which is finite. By the definition of the equivalence relation from \ref{definitions:tfag}(c), we can assume without loss of generality that $\ell_n=0$.
	
	\smallskip
	\noindent Consider the group $R_{\mathbf{t}}\leq \mathbb{Q}$ from Definition~\ref{def:type_closure} and for every $i<\omega$ let $R_{\mathbf{t}}(x_i)$ be a group isomorphic to $R_{\mathbf{t}}$ under the map induced by $x_i\mapsto 1$ (cf.~\ref{def:type_closure}). Additionally, we assume $R_{\mathbf{t}}(x_i)$ and $R_{\mathbf{t}}(x_j)$ are disjoint whenever $i\neq j$. Clearly $R_{\mathbf{t}}(x_i)\in \mathbf{K}_{\mathbf{t}}$ for all $i<\omega$, and thus  $B=\bigoplus_{i<\omega}R_{\mathbf{t}}(x_i)$ also belongs to $ \mathbf{K}_{\mathbf{t}} $.  For all $i<\omega$, we define the term $y_i$ by letting:
	\[ y_i\coloneqq x_i- p_nx_{i+1},   \]
	where $p_n$ is exactly the \emph{$n$-th} prime number (and recall that we are assuming that $\ell_n = 0$). We then define, for $i < \omega$, $A_i= \bigoplus \{ R_{\mathbf{t}}(y_j) : j =0, ..., i\}$.  We show that the models $(A_i)_{i<\omega}$ and $B$ witness $\mrm{CP}(\mathbf{K}_{\mathbf{t}},\ast)$. First, notice that by construction we have that $A_{i+1}=A_i\oplus R_{\mathbf{t}}(y_{i+1})$ for all $i<\omega$, thus it follows that $A_i\dleq A_{i+1}$ and that $(A_i)_{i<\omega}$ forms a $\dleq$-chain. 
	
	\smallskip
	\noindent We claim that we also have $A_i\dleq B$, for all $i<\omega$. Let $C_i={\bigoplus \{R_{\mathbf{t}}(x_j) : i<j<\omega\} }$, then clearly $C_i\in \mathbf{K}_{\mathbf{t}}$ since it is the direct sum of all $R_{\mathbf{t}}(x_j)$ for $j<\omega$ with $i<j$. In order to see that $A_i\oplus C_i=B$ (so that $A_i\dleq B$) is suffices to observe that the map $R_{\mathbf{t}}(x_0) \mapsto R_{\mathbf{t}}(y_0)$, ..., $R_t(x_i) \mapsto R_t(y_i)$, $R_t(x_{i+1}) \mapsto R_{\mathbf{t}}(x_{i+1})$ is an automorphism of $\bigoplus \{R_{\mathbf{t}}x_j : j=0, ..., i+1\}$, as then obviously we have the following:
	$$B = \bigoplus \{R_{\mathbf{t}}(y_j) : j = 0, ..., i\} \oplus R_{\mathbf{t}}(x_{i+1}) \oplus C_{i+1} = \bigoplus \{R_{\mathbf{t}}(y_j) : j = 0, ..., i\} \oplus C_{i}.$$
	
	\smallskip
	\noindent Finally, it remains to show that $A\not\dleq^{\mrm{c}} B$. Suppose towards contradiction that $D=A\oplus C$, $D=B\oplus E$ for some $C,D,E\in \mathbf{K}_{\mathbf{t}}$ with $D$ of size $\leq |A|+|B|+\aleph_0$. As remarked in \ref{context:tfag_groups}, this entails that $D/A\cong C \in \mathbf{K}_{\mathbf{t}}$. We now claim that in $D/A$ the element $x_0+A$ has $p_n$-height $\infty$. First, since $x_0-p_n x_1 =y_0 $, it follows that in the quotient group $D/A$ we have that $x_0 +A=p_n(x_1+A)$. Suppose inductively that we have shown $x_0 +A=p^i_n(x_{i}+A)$ for some $i<\omega$, then since $x_{i}-p_n x_{i+1} =y_{i} $ it follows that $x_{i}+A=p_n(x_{i+1}+A)$. So from the inductive assumption we obtain that $x_0+A=p^{i+1}_n(x_{i+1}+A)$. This shows  that in $D/A$ the element $x_0+A$ has $p_n$-height $\infty$. Now, since $D/A=C\in \mathbf{K}_{\mathbf{t}}$, it follows that $C$ is a direct sum of torsion-free abelian groups of type $\mathbf{t}$, so in particular $R_{\mathbf{t}}(x_0+A)$ is a subgroup of $C$. It follows from \ref{fact:tfag} that $\mathbf{t}(x_0+A)=\mathbf{t}$ and so that $X_0+A$ has finite $p_n$-height, which contradicts the above. This shows $A\not \dleq D$ and thus completes our proof.
\end{proof}

\begin{theorem}\label{prop:tfag}
	Let $(\mathbf{K}_{\mathbf{t}}, \dleq)$ be as in \ref{context:tfag_groups}, then there is an $\mathfrak{L}_{\infty,\omega_1}$-free structure $M\notin \mathbf{K}_{\mathbf{t}}$  of size $\aleph_1$ and, under $V=L$, there is for every $\kappa\geq \aleph_0$ an $\mathfrak{L}_{\infty,\kappa^+}$-free structure $M\notin \mathbf{K}_{\mathbf{t}}$  of size $\kappa^+$ .
\end{theorem}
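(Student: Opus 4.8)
The plan is to observe that $(\mathbf{K}_{\mathbf{t}},\dleq)$ meets every hypothesis of Context~\ref{assumptions:subsec_1} with $\mu=\aleph_0$, and then to quote verbatim the general results of Section~\ref{CP:coproduct_case}. The verification has in fact already been carried out in the two preceding lemmas: Lemma~\ref{tfag:AEC} shows that $(\mathbf{K}_{\mathbf{t}},\dleq)$ is a weak $\mrm{AEC}$ with $\mathrm{LS}(\mathbf{K}_{\mathbf{t}},\dleq)=\aleph_0$ satisfying Conditions~\ref{context:amalgamation_class}(\hyperref[C1]{C1})--(\hyperref[C3]{C3}) and \ref{assumptions:subsec_1}(\hyperref[C5]{C5})--(\hyperref[C6]{C6}), while Lemma~\ref{CP:tfag_groups} establishes the Construction Principle $\mrm{CP}(\mathbf{K}_{\mathbf{t}},\ast)$, namely Condition~\ref{CP-AEC:coproducts}(\hyperref[C4]{C4}). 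Hence $(\mathbf{K}_{\mathbf{t}},\dleq)$ falls exactly under the scope of \ref{assumptions:subsec_1} with $\mu=\aleph_0$.

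With this in place, both assertions of the theorem follow by a direct appeal to results already proved. For the $\mrm{ZFC}$ statement I would invoke Corollary~\ref{main:corollary}: since $(\mathbf{K}_{\mathbf{t}},\dleq)$ is as in \ref{assumptions:subsec_1} and has L\"owenheim--Skolem number $\aleph_0$, there is an $\mathfrak{L}_{\infty,\omega_1}$-free structure $M\notin\mathbf{K}_{\mathbf{t}}$ of size $\aleph_1$. For the $V=L$ statement I would invoke Theorem~\ref{main:theorem}: under $V=L$ one has $E^+(\kappa^+)$ for every regular, non-weakly-compact cardinal (in particular every successor cardinal; cf.~\ref{existence:stationary}), so the theorem yields, for all $\kappa\geq\mu=\aleph_0$, an $\mathfrak{L}_{\infty,\kappa^+}$-free structure $M\notin\mathbf{K}_{\mathbf{t}}$ of size $\kappa^+$.

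I do not expect any further obstacle at this stage: the genuinely substantive work is entirely contained in the two lemmas being quoted. The combinatorial heart of the matter is Lemma~\ref{CP:tfag_groups}, where the explicit configuration $B=\bigoplus_{i<\omega}R_{\mathbf{t}}(x_i)$ with $A_i=\bigoplus\{R_{\mathbf{t}}(y_j):j\leq i\}$ and $y_i=x_i-p_n x_{i+1}$ is shown to witness the failure of the coherentised direct-summand relation $\dleq^{\mrm{c}}$ via a $p_n$-height computation in the quotient $D/A$; and Lemma~\ref{tfag:AEC}, where closure under continuous $\dleq$-chains and the canonical-amalgamation conditions are derived from the pure-closure operators $R_{\chi}$ of Definition~\ref{def:type_closure} and the rank-$1$ structure theory. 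Once those lemmas are granted, the present theorem is an immediate corollary of Theorem~\ref{main:theorem} together with Corollary~\ref{main:corollary}, exactly as in the proof of Theorem~\ref{prop:cyclic_groups}.
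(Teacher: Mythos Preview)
Your proposal is correct and follows exactly the paper's own proof, which is the one-line ``By Lemma~\ref{tfag:AEC}, Lemma~\ref{CP:tfag_groups}, Theorem~\ref{main:theorem} and Corollary~\ref{main:corollary}.'' The additional commentary you give about the content of those lemmas is accurate but not needed for the argument itself.
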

\begin{proof}
	By Lemma~\ref{tfag:AEC}, Lemma~\ref{CP:tfag_groups}, Theorem~\ref{main:theorem} and Corollary~\ref{main:corollary}.
\end{proof}

\begin{corollary}\label{corol:tbab}
	The class $\mathbf{K}_{\mathbf{t}}$ from \ref{context:tfag_groups} is not axiomatisable in $\mathfrak{L}_{\infty,\omega_1}$ and, under $V=L$, it is not axiomatisable in $\mathfrak{L}_{\infty,\infty}$. 
\end{corollary}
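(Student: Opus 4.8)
The plan is to deduce Corollary~\ref{corol:tbab} directly from Theorem~\ref{prop:tfag} together with the definition of $\mathfrak{L}_{\infty,\mu}$-freeness. Recall that $(\mathbf{K}_{\mathbf{t}},\dleq)$ is $\kappa^+$-categorical for every $\kappa\geq\aleph_0$ by Lemma~\ref{tfag:AEC}, so $F(\kappa^+)$ denotes the unique member of $\mathbf{K}_{\mathbf{t}}$ of size $\kappa^+$; in particular $F(\aleph_1)\in\mathbf{K}_{\mathbf{t}}$ and $F(\kappa^+)\in\mathbf{K}_{\mathbf{t}}$ for all $\kappa\geq\aleph_0$. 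The key point is that, by Definition~\ref{def:L-free}(1), a structure $M$ which is $\mathfrak{L}_{\infty,\kappa^+}$-free satisfies exactly the same $\mathfrak{L}_{\infty,\kappa^+}$-sentences as $F(\kappa^+)\in\mathbf{K}_{\mathbf{t}}$; hence no single sentence of $\mathfrak{L}_{\infty,\kappa^+}$ can separate such an $M$ from the class $\mathbf{K}_{\mathbf{t}}$.

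First I would treat the $\mrm{ZFC}$ statement. By Theorem~\ref{prop:tfag} there is an $\mathfrak{L}_{\infty,\omega_1}$-free structure $M\notin\mathbf{K}_{\mathbf{t}}$. Suppose for contradiction that $\mathbf{K}_{\mathbf{t}}$ were axiomatised by a sentence $\varphi\in\mathfrak{L}_{\infty,\omega_1}$. Since $F(\aleph_1)\in\mathbf{K}_{\mathbf{t}}$ we have $F(\aleph_1)\models\varphi$, and since $M\equiv_{\infty,\omega_1}F(\aleph_1)$ it follows that $M\models\varphi$, whence $M\in\mathbf{K}_{\mathbf{t}}$ --- a contradiction. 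Hence $\mathbf{K}_{\mathbf{t}}$ is not axiomatisable in $\mathfrak{L}_{\infty,\omega_1}$.

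For the $V=L$ statement, suppose $\mathbf{K}_{\mathbf{t}}$ were axiomatised by some $\varphi\in\mathfrak{L}_{\infty,\infty}$. By the definition of $\mathfrak{L}_{\infty,\infty}$ we have $\varphi\in\mathfrak{L}_{\infty,\lambda}$ for some infinite cardinal $\lambda$. Put $\kappa=\lambda$, so $\kappa\geq\aleph_0$ and $\lambda=\kappa<\kappa^+$, whence $\mathfrak{L}_{\infty,\lambda}\subseteq\mathfrak{L}_{\infty,\kappa^+}$. Under $V=L$, Theorem~\ref{prop:tfag} provides an $\mathfrak{L}_{\infty,\kappa^+}$-free structure $M\notin\mathbf{K}_{\mathbf{t}}$, so $M\equiv_{\infty,\kappa^+}F(\kappa^+)$ and in particular $M\equiv_{\infty,\lambda}F(\kappa^+)$. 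Since $F(\kappa^+)\in\mathbf{K}_{\mathbf{t}}$ we get $F(\kappa^+)\models\varphi$, hence $M\models\varphi$, hence $M\in\mathbf{K}_{\mathbf{t}}$ --- again a contradiction. Thus, under $V=L$, $\mathbf{K}_{\mathbf{t}}$ is not axiomatisable in $\mathfrak{L}_{\infty,\infty}$. There is no real obstacle here: all the substance has already been carried out in Lemmas~\ref{tfag:AEC} and \ref{CP:tfag_groups} and in Theorem~\ref{prop:tfag}, and the present argument only needs the trivial cardinal bookkeeping ensuring $\mathfrak{L}_{\infty,\lambda}\subseteq\mathfrak{L}_{\infty,\kappa^+}$ together with the observation that $\mathfrak{L}_{\infty,\kappa^+}$-freeness is defined as $\mathfrak{L}_{\infty,\kappa^+}$-equivalence to a member of $\mathbf{K}_{\mathbf{t}}$.
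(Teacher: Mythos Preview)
Your proposal is correct and matches the paper's (implicit) approach: the paper states Corollary~\ref{corol:tbab} without proof immediately after Theorem~\ref{prop:tfag}, since the non-axiomatisability conclusions are already built into Theorem~\ref{main:theorem} and Corollary~\ref{main:corollary} from which Theorem~\ref{prop:tfag} is derived. You have simply spelled out the routine argument that an $\mathfrak{L}_{\infty,\kappa^+}$-free structure outside $\mathbf{K}_{\mathbf{t}}$ blocks any $\mathfrak{L}_{\infty,\kappa^+}$-axiomatisation, which is exactly what the paper leaves implicit.
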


\subsection{Free $(k,n)$-Steiner systems}\label{Application:steiner}

In this section we provide a first example of an application of the construction principle $\mrm{CP}(\mathbf{K},\ast)$ outside of algebra. In particular, we show that $\mrm{CP}(\mathbf{K},\ast)$ holds with respect to the class of free $(k,n)$-Steiner systems with the relation $\sleq$ from Definition~\ref{def:steiner_sleq}. We briefly expand on our claim that they do not fall under the original scope of the Eklof-Mekler-Shelah Construction Principle. As a matter of fact, Steiner systems are often studied under the lenses of universal algebra by relating them to so-called \emph{Steiner quasigroups} (cf.~\cite[p.~101]{burris}). However, Steiner quasigroups correspond exclusively to Steiner triple systems, i.e., $(2,3)$-Steiner systems, and they capture  arbitrary free $(k,n)$-Steiner systems only modulo a suitable expansion of the signature (we refer the interested reader to \cite{baldwin_steiner} for details). Crucially, our Construction Principle $\mrm{CP}(\mathbf{K},\ast)$ makes it possible to directly approach free $(k,n)$-Steiner systems as incidence structures, as it is usually done in the literature (see e.g. \cite{funk}). Compared to other model-theoretic works on free incidence structures, the distinctive feature of our approach in this section and the next is that we focus \emph{exclusively} on free $(k,n)$-Steiner systems (and, respectively, on free generalised $n$-gons), rather than on the wider class of infinite \emph{open} $(k,n)$-Steiner systems (or infinite open generalised $n$-gons), which are the models elementarily equivalent to the free ones and which were investigated before in \cite{paolini&hyttinen,Ammer_tent,PQ}. We next recall some preliminary notions and facts from  \cite{PQ} and refer the reader to \cite{funk} for an introduction to free $(k,n)$-Steiner systems and, more generally, to free structures in incidence geometry.

\begin{definition}\label{def:steiner}
	Let $L$ be the two-sorted language where the symbols $p_0,p_1,\dots$ denote \emph{points}, the symbols $b_0,b_1,\dots$ denote \emph{blocks} and the symbol $p\, \vert \, b$ means that the point $p$ is incident with the block $b$. For any $2\leq k<n$ the theory of \emph{$(k, n)$-Steiner systems} consists of the following axioms:
	\begin{enumerate}[(1)]
		\item given $k$-many distinct points $p_0,\dots,p_{k-1}$ there is a unique block $b$ such that $p_i\, \vert \, b$ for all $i<k$;
		\item for every block $b$ there are exactly $n$-many points $p_0,\dots,p_{n-1}$ such that $p_i\, \vert \, b$ for all $i<n$. 
	\end{enumerate}
	The universal fragment of the theory of $(k, n)$-Steiner systems is axiomatised exactly by the formula saying that, whenever both $\bigwedge_{i<k} p_i\, \vert\,b$ and $\bigwedge_{i<k} p_i\, \vert\,b'$ hold, then $b=b'$. We refer to models of this universal theory as \emph{partial  $(k, n)$-Steiner systems}. 
\end{definition}

\begin{definition}\label{Steiner:free_completion}
	Let $A$ be a partial $(k, n)$-Steiner system. We let $A_0=A$ and for every $2i<\omega$ we define $A_{2i+1}$ and $A_{2i+2}$ as follows: 	
	\begin{enumerate}[(a)]
		\item for every sequence of $k$ points $p_1,\dots,p_{k}$ from $A_{2i}$ which do not have a common incident block in $A_{2i}$, we add a new block $b\in A_{2i+1}$ which is incident only with $p_1,\dots,p_{k}$;
		\item for every block $b\in A_{2i+1}$ which is incident only to $\ell<n$ points from $A_{2i+1}$, we add $n-\ell$ points $p_\ell,\dots,p_{n-1}\in A_{2i+2}$ incident only with $b$.
	\end{enumerate}	
	Then the structure $F(A) := \bigcup_{i < \omega}A_i$ is called the \emph{free completion} of $A$ and we say that $F(A)$ is freely generated over $A$. We say that $A$ is \emph{non-degenerate} if $F(A)$ is infinite, and \emph{degenerate} otherwise. 
\end{definition}

\begin{definition}\label{Steiner:free_models}
	We say that a $(k,n)$-Steiner system $B$ is \emph{free} if it is the free completion $F(X)$ of some set of points $X$. We refer to $X$ as the \emph{basis} of $F(X)$.
\end{definition}

\begin{remark}\label{issue:freeness}
	We notice that our definition of free $(k,n)$-Steiner systems is slightly different from the definition of free $(k,n)$-Steiner systems from e.g. \cite{funk}, as we restrict our focus to free completion processes starting from a basis consisting of points only. The reason for this choice is simply to make immediately evident the uncountable categoricity of the class of free $(k,n)$-Steiner systems, and avoid obfuscating the main results from this section with non-essential combinatorial technicalities. We delay to Section~\ref{sec:remark_freeness} the problem of the ``right'' definition of free $(k,n)$-Steiner systems, where we further justify our Definition~\ref{Steiner:free_models}.
\end{remark}

We recall the notion of $\mrm{HF}$-order, which provides a key technical tool to study the first-order theory of free $(k, n)$-Steiner systems. We refer the reader to \cite{PQ} and \cite{paolini&hyttinen} for an introduction to $\mrm{HF}$-orders and pointers to the literature.

\begin{notation}\label{notation:downset}
	Let $<$ be a partial order on some set $X$ and let $Y\subseteq X$, we write $Y^\downarrow$ for the downset generated in $X$ by $Y$, i.e., $Y=\{x\in X : \exists y\in Y \text{ s.t. } x<y  \}$. If $Y=\{y\}$ we simply write $y^\downarrow$ instead of $\{y\}^\downarrow$.
\end{notation}

\begin{definition}\label{Steiner:def_hyperfree} \label{def:hf_order}
	For partial $(k, n)$-Steiner systems $A\subseteq B$, we define the following.
	\begin{enumerate}[(1)]
		\item An element $a\in A$ is \emph{hyperfree} in $B$ if it satisfies one of the following conditions:
		\begin{enumerate}[(i)]
			\item $a$ is a point incident with at most one block from $B$;
			\item $a$ is a block incident with at most $k$-many points from $B$.
		\end{enumerate}
		\item  We write $A\hleq B$ if there is a linear order $<$ of $B\setminus A$ such that every $c\in B\setminus A$ is hyperfree in $A\cup c^\downarrow$. If the linear order $<$ on $B\setminus A$ witnesses the fact that $A\hleq B$ then we say that $<$ is a \emph{$\mrm{HF}$-order over $A$}.
		\item  We say that $B$ is \emph{open} if $\emptyset \hleq B$.
	\end{enumerate}
\end{definition}

We recall two useful facts. Fact (1) follows immediately from the previous definitions, Fact (2) was proved in \cite[Prop.~3.51]{PQ}.

\begin{fact}\label{lemma:useful_facts}\label{lemma:useful_facts_2}
	The following facts hold:
	\begin{enumerate}[(1)]
		\item if $A=F(X_A)$, then there is a wellfounded $\mrm{HF}$-order of $A$ over $X_A$;
		\item let $A$ be any infinite free $(k,n)$-Steiner system and let $C\subseteq A$ with $C\hleq A$, then $F(C)\cong_C \mrm{acl}_A(C)\hleq A$.
	\end{enumerate}
\end{fact}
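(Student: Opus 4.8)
The plan is to treat the two clauses separately: clause (1) by directly inspecting the free completion process, and clause (2) by reducing to the structure theory of free $(k,n)$-Steiner systems established in \cite{PQ}.

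For clause (1), write $A = F(X_A) = \bigcup_{i<\omega} A_i$ with $A_0 = X_A$ and $A_{i+1}$ obtained from $A_i$ as in Definition~\ref{Steiner:free_completion}; for $c \in A \setminus X_A$ let $\mrm{st}(c)$ be the least $i \geq 1$ with $c \in A_i$. Fix once and for all a wellordering of the new elements introduced at each stage and let $<$ be the induced lexicographic order on $A \setminus X_A$: $c < c'$ iff $\mrm{st}(c) < \mrm{st}(c')$, or $\mrm{st}(c) = \mrm{st}(c')$ and $c$ precedes $c'$ within that stage. This is a wellordering, hence wellfounded, so it suffices to check that each $c$ is hyperfree in $X_A \cup c^\downarrow$. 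If $c$ is a block, born at an odd stage $2i+1$ for a $k$-subset $p_1,\dots,p_k \subseteq A_{2i}$, then in $A$ it is incident precisely with those $p_1,\dots,p_k$ (each of stage $\leq 2i$, hence in $X_A \cup c^\downarrow$) and with the $n-k$ points added at stage $2i+2$ (each of stage $> \mrm{st}(c)$, hence above $c$), and it never acquires further points; so inside $X_A \cup c^\downarrow$ it meets exactly $k$ points and is hyperfree by Definition~\ref{Steiner:def_hyperfree}(1)(ii). Dually, if $c$ is a point, born at an even stage $2i+2$, then inside $X_A \cup c^\downarrow$ it lies only on its unique parent block (of stage $2i+1 < \mrm{st}(c)$), every further block through $c$ having been added at a strictly later stage; so it is hyperfree by Definition~\ref{Steiner:def_hyperfree}(1)(i). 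This proves (1).

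For clause (2), I would realise the copy of $F(C)$ inside $A$ as the completion of $C$ \emph{relative to $A$}: put $D = \bigcup_{i<\omega} D_i$ with $D_0 = C$, with $D_{2i+1}$ obtained from $D_{2i}$ by adjoining, for each $k$-subset of points of $D_{2i}$ lacking a common block in $D_{2i}$, the unique block of $A$ through it, and with $D_{2i+2}$ obtained from $D_{2i+1}$ by adjoining all $n$ points of $A$ on each block of $D_{2i+1}$ --- well-defined since $A$ is a total $(k,n)$-Steiner system. Then I would establish three things. First, $D \cong_C F(C)$: since $C \hleq A$, restricting an $\mrm{HF}$-order of $A$ over $C$ to $D \setminus C$ gives $C \hleq D$, and an induction along that order (equivalently along the completion stages) shows that the natural morphism $F(C) \to A$ over $C$ matching completion steps is injective and incidence-reflecting, with image exactly $D$. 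Second, $D \hleq A$: by restricting the $\mrm{HF}$-order of $A$ over $C$ and using that $D$ is closed under the two completion operations, so that no element of $A \setminus D$ introduces a new incidence within $D$. Third, $D = \mrm{acl}_A(C)$: the inclusion $D \subseteq \mrm{acl}_A(C)$ is immediate since each completion step adjoins algebraic elements --- the block through $k$ points is $\emptyset$-definable from them, and the $n$ points on a block form a finite, hence algebraic, set over that block --- while the reverse inclusion $\mrm{acl}_A(C) \subseteq D$ is the substantial point, which is precisely \cite[Prop.~3.51]{PQ}.

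The main obstacle is exactly the inclusion $\mrm{acl}_A(C) \subseteq D$, i.e.\ that algebraic closure does not outrun the relative completion: one must show every element of $A \setminus D$ has infinitely many conjugates over $C$. The natural route, carried out in \cite{PQ}, is to use the openness/freeness of $A$ together with $D \hleq A$ to produce, in a monster model of the theory of infinite open $(k,n)$-Steiner systems, automorphisms fixing $C$ pointwise and moving any prescribed element of $A \setminus D$ --- equivalently, a quantifier-simplification statement for that theory that pins down $\mrm{acl}$ --- which in turn rests on the back-and-forth extension properties of $\mrm{HF}$-orders from \cite{PQ, paolini&hyttinen}. Since this is already available, in the paper clause (2) is simply invoked and clause (1) reduces to the stage-ordering argument above.
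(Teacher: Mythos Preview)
Your proposal is correct and aligned with the paper's treatment: the paper does not prove this Fact at all, merely recording that clause (1) follows immediately from the definitions and that clause (2) is \cite[Prop.~3.51]{PQ}. Your stage-ordering argument for (1) is exactly the unpacking of ``follows from the definitions'', and for (2) you correctly identify the substantive content as residing in \cite{PQ} (in particular the inclusion $\mrm{acl}_A(C)\subseteq D$), which is all the paper claims.

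One small caution on your sketch for (2): the step ``$D\hleq A$ by restricting the $\mrm{HF}$-order of $A$ over $C$'' is not as immediate as you suggest. Since $D\cup c^\downarrow_{A\setminus D}=D\cup c^\downarrow_{A\setminus C}\supseteq C\cup c^\downarrow_{A\setminus C}$, hyperfreeness in the smaller set does not formally transfer to the larger one; what saves the block case is that a block $c\notin D$ can meet at most $k-1$ points of $D$ (else the closure of $D$ forces $c\in D$), and one then has to combine this with the original bound more carefully than a bare restriction. You are right that the full argument is in \cite{PQ}, and since the paper simply invokes that reference, your sketch is already more than what is required here.
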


\begin{notation}
	Given the previous fact, we often slightly abuse the notation and identify $F(C)$ with $\mrm{acl}_A(C)$ whenever $C\hleq A$ for some infinite free $(k,n)$-Steiner system $A$. We often write $\mrm{acl}(C)$ instead of $\mrm{acl}_A(C)$, since all infinite free $(k,n)$-Steiner systems are elementary equivalent and $F(C)\preccurlyeq_{\omega,\omega} M$ whenever $C\hleq M$ and $M$ is elementary equivalent to a free $(k,n)$-Steiner system (cf.~\cite[Thm. 4.12]{PQ}).
\end{notation}

In \cite{PQ} we studied the first-order theory of all infinite free $(k,n)$-Steiner systems. We proved that it is complete, strictly stable, and we showed that the relation of $\mrm{HF}$-ordering $A\hleq B$ is equivalent to the relation of elementary embedding $A\preccurlyeq_{\omega,\omega} B$. Here we take another direction and focus on the class of free $(k,n)$-Steiner systems itself, rather than on the larger class of models of their first-order theory. We introduce the following strong submodel relation $\sleq$.

\begin{definition}\label{def:steiner_sleq}
	Let $A$ and $B$ be free $(k,n)$-Steiner systems, then we write $A\sleq B$ if there are bases $X_B$ of $B$ and $X_A$ of $A$ such that $X_A\subseteq X_B$, i.e., $A=F(X_A)$ and $B=F(X_B)$ (cf.~\ref{Steiner:free_completion}).
\end{definition}

\begin{context}\label{context:steiner}
	In this section, we fix the class $(\mathbf{K},\sleq)$ where  $\mathbf{K}$ is the class of all free $(k,n)$-Steiner systems and $\sleq$ is the relation from Definition~\ref{def:steiner_sleq}. 
\end{context}

\begin{remark}\label{remark:sleq_forking}
	We briefly mention --- without proof --- a more model-theoretic characterisation of the relation $\sleq$ (with respect to infinite models in $\mathbf{K}$). Let $\ind$ denote the relation of forking independence in first-order logic, let $A,B\in \mathbf{K}_{\geq\aleph_0}$ and let $X_A$, $X_B$ be respectively a basis of $A$ and $B$. Let $X_C=X_B\setminus A$, then by the characterisation of forking independence from \cite{PQ} we have that
	\begin{align*}
	A\sleq B \; \Longleftrightarrow \; A\hleq B \text { and } X_C\ind X_A.
	\end{align*}
\end{remark}

We proceed as in the previous sections and show that $(\mathbf{K},\sleq)$ fits the framework of canonical amalgamation classes from Section~\ref{Sec:application}. First, we define canonical amalgamation for free $(k,n)$-Steiner systems and then we show that $(\mathbf{K},\sleq)$ is a weak $\mrm{AEC}$ that satisfies Conditions~\ref{context:amalgamation_class}(\hyperref[C1]{C1})-(\hyperref[C3]{C3}) and Conditions~\ref{assumptions:subsec_1}(\hyperref[C5]{C5})-(\hyperref[C6]{C6}).

\begin{definition}\label{steiner:free_products}
	Let $A,B\in \mathbf{K}$ be disjoint free $(k,n)$-Steiner systems generated respectively by $X_A$ and $X_B$, then we let $A\ast B=F(X_AX_B)$. Similarly, if $B_i\in \mathbf{K}$ is a disjoint family of free $(k,n)$-Steiner systems with bases $(X_i)_{i\in I}$, then we let $\bigast_{i\in I} B_i=F(\bigcup_{i\in I}X_i)$.
\end{definition}

\begin{lemma}\label{steiner:AEC}
	The class $(\mathbf{K},\sleq)$ is a weak $\mrm{AEC}$ with $\mathrm{LS}(\mathbf{K}, \sleq)=\aleph_0$ and it satisfies Conditions~\ref{context:amalgamation_class}(\hyperref[C1]{C1})-(\hyperref[C3]{C3}) and Conditions~\ref{assumptions:subsec_1}(\hyperref[C5]{C5})-(\hyperref[C6]{C6}).
\end{lemma}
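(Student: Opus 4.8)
The plan is to mirror the proof of Lemma~\ref{prop:varieties_canon} almost verbatim, with the free‑completion operation $F(\cdot)$ of Definition~\ref{Steiner:free_completion} and the bases of Definition~\ref{Steiner:free_models} playing the roles of the free‑algebra functor and of an algebra basis. The argument rests on a handful of structural properties of $F(\cdot)$, all elementary consequences of the stagewise construction of Definition~\ref{Steiner:free_completion} (and implicit in \cite{PQ}):
\begin{enumerate}[(i)]
	\item \emph{monotonicity}: if $Y\subseteq X$ then $F(Y)$ is canonically a substructure of $F(X)$ (simultaneous induction on the stages $A_i$);
	\item \emph{finitary character}: $F(X)=\bigcup\{F(X_0): X_0\subseteq X\ \text{finite}\}$, since each element is born at a finite stage from finitely many generators;
	\item \emph{functoriality}: any bijection $X\to X'$ lifts to a canonical isomorphism $F(X)\to F(X')$;
	\item \emph{cardinality}: $|F(X)|=|X|+\aleph_0$, so $|F(X)|=\kappa$ whenever $|X|=\kappa\geq\aleph_0$;
	\item \emph{absoluteness of bases}: $x\notin F(X\setminus\{x\})$ for $x\in X$, whence $X_B\setminus X_A$ is disjoint from $A$ for any bases $X_A\subseteq X_B$ witnessing $A\sleq B$.
\end{enumerate}

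The one place where the incidence geometry of \cite{PQ}, rather than formal manipulation of $F$, genuinely enters --- and the step I expect to be the main obstacle --- is a \emph{reduction lemma}: $A\sleq B$ if and only if $B\cong A\ast C$ for some $C\in\mathbf{K}$, and in that case \emph{every} basis of $A$ extends to a basis of $B$. The ``if'' direction is Definition~\ref{def:canonical_jep}(a) for the operator $\ast$ of Definition~\ref{steiner:free_products}; the ``only if'' takes $C:=F(X_B\setminus X_A)$ for witnessing bases $X_A\subseteq X_B$. The substantive part is the ``moreover'' clause: the bare definition of $\sleq$ only hands us \emph{some} compatible pair of bases, and to extend an arbitrary basis $X_A'$ of $A$ one must use the rigidity of bases in free $(k,n)$-Steiner systems --- either by manipulating a wellfounded $\mrm{HF}$-order of $B$ over $X_A$ (Fact~\ref{lemma:useful_facts}(1)) or, more conceptually, via the forking characterization of $\sleq$ recorded in Remark~\ref{remark:sleq_forking}, which exhibits $B$ as freely generated over $A$ by a set of points and hence makes $X_A'\cup(X_B\setminus X_A)$ a basis of $B$.

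Granting (i)--(v) and the reduction lemma, the remaining verifications are bookkeeping parallel to Lemma~\ref{prop:varieties_canon}. The abstract‑class axioms \ref{def_AC}(1)--(3) hold: $\sleq$ implies $\leq$ by (i), reflexivity and antisymmetry are clear, and transitivity follows by fixing a basis of the middle structure and pushing it upward with the reduction lemma. For closure under continuous $\sleq$-chains, i.e.\ \ref{def_AEC}(4.1)--(4.2), one builds by recursion a continuous $\subseteq$-increasing family of bases $X_i$ of $A_i$ (reduction lemma at successors, union at limits, with (ii) ensuring the union is again a basis), so that $\bigcup_iA_i=F(\bigcup_iX_i)\in\mathbf{K}$ and $A_j\sleq\bigcup_iA_i$; Smoothness (\ref{def_AEC}(4.3)) and Coherence (\ref{def_AEC}(5)) are not claimed, consistently with $(\mathbf{K},\sleq)$ being only a weak $\mrm{AEC}$. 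For \ref{def_AEC}(6): given $B\subseteq A=F(X_A)$, let $Y\subseteq X_A$ collect the at most $|B|+\aleph_0$ generators used to build the elements of $B$ and put $C:=F(Y)$, so $B\subseteq C\sleq A$ and $|C|\leq|B|+\aleph_0$; as the language is countable, $\mathrm{LS}(\mathbf{K},\sleq)=\aleph_0$.

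Finally I would check Conditions~\ref{context:amalgamation_class}(\hyperref[C1]{C1})--(\hyperref[C3]{C3}) and \ref{assumptions:subsec_1}(\hyperref[C5]{C5})--(\hyperref[C6]{C6}) for $\bigast_{i\in I}B_i:=F(\bigcup_{i\in I}X_i)$ (with the $B_i$ disjoint and $X_i$ a basis of $B_i$). For (C1): clause (a) is $B_i=F(X_i)\sleq F(\bigcup_jX_j)$ by (i); clause (c) follows by choosing witnessing bases $X_i\subseteq Y_i$ so that $\bigcup_iX_i\subseteq\bigcup_iY_i$; clause (d) is the identity $F(\bigcup_{i\in I}X_i)=F(\bigcup_{i\in J}X_i)\ast F(\bigcup_{i\in I\setminus J}X_i)$; and clause (b) --- independence of the witnesses with prescribed isomorphisms $f_i:B_i\cong C_i$ --- follows from (iii) applied to the bijection $\bigcup_iX_i\to\bigcup_if_i(X_i)$, noting that $f_i(X_i)$ is again a basis of $C_i$. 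Condition (C2) is the ``only if'' of the reduction lemma, and (C3) is the continuity of $\bigast$, immediate from finitary character (ii) (the substantive case being $\alpha$ a limit). For (C5): by (iv) and (iii), any two free $(k,n)$-Steiner systems of the same uncountable cardinality $\lambda$ are isomorphic (their bases have size $\lambda$), so $\mathbf{K}$ is $\lambda$-categorical for all uncountable $\lambda$, in particular $\kappa^+$-categorical for all $\kappa\geq\aleph_0$. For (C6): take $M_\kappa:=F(X)$ with $|X|=\kappa$; any $A\in\mathbf{K}$ with $|A|\leq\kappa$ equals $F(Y)$ for some $|Y|\leq\kappa$, and an injection $\iota:Y\hookrightarrow X$ lifts by (iii) to an embedding $\hat\iota:F(Y)\to F(X)$ whose image is $\sleq M_\kappa$ by (i), so $\hat\iota$ is a strong embedding and $M_\kappa$ is $(\mathbf{K},\sleq)$-universal.
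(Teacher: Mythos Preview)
Your proposal is correct and follows essentially the same route as the paper's proof: both arguments transplant the proof of Lemma~\ref{prop:varieties_canon} by replacing algebra bases with point-bases and the free-algebra functor with the free completion $F(\cdot)$, and both verify (C1)--(C3), (C5)--(C6) directly from Definition~\ref{steiner:free_products} and the uncountable categoricity built into Definition~\ref{Steiner:free_models}.

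The one point worth flagging is that you are in fact \emph{more} careful than the paper on a genuine subtlety. The paper writes ``Clearly, $(\mathbf{K},\sleq)$ is an abstract class'' and, for continuous chains, ``By Definition~\ref{def:steiner_sleq} we can assume that $X_i\subseteq X_j$ whenever $i<j<\lambda$'' --- but Definition~\ref{def:steiner_sleq} only hands you \emph{some} pair of compatible bases for each $A_i\sleq A_{i+1}$, and to thread these into a single $\subseteq$-increasing family (and, earlier, to get transitivity of $\sleq$) one does need precisely your reduction lemma, namely that \emph{every} basis of $A$ extends to a basis of $B$ whenever $A\sleq B$. Your suggestion to obtain this via Fact~\ref{lemma:useful_facts} or Remark~\ref{remark:sleq_forking} is the right one. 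Similarly, your derivation of the L\"owenheim--Skolem number from the finitary character of $F(\cdot)$ is just a repackaging of the paper's argument via the wellfounded $\mrm{HF}$-order and downsets $B^\downarrow\cap X_A$; the two are interchangeable.
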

\begin{proof}
	We first prove that $(\mathbf{K},\sleq)$ is a weak $\mrm{AEC}$ with $\mathrm{LS}(\mathbf{K}, \sleq)=\aleph_0$. Clearly, $(\mathbf{K},\sleq)$ is an abstract class. We show that  $\mathrm{LS}(\mathbf{K}, \sleq)=\aleph_0$. Suppose that $A=F(X_A)\in \mathbf{K}$ with $|X_A|=\kappa$ and let $<$ be the wellfounded $\mrm{HF}$-order of $A$ (this exists by Lemma~\ref{lemma:useful_facts}). Given $B\subseteq A$ consider the set $X=B^\downarrow\cap X_A$ of elements $c\in X_A$ occurring before some elements of $B$ in the order $<$ (cf.~\ref{notation:downset}). Since $<$ is wellfounded we have that $|X|\leq |B|+\aleph_0$, and since $X\subseteq X_A$ it follows immediately from \ref{def:steiner_sleq} that $F(X)\sleq F(X)\ast F(X_A\setminus X)=A$. Given that $|F(X)|\leq |B|+\aleph_0$ this shows that $\mathrm{LS}(\mathbf{K}, \sleq)=\aleph_0$. 
	\newline We next verify Conditions~\ref{def_AEC}(4.1) and \ref{def_AEC}(4.2). Consider a continuous $\sleq$-chain $(A_i)_{i<\lambda}$ and let $X_i$ be a basis of $A_i$ for all $i<\lambda$. By Definition~\ref{def:steiner_sleq}  we can assume that $X_{i}\subseteq X_j$ whenever $i<j<\lambda$ and, clearly, $X_\alpha=\bigcup_{i<\alpha}X_i$ whenever $\alpha<\lambda$ is limit. Let $X_\lambda=\bigcap_{i<\lambda}X_i$, then it follows that $\bigcup_{i<\lambda}A_i=F(\bigcup_{i<\lambda}X_i)\in \mathbf{K}$, which verifies Condition~\ref{def_AEC}(4.1). Also, for all $i<\lambda$ we have that $A_i=F(X_i)\sleq F(X_i)\ast F(X_\lambda\setminus X_i)=\bigcup_{i<\lambda}A_i$, additionally showing that  Condition~\ref{def_AEC}(4.2) holds as well. Thus $(\mathbf{K},\sleq)$ is a weak $\mrm{AEC}$.
	
	\smallskip
	\noindent Consider Conditions~\ref{context:amalgamation_class}(\hyperref[C1]{C1})-(\hyperref[C3]{C3}).  Condition~\ref{context:amalgamation_class}(\hyperref[C1]{C1}) follows by Definition~\ref{steiner:free_products} and Condition~\ref{context:amalgamation_class}(\hyperref[C3]{C3}) follows by reasoning as in Lemma~\ref{prop:varieties_canon}. For  Condition~\ref{context:amalgamation_class}(\hyperref[C2]{C2}) notice that if $A\sleq B$, then there are bases $X_B$ of $B$ and $X_A$ of $A$ such that $X_A\subseteq X_B$, Let $X_C=X_B\setminus X_A$, then $B=F(X_AX_C)=F(X_A)\ast F(X_C)$, which verifies Condition~\ref{context:amalgamation_class}(\hyperref[C2]{C2}).  
	
	\smallskip
	\noindent Finally, Condition~\ref{assumptions:subsec_1}(\hyperref[C5]{C5}) follows immediately from \ref{Steiner:free_models}, i.e., for any $\kappa\geq \aleph_1$ we  have by definition that $F(X_\kappa)$ is the only free $(k,n)$-Steiner system, up to isomorphism. Consider Condition~\ref{assumptions:subsec_1}(\hyperref[C6]{C6}) with respect to some $\kappa\geq \aleph_0$. If $\lambda<\kappa$ then clearly $X_\lambda\subseteq X_\kappa$, where $X_\lambda$ and $X_\kappa$ are two sets of points respectively of size $\lambda$ and $\kappa$. It follows that $F(X_\lambda)\sleq F(X_\kappa)$ and thus that $F(X_\kappa)$ is $\sleq$-universal. This concludes our proof.
\end{proof}

Finally, we exhibit a configuration witnessing the fact that $(\mathbf{K},\sleq)$ satisfies the Construction Principle $\mrm{CP}(\mathbf{K},\ast)$. As in the former cases, this lemma displays that $(\mathbf{K},\sleq)$ does not satisfies Smoothness.

\begin{lemma}\label{CP:steiner}
	Let $(\mathbf{K},\sleq)$ be the class of free $(k,n)$-Steiner systems with the relation $\sleq$ from \ref{def:steiner_sleq}, then $(\mathbf{K},\sleq)$ satisfies $\mrm{CP}(\mathbf{K},\ast)$, i.e., Condition~\ref{CP-AEC:coproducts}(\hyperref[C4]{C4}).
\end{lemma}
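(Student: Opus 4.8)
The goal is to construct, inside the class $\mathbf{K}$ of free $(k,n)$-Steiner systems, a countable $\sleq$-chain $(A_i)_{i<\omega}$ together with a countable $B$ such that $A_i\sleq A_{i+1}$ and $A_i\sleq B$ for every $i$, but $A:=\bigcup_{i<\omega}A_i$ fails $A\pleq B$ (recall $\pleq$ is the coherentisation $\cleq$ of $\sleq$ with $\mathrm{LS}=\aleph_0$, so $A\pleq B$ means there is $C\in\mathbf{K}$ of size $\leq|A|+|B|+\aleph_0=\aleph_0$ and an embedding $f:B\to C$ with $A\sleq C$ and $f(B)\sleq C$; since $A\sleq C$ and $B\sleq C$ would both have to hold and $\sleq$ is transitive, it suffices to show there is \emph{no} $D\in\mathbf{K}$ with $A\sleq D$ and $B\sleq D$). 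I would follow exactly the template of Lemma~\ref{CP:cyclyc_groups} and Lemma~\ref{CP:tfag_groups}: build $B$ as the free completion of a countable set of points $X_B=\{p_i:i<\omega\}$, i.e. $B=F(X_B)$; then define, for each $i$, a new point $q_i\in B$ that is a ``generic combination'' of $p_i$ and $p_{i+1}$ inside $B$ — concretely, let $b_i$ be the unique block through $p_i,p_{i+1}$ and $k-2$ further auxiliary points, and let $q_i$ be one of the remaining points incident with $b_i$ (here we exploit $k<n$, so such a third point on the block exists). Set $A_i=F(\{q_0,\dots,q_i\})$ computed \emph{inside} $B$ (i.e. $A_i=\operatorname{acl}_B(\{q_0,\dots,q_i\})$), and $A=\bigcup_i A_i$.

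\textbf{Key steps, in order.} (1) Verify $A_i\sleq A_{i+1}$: since $q_{i+1}$ can be added to the basis $\{q_0,\dots,q_i\}$ of $A_i$ and $A_{i+1}=F(\{q_0,\dots,q_{i+1}\})$ by construction, Definition~\ref{def:steiner_sleq} gives $A_i\sleq A_{i+1}$ directly. (2) Verify $A_i\sleq B$: here I would show that $\{q_0,\dots,q_i\}$ extends to a basis of $B$. The point is that the substitution sending $p_j\mapsto q_j$ for $j\leq i$ and fixing $p_{i+1},p_{i+2},\dots$ induces an automorphism of the free completion $F(\{p_0,\dots,p_i,p_{i+1}\})$ (because $q_0,\dots,q_i$, together with $p_{i+1}$, freely generate the same structure that $p_0,\dots,p_i,p_{i+1}$ do — each $q_j$ is recoverable once $p_{i+1}$ and the $q_\ell$ for $\ell\geq j$ are known), hence $X_B'=\{q_0,\dots,q_i\}\cup\{p_{i+1},p_{i+2},\dots\}$ is again a basis of $B$, witnessing $A_i=F(\{q_0,\dots,q_i\})\sleq F(X_B')=B$. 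This is the analogue of the automorphism argument in Lemma~\ref{CP:tfag_groups}. (3) The crucial step: show $A=\bigcup_i A_i\not\pleq B$. Suppose $D\in\mathbf{K}$ with $A\sleq D$ and $B\sleq D$. Using $B\sleq D$ and Condition~\ref{context:amalgamation_class}(\hyperref[C2]{C2}) write $D=B\ast E$; using $A\sleq D$, $D=A\ast A'$ for some $A'\in\mathbf{K}$, and by Fact~\ref{lemma:useful_facts_2}(2) the quotient-type object $\operatorname{acl}_D$-structure ``$D$ modulo $A$'' should again be a free $(k,n)$-Steiner system. I then derive a contradiction from an incidence/divisibility phenomenon: in $D$, the chain of relations $q_j \leftrightarrow (p_j,p_{j+1})$ forces $p_0$ to be connected to $p_j$ through a path of length growing with $j$ that becomes ``collapsed'' once we pass to the structure generated by $B$ over $A$, producing an element that is not hyperfree / cannot occur in a free completion — exactly mirroring the ``$2$-divisible element'' obstruction of Case~2 of Lemma~\ref{CP:cyclyc_groups} and the ``$p_n$-height $\infty$'' obstruction of Lemma~\ref{CP:tfag_groups}. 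Concretely, I expect the contradiction to come from showing that $p_0$, viewed in the appropriate quotient of $D$ by (the $\operatorname{acl}$ of) $A$, acquires infinitely many blocks through it, violating axiom~(2) of Definition~\ref{def:steiner} (each block has exactly $n$ points, but dually a point in a free Steiner system has only finitely many blocks through it at each finite stage, and ``being free'' over a substructure bounds this), so the quotient cannot lie in $\mathbf{K}$.

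\textbf{Main obstacle.} The genuinely delicate point is step (3): unlike the group cases, ``quotient'' is not an algebraic operation on Steiner systems, so I must replace it by a careful analysis of free completions. The right tool is Fact~\ref{lemma:useful_facts_2}(2) together with the characterisation $A\sleq B \Leftrightarrow A\hleq B$ and $X_C\ind X_A$ from Remark~\ref{remark:sleq_forking}: assuming $A\sleq D$ and $B\sleq D$, I would look at a putative $\mrm{HF}$-order of $D$ over $A$ and over $B$ simultaneously, and show that the points $p_i$ cannot all be hyperfree over $A$ because each $q_j=q_j(p_j,p_{j+1},\dots)$ already ``uses up'' a block at $p_j$, so in $\operatorname{acl}_D(A)$ the point $p_0$ would have to be incident with infinitely many distinct blocks — impossible in any free (indeed any) $(k,n)$-Steiner system. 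Making this counting argument precise, and in particular checking that the auxiliary points introduced when forming the blocks $b_i$ do not accidentally re-enter $A$ or $B$ in a way that destroys the freeness of the basis extensions in step (2), is where the real work lies; everything else is a direct transcription of the pattern already executed twice in the paper.
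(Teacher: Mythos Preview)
Your overall template (build $B$, carve out a $\sleq$-chain $(A_i)$ via ``twisted'' generators, then show $A=\bigcup A_i$ has $A\not\pleq B$) matches the paper, but the specific construction you propose is too simple and your mechanism for step~(3) is not the one that actually works.

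The problem is that your configuration does not produce a \emph{closed} subconfiguration. Take $k=2$, $n=3$ for concreteness: your $q_i$ is the third point on the block $b_i$ through $p_i,p_{i+1}$. Suppose $A\sleq D$, so $D$ has a wellfounded $\mrm{HF}$-order over $\{q_i:i<\omega\}$, and look at the least $p_\ell$. Nothing prevents $p_\ell$ from being added as a loose end (none of its blocks need be present yet), then $b_\ell$ as a block incident to the two points $q_\ell,p_\ell$, then $p_{\ell+1}$ as a point on $b_\ell$, etc. So there is no contradiction: the tiny configuration $\{p_{\ell+1},b_\ell\}$ \emph{does} contain hyperfree elements over $\{q_i\}\cup\{p_\ell\}$. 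Your proposed obstruction --- a ``quotient'' making $p_0$ incident to infinitely many blocks, by analogy with $2$-divisibility or $p_n$-height --- has no analogue here: there is no quotient operation on Steiner systems, and nothing in your setup forces $p_0$ to acquire extra incidences in any ambient free structure.

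What the paper actually does is engineer, for each $\ell$, a finite configuration $C_\ell\subseteq B$ (seven blocks $b^1_\ell,\dots,b^7_\ell$ and six auxiliary points $c^1_\ell,\dots,c^6_\ell$, plus $p_{\ell+1}$, displayed in an explicit incidence table) with the following two-sided property: (i) starting from $p_{\ell+1}$ and the fixed anchor points $a_1,\dots,a_k$, one can run through $C_\ell$ by a legitimate $\mrm{HF}$-order and recover $p_\ell$ at the end (this is what makes $A_\ell\sleq B$ work, replacing your automorphism argument); but (ii) over the basis $X_\omega=\{a_i\}\cup\{t_i\}$ of $A$ together with $p_\ell$ alone, \emph{no} element of $C_\ell$ is hyperfree --- every block in $C_\ell$ already sees $\geq k+1$ points and every point already sees $\geq 2$ blocks. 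Property (ii) is the genuine combinatorial content: it forces $p_{\ell+1}<_D p_\ell$ in any $\mrm{HF}$-order of $D$ over $X_\omega$, hence an infinite descending chain, contradicting wellfoundedness. Designing a configuration with property~(ii) while keeping property~(i) is the missing idea in your proposal; your single block $b_i$ is far too small to be closed in this sense. You should replace your $q_i$ by the paper's $t_\ell$ (a specific point on the last block $b^7_\ell$) and build the rigid table around the anchors $a_1,\dots,a_k$ --- this is also why the paper uses two generating sets $X_A$ and $X_P$ rather than one.
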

\begin{proof}
	We exhibit models $(A_i)_{i<\omega}$ and $B$ from $\mathbf{K}$ witnessing Condition~\ref{CP-AEC:coproducts}(\hyperref[C4]{C4}). The main strategy in the following proof is to define a free $(k,n)$-Steiner systems $B$ and a strong substructure $A\sleq B$ so that every $D\in \mathbf{K}$ with $B\sleq D$ cannot be obtained from $A$ by means of a wellfounded $\mrm{HF}$-order. We do this by carefully selecting the generators of $A$ so that they belong to a special combinatorial configuration that we outline in the table below.  We start by letting $X_A=\{a_i : i<\omega \}$ and $X_P=\{p_i :i<\omega \}$ be two countable sets of points. We define $B\coloneqq F(X_AX_P)$ and for every $\ell<\omega$ we consider a combinatorial configuration in $B$. We illustrate this by means of the incidence table below.
	
	\smallskip
	\noindent We briefly explain the configuration described in the table below. Every element in the table must be considered as a ``word'' in $B$,  exactly as elements of a free group can be thought of as words over some set of generators. In particular, we are working over the generators $X_AX_P$ and defining the other elements in terms of them. Specifically, we notice that the element $b^7_\ell$ is the unique block in $B$ determined by $a_3,a_4,\dots, a_k,c^6_\ell$ together with $p_\ell$ and, in turn, $t_\ell$ is one of the remaining $(n-k)$-many points incidents to it. From the table above one can similarly see how every element $b_\ell^1,\dots,b^7_\ell$ and $c^1_\ell,\dots,c^6_\ell$ is obtained in  $B=F(X_AX_P)$ from the generators $X_AX_P$. For every $\ell<\omega$, we consider the set 
	\[X_\ell \coloneqq \{ a_i: i<\omega\}\cup \{ t_i : i\leq \ell\} \]
	and define the model $A_\ell=\mrm{acl}_B(X_\ell)$. We claim that $(A_\ell)_{\ell<\omega}$ and $B$ witness the Construction Principle $\mrm{CP}(\mathbf{K},\ast)$. 
		
				\begin{table}[H]
		\begin{tabular}{|l|l|l|l|l|l|c|l|} \hline
			& $b^1_\ell$& $b^2_\ell$& $b^3_\ell$& $b^4_\ell$& $b^5_\ell$&$b^6_\ell$ &$b^7_\ell$\\ \hline \hline 
			$t_\ell$& & & & & & &$\times$\\\hline 
			$a_1$& $\times$ & & $\times$& & & &\\ \hline
			$a_2$& $\times$& $\times$& & & & &\\ \hline
			$a_3$& $\times$& $\times$& $\times$& $\times$& $\times$&$\times$ &$\times$\\ \hline
			$a_4$& $\times$& $\times$& $\times$& $\times$& $\times$&$\times$ &$\times$\\ \hline 
			$\dots$& $\times$& $\times$& $\times$& $\times$& $\times$& $\times$&$\times$\\\hline
			$a_{k-1}$& $\times$& $\times$& $\times$& $\times$& $\times$&$\times$ &$\times$\\ \hline
			$a_k$& $\times$& $\times$& $\times$&$\times$ & $\times$&$\times$ &$\times$\\ \hline
			$p_{\ell+1}$& & $\times$& $\times$& & & &\\ \hline 
			$c^1_\ell$& $\times$& & & $\times$& & &\\ \hline 
			$c^2_\ell$& & $\times$& & $\times$& $\times$& &\\ \hline 
			$c^3_\ell$& & & $\times$& & $\times$& &\\\hline 
			$c^4_\ell$& & & & $\times$& &$\times$ &\\ \hline
			$c^5_\ell$& & & & & $\times$& $\times$ &\\ \hline 
			$c^6_\ell$& & & & & & $\times$ &$\times$\\\hline
			$p_{\ell}$& & & & & & &$\times$\\ \hline
		\end{tabular}
	\end{table}	

	\noindent  We first claim that $A_\ell \sleq B$ for all $\ell<\omega$. Starting from $\{ a_1,\dots, a_k, t_\ell \}\subseteq X_\ell\subseteq A_\ell$ we can proceed with the order suggested by the table above, namely:
	\begin{align*}
	p_{\ell+1}<b^1_\ell <b^2_\ell<b^3_\ell< c^1_\ell< c^2_\ell < c^3_\ell < b^4_\ell < b^5_\ell < c^4_\ell<c^5_\ell<b^6_\ell<c^6_\ell<b^7_\ell<p_\ell.
	\end{align*} 
	It is routine to check that this is in fact a $\mrm{HF}$-order. So from $X_\ell$ we can first adjoin $p_{\ell+1}$ and then obtained $p_{\ell}$. By proceeding in the same fashion, from $p_\ell$ we can then construct $p_{\ell-1}$, and so on. In particular, we obtain that
	\[ X_\ell \hleq X_\ell \cup \{p_{\ell+1}  \}\hleq  X_\ell \cup \{ p_i : i\leq \ell+1\}. \]
	We can then add all points $p_{j}$ with $j\geq i+2$, so it follows that 
	\[ X_\ell \cup \{ p_i : i\leq \ell+1\} \hleq  X_\ell \cup X_P \]
	and in turn we have $ X_\ell \cup X_P  \hleq B$. Now let $Y_\ell=\{p_i :i\geq \ell+1  \}$. By the table above one can readily verify that $ \{ p_i : i\leq \ell\}$ is algebraic over $X_\ell Y_\ell$, as in each step of the previous $\mrm{HF}$-construction we either adjoined a block incident to exactly $k$-many points or we adjoined a novel point incident to some block.  Thus, since the previous display shows that $X_\ell Y_\ell\hleq B$,  we obtain from Fact~\ref{lemma:useful_facts_2} that  $F(X_\ell Y_\ell)=\mrm{acl}(X_\ell Y_\ell)$. Additionally, since $B$ is clearly algebraic over $X_\ell Y_\ell$, we obtain that $F(X_\ell Y_\ell)=B$. We conclude that $X_\ell Y_\ell $ is a  basis of $B$ and so that $A_\ell \sleq B$ for all $\ell<\omega$. Moreover, we also obtain from Fact~\ref{lemma:useful_facts_2} that $A_\ell=\mrm{acl}(X_\ell)=F(X_\ell)$, which shows that $A_\ell \in \mathbf{K}$ for all $\ell<\omega$. Together with the previous observation this gives us that $A_\ell\sleq B$ for all $\ell<\omega$.
	
	\smallskip
	\noindent	Additionally, it follows immediately by  the definition of $A_\ell$ for $\ell<\omega$ that for $\ell<m$ we have $A_\ell=F(X_\ell)\sleq F(X_m)=A_m$. Moreover, it is also clear by the choice of $t_i$ for all $i<\omega$ that $t_{\ell+1}\notin A_\ell$ for all $\ell<\omega$. Thus $(A_\ell)_{\ell<\omega}$ forms a strict $\sleq$-chain in $\mathbf{K}$ with $A_\ell\sleq B$ for all $\ell<\omega$.
	
	\smallskip
	\noindent   Finally,  let  $A=\bigcup_{\ell<\omega}A_\ell$ with basis $X_\omega=\bigcup_{\ell<\omega}X_\ell$. It remains to show that $A\not \pleq B$. Let $D\in \mathbf{K}$ and suppose that $B\sleq D$, we  claim that $A\not\sleq D$. Suppose towards contradiction that $A\sleq D$, i.e., there is a basis $X_D$ of $D$ and a basis $Y_A$ of $A$ such that $Y_A\subseteq X_D$. Then $D=F(Y_A)\ast F(X_C)$ for $X_C=X_D\setminus Y_A$. Since $|Y_A|=\aleph_0$ and $A=F(Y_A)=F(X_\omega)$, we can assume without loss of generality that $Y_A=X_\omega$, whence by Lemma~\ref{lemma:useful_facts} it follows that there is a \emph{wellfounded} $\mrm{HF}$-order $<_D$ of $D$ over $X_\omega$.  Now, since $<_D$ is wellfounded, there is an index $\ell<\omega$ such that $p_\ell$ appears first in the order $<_D$ among $(p_i)_{i<\omega}$. Consider then the following subconfiguration from the table above, i.e.:
	\[C_\ell=\{p_{\ell+1}\} \cup \{ b^i_\ell \mid 1\leq i\leq 7\} \cup  \{ c^i_\ell \mid 1\leq i\leq 6\}  \]
	and notice that $C_\ell\subseteq D$ but $C_\ell\not\subseteq X_\omega$. By Definition~\ref{Steiner:def_hyperfree} and by directly inspecting the table above we see that $C_\ell$ does not contain any element hyperfree in $X_\omega\cup C_\ell\cup \{ p_\ell \}$. Since we assumed that $p_\ell$ is first in $X_P$ according to the ordering $<_D$, this contradicts the fact that $<_D$ is a $\mrm{HF}$-order of $D$ over $X_\omega$. This shows that $D$ does not admit a wellfounded $\mrm{HF}$-order over $A$. By Lemma~\ref{lemma:useful_facts} this shows that $A\not\sleq D$ and thus completes our proof.
\end{proof}

\begin{theorem}\label{prop:steiner}
	Let $(\mathbf{K}, \sleq)$ be as in \ref{context:steiner}, then  there is an $\mathfrak{L}_{\infty,\omega_1}$-free structure $M\notin \mathbf{K}$ of size $\aleph_1$ and, under $V=L$, there is for every $\kappa\geq \aleph_0$ an $\mathfrak{L}_{\infty,\kappa^+}$-free structure $M\notin \mathbf{K}$  of size $\kappa^+$.
\end{theorem}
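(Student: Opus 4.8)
The plan is to deduce Theorem~\ref{prop:steiner} directly from the general machinery already developed in Section~\ref{CP:coproduct_case}, just as was done for the previous applications (e.g. Theorem~\ref{prop:cyclic_groups} and Theorem~\ref{prop:tfag}). The point is that all the genuine work has already been packaged into Lemma~\ref{steiner:AEC} and Lemma~\ref{CP:steiner}: the former shows that $(\mathbf{K},\sleq)$ (for $\mathbf{K}$ the class of free $(k,n)$-Steiner systems) is a weak $\mrm{AEC}$ with $\mathrm{LS}(\mathbf{K},\sleq)=\aleph_0$ satisfying Conditions~\ref{context:amalgamation_class}(\hyperref[C1]{C1})--(\hyperref[C3]{C3}) and \ref{assumptions:subsec_1}(\hyperref[C5]{C5})--(\hyperref[C6]{C6}), and the latter shows it satisfies Condition~\ref{CP-AEC:coproducts}(\hyperref[C4]{C4}), i.e.\ the Construction Principle $\mrm{CP}(\mathbf{K},\ast)$. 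Together these say precisely that $(\mathbf{K},\sleq)$ satisfies the blanket hypotheses \ref{assumptions:subsec_1}.

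Having verified the hypotheses, I would invoke the two headline results of Section~\ref{CP:coproduct_case}. First, since $\mathrm{LS}(\mathbf{K},\sleq)=\aleph_0$, Corollary~\ref{main:corollary} applies and yields, in $\mrm{ZFC}$, an $\mathfrak{L}_{\infty,\omega_1}$-free structure $M\notin\mathbf{K}$ of size $\aleph_1$. Second, Theorem~\ref{main:theorem} (which runs under $V=L$, using that $V=L$ implies $E^+(\kappa^+)$ for every successor cardinal $\kappa^+$ via Fact~\ref{existence:stationary}) gives, for every $\kappa\geq\aleph_0$, an $\mathfrak{L}_{\infty,\kappa^+}$-free structure $M\notin\mathbf{K}$ of size $\kappa^+$. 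This is exactly the statement of Theorem~\ref{prop:steiner}. So the proof is a one-line citation: the result follows from Lemma~\ref{steiner:AEC}, Lemma~\ref{CP:steiner}, Corollary~\ref{main:corollary} and Theorem~\ref{main:theorem}.

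\begin{proof}
	Immediate from Lemma~\ref{steiner:AEC}, Lemma~\ref{CP:steiner}, Theorem~\ref{main:theorem} and Corollary~\ref{main:corollary}.
\end{proof}

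There is essentially no obstacle at the level of this final theorem — the difficulty has all been discharged earlier, the nontrivial part being the combinatorial construction in Lemma~\ref{CP:steiner} (the incidence-table configuration and the verification, via wellfounded $\mrm{HF}$-orders and Fact~\ref{lemma:useful_facts_2}, that $A_\ell\sleq B$ for all $\ell$ while $A=\bigcup_\ell A_\ell\not\pleq B$, since no element of the critical subconfiguration $C_\ell$ is hyperfree over $X_\omega$). One should, however, double-check that the hypotheses of Corollary~\ref{main:corollary} and Theorem~\ref{main:theorem} are matched verbatim — in particular that "canonical amalgamation class'' from \ref{context:amalgamation_class} together with (C4), (C5), (C6) is precisely the package \ref{assumptions:subsec_1}, and that $\mu=\mathrm{LS}(\mathbf{K},\sleq)=\aleph_0$ so that the range "$\kappa\geq\mu$'' in Theorem~\ref{main:theorem} becomes "$\kappa\geq\aleph_0$'' as claimed. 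Both are clear from Lemma~\ref{steiner:AEC}.
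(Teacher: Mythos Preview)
Your proposal is correct and matches the paper's own proof essentially verbatim: the paper likewise writes ``Immediate by Lemmas~\ref{steiner:AEC}, \ref{CP:steiner}, Theorem~\ref{main:theorem} and Corollary~\ref{main:corollary}.'' Your surrounding commentary about checking that the hypotheses of \ref{assumptions:subsec_1} are met and that $\mu=\aleph_0$ is exactly the right sanity check, and nothing further is needed.
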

\begin{proof}
	Immediate by Lemmas~\ref{steiner:AEC}, \ref{CP:steiner}, Theorem~\ref{main:theorem} and Corollary~\ref{main:corollary}.
\end{proof}

\noindent We conclude with the following corollary. Since degenerate (i.e., finite) free $(k,n)$-Steiner systems are often treated as a pathological examples, we state the following corollary also with respect to the class of infinite  $(k,n)$-Steiner systems. Clearly, the following result follows immediately from the previous theorem.

\begin{corollary}
	The class of (infinite) free $(k,n)$-Steiner systems is not axiomatisable in $\mathfrak{L}_{\infty,\omega_1}$ and, under $V=L$, it is not axiomatisable in $\mathfrak{L}_{\infty,\infty}$. 
\end{corollary}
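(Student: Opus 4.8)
The plan is to obtain the corollary immediately from Theorem~\ref{prop:steiner}, which was itself established via Lemmas~\ref{steiner:AEC} and \ref{CP:steiner} together with Theorem~\ref{main:theorem} and Corollary~\ref{main:corollary}. Recall that Theorem~\ref{prop:steiner} produces an $\mathfrak{L}_{\infty,\omega_1}$-free structure $M\notin\mathbf{K}$ of size $\aleph_1$ and, assuming $V=L$, for each $\kappa\geq\aleph_0$ an $\mathfrak{L}_{\infty,\kappa^+}$-free structure $M_\kappa\notin\mathbf{K}$ of size $\kappa^+$; here $\mathbf{K}$ is the class of all free $(k,n)$-Steiner systems and, by Definition~\ref{def:L-free}(1), ``$\mathfrak{L}_{\infty,\kappa^+}$-free'' means $\equiv_{\infty,\kappa^+}$-equivalent to the unique (by $\kappa^+$-categoricity) free $(k,n)$-Steiner system $F(\kappa^+)\in\mathbf{K}$ of that size.

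First I would argue the $\mathfrak{L}_{\infty,\omega_1}$ part. Suppose toward a contradiction that the class of free $(k,n)$-Steiner systems is the class of models of some sentence (or set of sentences) $\varphi$ of $\mathfrak{L}_{\infty,\omega_1}(L)$. Since $F(\aleph_1)\in\mathbf{K}$ we get $F(\aleph_1)\models\varphi$; since $M\equiv_{\infty,\omega_1}F(\aleph_1)$ this yields $M\models\varphi$, hence $M\in\mathbf{K}$, contradicting $M\notin\mathbf{K}$. The very same structure $M$ handles the subclass of \emph{infinite} free $(k,n)$-Steiner systems: $M$ has cardinality $\aleph_1$, hence is infinite, and $F(\aleph_1)$ is an infinite element of $\mathbf{K}$, so an $\mathfrak{L}_{\infty,\omega_1}$-axiomatisation of the infinite free systems would again be satisfied by $M$, forcing $M$ to be an infinite free $(k,n)$-Steiner system.

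For the $\mathfrak{L}_{\infty,\infty}$ part I would work under $V=L$. If the class (resp.\ its infinite part) were axiomatised by some $\psi\in\mathfrak{L}_{\infty,\infty}(L)$, then by definition of that logic $\psi\in\mathfrak{L}_{\infty,\lambda}(L)$ for some infinite cardinal $\lambda$. Choosing $\kappa\geq\aleph_0$ with $\kappa^+\geq\lambda$ (e.g.\ $\kappa=\lambda$), Theorem~\ref{prop:steiner} supplies an $\mathfrak{L}_{\infty,\kappa^+}$-free $M_\kappa\notin\mathbf{K}$ of size $\kappa^+$. Since $\lambda\leq\kappa^+$ we have the inclusion $\mathfrak{L}_{\infty,\lambda}(L)\subseteq\mathfrak{L}_{\infty,\kappa^+}(L)$, so $M_\kappa\equiv_{\infty,\kappa^+}F(\kappa^+)$ implies $M_\kappa\equiv_{\infty,\lambda}F(\kappa^+)$ and in particular $M_\kappa\models\psi$ because $F(\kappa^+)\in\mathbf{K}$ does; as $M_\kappa$ is infinite, this places $M_\kappa$ in the axiomatised class, the desired contradiction. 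I do not expect a genuine obstacle here: the corollary is the specialisation to Steiner systems of the abstract non-axiomatisability mechanism already proved, and the only points needing a line of care are the logic inclusion $\mathfrak{L}_{\infty,\lambda}\subseteq\mathfrak{L}_{\infty,\mu}$ for $\lambda\leq\mu$ (immediate from the syntax) and the remark that the witnessing structures are automatically infinite, so the statement transfers without change to the class of infinite free $(k,n)$-Steiner systems.
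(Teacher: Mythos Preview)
Your proposal is correct and takes essentially the same approach as the paper, which simply states that the corollary follows immediately from Theorem~\ref{prop:steiner}; you have merely spelled out the standard argument (passing from the existence of an $\mathfrak{L}_{\infty,\kappa^+}$-free $M\notin\mathbf{K}$ to non-axiomatisability, and noting that $M$ is infinite so the restriction to infinite free systems is handled as well).
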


\subsection{Free generalised $n$-gons}\label{Application:ngons}

We provide in this section a second geometric application of the Construction Principle $\mrm{CP}(\mathbf{K},\ast)$, i.e., we consider the case of free generalised $n$-gons for all $n\geq 3$. This section complements the model-theoretic study of the first-order theory of free generalised $n$-gons, for which we redirect the reader to \cite{Ammer_tent,paolini&hyttinen,PQ}. We follow the same pattern of the previous section on Steiner systems --- we omit the details of those proofs which are the same and focus on the reason why free generalised $n$-gons satisfy $\mrm{CP}(\mathbf{K},\ast)$. We start by recalling some preliminary definitions.

\begin{definition}\label{def:graphs}
	Let $(G,E)$ be a graph, then we define the following:
	\begin{enumerate}[(1)]
		\item the \emph{valency} of an element $a\in G$ is the size of the set $\{b\in G : aEb \}$ of neighbours of $a$;
		\item  the \emph{distance} $d(a,b) = d(a, b/G)$ between two elements $a,b\in G$ is the length $n$ of the shortest path $a=c_0Ec_1E\dots E c_{n-1}Ec_n=b$ in $G$, and it is $\infty$ if there is no such path;
		\item the \emph{girth} of a graph is the length of its shortest cycle;
		\item the \emph{diameter} of a graph is the maximal distance between any two elements;
		\item the graph $G$ is \emph{bipartite} if there are $A,B\subseteq G$ such that $A\cap B=\emptyset$, $A\cup B= G$, and every edge has one vertex in $A$ and one in $B$.
	\end{enumerate}
\end{definition}

\begin{definition}\label{def:n_gon}
	Let $L$ be the two-sorted language where the symbols $p_0,p_1,\dots$ denote \emph{points} and the symbols $\ell_0,\ell_1,\dots$ denote \emph{lines}, the symbol $p\, \vert\,\ell$ (or $\ell\, \vert\,p$) means that the point $p$ is \emph{incident} with the line $\ell$.  For every $n\geq 3$, we define a \emph{generalised $n$-gon} as a bipartite graph (with points and lines) with girth $2n$ and diameter $n$. A \emph{partial $n$-gon} is a model of the universal fragment of the theory of generalised $n$-gons, i.e., it is a bipartite graph with girth at least $2n$.
\end{definition}

The following free completion process for generalised $n$-gons is a special case of the one described in \cite{PQ}. We stress that, in the literature, it is generally assumed that the starting configuration in the completion process of a generalised $n$-gon forms a connected partial $n$-gon (cf.~\cite{funk}). However, as remarked in \cite[p.~51]{PQ}, already at the second stage of the completion process given by the following construction one obtains a connected graph, so that the following notion is consistent with Tits' original definition of free generalised $n$-gons from \cite{tits}. As we already mentioned in the case of Steiner systems (cf.~Remark~\ref{issue:freeness}), the following Definition~\ref{ngons:free_models} of free generalised $n$-gon is chosen to simplify technicalities in this section. We consider later in Section~\ref{sec:remark_freeness} the problem of the correct definition of free structures in incidence geometry. We next recall the notion of clean arc from \cite[Def.~2.3]{Ammer_tent}.

\begin{notation}\label{clean_arc}
	Given a partial $n$-gon $A$, a chain of $n-2$ elements $z_1,\dots,z_{n-2}$ such that $a\,\vert\, z_1$, $b\,\vert\, z_{n-2}$, and every element in $\{z_1, ..., z_{n-2}\}$ has exactly valency 2, is called a \emph{clean arc} between $a$ and $b$. Notice that $a$ and $b$ do \emph{not} belong to the clean arc, and we often refer to them as the \emph{endpoints} of the clean arc $\bar{z}=(z_1,\dots, z_{n-2})$. Notice also that the parity of a clean arc depends on the underlying value of $n$: clean arcs are even for even $n$ and odd for odd $n$.
\end{notation}

\begin{definition}\label{ngons:free_completion2}
	Let $A$ be a partial $n$-gon. We let $A_0=A$ and, for every $i<\omega$, we let $A_{i+1}$ be obtained by adding a clean arc $a\, \vert\,z_1\, \vert\,z_2\, \vert\,\dots\, \vert\,z_{n-2}\, \vert\,b$ between any two elements $a,b\in A_i$ such that
	\begin{enumerate}[(a)]
		\item either $d(a,b/A_i)=n+1$,
		\item or $d(a,b/A_i)=\infty$ and $a,b$ are of the appropriate sort (i.e., $a,b$ have the same sort if $n$ is odd and have different sorts if $n$ is even).
	\end{enumerate}
	The structure $F(A)\coloneq\bigcup_{i < \omega}A_i$ is called the \emph{free completion} of $A$ and we say that $F(A)$ is freely generated over $A$. We say that $A$ is \emph{non-degenerate} if $F(A)$ is infinite, and \emph{degenerate} otherwise.
\end{definition}

\begin{definition}\label{ngons:free_models}
	Let $\kappa$ be any cardinal, then the configuration $X_\kappa$ is the unique set of points and lines $x_i$ with $i<\kappa$ such that:
	\begin{enumerate}[(i)]
		\item $x_0$ is a point;
		\item  for every limit $\alpha<\kappa$ we have that $x_\alpha$ is a point;
		\item for every $\alpha<\kappa$ we have that the element $x_\alpha$ is incident to $x_{\alpha+1}$.
	\end{enumerate}
	For $n$ odd, we say that a generalised $n$-gon $B$ is \emph{free} if it is the free completion of the configuration $X_\kappa$ for some cardinal $\kappa$. For $n$ even, we say that a generalised $n$-gon $B$ is \emph{free} if it is the free completion of the configuration $X_\kappa$ for some cardinal $\kappa\geq \aleph_0$, or it is the free completion of $X_{k}$ for some even $k<\omega$. We denote by $\Gamma_i$ the free completion $F(X_i)$ and we refer to $X_\kappa$ as the \emph{basis} of $F(X_\kappa)$.
\end{definition}

\noindent We recall the definition of $\mrm{HF}$-orderings for generalised $n$-gons. It is important to notice that in this case the notion of hyperfree is defined at the level of tuples, rather than elements (cf.~\cite{Ammer_tent} and \cite[Remark 4.35]{PQ}). As the reader can easily verify, the following definition specialises \cite[Def. 3.10]{PQ} in the setting of generalised $n$-gons.

\begin{definition}\label{ngons:def_hyperfree}
	For finite partial $n$-gons $A\subseteq B$, we define the following.
	\begin{enumerate}[(1)]
		\item A tuple $\bar{a}\in A^{<\omega}$ is \emph{hyperfree in $B$} if it satisfies one of the following conditions:
		\begin{enumerate}[(i)]
			\item $\bar{a}=a$ is a \emph{loose end}, i.e., it is a single element incident with at most one element in $B$;
			\item $\bar{a}$ is a \emph{clean arc}, i.e., $\bar{a}$ is a chain $a_1\, \vert \, a_2\, \vert\ \dots\, \vert\ a_{n-3}\, \vert\,a_{n-2}$ where each $a_i$ for $1\leq i\leq n-2$ is incident exactly to two elements in $B$ (recall \ref{clean_arc}).
		\end{enumerate}
		\item We write $A\hleq B$ if there is a linear order $<$ of $B\setminus A$ such that every $c\in B\setminus A$ belongs to a tuple $\bar{d}=(d_1,\dots, d_\ell)\in (B\setminus A)^{<\omega}$ which is  hyperfree in $A\cup \{d_1,\dots, d_\ell\}^\downarrow$. If the linear order $<$ on $B\setminus A$ witnesses the fact that $A\hleq B$ then we say that $<$ is a \emph{$\mrm{HF}$-order over $A$}.
		\item  We say that $B$ is \emph{open} if $\emptyset \hleq B$.
	\end{enumerate}	
\end{definition}

We recall two useful facts which we shall use in the proofs below. Fact (1) follows immediately from the previous definitions, Fact (2) was proved in \cite[Lemma 2.18]{Ammer_tent}. 

\begin{fact}\label{lemma:useful_facts_ngons}
	The following facts hold:
	\begin{enumerate}[(1)]
		\item if $X$ is a partial generalised $n$-gon and $A=F(X)$, then there is a wellfounded $\mrm{HF}$-order of $A$ over $X$;
		\item let $A\in \mathbf{K}$ and $|A|\geq \aleph_0$, $C\subseteq A$ and $C\hleq A$, then $F(C)\cong \mrm{acl}_A(C)\hleq A$.
	\end{enumerate}
\end{fact}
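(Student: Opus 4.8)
The plan is to prove part~(1) directly from the definition of the free completion (Definition~\ref{ngons:free_completion2}), and to reduce part~(2) to the analysis of algebraic closure in the first-order theory of free generalised $n$-gons carried out in \cite{PQ,Ammer_tent} --- indeed part~(2) is \cite[Lemma~2.18]{Ammer_tent}, so I would simply cite it after indicating the idea.

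For part~(1), write $A = F(X) = \bigcup_{i<\omega} A_i$ with $A_0 = X$, each $A_{i+1}$ obtained by adjoining, for every pair $a,b \in A_i$ satisfying the distance condition of Definition~\ref{ngons:free_completion2}, a fresh clean arc $a\,\vert\,z^{a,b}_1\,\vert\,\dots\,\vert\,z^{a,b}_{n-2}\,\vert\,b$. Every element of $A \setminus X$ is an internal vertex of a unique such clean arc, adjoined at a unique stage. I would fix, at each stage $i$, a well-order of the set of pairs to be connected, and define the linear order $<$ on $A\setminus X$ that: puts all stage-$i$ vertices before all stage-$(i+1)$ vertices; orders the clean arcs of a given stage following the chosen well-order of pairs; and orders the internal vertices of a single clean arc by $z_1 < z_2 < \dots < z_{n-2}$. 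As a well-ordered sum over $i < \omega$ of well-orders, $<$ is itself a well-order, hence wellfounded. To check it is an $\mrm{HF}$-order over $X$ (Definition~\ref{ngons:def_hyperfree}(2)), take $c \in A\setminus X$, say an internal vertex of a clean arc $\bar d = (z_1,\dots,z_{n-2})$ adjoined at stage $i+1$; these $n-2$ vertices are consecutive in $<$, and the initial segment $X \cup \{z_1,\dots,z_{n-2}\}^\downarrow$ is $A_i$ together with the internal vertices of the stage-$(i+1)$ arcs ordered no later than $\bar d$. Since all of these arcs --- $\bar d$ included --- are glued only to vertices of $A_i$, each $z_m$ has exactly its two clean-arc neighbours in this segment (and no third incidence, even if $z_m$ later acquires one at a subsequent stage, which is irrelevant to the initial-segment check); so $\bar d$ is a clean arc there, hence hyperfree by Definition~\ref{ngons:def_hyperfree}(1)(ii), and $c$ lies in a hyperfree tuple of the required kind. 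This proves $X \hleq A$.

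For part~(2), first note that $C$, being a substructure of the $n$-gon $A$, is a partial $n$-gon, so $F(C)$ is defined. The key structural input is that $C \hleq A$ makes $C$ self-sufficient in $A$: no path between vertices of $C$ is shortened, and no new cycle through $C$ is created, when passing from $C$ to $A$. Granting this, I would build an embedding $F(C) \to A$ fixing $C$ by recursion on the free-completion stages of $C$, matching each clean arc adjoined over a pair $a,b$ at distance $n+1$ in the partial completion with a clean arc of $A$ between the images of $a$ and $b$; such a clean arc exists because $A$ has diameter $n$ and, being a free $n$-gon $\hleq$-extending $C$, contains a \emph{clean} path of the right length between those vertices. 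One then identifies the image with $\mrm{acl}_A(C)$: every adjoined clean arc sits in a finite configuration forced over $C$, so the image is contained in $\mrm{acl}_A(C)$; conversely, $\mrm{acl}_A(C) \subseteq \mathrm{image}$ follows from the description of algebraic closure in the (strictly stable) theory of free $n$-gons from \cite{PQ,Ammer_tent}, namely that $\mrm{acl}$ adds nothing beyond the free completion. This yields $F(C) \cong_C \mrm{acl}_A(C)$. Finally, to get $\mrm{acl}_A(C) \hleq A$ I would restrict the $\mrm{HF}$-order of $A$ over $C$ to $A \setminus \mrm{acl}_A(C)$: if some internal clean-arc vertex $c \in A \setminus \mrm{acl}_A(C)$ had, in the relevant initial segment together with $\mrm{acl}_A(C)$, an extra incidence to a vertex of $\mrm{acl}_A(C)$, then $c$ would lie in a finite configuration forced over $C$ and hence belong to $\mrm{acl}_A(C)$, a contradiction; so the same hyperfree tuples witness $\mrm{acl}_A(C) \hleq A$.

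Part~(1) is purely bookkeeping. The real obstacle is the inclusion $\mrm{acl}_A(C) \subseteq F(C)$ in part~(2): it rests on the forking calculus and the explicit computation of algebraic closure in the first-order theory of free generalised $n$-gons. Since this is precisely the content of \cite[Lemma~2.18]{Ammer_tent} (building on \cite{PQ}), the honest plan is to quote that result rather than reprove it.
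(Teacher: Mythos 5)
Your proposal matches the paper exactly: the paper states this as a Fact with no proof, asserting that (1) ``follows immediately from the previous definitions'' and that (2) ``was proved in \cite[Lemma 2.18]{Ammer_tent}'', and your stage-by-stage well-order on the freshly adjoined clean arcs is precisely the routine bookkeeping intended for (1), while your decision to quote the cited lemma for (2) is what the paper does. No gaps.
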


\begin{notation}
	As for Steiner systems, by the previous fact we often identify $F(C)$ with $\mrm{acl}_A(C)$ whenever $C\hleq A\in \mathbf{K}$. We also write simply $\mrm{acl}(C)$ instead of $\mrm{acl}_A(C)$, since all infinite free generalised $n$-gons are elementary equivalent and $F(C)\preccurlyeq_{\omega,\omega} M$ whenever $C\hleq M$ and $M$ is elementary equivalent to a free generalised $n$-gon (by \cite{paolini&hyttinen} for the case $n=3$ and \cite{Ammer_tent} for $n\geq 3$; cf. also \cite[\S 4.3]{PQ}).
\end{notation}

The case of free generalised $n$-gons is analogous to the case of free $(k,n)$-Steiner systems. Their first order theory was investigated in \cite{paolini&hyttinen} for $n=3$ (i.e., for the subcase of projective planes, which are exactly the generalised $3$-gons), and later in \cite{Ammer_tent} for the more general case with $n\geq 3$. It was shown in these works that the first-order theory of all infinite free generalised $n$-gons is complete, strictly stable, and that the relation of $\mrm{HF}$-ordering $A\hleq B$ is equivalent to the relation of elementary embedding $A\preccurlyeq_{\omega,\omega} B$. We focus here on the class of free generalised $n$-gons with the strong submodel relation $\sleq$ defined below.

\begin{definition}\label{ngons:free_products}
	Let $A,B\in \mathbf{K}$ be disjoint free generalised $n$-gons generated respectively by $X_A$ and $X_B$, then we let $A\ast B=F(X_AX_B)$. Similarly, if $B_i\in \mathbf{K}$ is a disjoint family of free generalised $n$-gons with bases $(X_i)_{i\in I}$, then we let $\bigast_{i\in I} B_i=F(\bigcup_{i\in I}X_i)$.
\end{definition}

\begin{definition}\label{def:ngons_sleq}
	Let $A$ and $B$ be two free generalised $n$-gons, then we write $A\sleq B$ if there is a free generalised $n$-gon $C$ and two bases $X_A$ and $X_C$ of $A$ and $C$ as in \ref{ngons:free_models} (which witness the fact that $A$ and $C$ are free) such that there is no incidence between $X_A$ and $X_C$ and $B=F(X_AX_C)$.
\end{definition}

\begin{context}\label{context:ngons}
	In this section, we fix the class $(\mathbf{K},\sleq)$ where  $\mathbf{K}$ is the class of all free generalised $n$-gons and $\sleq$ is the relation from Definition~\ref{def:ngons_sleq}. 
\end{context}

\noindent We proceed as in the previous sections and show that $(\mathbf{K},\sleq)$ fits the framework of canonical amalgamation classes from Section~\ref{Sec:application}. In particular, the following Lemma~\ref{projective_construction_principle} means that $(\mathbf{K},\sleq)$ is a strictly weak $\mrm{AEC}$ but not an $\mrm{AEC}$, as it crucially does not satisfy Smoothness.

\begin{lemma}\label{ngons:AEC}
	The class $(\mathbf{K},\sleq)$ is a weak $\mrm{AEC}$ with $\mathrm{LS}(\mathbf{K}, \sleq)=\aleph_0$ and it satisfies Conditions~\ref{context:amalgamation_class}(\hyperref[C1]{C1})-(\hyperref[C3]{C3}) and Conditions~\ref{assumptions:subsec_1}(\hyperref[C5]{C5})-(\hyperref[C6]{C6}).
\end{lemma}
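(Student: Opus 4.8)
The plan is to mirror the proof of Lemma~\ref{prop:varieties_canon} and Lemma~\ref{steiner:AEC}, replacing the notion of basis of a free algebra with the notion of basis of a free generalised $n$-gon as in Definition~\ref{ngons:free_models}, and using the wellfounded $\mrm{HF}$-order machinery from Fact~\ref{lemma:useful_facts_ngons}. First I would verify that $(\mathbf{K},\sleq)$ is an abstract class: closure under isomorphism is clear from Definition~\ref{ngons:free_models} and Definition~\ref{def:ngons_sleq}, the fact that $A\sleq B$ entails $A\leq B$ is immediate since $B=F(X_AX_C)$ obviously contains $A=F(X_A)$ as a substructure, and transitivity/antisymmetry of $\sleq$ follow from the combinatorial description of bases together with Fact~\ref{lemma:useful_facts_ngons}(2) (which lets us identify $F(C)$ with $\mrm{acl}(C)$ when $C\hleq A$). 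The only subtlety here is that the bases $X_\kappa$ from Definition~\ref{ngons:free_models} are linearly ordered chains of alternating sorts rather than unstructured sets, so "$X_A\subseteq X_B$'' must be read compatibly with this structure; I would make this explicit but it is routine.

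Next I would establish $\mathrm{LS}(\mathbf{K},\sleq)=\aleph_0$. Given $A=F(X_A)\in\mathbf{K}$ with a wellfounded $\mrm{HF}$-order $<$ over $X_A$ (Fact~\ref{lemma:useful_facts_ngons}(1)) and a subset $B\subseteq A$, I take $X=B^\downarrow\cap X_A$ (cf.\ Notation~\ref{notation:downset}); wellfoundedness of $<$ gives $|X|\le|B|+\aleph_0$. The point is that $X$ is itself (isomorphic to) a basis configuration of the required form, so $F(X)\sleq F(X)\ast F(X_A\setminus X)=A$ and $|F(X)|\le|B|+\aleph_0$; hence any $B\subseteq A$ sits inside a strong submodel of size $\le|B|+\aleph_0$. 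Then I verify the Tarski--Vaught axioms \ref{def_AEC}(4.1)--(4.2): given a continuous $\sleq$-chain $(A_i)_{i<\lambda}$ with bases $X_i$ which we may take to be $\subseteq$-increasing (with $X_\alpha=\bigcup_{i<\alpha}X_i$ at limits), the union of the bases is again a basis configuration of the correct form, so $\bigcup_{i<\lambda}A_i=F(\bigcup_i X_i)\in\mathbf{K}$ and each $A_i\sleq\bigcup_i A_i$. For Conditions~\ref{context:amalgamation_class}(\hyperref[C1]{C1})--(\hyperref[C3]{C3}): (C1) is Definition~\ref{ngons:free_products} together with the observation that a disjoint union of basis configurations is a basis configuration (this gives associativity, the respect for $\sleq$, and the isomorphism-extension property of Definition~\ref{def:canonical_jep}); (C2) follows from Definition~\ref{def:ngons_sleq} by taking $X_C=X_B\setminus X_A$; (C3) is immediate from $F(\bigcup_{i<\alpha}X_i)=\bigcup_{\beta<\alpha}F(\bigcup_{i<\beta}X_i)$, which holds because each element of $F$ lies in the free completion of a finite sub-configuration. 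Finally, (C5) is Definition~\ref{ngons:free_models}: for $\kappa\ge\aleph_1$ (indeed for all infinite $\kappa$, once $n$ is fixed) there is exactly one basis configuration $X_\kappa$ up to isomorphism, hence one free generalised $n$-gon $\Gamma_\kappa$; and (C6) follows since $\lambda<\kappa$ forces $X_\lambda\subseteq X_\kappa$ (as a sub-configuration), so $F(X_\lambda)\sleq F(X_\kappa)$ and $\Gamma_\kappa$ is $\sleq$-universal.

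The main point of care — though not a deep obstacle — is the interplay between the rigid chain structure of the basis configurations $X_\kappa$ in Definition~\ref{ngons:free_models} (alternating sorts, successor-incidences, points at limit levels) and the set-theoretic manipulations "restrict to a downset of a basis'' and "take a union/disjoint union of bases''. In each case one must check that the resulting sub- or super-configuration is again of the prescribed form, and in particular respects the sort constraints and the parity conventions for even versus odd $n$; the even case additionally allows finite bases $X_k$ with $k$ even, which must be handled in the universality argument. None of this requires new ideas beyond Fact~\ref{lemma:useful_facts_ngons}, but it is where the proof differs from the purely algebraic case of Lemma~\ref{prop:varieties_canon} and where the $\mrm{HF}$-order wellfoundedness does the real work. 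I would therefore write the proof by saying "this follows by reasoning as in Lemma~\ref{steiner:AEC}, using the wellfounded $\mrm{HF}$-orders from Fact~\ref{lemma:useful_facts_ngons} and the basis configurations from Definition~\ref{ngons:free_models}'', and spell out only the points where the chain structure of $X_\kappa$ needs attention.
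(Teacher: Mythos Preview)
Your overall strategy is right and matches the paper's for most axioms, but there is a genuine gap at Condition~(\hyperref[C1]{C1}). You write that ``a disjoint union of basis configurations is a basis configuration'', and this is precisely what fails for generalised $n$-gons as opposed to Steiner systems or free algebras. The bases $X_\kappa$ from Definition~\ref{ngons:free_models} are \emph{connected} chains (each $x_\alpha$ is incident to $x_{\alpha+1}$), so if $X_A$ and $X_B$ are two such chains with no incidences between them, their disjoint union $X_AX_B$ is \emph{not} of the form $X_\kappa$ for any $\kappa$. Hence $A\ast B=F(X_AX_B)$ is a well-defined generalised $n$-gon, but it is not at all obvious that it lies in $\mathbf{K}$, i.e., that it admits a basis of the prescribed chain form.

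The paper devotes essentially its entire proof to this point: one must exhibit a new chain $X_C\subseteq F(X_AX_B)$ isomorphic to some $X_\kappa$ with $F(X_C)=F(X_AX_B)$. This is done by interleaving the two chains $X_A$, $X_B$ using clean arcs from the free completion (treating the finite and infinite cases, and the parities of $n$, separately). Your final paragraph correctly flags that the rigid chain structure needs attention, but you underestimate the work: this is not a routine parity check but the substantive combinatorial content of the lemma, and it requires an explicit construction rather than a reference to Fact~\ref{lemma:useful_facts_ngons}. The same issue recurs, in milder form, in your L\"owenheim--Skolem argument: the downset $X=B^\downarrow\cap X_A$ need not be an initial segment of the chain $X_A$, so one must again argue that $F(X)$ is free in the sense of Definition~\ref{ngons:free_models}.
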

\begin{proof}
	Conditions~\ref{assumptions:subsec_1}(\hyperref[C5]{C5})-(\hyperref[C6]{C6}) follow as in Lemma~\ref{steiner:AEC} and Conditions~\ref{context:amalgamation_class}(\hyperref[C2]{C2})-(\hyperref[C3]{C3}) follow directly from Definition~\ref{ngons:free_products} and Definition~\ref{def:ngons_sleq}.  We verify Condition~\ref{context:amalgamation_class}(\hyperref[C1]{C1}) for the case where $n$ is odd and leave the case with $n$ even to the reader. We consider only the finitary case, as then this argument extends inductively to the infinitary setting. Let $A=F(X_A)$ and $B=F(X_B)$ be two free generalised $n$-gons with $X_A$, $X_B$ as in \ref{ngons:free_models}. Let $(x_i)_{i<\kappa}$ and $(y_i)_{i<\lambda}$ be the chains from $X_A$ and $X_B$, respectively, and suppose without loss of generality that $\lambda\leq \kappa$. We verify that $A\ast B\in \mathbf{K}$. The remaining fact that $A\ast B$ satisfies the properties from \ref{def:canonical_jep} follows as in the previous cases.  We distinguish the following cases:
	
	\noindent
	\underline{Case 1}. $\lambda$ is infinite.
	\newline For every limit ordinal $\alpha<\lambda$ and $i<\omega$ we let $\bar{z}_i=(z^1_i,\dots,z^{n-2}_i)$ be the clean arc between $x_{\alpha+i}$ and $y_{\alpha+i}$. Then consider the set 
	\[Y=\{ z^1_{\alpha+i},\dots,z_{\alpha+i}^{n-2} : i= (n-1)\cdot j \text{ for some } 1\leq j<\omega   \}\]
	 and adjoin to $Y$ the sequences $x_{\alpha+i},\dots, x_{\alpha+i+n-1}$ and $y_{\alpha+i+n-1},\dots, y_{\alpha+i+2(n-1)}$ for all $i=(n-1)\cdot j$ and $1\leq j<\omega$. Then the configuration formed by $Y$ together with these sequences and the elements $x_i$ for all $i\geq\lambda$ forms a chain $X_C$ isomorphic to $X_{\kappa}$, as $\kappa=\kappa+\lambda$. Since  $d(x_{\alpha+i},x_{\alpha+i+n-1}/X_C)=d(y_{\alpha+i},y_{\alpha+i+n-1}/X_C)>n$ for all $\alpha<\lambda$ limit and $i<\omega$, it then follows by construction and Definition~\ref{ngons:free_completion2} that $F(X_C)=F(X_AX_B)$, which proves that $A\ast B=F(X_C)$ is a free $n$-gon and thus belongs to $\mathbf{K}$.

	 \noindent
	 \underline{Case 2}. $\lambda$ is a finite cardinal $\ell$.
	 \newline Let $\bar{z}=(z_1,\dots, z_{n-2})$ be the clean arc between $y_0$ and $x_0$. Then $X_C\coloneqq X_AX_B\cup\{z_1,\dots, z_{n-2}\} $ is a chain isomorphic to $X_\kappa$ from Definition~\ref{ngons:free_models} and by construction $F(X_C)=F(X_AX_B)=A\ast B$. As in the previous case we obtain that $A\ast B\in \mathbf{K}$. This verifies Condition~\ref{context:amalgamation_class}(\hyperref[C1]{C1}) and thus completes our proof. 
\end{proof}

\begin{lemma}\label{projective_construction_principle}
	Let $(\mathbf{K},\sleq)$ be the class of free generalised $n$-gons with the relation $\sleq$ from Definition~\ref{def:ngons_sleq}, then $(\mathbf{K},\sleq)$ satisfies $\mrm{CP}(\mathbf{K},\ast)$, i.e., Condition~\ref{CP-AEC:coproducts}(\hyperref[C4]{C4}).
\end{lemma}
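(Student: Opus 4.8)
The plan is to follow closely the template of Lemma~\ref{CP:steiner}: we build a single free generalised $n$-gon $B$ together with a strictly increasing $\sleq$-chain $(A_\ell)_{\ell<\omega}$ of free generalised $n$-gons with $A_\ell\sleq B$ for all $\ell$, but such that the union $A=\bigcup_{\ell<\omega}A_\ell$ fails to be $\pleq$-below $B$. As before, the failure of $A\pleq B$ will be forced by a ``locked'' combinatorial configuration: no generalised $n$-gon $D$ with $B\sleq D$ can carry a \emph{wellfounded} $\mrm{HF}$-order over $A$, so by Fact~\ref{lemma:useful_facts_ngons}(1) we get $A\not\sleq D$, whence $A\not\pleq B$. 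We would carry out the construction for $n$ odd and leave the (entirely analogous, with clean arcs of the opposite parity) case $n$ even to the reader.

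First I would fix two ``threads'': a chain $X_A$ (a copy of the basis $X_\kappa$ of Definition~\ref{ngons:free_models}, which will become the basis of $A$) and an auxiliary chain $X_P=\{p_i:i<\omega\}$. Set $B=F(X_AX_P)$, a free generalised $n$-gon by Lemma~\ref{ngons:AEC}. Inside $B$, for each $\ell<\omega$ I would single out a finite \emph{gadget} $\mathcal{G}_\ell$ --- a web of clean arcs and branch points, anchored at finitely many elements $a_1,\dots,a_m$ common to all gadgets, connecting $p_{\ell+1}$ to $p_\ell$ --- designed so that: (i) starting from $\{a_1,\dots,a_m\}$ together with a single chosen element $t_\ell\in\mathcal{G}_\ell$ one can $\mrm{HF}$-reconstruct all of $\mathcal{G}_\ell$, and in particular obtain $p_{\ell+1}$ and then $p_\ell$; and (ii) the ``core'' $C_\ell$ of $\mathcal{G}_\ell$ becomes rigid once $X_\omega=\bigcup_\ell X_\ell$ and $p_\ell$ are present, i.e.\ every core element already has valency $\geq 2$ and no subchain of core elements is a clean arc of length $n-2$ (every potential clean arc is interrupted by a branch point of valency $\geq 3$). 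Throughout, the girth--diameter constraints must be respected: each clean arc we introduce must connect elements at distance $n+1$ (or $\infty$) of the appropriate sort, as in Definition~\ref{ngons:free_completion2}. With the gadgets in hand I would put $X_\ell=X_A\cup\{t_i:i\leq\ell\}$ and $A_\ell=\mrm{acl}_B(X_\ell)$.

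Next I would verify $A_\ell\sleq B$. Using (i) and the explicit $\mrm{HF}$-orders inside each gadget one shows that from $X_\ell$ one reaches $p_{\ell+1}$, then $p_\ell$, and then --- descending and ascending through the remaining gadgets --- all of $X_P$, and then all of $B$; hence $X_\ell\cup(X_P\setminus\{p_i:i\leq\ell\})$ is a basis of $B$ with no incidence between its two halves, which is exactly the requirement of Definition~\ref{def:ngons_sleq}, so $A_\ell\sleq B$. By Fact~\ref{lemma:useful_facts_ngons}(2) we also get $A_\ell=F(X_\ell)\in\mathbf{K}$. That $(A_\ell)_{\ell<\omega}$ is a $\sleq$-chain is immediate from $X_\ell\subseteq X_{\ell+1}$, and it is strict because $t_{\ell+1}\notin A_\ell$ by the choice of the $t_i$. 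Finally, for $A\not\pleq B$: assume $D\in\mathbf{K}$ with $B\sleq D$ and, towards a contradiction, $A\sleq D$; by Fact~\ref{lemma:useful_facts_ngons}(1) there is a wellfounded $\mrm{HF}$-order $<_D$ of $D$ over $X_\omega$. By wellfoundedness pick $\ell$ with $p_\ell$ least among $\{p_i:i<\omega\}$ in $<_D$; then in the partial $n$-gon $X_\omega\cup\{c:c<_D p_\ell\}\cup\{p_\ell\}$ the witness tuple through $p_\ell$ would have to be a loose end or a clean arc, but all neighbours and arc-partners of $p_\ell$ in $B$ lie in $C_\ell$, which by (ii) is rigid --- no loose end, no length-$(n-2)$ clean arc --- in $X_\omega\cup C_\ell\cup\{p_\ell\}$. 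This contradiction shows $A\not\sleq D$, hence $A\not\pleq B$, completing the verification of $\mrm{CP}(\mathbf{K},\ast)$.

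The main obstacle is precisely the design of the gadget $\mathcal{G}_\ell$: one must simultaneously make it $\mrm{HF}$-generable from a single new element $t_\ell$ over the common anchors (so that $A_\ell\sleq B$ and the $A_\ell$'s form a proper chain), yet rigid --- no loose ends, every potential clean arc broken by a high-valency branch point --- once the $p$-chain and $p_\ell$ are in place, all while keeping every clean arc introduced at the correct distance $n+1$ so that $X_AX_P$ really does freely complete to $B$ and the girth stays $2n$. For $n=3$ this is essentially the planar gadget implicit in \cite{paolini&hyttinen}; for general $n$ it amounts to replacing single incidences by clean arcs of length $n-2$ and carefully tracking distances, which is the combinatorial heart of the proof.
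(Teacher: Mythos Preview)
Your plan matches the paper's strategy: the paper also follows the Steiner template, builds $B$ as the free completion of two ``threads'' anchored at a few common elements $a_1,\dots,a_4$, isolates for each $\ell$ a gadget $D^n_\ell$ with a distinguished tuple $\bar t_\ell$, sets $A_\ell$ to be the algebraic closure of the $A$-thread together with $\{\bar t_i:i\leq\ell\}$, and proves $A\not\pleq B$ by the same wellfoundedness contradiction you outline. So the architecture is right.

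What you have left as a black box is precisely what the paper supplies: the explicit gadget. The paper gives it concretely (21 clean arcs $\bar c_1,\dots,\bar c_{21}$ woven between $a_1,a_2,a_3,a_4$, $\bar p_\ell$, $\bar p_{\ell+1}$ and $\bar t_\ell$, tabulated for general odd $n$ and specialised for $n=3$), together with two explicit $\mrm{HF}$-orders on it---one starting from $\bar p_\ell,\bar p_{\ell+1}$ producing $\bar t_\ell$, the other starting from $\bar t_\ell,\bar p_{\ell+1}$ producing $\bar p_\ell$---and the direct inspection showing no hyperfree tuple survives once $\bar p_{\ell+1}$ is removed. Since you yourself flag this as ``the combinatorial heart of the proof'', your proposal is a correct outline rather than a proof.

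Two technical points where your sketch diverges from what actually works. First, in the $n$-gon setting the $p$-thread cannot consist of single points: for $n>3$ the relevant hyperfree tuples are clean arcs of length $n-2$, so the paper takes each $\bar p_\ell$ (and $\bar t_\ell$) to be a clean arc rather than a single vertex, and the rigidity argument is that $D^n_\ell$ contains no hyperfree \emph{tuple}, not merely no loose end. Second, bases of free $n$-gons in Definition~\ref{ngons:free_models} are \emph{chains}, not bare point-sets as in the Steiner case; the paper therefore pads both threads with extra chains $\bar u_i,\bar v_i,\bar z$ so that $X_AX_PX_UX_VX_Z\cong X_\omega$, and likewise inserts connecting clean arcs $\bar s_j$ between successive $\bar t_j$'s so that the basis $Z_\ell$ of $A_\ell$ is again a chain. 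Your sets $X_AX_P$ and $X_\ell=X_A\cup\{t_i:i\leq\ell\}$ as written are not of this form, so $B$ and $A_\ell$ would not obviously lie in $\mathbf K$.
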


\afterpage{
	\begin{landscape}
		\vspace*{\fill}
		\begin{table}[htb!]
			\centering
			\tiny			
			\vspace*{\fill}
					\begin{tabular}{|l|l|l|l|l|l|l|l|l|l|l|l|l|l|l|l|l|l|l|l|l|l|l|l|l|} \hline
						& $c^1_1$ & $c^{n-2}_1$ & $c^1_2$ & $c^{n-2}_2$ & $c^1_3$ & $c^{n-2}_3$ & $c^1_4$ & $c^{n-2}_4$ & $c^1_8$ & $c^{n-2}_8$ & $c^1_9$ & $c^{n-2}_9$ & $c^1_{10}$ & $c^{n-2}_{10}$ & $c^1_{13}$ & $c^{n-2}_{13}$ & $c^1_{15}$ &  $c^{n-2}_{15}$& $c^1_{17}$ &  $c^{n-2}_{17}$& $c^1_{19}$ &  $c^{n-2}_{19}$& $c^1_{21}$ &  $c^{n-2}_{21}$\\  \hline			\hline						
						$t^1_{\ell}$   &         &         &         &       &         &       &         &         &         &         &         &         &            &            &            &            &                   &                                             &         $\times$     &&& &&\\ \hline
						$t^{n-2}_{\ell}$   &         &         &         &       &         &       &         &         &         &         &         &         &            &            &            &            &                   &                                             &             & &$\times$&&&\\ \hline
						$a_1$        &$\times$       &         &         &       &         &       &         &         &         &         &         &         & $\times$          &            &            &            &            &                                                     &            &  &            &&$\times$&\\ \hline
						$a_{2}$      &         & $\times$      &$\times$       &       &         &       &         &         &         &         &         &         &            &            &            &            &            &                                                      &            &&            &&& \\ \hline
						$a_{3}$      &         &         &         & $\times$    & $\times$      &       &         &         &         &         & $\times$       &         &            &            &            &            &            &                                                      &            & &&&&\\ \hline
						$a_{4}$      &         &         &         &       &         &    &   $\times$      &         &  $\times$       &         &         &         &            &            &            &            &            &                                                      &            &&& &&\\ \hline
						$p^1_{\ell+1}$ &         &         &         &       &       & $\times$    &         &   $\times$       &        &         &         &         &            &            & $\times$         &            &            &                                                      &            & &&&&\\ \hline
						$c^1_5$       & $\times$       &         &         &       &         &       &         &         &         &      $\times$    &         &         &            &            &            &            &            &                                                      &            & &&&&\\ \hline
						$c^{n-2}_5$         &         &         &         &       &     $\times$    &      &         &         &         &       &         &         &            &            &            &            &            &                                                      &            & &&&&\\ \hline
						$c^1_6$      & $\times$       &         &         &       &         &       &         &       &         &         &         & $\times$       &            &            &            &            &            &                                                      &            &&&&& \\ \hline
						$c^{n-2}_6$         &         &         &         &       &         &       &   $\times$      &       &         &         &         &      &            &            &            &            &            &                                                      &            &&&&& \\ \hline
						$c^1_7$      &         &         &$\times$       &       &         &       &         &         &         &         &         &         &            & $\times$            &            &            &            &                                                      &            & &&&&\\ \hline
						$c^{n-2}_7$         &         &         &         &       &         &       &         $\times$ &       &         &         &         &         &            &          &            &            &            &                                                       &            &&&&&\\ \hline
						$ c^{1}_{11} $    &         &         &         &       &         &       &         &         &$\times$         &         &         &         &            &            &           &  $\times$           &            &                                                      &            & &&&&\\ \hline
						$c^{n-2}_{11}$     &         &         &         &       &         &       &         &         &         &         &      $\times$    &       &            &            &            &             &            &                                                       &            &&&&&\\ \hline
						$c^{1}_{12}$     &         &         &         &       & $\times$       &       &         &         &         &         &         &         &            &            &            &           &            $\times$&                                                      &            &&&&& \\\hline
						$c^{n-2}_{12}$     &         &         &         &       &         &       &         &         &         &         &         &         &        $\times$     &          &            &            &            &                                                     &            &&& && \\ \hline
						$c^{1}_{14}$     &  $\times$       &         &         &       &         &       &             &     &         &         &         &         &            &            &            &            &            & $\times$                                                      &            &&&&& \\ \hline
						$c^{n-2}_{14}$     &         &         &         &       &         &       &         &         &         &           &         &         &            &            &          $\times$   &        &            &                                                &            & && &&  \\ \hline
						$c^{1}_{16}$     &         &         &         &       &         &       &             &     &$\times$         &         &         &         &            &            &            &            &            &                                                      &            &$\times$&&&& \\ \hline
						$c^{n-2}_{16}$     &         &         &         &       &         &       &         &         &         &         &         &         &           $\times$  &            &            &           &            &                                                 &             & && && \\ \hline 
						$c^1_{18}$     &         &         &    $\times$     &       &         &       &         &         &         &         &         &         &            &            &            &            &            &       &           & &&$\times$&& \\ \hline
						$c^{n-2}_{18}$     &         &         &         &       &         &       &         &         &         &         &         &         &            &            &            &            &   $\times$         &       &           & &&&& \\ \hline                   
						$c^1_{20}$     &         &         &         &       &  $\times$       &       &         &         &         &         &         &         &            &            &            &            &            &       &           & &&&&  \\ \hline
						$c^{n-2}_{20}$     &         &         &         &       &         &     &         &         &         &         &         &         &            &            &            &            &            &       &           && $\times$&&& $\times$\\ \hline                          
						$p^1_\ell$     &         &         &         &       &         &       &         &         &         &         &         &         &            &            &            &            &            &       &       $\times$    & && & &\\ \hline
						$p^{n-2}_\ell$     &         &         &         &       &         &       &         &         &         &         &         &         &            &            &            &            &            &       &           & &&&& $\times$ \\ \hline                         
					\end{tabular}
			
			\caption{\label{tab:ngons} The configuration $D^n_\ell$ for $n\geq 3$ odd.}
				\end{table}			
			\vspace*{\fill}
\end{landscape}}

\begin{proof}
	The proof proceeds similarly to Lemma~\ref{CP:steiner}. The main idea is to construct a free generalised $n$-gon $B$ and a substructure $A\sleq B$ so that every $D\in \mathbf{K}$ with $B\sleq D$ cannot be obtained from $A$ by means of a wellfounded $\mrm{HF}$-order. We provide details for the proof with $n\geq 3$ odd, and we leave to the reader to adapt our reasoning to the case with $n$ even. We shall refer in this proof to a special rigid configuration inside a free generalised $n$-gon, which we describe in Table~\ref{tab:ngons}. To provide the reader with some intuition, we also add Table~\ref{tab:projective}, which simply specialises Table~\ref{tab:ngons} to the case of projective planes, namely with $n=3$. We first explain how to read Table~\ref{tab:ngons}.
	
	\smallskip
	\noindent Let $X_A=\{a_i: i<\omega\}$ and $X_P=\{p^j_i: i<\omega, \; 1\leq j\leq n-2 \}$ be two sets of points such that $\bar{p}_{i}=(p^1_i,\dots, p^{n-2}_{i})$ is a clean arc. For every $i<\omega$ we let $\bar{u}_{i}$ be a chain between $a_i$ and $a_{i+1}$ of length $>n$. For every $i<\omega$ we let $\bar{v}_{i}$ be the clean arc between $p^{n-2}_i$ and $p^1_{i+1}$, and we let $\bar{z}=(z_1,\dots, z_{n-2})$ be the clean arc between $p^1_0$ and $a_0$. We let $X_U$ be the set of all elements from the tuples $\bar{u}_{i}$, $X_V$ be the set of  elements from the tuples $\bar{v}_{i}$ and $X_Z=\{z_1,\dots, z_{n-2}\}$. The resulting configuration $X_AX_PX_UX_VX_Z$ is a chain of length $\omega$ (i.e., it is isomorphic to the configuration $X_\omega$ from \ref{ngons:free_models}), and thus its free completion $F(X_AX_PX_UX_VX_Z)$ is a free generalised $n$-gons. For notational convenience we let $X\coloneqq X_AX_PX_UX_VX_Z $.
	
	\smallskip
	\noindent For each $\ell<\omega$, we consider the configuration $D^n_\ell$ from  Table~\ref{tab:ngons} inside $B$. The key point of the argument is that $D^n_\ell$ admits two different $\mrm{HF}$-ordering, one starting from the generators $X$ and one from $X_A$, $\bar{p}_{\ell+1}$ and the clean arc $\bar{t}_\ell$, which we define below. First, we clarify that the elements in $D^n_\ell$ are essentially words over the  generators $a_1,a_{2}, a_3,a_4$ and $p^1_\ell,p^1_{\ell+1},p^{n-2}_\ell$. In fact, starting with these elements, the free construction process in the setting of $n$-gons provides us with the clean arc $c^1_1,\dots, c_1^{n-2}$, i.e., with a sequence of elements such that $a_1,c^1_1,\dots, c_1^{n-2},a_{2}$ forms a chain. Notice that, for the special case of projective planes we are simply adjoining the unique line incident to $a_1$ and $a_{2}$. In Table~\ref{tab:ngons} we represent this clean arc (and the others) by displaying only the first and last element in it, but one should keep in mind that \emph{every} element in the clean arc is incident to two elements. Similarly, we then adjoin a clean arc $c^1_2,\dots, c_2^{n-2}$ between $a_2$ and $a_3$, and we represent this in Table~\ref{tab:ngons} by showing its first and last element. We proceed in this fashion and we progressively add the clean arcs $\bar{c}_i$ for all $i\leq 16$, as displayed in the table. After this, we proceed in the following order:
	\[\bar{c}_{18}<\bar{c}_{21}<\bar{c}_{17}<\bar{c}_{20}<\bar{c}_{19} <\bar{t}_{\ell}. \]
	Since $p^1_{\ell},p^{n-2}_{\ell}$ are given, the reader can easily verify that this is indeed a $\mrm{HF}$-order and, additionally, that at every step we adjoin clean arcs, thus showing that the endresult is algebraic over the generators. It follows that the resulting configuration $D^n_\ell$ from Table~\ref{tab:ngons} is a subconfiguration of $B$ which is $\mrm{HF}$-constructed from the generators $a_0,a_1,a_{2}, a_3,a_4$ and $p^1_\ell,p^{n-2}_\ell,p^1_{\ell+1}$.  Also, notice that the tuple $\bar{t}_\ell=(t^1_{\ell},\dots,t^{n-2}_\ell)$ has been identified as the unique clean arc between $c^{1}_{17}$ and $c^{1}_{19}$. In particular, we have that $t^i_{\ell}\in B$ for all $\ell<\omega$ and $1\leq i \leq n-2$. 
	
	\smallskip
	\noindent We use the configuration $D^n_\ell$ to produce a chain of free models $(A_i)_{i<\omega}$ as in the statement of $\mrm{CP}(\mathbf{K},\ast)$. For $\ell=0$ let $\bar{s}_0=(s^1_0,\dots, s^{n-2}_{0})$ be the clean arc between $a_0$ and $t^1_\ell$. For $\ell=m+1$ we let $\bar{s}_\ell=(s^1_\ell,\dots, s^{n-2}_{\ell})$ be the clean arc between $t^1_\ell$ and $t^{n-2}_m$. Notice that by construction such clean arcs do not belong to the configuration $D^n_i$ from above for all $i\leq \ell$. Consider then the configuration
	\[Z_\ell\coloneqq X_AX_U\cup\{t^i_j : 1\leq i\leq n-2, \; j\leq \ell  \} \cup \{ s^i_j : 1\leq i\leq n-2, \; j\leq \ell   \},  \]
	which is a chain isomorphic to $X_\omega$ from \ref{ngons:free_models}. Then we define $A_\ell=F(Z_\ell)$ and it follows from Definition~\ref{ngons:free_models} and the previous observation that $A_\ell$ is free, i.e., $A_\ell \in \mathbf{K}$. Additionally, it is immediately clear by construction that we also have $A_i\sleq A_j$ whenever $i<j<\omega$.
	
		\smallskip
	\noindent  In order to verify $\mrm{CP}(\mathbf{K},\ast)$ we next show that also $A_\ell\sleq B$ holds for all $\ell<\omega$.  The key observation is that we can modify the ordering described before so to start from $\bar{t}_\ell$ and use it to obtain $\bar{p}_\ell$ (this is similar to how we proceed in Lemma~\ref{CP:steiner}). Starting from $Z_\ell$ we construct the configuration $D^n_\ell$ as follows. First, we adjoin the tuples $\bar{p}_{i}$ and $\bar{v}_i$ for all $i\geq \ell+1$, notice that by the choice of the tuples $\bar{v}_i$ each element can be added as a loose end. From this configuration we then proceed algebraically, i.e. we perform the free completion process from Definition~\ref{ngons:free_completion2}. We start by adding the tuples $\bar{c}_1,\dots, \bar{c}_{21}$ following the order indicated by their numbering and, then, we adjoin the tuple $\bar{p}_{\ell}$ as the clean arc witnessing that the distance between $c^1_{17}$ and $c^{n-2}_{21}$ is exactly $n$. We then adjoin the tuple $\bar{v}_{\ell}$.
	\newline Similarly, we then use the point $p^1_\ell$ and the generators $a_1,\dots, a_4$ to obtain the tuple $\bar{p}_{\ell-1}$ and $\bar{v}_{\ell-1}$, and we continue in this fashion until we adjoint also $\bar{p}_0$. Finally, we adjoin the clean arc $\bar{z}$ between $a_0$ and $p^1_0$. In this way we have adjoined all elements from $X$ starting from $Z_\ell$. From the table above one can verify that this is indeed part of the free completion process from $Z_\ell$ and the tuples $\bar{p}_j$, $\bar{v}_j$ with $j\geq \ell+1$. 
	
	\smallskip \noindent Let $Y_\ell= \{\bar{p}_j : j\geq \ell+1 \}\cup\{\bar{v}_j: j\geq \ell+1  \}$.  Since by construction and the reasoning above we have that $Z_\ell Y_\ell\hleq B$, it also follows that $F(Z_\ell Y_\ell)=\mrm{acl}(Z_\ell Y_\ell)=B$. Since $A=F(Z_\ell)$ and $Z_\ell\hleq Z_\ell Y_\ell$, as we argued above, we obtain that $A_\ell=F(Z_\ell)=\mrm{acl}(Z_\ell)$. It follows that $B=A_\ell \ast F(Y_\ell)$ and so $A_\ell\sleq B$ for all $\ell<\omega$.
	
	\smallskip \noindent Finally, let $A=\bigcup_{i<\omega}A_i$ and notice that $Z_\omega=\bigcup_{\ell<\omega}Z_\ell$ is a basis for it. Then since $\mathbf{K}$ is a weak $\mrm{AEC}$ and  $A_i\in \mathbf{K}$ for all $i<\omega$ it follows that  $A\in \mathbf{K}$. We claim that $A\not \pleq B$.  Suppose there is $E\in \mathbf{K}$ such that $A\sleq E$ and $B\sleq E$. By reasoning exactly as in Lemma~\ref{steiner:AEC}, it follows that there is a wellfounded $\mrm{HF}$-order $<_E$ of $E$ over $X_A$. Then let $\bar{p}_\ell\in X_P$ be the first tuple in $X_P$ occurring in the ordering $<_E$ and consider the configuration $D^n_\ell$ from Table~\ref{tab:ngons}. In particular, since $p^j_{\ell}<_E p^1_{\ell+1}$ for some $1\leq j\leq n-2$ it follows by directly inspecting the configuration $D^n_\ell$ from Table~\ref{tab:ngons} that it does not contain any tuple which is hyperfree in $Z_\omega\cup D^n_\ell $. Since we assumed that $p_\ell$ is first in the ordering $<_E$, this contradicts the fact that $<_E$ is a $\mrm{HF}$-order of $E$ over $X_\omega$. As in Lemma~\ref{steiner:AEC}, we obtain that $E$ does not admit a wellfounded $\mrm{HF}$-order over $A$ and so $A\not\sleq E$. This completes the proof.
\end{proof} 
	
		\begin{table}[H]			 
	\begin{tabular}{|l|l|l|l|l|l|l|l|l|l|l|l|l|} \hline
		& $c_1$&$c_2$&$c_3$&$c_4$& $c_8$& $c_9$& $c_{10}$& $c_{13}$& $c_{15}$& $c_{17}$& $c_{19}$&$c_{21}$\\ \hline \hline 
		$t_{\ell}$&& & & & & & & & &$\times$  &$\times$&\\\hline 
		$a_{1}$&$\times$& &  & & & & $\times$& &  &  &&$\times$\\ \hline
		$a_{2}$&$\times$ &$\times$& & & & & & & & &&\\ \hline
		$a_{3}$& & $\times$ &$\times$ & & & $\times$& & && && \\ \hline
		$a_{4}$& && & $\times$ &$\times$ & & & &&  &&\\ \hline
		$p_{\ell+1}$& & &$\times$&$\times$ & & & & $\times$&&  &&\\ \hline 
		$c_5$&$\times$ &&$\times$ & & $\times$& & & && && \\ \hline 
		$c_6$&$\times$ && &$\times$ & & $\times$& & && & &\\ \hline 
		$c_7$& & $\times$&& $\times$& & & $\times$& &&  &&\\ \hline 
		$c_{11}$& && & & $\times$& $\times$& & $\times$&&  &&\\ \hline 
		$c_{12}$&&&$\times$  & & & & $\times$& & $\times$& &&\\ \hline 
		$c_{14}$&$\times$ && & & & & &$\times$ & $\times$& &&\\ \hline 
		$c_{16}$& && & &$\times$ & & $\times$& && $\times$ &&\\ \hline
		$c_{18}$& &$\times$& & & & & & &$\times$&  &$\times$&\\\hline
		$c_{20}$& &&$\times$ & & & & & &&  &$\times$&$\times$\\\hline				
		$p_\ell$& && & & & & & &&$\times$  &&$\times$\\\hline 
	\end{tabular}\caption{\label{tab:projective} The configuration $D^3_\ell$.}
\end{table}

\begin{theorem}\label{prop:ngons}
	Let $(\mathbf{K}, \sleq)$ be as in \ref{context:ngons}, then  there is an $\mathfrak{L}_{\infty,\omega_1}$-free structure $M\notin \mathbf{K}$ of size $\aleph_1$ and, under $V=L$, there is for every $\kappa\geq \aleph_0$ an $\mathfrak{L}_{\infty,\kappa^+}$-free structure $M\notin \mathbf{K}$  of size $\kappa^+$.
\end{theorem}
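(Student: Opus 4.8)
The plan is to invoke the general machinery established in Section~\ref{CP:coproduct_case}. For each of the four classes (a)--(d), the strategy is identical: verify that $(\mathbf{K},\sleq)$ is a canonical amalgamation class satisfying the hypotheses of Context~\ref{assumptions:subsec_1}, namely Conditions~\ref{context:amalgamation_class}(\hyperref[C1]{C1})--(\hyperref[C3]{C3}), Condition~\ref{CP-AEC:coproducts}(\hyperref[C4]{C4}) (that is, $\mrm{CP}(\mathbf{K},\ast)$), and Conditions~\ref{assumptions:subsec_1}(\hyperref[C5]{C5})--(\hyperref[C6]{C6}), and then apply Theorem~\ref{main:theorem} and Corollary~\ref{main:corollary}. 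In every case the L\"owenheim-Skolem number is $\aleph_0$, so Corollary~\ref{main:corollary} gives the $\mathfrak{L}_{\infty,\omega_1}$-statement in $\mrm{ZFC}$ and Theorem~\ref{main:theorem} gives, under $V=L$, an $\mathfrak{L}_{\infty,\kappa^+}$-free structure not in $\mathbf{K}$ of size $\kappa^+$ for every $\kappa\geq\aleph_0$; the non-axiomatisability conclusions then follow formally, since an $\mathfrak{L}_{\infty,\kappa^+}$-free structure not in $\mathbf{K}$ witnesses that $\mathbf{K}$ is not the class of models of any $\mathfrak{L}_{\infty,\kappa^+}$-sentence.

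First I would collect, for each class, the two required lemmas. For (a), free products of cyclic groups of fixed order $n$: Lemma~\ref{cyclic_grups:AEC} establishes that $(\mathbf{K},\sleq)$ is a weak $\mrm{AEC}$ with $\mathrm{LS}=\aleph_0$ satisfying (\hyperref[C1]{C1})--(\hyperref[C3]{C3}) and (\hyperref[C5]{C5})--(\hyperref[C6]{C6}), and Lemma~\ref{CP:cyclyc_groups} establishes $\mrm{CP}(\mathbf{K},\ast)$; hence Theorem~\ref{prop:cyclic_groups}. For (b), direct sums of a fixed torsion-free abelian group $G_{\mathbf{t}}$ of rank $1$ with $\mathbf{t}\neq\mathbf{t}(\mathbb{Q})$: Lemma~\ref{tfag:AEC} and Lemma~\ref{CP:tfag_groups}, hence Theorem~\ref{prop:tfag}. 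For (c), free $(k,n)$-Steiner systems: Lemma~\ref{steiner:AEC} and Lemma~\ref{CP:steiner}, hence Theorem~\ref{prop:steiner}. For (d), free generalised $n$-gons: Lemma~\ref{ngons:AEC} and Lemma~\ref{projective_construction_principle}, hence Theorem~\ref{prop:ngons}. In each case the existence statement in the displayed theorem is exactly the existence statement in the present theorem for that class, so the four theorems assemble into the claim.

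The assembled argument runs as follows. Fix $\mathbf{K}$ to be one of (a)--(d). By the relevant pair of lemmas, $(\mathbf{K},\sleq)$ satisfies the hypotheses of Context~\ref{assumptions:subsec_1} with $\mu=\aleph_0$. By Corollary~\ref{main:corollary} there is an $\mathfrak{L}_{\infty,\omega_1}$-free structure $M\notin\mathbf{K}$ of size $\aleph_1$, so $\mathbf{K}$ is not axiomatisable in $\mathfrak{L}_{\infty,\omega_1}$ in $\mrm{ZFC}$; and, assuming $V=L$, Theorem~\ref{main:theorem} gives for every $\kappa\geq\aleph_0$ an $\mathfrak{L}_{\infty,\kappa^+}$-free structure $M\notin\mathbf{K}$ of size $\kappa^+$, so $\mathbf{K}$ is not axiomatisable in $\mathfrak{L}_{\infty,\infty}$. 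This proves the theorem. The main obstacle in this whole development is not the present theorem --- whose proof is a bookkeeping reduction --- but the verification of $\mrm{CP}(\mathbf{K},\ast)$ in each concrete class, i.e.\ the content of Lemmas~\ref{CP:cyclyc_groups}, \ref{CP:tfag_groups}, \ref{CP:steiner} and \ref{projective_construction_principle}. For the group-theoretic cases this requires a careful use of normal forms / Coxeter-reflection combinatorics (respectively divisibility obstructions) to show that the chain $(A_i)_{i<\omega}$ cannot be completed to a free factor of any extension of $B$; for the incidence-geometric cases it requires exhibiting a rigid combinatorial configuration that admits two incompatible wellfounded $\mrm{HF}$-orderings. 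Once those lemmas are in hand, the proof above is immediate.

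\begin{proof}
	Fix $\mathbf{K}$ to be one of the classes (a)--(d), equipped with the strong submodel relation $\sleq$ defined in the corresponding section (\ref{context:free_cyclic_groups}, \ref{context:tfag_groups}, \ref{context:steiner}, \ref{context:ngons}, respectively). We verify that $(\mathbf{K},\sleq)$ satisfies the assumptions of Context~\ref{assumptions:subsec_1} with $\mathrm{LS}(\mathbf{K},\sleq)=\aleph_0$, that is, Conditions~\ref{context:amalgamation_class}(\hyperref[C1]{C1})--(\hyperref[C3]{C3}), Condition~\ref{CP-AEC:coproducts}(\hyperref[C4]{C4}) and Conditions~\ref{assumptions:subsec_1}(\hyperref[C5]{C5})--(\hyperref[C6]{C6}):
	\begin{enumerate}[(a)]
		\item by Lemma~\ref{cyclic_grups:AEC} the class $(\mathbf{K},\sleq)$ is a weak $\mrm{AEC}$ with $\mathrm{LS}(\mathbf{K},\sleq)=\aleph_0$ satisfying \ref{context:amalgamation_class}(\hyperref[C1]{C1})--(\hyperref[C3]{C3}) and \ref{assumptions:subsec_1}(\hyperref[C5]{C5})--(\hyperref[C6]{C6}), and by Lemma~\ref{CP:cyclyc_groups} it satisfies $\mrm{CP}(\mathbf{K},\ast)$, i.e.\ \ref{CP-AEC:coproducts}(\hyperref[C4]{C4});
		\item by Lemma~\ref{tfag:AEC} and Lemma~\ref{CP:tfag_groups};
		\item by Lemma~\ref{steiner:AEC} and Lemma~\ref{CP:steiner};
		\item by Lemma~\ref{ngons:AEC} and Lemma~\ref{projective_construction_principle}.
	\end{enumerate}
	Hence in all four cases $(\mathbf{K},\sleq)$ is as in Context~\ref{assumptions:subsec_1} with $\mu=\aleph_0$. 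By Corollary~\ref{main:corollary} there is an $\mathfrak{L}_{\infty,\omega_1}$-free structure $M\notin\mathbf{K}$ of size $\aleph_1$; in particular, since such an $M$ satisfies exactly the same $\mathfrak{L}_{\infty,\omega_1}$-sentences as the structure $F(\aleph_1)\in\mathbf{K}$ yet does not belong to $\mathbf{K}$, the class $\mathbf{K}$ is not axiomatisable in $\mathfrak{L}_{\infty,\omega_1}$; this holds in $\mrm{ZFC}$. Assuming in addition $V=L$, Theorem~\ref{main:theorem} yields, for every $\kappa\geq\aleph_0$, an $\mathfrak{L}_{\infty,\kappa^+}$-free structure $M\notin\mathbf{K}$ of size $\kappa^+$; arguing as before, $\mathbf{K}$ is then not axiomatisable in $\mathfrak{L}_{\infty,\kappa^+}$ for any $\kappa\geq\aleph_0$, and therefore not axiomatisable in $\mathfrak{L}_{\infty,\infty}$.
\end{proof}
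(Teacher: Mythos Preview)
Your argument is correct and follows the same route as the paper: for the $n$-gons case you invoke Lemma~\ref{ngons:AEC} and Lemma~\ref{projective_construction_principle} to place $(\mathbf{K},\sleq)$ in Context~\ref{assumptions:subsec_1} with $\mu=\aleph_0$, and then apply Theorem~\ref{main:theorem} and Corollary~\ref{main:corollary}. Two minor remarks: first, the statement you were asked to prove is only Theorem~\ref{prop:ngons}, i.e.\ case~(d), so the material on cases~(a)--(c) is surplus here (it belongs to the separate Theorems~\ref{prop:cyclic_groups}, \ref{prop:tfag}, \ref{prop:steiner}); second, the paper's own one-line proof of Theorem~\ref{prop:ngons} cites Lemmas~\ref{steiner:AEC} and~\ref{CP:steiner}, which is evidently a copy-paste slip --- your citation of Lemmas~\ref{ngons:AEC} and~\ref{projective_construction_principle} is the intended one.
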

\begin{proof}
	Immediate by Lemmas~\ref{steiner:AEC}, \ref{CP:steiner}, Theorem~\ref{main:theorem} and Corollary~\ref{main:corollary}.
\end{proof}

\noindent As in the case of Steiner systems, we obtain the following non-axiomatisability results also with respect to the class of infinite free generalised $n$-gons. The following corollary follows immediately from the previous theorem.

\begin{corollary}
	The class of (infinite) free generalised $n$-gons is not axiomatisable in $\mathfrak{L}_{\infty,\omega_1}$ and, under $V=L$, it is not axiomatisable in $\mathfrak{L}_{\infty,\infty}$. 
\end{corollary}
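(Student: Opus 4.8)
The plan is to read this corollary off Theorem~\ref{prop:ngons} together with the elementary observation that if a class is axiomatised by a sentence of some infinitary logic, then membership in the class is invariant under equivalence in that logic. Write $\mathbf{K}^{\infty}$ for the class of infinite free generalised $n$-gons, i.e.\ $\mathbf{K}^{\infty}=\{A\in\mathbf{K}:|A|\geq\aleph_0\}$ with $\mathbf{K}$ as in Context~\ref{context:ngons}; recall that $(\mathbf{K},\sleq)$ satisfies the hypotheses of Theorem~\ref{main:theorem} by Lemmas~\ref{ngons:AEC} and \ref{projective_construction_principle}, and that for every infinite cardinal $\kappa$ the unique member of $\mathbf{K}$ of size $\kappa^{+}$ is the free generalised $n$-gon $F(\kappa^{+})$, which therefore lies in $\mathbf{K}^{\infty}$.

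First I would treat the $\mathfrak{L}_{\infty,\omega_1}$-case. By Theorem~\ref{prop:ngons} there is an $\mathfrak{L}_{\infty,\omega_1}$-free structure $M\notin\mathbf{K}$ of size $\aleph_1$; since $|M|=\aleph_1$ is infinite, this gives $M\notin\mathbf{K}^{\infty}$, while $F(\aleph_1)\in\mathbf{K}^{\infty}$ and $M\equiv_{\infty,\omega_1}F(\aleph_1)$ by Definition~\ref{def:L-free}(1). If $\mathbf{K}^{\infty}$ were axiomatised by a sentence $\varphi\in\mathfrak{L}_{\infty,\omega_1}$, then $F(\aleph_1)\models\varphi$ would force $M\models\varphi$, hence $M\in\mathbf{K}^{\infty}$, a contradiction; so $\mathbf{K}^{\infty}$ is not axiomatisable in $\mathfrak{L}_{\infty,\omega_1}$, and this is a $\mrm{ZFC}$-theorem since the relevant instance of Theorem~\ref{prop:ngons} rests on Corollary~\ref{main:corollary}.

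For the $\mathfrak{L}_{\infty,\infty}$-case under $V=L$, suppose $\mathbf{K}^{\infty}$ were axiomatised by some $\varphi\in\mathfrak{L}_{\infty,\infty}$; then $\varphi\in\mathfrak{L}_{\infty,\lambda}$ for some infinite cardinal $\lambda$. Fix $\kappa\geq\lambda$. By the $V=L$ part of Theorem~\ref{prop:ngons} there is an $\mathfrak{L}_{\infty,\kappa^{+}}$-free $M\notin\mathbf{K}$ of size $\kappa^{+}$; as above $M\notin\mathbf{K}^{\infty}$ but $F(\kappa^{+})\in\mathbf{K}^{\infty}$, so $F(\kappa^{+})\models\varphi$, and since $\lambda\leq\kappa<\kappa^{+}$ we have $\varphi\in\mathfrak{L}_{\infty,\lambda}\subseteq\mathfrak{L}_{\infty,\kappa^{+}}$, whence $M\equiv_{\infty,\kappa^{+}}F(\kappa^{+})$ yields $M\models\varphi$ and so $M\in\mathbf{K}^{\infty}$, a contradiction. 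Since the argument is pure bookkeeping, there is no real obstacle; the only points deserving a line of care are that the witnessing structures $M$ are uncountable and hence genuinely infinite (so that restricting from $\mathbf{K}$ to $\mathbf{K}^{\infty}$ changes nothing), and that in the $V=L$ case one must choose $\kappa$ large enough for $\varphi$ to be a sentence of $\mathfrak{L}_{\infty,\kappa^{+}}$.
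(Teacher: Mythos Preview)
Your argument is correct and is exactly the standard derivation that the paper leaves implicit when it says the corollary ``follows immediately from the previous theorem.'' You have simply spelled out the bookkeeping (choosing $\kappa$ large enough so that a putative axiom lies in $\mathfrak{L}_{\infty,\kappa^+}$, and noting the witness $M$ is infinite so the restriction to $\mathbf{K}^{\infty}$ is harmless), which the paper omits.
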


\subsection{A remark on the categoricity of free incidence structures}\label{sec:remark_freeness}

In the previous sections we have considered free structures in incidence geometry, and we focused in particular on $(k,n)$-Steiner systems and generalised $n$-gons. As we already mentioned, our definition of free $(k,n)$-Steiner systems and free generalised $n$-gons (cf.~\ref{Steiner:free_models}, \ref{ngons:free_models}) are slightly different from the standard ones from the geometric literature, which do not generally take into account the uncountable case and do not immediately entail that these are uncountably categorical classes. We consider this issue in this last section and we justify our previous definitions. We consider here only the case of free projective planes, namely $3$-gons, as they are the prototypical example of free objects in incidence geometry, and we leave it to the reader to adapt our arguments to the cases of Steiner systems and generalised $n$-gons for all $n\geq 3$.  Before introducing Definition~\ref{def:free_correct} and proving Theorem~\ref{theorem:free_categoricity} we recall the standard definition of free projective planes from the literature and explain its problems in the uncountable case.

Historically, finitely-generated free projective planes were first introduced by Hall in his seminal article \cite{hall_proj}, where he defined them as the free completions $\pi^k$ of special \emph{finite} configurations $\pi^k_0$ consisting of a line $\ell$, $k-2$ points on $\ell$ and two points off of $\ell$. In \cite{hall_proj} he proved that if $n\neq m$ then  $\pi^n\not\cong \pi^m$, and he showed that the free completion of any finite, non-degenerate, open projective plane is isomorphic to $\pi^n$ for some $n<\omega$. Hall's methods and definitions were significantly simplified by Siebenmann in \cite{sieben}, where he introduced the notion of extension process and hyperfree completion, which is essentially the precursor of the concept of $\mrm{HF}$-ordering. Siebenmann also took into consideration the countably generated free projective plane $\pi^\omega$, which he simply defined as the limit of all finitely-generated ones, i.e., $\pi^\omega=\bigcup_{n<\omega}\pi^n$ (the same had been done by Kopeikina in \cite{kope}). This focus on finitely-generated structures also meant that Siebenmann only considered wellfounded extension processes, thus, in our terminology, only wellfounded $\mrm{HF}$-orderings. Non-wellfounded $\mrm{HF}$-orders were first considered in \cite{paolini&hyttinen} to study the first-order theory of free projective planes, as they provide a tool to study arbitrary (i.e., possibly non-free) open projective planes, and to investigate their first-order model theory. To our knowledge, uncountable free projective planes have not really been considered in the geometric literature. As a matter of fact, one can see that the definition of free structure from \cite{funk} overgeneralises and leads to the paradoxical consequence that every open plane is free. In fact, by specialising the definition of free structure from \cite[Def. 5]{funk} to the specific case of projective planes, one would obtain that \emph{$A$ is a free projective plane if it is the free completion of some open projective plane $A_0$}, i.e., if $A=F(A_0)$ (see Definition~\ref{ngons:free_completion2} for $n=3$). However, this definition clearly overgenerates:  if $A$ is an infinite open projective plane, then it is easy to identify a set $A_0\subseteq A$ so that $A=F(A_0)$. This contrasts to the case where $A_0$ is finite, as Hall's result  that the free completion of any finite, open, projective plane is isomorphic to $\pi^n$ for some $n<\omega$ immediately entails that $F(A_0)$ is free whenever $A_0$ is a finite, open, projective plane (for a similar result concerning all generalised $n$-gons we refer the reader to \cite[Prop.~4.1]{Ammer_tent}).

Given the previous considerations, we believe that the problem with \cite[Def. 5]{funk} is no oversight, but a sign of a difficulty in defining free planes in the infinite setting, where it is not true that free planes are simply free completion of \emph{any} open configuration. In particular, we take that this indicates that to capture the notion of free projective planes of arbitrary cardinality one needs to specify some constraint on the generators $A_0$ of $F(A_0)$. One solution would be, as we did in Definition~\ref{ngons:free_completion2}, to identify a ``canonical'' set of generators, similarly to what Hall did, and simply stipulate that the free models are the free completions of this set. Interestingly, we notice as a side remark that in \cite{kope} Kopeikina essentially identified the classes of planes which are freely generated by lines and points without any incidence, and refers to them as \emph{completely free}. However, to simply stipulate a canonical set of generators seems quite arbitrary. To justify our definition we prove in this section that, for every uncountable cardinal $\kappa$, there is only one projective plane of size $\kappa$ which admits a \emph{wellfounded} $\mrm{HF}$-order. This result further exhibits the key importance of the notion of $\mrm{HF}$-orderings in the study of free structures, and it shows in particular how wellfoundedness determines whether an open plane is free or not. With these motivations in mind we introduce the following alternative definition of \emph{free projective plane}, and we show in Corollary~\ref{incidence:corollary} that for uncountable projective planes it is actually equivalent to the one from Definition~\ref{ngons:free_completion2}.

\begin{definition}\label{def:free_correct}
	Let $A$ be a projective plane, we say that $A$ is \emph{free} if there is a wellfounded $\mrm{HF}$-order $<$ of $A$.
\end{definition}

\begin{notation}
	Let $P$ be a projective plane, let $p_0,p_1\in P$ be two points and let $\ell_0,\ell_1\in P$ be two lines. We denote by $p_0\lor p_1$ the unique line in $P$ incident to $p_0$ and $p_1$, and we denote by $\ell_0\land \ell_1$ the unique point in $P$ incident to $\ell_0$ and $\ell_1$.
\end{notation}

\begin{lemma}\label{lemma:substitution_points}
	Let $A$ be an infinite open plane with a wellfounded $\mrm{HF}$-order $<$, let $\ell\in A$ be a line and let $B\subsetneq A$ be an infinite open subplane with $B=\mrm{acl}(B^\downarrow)$. Suppose $c$ is the least element under the ordering $<$ in $A\setminus B$, then there are at most two points $d_0,d_1\in A\setminus B$ which are incident only to $\ell\in B$ and such that $\mrm{acl}(Bc)=\mrm{acl}(Bd_0d_1)$.
\end{lemma}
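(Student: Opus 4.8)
The plan is to analyze what the hyperfree condition on $c$ forces, exploiting wellfoundedness and the fact that $B = \mrm{acl}(B^\downarrow)$ is closed. Since $c$ is the least element of $A\setminus B$ under $<$, hyperfreeness of $c$ (in the sense of \ref{def:hf_order}(1) for the case $n=3$) means that $c$ is either (i) a point incident with at most one line from $B\cup c^\downarrow = B$ (as $c^\downarrow \subseteq B$ here), or (ii) a line incident with at most two points from $B$. I would first dispose of case (ii): if $c$ is a line incident with at most two points $p_0,p_1 \in B$, then the construction over $B$ adds a new line, and the hypothesis that we want to track points incident only to the fixed line $\ell$ is not in play in the same way — but actually the statement is about points $d_0,d_1$, so I would argue that in case (ii) we can take $c$ itself together with the structure it generates, and the relevant ``points incident only to $\ell$'' claim is handled by the subsequent analysis; more precisely I would reduce to case (i), where $c$ is genuinely a new point.

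So assume $c$ is a point incident with at most one line of $B$. First I would handle the subcase where $c$ is incident with \emph{no} line of $B$: then $c$ is a loose end, and $\mrm{acl}(Bc)$ is obtained from $B$ by adjoining $c$ and then freely completing; but since $c$ is a single isolated point, no new incidences with $\ell$ are forced at the first step, and one checks inductively (using that $B$ is already closed and open) that the only elements of $\mrm{acl}(Bc)\setminus B$ incident to $\ell$ and to nothing else in $B$ are at most the point $c$ together with possibly one more point produced as $\ell \land (c \lor q)$ for some $q \in B$ — I would need to bound these carefully, but the key point is that a clean/wellfounded $\mrm{HF}$-extension from a single loose end cannot manufacture arbitrarily many points lying on the \emph{fixed} line $\ell$ with no other incidences, because each such point, once it has a second incidence forced, drops out of consideration. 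The main subcase is where $c$ is incident with exactly one line $m \in B$. If $m = \ell$, then $c$ is itself one of the candidate points $d_i$; if $m \neq \ell$, then $c$ is not a candidate, but $\mrm{acl}(Bc)$ may produce candidates of the form $\ell \land (c\lor q)$ for $q\in B\setminus m$, and I would show there are at most two such that are incident \emph{only} to $\ell$ in $\mrm{acl}(Bc)$ — the ``two'' coming from the fact that once you have one point $d_0 = \ell \land (c\lor q_0)$ on $\ell$, any further point of this shape already lies on a line through $c$ meeting $\ell$, and connecting two of them back through $B$ forces extra incidences.

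The cleanest route, which I would actually pursue, is to invoke \ref{lemma:useful_facts}(2) (the $n=3$ analogue from \cite{PQ}): since $B = \mrm{acl}(B^\downarrow)$ and $B^\downarrow \cup \{c\} \hleq A$ (because $<$ is a wellfounded $\mrm{HF}$-order and $c$ is the next element), we get $\mrm{acl}(Bc) \cong_{Bc} F(Bc)$ where $F(Bc)$ is the free completion of the partial plane $B \cup \{c\}$ with its inherited incidences. Now I would argue purely combinatorially in the free completion $F(B\cup\{c\})$: at stage $A_1$ of the completion (adjoining lines through pairs of points lacking a common line) the point $c$, having at most one incidence in $B$, spawns lines $c\lor q$ for $q \in B$; at stage $A_2$ we adjoin intersection points, and a point of $A_2$ incident to $\ell$ and to nothing else already in $B\cup\{c\}$ must be $\ell \land (c\lor q)$ for some $q$. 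The crucial observation is that two distinct such points $\ell\land(c\lor q_0)$ and $\ell\land(c\lor q_1)$ together with $c$ form a triangle-like configuration, and any further point on $\ell$ of the required form, say $\ell\land(c\lor q_2)$, would by the projective axioms coincide with one of them or acquire a further incidence through the closure; and at all later stages no genuinely new point on $\ell$ with a unique incidence can appear, since newly created points on $\ell$ arise as intersections of two new lines through $\ell$, giving them two incidences. Hence at most two candidates $d_0,d_1$, and by construction $\mrm{acl}(Bc) = \mrm{acl}(Bd_0d_1)$ since $c = (c\lor q_0)\land(c\lor q_1) = (q_0\lor d_0)\land(q_1\lor d_1)$ is recovered from $d_0,d_1$ and $B$, and the rest of the completion then agrees.

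\textbf{Main obstacle.} The delicate point is the ``at most two'' bound: I expect the real work is a careful case analysis inside the free completion $F(B\cup\{c\})$ to rule out a third point on $\ell$ with a unique incidence, and to verify the reciprocal recovery $c \in \mrm{acl}(Bd_0d_1)$ in \emph{all} cases (including the degenerate ones where $c$ has zero incidences in $B$, where one may only get one candidate $d_0$, or where the line $m$ through $c$ interacts with $\ell$). Making the phrase ``incident only to $\ell \in B$'' precise across the stages of completion — i.e.\ that such a point never later picks up a second incidence — is where I would be most careful, using the rigidity of wellfounded $\mrm{HF}$-orders and the fact that $B$ is algebraically closed so no new incidences land inside $B$.
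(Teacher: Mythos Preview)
You have misread the lemma. The phrase ``there are at most two points $d_0,d_1\in A\setminus B$ \dots\ such that $\mrm{acl}(Bc)=\mrm{acl}(Bd_0d_1)$'' is an \emph{existence} claim: it asserts that one can find one or two points on the fixed line $\ell$ which, together with $B$, generate the same algebraic closure as $c$ does. The role of the lemma in the subsequent categoricity argument is precisely this: each new generator $c$ can be traded for at most two points lying on $\ell$, so that inductively the whole plane is freely generated by points on a single line. The paper's proof is therefore a short direct construction, splitting into four cases according to whether $c$ is a point or a line and whether it has zero or one incidence in $B$; in each case one writes down the required $d_0$ (and $d_1$ if needed) explicitly as intersections of the form $\ell\land(c\lor p)$ or $\ell\land c$, and checks that $c$ is algebraic over $B d_0 d_1$ and vice versa.

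Your proposal instead treats the lemma as a \emph{bounding} statement --- that among all points of $A\setminus B$ incident only to $\ell$ there are at most two --- and your main effort goes into establishing this bound inside the free completion $F(B\cup\{c\})$. But that reading is false: for every point $q\in B$ not on $\ell$, the element $\ell\land(c\lor q)$ is a point of $\mrm{acl}(Bc)\setminus B$ whose only incidence with a line of $B$ is $\ell$, and there are infinitely many such $q$. Your ``triangle-like configuration'' argument does not rule these out, because distinct points $\ell\land(c\lor q_0)$ and $\ell\land(c\lor q_1)$ are simply distinct points on $\ell$ with no forced coincidence. The actual content of the lemma is not that such points are scarce, but that \emph{some} choice of one or two of them suffices to recover $c$; once you reinterpret the statement this way, the proof collapses to the elementary case analysis the paper gives, and your ``main obstacle'' disappears.
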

\begin{proof}
	As in the assumptions, we let $A$ be an infinite open plane with a wellfounded $\mrm{HF}$-order $<$, we let $B\subsetneq A$ with $B=B^\downarrow$ and $c\in A\setminus B$. Also, we fix a line $\ell\in B$. Let $c$ be the least element in $ A\setminus B$ according to the order $<$ and notice that clearly $<$ witnesses the fact that $Bc\hleq A$.  Notice that by \ref{lemma:useful_facts_ngons} we have  $B=\mrm{acl}(B^\downarrow)=F(B^\downarrow)$.  We next distinguish the following cases.
	
	\smallskip
	\noindent \underline{Case 1.} $c$ is a point not incident to any element from $B$.
	\newline Let $p_0,p_1$ be two points in $F(B)$ not incident to $\ell$ (these exist since $F(B)$ is infinite) and consider the lines $\ell_0=c\lor p_0$ and $\ell_1=c\lor p_1$. Then take the points $d_0\coloneqq \ell_0\land \ell$ and $d_1\coloneqq \ell_1\land \ell$. Clearly $d_0,d_1\in F(Bc)$. Moreover, it is easy to see that $F(B)d_0d_1\hleq F(B)d_0d_1\ell_0\ell_1c$ and that $c$ is also algebraic over $F(B)d_0d_1$. It follows readily that $F(Bc)=F(Bd_0d_1)$. 
	
	\smallskip
	\noindent \underline{Case 2.} $c$ is a point incident to a line $\ell_1\in B$.
	\newline If $\ell_1=\ell$ we do not have to do anything. Otherwise we proceed as follows. First, let $p$ be any point in $F(B)$ not incident to $\ell$ and $\ell_1$, and consider the line $\ell_2=c\lor p$. Then take the point $d\coloneqq \ell_2\land \ell$. Clearly $d\in F(Bc)$. Moreover, it is easy to see that $F(B)d\hleq F(B)d\ell_2c$ and, since $c$ has two incidences in $F(B)d\ell_2$, we also have that $c$ is algebraic over $F(B)d$. Since $F(B)dc\ell\hleq A$ it follows readily that $F(Bd)=F(Bc)$. 	
	
	\smallskip
	\noindent \underline{Case 3.} $c$ is a line not incident to any element from $B$.
	\newline First, we construct the point $d_0=c\land \ell$. Then pick any line $\ell_0\neq \ell$ from $B$ and construct $p_0=c\land \ell_0$. Then pick $p_1\in B$ not incident to $\ell,\ell_0$, and construct $\ell_1=p_1\lor p_0$. Finally, let $d_1=\ell_1\land \ell$. Reasoning as in the previous case one can see that $F(Bb)=F(Bd_0d_1)$.
	
	\smallskip
	\noindent \underline{Case 4.} $c$ is a line incident to no point from $B$
	\newline This proceeds similarly as the previous case. \qedhere	
\end{proof}

\begin{theorem}\label{theorem:free_categoricity}
	Let $A$ be a projective plane of size $\kappa\geq\aleph_1$ and suppose it is free in the sense of Definition~\ref{def:free_correct}, i.e., it admits a wellfounded $\mrm{HF}$-order $<$, then $A\cong F(A_0X)$, where $A_0=\{a_0,a_1,a_2,a_3,a_4, a_0\lor a_1\}$ is a set of five points together with the line $a_0\lor a_1$, and $X$ is a set of points of size $\kappa$ incident to $a_0\lor a_1$.
\end{theorem}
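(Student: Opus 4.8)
The plan is to construct an isomorphism $f\colon A\to F(A_0X)$ by transfinite recursion along the given wellfounded $\mrm{HF}$-order $<$, rerouting every hyperfree generator of $A$ into at most two points lying on the single fixed line $a_0\lor a_1$ by means of Lemma~\ref{lemma:substitution_points}. First I would fix the bookkeeping: enumerate $A=(c_\alpha)_{\alpha<\mu}$ according to $<$ (so $|\mu|=\kappa$), write $D_\alpha=\{c_\beta:\beta<\alpha\}$ for the initial segments, and put $B_\alpha:=\mathrm{acl}_A(D_\alpha)$. Since each $D_\alpha$ is a downset of a wellfounded $\mrm{HF}$-order we have $D_\alpha\hleq A$, hence $B_\alpha=F(D_\alpha)\hleq A$ by Fact~\ref{lemma:useful_facts_ngons}(2); and since free completion is finitary (each of its elements is reached after finitely many steps), the family $(B_\alpha)_{\alpha\le\mu}$ is an increasing continuous chain of infinite open free subplanes of $A$ with $\bigcup_\alpha B_\alpha=\bigcup_\alpha D_\alpha=A$ and $B_{\alpha+1}=\mathrm{acl}_A(B_\alpha\cup\{c_\alpha\})$.

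For the base of the recursion: because $A$ is infinite, $B_{\alpha_0}$ is infinite for some finite $\alpha_0$, and being finitely generated and free it is isomorphic to one of Hall's planes $\pi^n$ by the classification of finitely generated free planes (\cite{hall_proj}; see also \cite[Prop.~4.1]{Ammer_tent}). Passing to a sufficiently large finite stage and applying Lemma~\ref{lemma:substitution_points} to the finitely many generators of Hall's configuration that do not lie on its distinguished line, one checks that $B_{\alpha_0}\cong F(A_0X_0)$ for a finite set $X_0$ of points incident with $a_0\lor a_1$; fix such an isomorphism $f_{\alpha_0}$. From here the recursion runs for $\alpha$ from $\alpha_0$ to $\mu$.

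Then I would carry out the recursion step. Given $f_\alpha\colon B_\alpha\cong F(A_0X_\alpha)$ with $X_\alpha\supseteq X_0$ a set of points on $a_0\lor a_1$, set $\ell:=f_\alpha^{-1}(a_0\lor a_1)\in B_\alpha$. If $c_\alpha\in B_\alpha$, keep everything fixed. Otherwise $c_\alpha\notin\mathrm{acl}_A(B_\alpha)=B_\alpha$; since any element incident with two members of $B_\alpha$ would lie in $\mathrm{acl}_A(B_\alpha)=B_\alpha$ (two lines meet in a point, two points span a line), $c_\alpha$ is incident with at most one element of $B_\alpha$, and it is the $<$-least element of $A\setminus B_\alpha$. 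Hence Lemma~\ref{lemma:substitution_points} applies and gives at most two points $d_0,d_1\in A\setminus B_\alpha$, incident only with $\ell$, such that $B_{\alpha+1}=\mathrm{acl}_A(B_\alpha\cup\{c_\alpha\})=\mathrm{acl}_A(B_\alpha\cup\{d_0,d_1\})$. I adjoin two new points $x_0,x_1$ on $a_0\lor a_1$, put $X_{\alpha+1}=X_\alpha\cup\{x_0,x_1\}$, and extend $f_\alpha$ by $d_i\mapsto x_i$; since the partial planes $B_\alpha\cup\{d_0,d_1\}$ and $F(A_0X_\alpha)\cup\{x_0,x_1\}$ are then isomorphic and free completion is monotone, idempotent and canonical, this extends to an isomorphism $f_{\alpha+1}\colon B_{\alpha+1}\cong F(A_0X_{\alpha+1})$. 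At limit stages take unions, which is legitimate by continuity of both chains. Setting $X=\bigcup_{\alpha<\mu}X_\alpha$ and $f=\bigcup_{\alpha<\mu}f_\alpha$ yields an isomorphism $f\colon A=\bigcup_\alpha B_\alpha\to\bigcup_\alpha F(A_0X_\alpha)=F(A_0X)$, with $X$ a set of points incident with $a_0\lor a_1$; as $|A|=\kappa\ge\aleph_1$ while $|F(A_0X)|=\max(|X|,\aleph_0)$, necessarily $|X|=\kappa$.

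The hardest part will be the base case: matching a finitely generated infinite free plane with the normal form $F(A_0X_0)$, i.e.\ checking that each $\pi^n$ — and more generally the free completion of any finite open subplane with infinite completion — can be re-presented so that all generators beyond $A_0$ are collinear points on $a_0\lor a_1$; this is precisely where the five‑point shape of $A_0$ is forced. A secondary subtlety is keeping Lemma~\ref{lemma:substitution_points} genuinely applicable at every step: one must know that $c_\alpha$ is hyperfree over the \emph{whole} subplane $B_\alpha$, not merely over the downset $D_\alpha$ — which is exactly what the ``two incidences $\Rightarrow$ algebraic'' observation delivers — and one must keep the rerouted generators $d_0,d_1$ outside $B_\alpha$ so that their images can be taken to be genuinely new points of $F(A_0X)$, making the extension $f_{\alpha+1}\supseteq f_\alpha$ available from the canonicity of free completion.
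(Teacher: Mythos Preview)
Your proposal is correct and follows essentially the same approach as the paper: both arguments walk along the wellfounded $\mrm{HF}$-order, take algebraic closures at each stage, and invoke Lemma~\ref{lemma:substitution_points} to reroute each new generator to at most two points on the fixed line $a_0\lor a_1$. The paper first reorganises the enumeration so that algebraic elements precede non-algebraic ones and then indexes the genuine extension steps by a subsequence $(\gamma_i)_{i<\beta}$, whereas you absorb this bookkeeping by working directly with $B_\alpha=\mathrm{acl}_A(D_\alpha)$ and explicitly carrying an isomorphism $f_\alpha$ through the recursion; these are cosmetic differences, and your treatment of the base case via Hall's classification is the same as the paper's ``without loss of generality $a_0,\dots,a_4$ are points''.
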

\begin{proof}
	Let $A$ be a projective plane of size $\kappa\geq \aleph_1$ with a wellfounded $\mrm{HF}$-ordering $<_A$. We can find some ordinal $\delta$ with $\kappa\leq \delta<\kappa^+$ and an enumeration $(a_i)_{i<\delta}$ so that $a_i<_A a_j$ whenever $i<j$. Since every finitely-generated free projective plane is isomorphic to the configuration $\pi^n$, we can assume without loss of generality that  $a_0,\dots, a_4$ are points.  For each $i<\delta$ we let $A_i=\{a_j : j<i\}$. By the definition of $\mrm{HF}$-ordering in projective planes we can assume without loss of generality that, if $i\leq j$, $i\leq  k$,  and $a_j$ is incident to two elements from $A_i$ while $a_k$ is incident to at most one element from $A_i$, then $j<k$ and so $a_j<_Aa_k$. It follows that there is an ordinal $\beta$ and an increasing continuous chain of ordinals $(\gamma_i)_{i<\beta}$ such that:
	\begin{enumerate}[(a)]
		\item $\gamma_i<\delta$ for all $i<\delta$;
		\item $\bigcup_{i<\beta}\gamma_i=\delta$ or $\beta=\alpha+1$ and $\gamma_\alpha=\delta$;
		\item $A_{\gamma_0}=F(a_0,\dots, a_4)$;
		\item $A_{\gamma_{i+1}}=F(A_{\gamma_i} a_{\gamma_{i+1}})$ for every every $i<\beta$.
	\end{enumerate}
	Then, it follows from Lemma~\ref{lemma:substitution_points} that for all $i<\delta$ we can either find one point $a^0_{\gamma_{i+1}}\in A\setminus A_{\gamma_i}$ incident only to $a_0\lor a_1$ in $F(A_{\alpha_i})$ and such that $F(A_{\gamma_{i+1}})=(A_{\gamma_{i}}a^0_{\gamma_{i+1}})$, or we can find two points  $a^0_{\gamma_{i+1}}\in A\setminus A_{\gamma_i}$ and $a^1_{\gamma_{i+1}}\in A\setminus A_{\gamma_i}$ which are incident only to $a_0\lor a_1$ in $F(A_{\gamma_{i}})$ and $F(A_{\gamma_{i+1}})=(A_{\gamma_{i}}a^0_{\gamma_{i+1}}a^1_{\gamma_{i+1}})$. Inductively, it follows for every $\omega\leq \gamma<\delta$ that $A_{\gamma}\cong F(A_0X_\lambda)$ where $\lambda=|\gamma|$ and $X_\lambda$ is any set of $\lambda$-many points.  Since by point (b) above we have that either $A=F(\bigcup_{i<\beta}A_{\gamma_i})$ or  $A=F(A_{\gamma_\alpha} a_{\gamma_{\alpha+1}})$, it follows in both cases  that $A$ is exactly the open projective plane obtained by first adding the points $a_0,\dots, a_4$, and then $\kappa$-many points incidents exactly to  $a_0\lor a_1$. This completes our proof.
\end{proof}

\begin{corollary}\label{incidence:corollary}
	The class of projective planes with a wellfounded $\mrm{HF}$-construction is uncountably categorical. In particular, for every set of points $X_\kappa$ of size $\kappa\geq \aleph_1$, we have that $F(X_\kappa)$ is the only projective plane of size $\kappa$, up to isomorphism.
\end{corollary}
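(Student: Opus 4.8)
The plan is to read this off as an essentially immediate consequence of Theorem~\ref{theorem:free_categoricity}. First I would record that, up to isomorphism, the plane $F(A_0X)$ produced by Theorem~\ref{theorem:free_categoricity} depends only on the cardinality $\kappa=|X|$. Indeed, if $X$ and $X'$ are two sets of points of size $\kappa$, each incident only to the line $a_0\lor a_1$, then the incidence structures $A_0X$ and $A_0X'$ are isomorphic, and isomorphic generating configurations have isomorphic free completions: this follows by induction on the stages of the free completion process from Definition~\ref{ngons:free_completion2}, since each stage is determined by the isomorphism type of the previous one (this is the analogue for $\ast$ of Definition~\ref{def:canonical_jep}(b), already implicit in Definition~\ref{ngons:free_products}). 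Combining this with Theorem~\ref{theorem:free_categoricity}, any two projective planes of the same size $\kappa\geq\aleph_1$ which admit a wellfounded $\mrm{HF}$-order are isomorphic to one and the same structure $F(A_0X)$, hence to each other. This is exactly uncountable categoricity of the class of projective planes with a wellfounded $\mrm{HF}$-construction.

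For the ``in particular'' clause I would check that, for a set $X_\kappa$ of $\kappa\geq\aleph_1$ points with no incidences, the free completion $F(X_\kappa)$ is itself a projective plane of size $\kappa$ admitting a wellfounded $\mrm{HF}$-order, so that it falls under the categoricity statement just established. The cardinality is routine: $F(X_\kappa)$ is obtained from $X_\kappa$ by $\omega$ many stages, each adding at most $|A_i|+\aleph_0$ new elements, so $|F(X_\kappa)|=|X_\kappa|=\kappa$. For the wellfounded $\mrm{HF}$-order: each point of $X_\kappa$ is a loose end in the sense of Definition~\ref{ngons:def_hyperfree}(1), being incident with nothing, so any wellorder of $X_\kappa$ witnesses $\emptyset\hleq X_\kappa$; on the other hand, by Fact~\ref{lemma:useful_facts_ngons}(1) there is a wellfounded $\mrm{HF}$-order of $F(X_\kappa)$ over $X_\kappa$; concatenating these two wellfounded orders yields a wellfounded $\mrm{HF}$-order of $F(X_\kappa)$ over $\emptyset$, i.e. $F(X_\kappa)$ is free in the sense of Definition~\ref{def:free_correct}. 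Hence $F(X_\kappa)$ is a projective plane of size $\kappa$ with a wellfounded $\mrm{HF}$-construction, and by the previous paragraph it is the unique such plane up to isomorphism.

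I do not anticipate a genuine obstacle, since all the substance is already carried by Theorem~\ref{theorem:free_categoricity}; the only points needing a line of argument are the invariance of $F(A_0X)$ under the isomorphism type of $X$ and the concatenation argument showing $F(X_\kappa)$ admits a wellfounded $\mrm{HF}$-order. As a bonus, this reconciles Definition~\ref{def:free_correct} with Definition~\ref{ngons:free_models} (specialised to $n=3$) in the uncountable case: since $F(X_\kappa)$ is free in the sense of Definition~\ref{def:free_correct} and has size $\kappa$, Theorem~\ref{theorem:free_categoricity} gives $F(X_\kappa)\cong F(A_0X)$, as promised in the discussion preceding Definition~\ref{def:free_correct}.
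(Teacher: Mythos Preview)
Your proposal is correct and follows exactly the approach the paper intends: the corollary is stated without proof because it is meant to be read off immediately from Theorem~\ref{theorem:free_categoricity}, and the two small observations you supply (invariance of $F(A_0X)$ under the isomorphism type of $X$, and the concatenation argument showing $F(X_\kappa)$ admits a wellfounded $\mrm{HF}$-order) are precisely the routine verifications the paper leaves implicit.
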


\end{document}